\documentclass[reqno,12pt,twoside]{ip-journal}
\topmargin      -0.3in  
\headheight      0.2in  
\headsep         0.3in  
\textheight      8.9in  
\footskip        0.3in  
\oddsidemargin   0.0in  
\evensidemargin  0.0in  
\textwidth       6.5in  
\parskip         12pt
\marginparwidth=0.75in

\makeatletter
\setcounter{MaxMatrixCols}{20}

\usepackage{mathdots}
\usepackage[pdfusetitle, colorlinks=false, bookmarks=false]{hyperref}
\hypersetup{pdfauthor={Rongqing Ye, Elad Zelingher},
	pdfsubject={Representation theory},
	pdfkeywords={Exterior square, Finite fields, p-adic groups, Level zero representations, Gauss sums}}

\usepackage{cleveref}

\numberwithin{equation}{section}
\numberwithin{figure}{section}
\theoremstyle{definition}
\newtheorem{defn}{\protect\definitionname}[section]
\theoremstyle{plain}
\newtheorem{prop}[defn]{\protect\propositionname}
\theoremstyle{plain}
\newtheorem{thm}[defn]{\protect\theoremname}
\theoremstyle{plain}
\newtheorem{lem}[defn]{\protect\lemmaname}
\theoremstyle{remark}
\newtheorem{rem}[defn]{\protect\remarkname}
\theoremstyle{remark}

\theoremstyle{plain}
\newtheorem{cor}[defn]{\protect\corollaryname}
\theoremstyle{plain}
\newtheorem*{thm*}{\protect\theoremname}

\makeatother

\providecommand{\claimname}{Claim}
\providecommand{\definitionname}{Definition}
\providecommand{\lemmaname}{Lemma}
\providecommand{\propositionname}{Proposition}
\providecommand{\remarkname}{Remark}
\providecommand{\corollaryname}{Corollary}
\providecommand{\theoremname}{Theorem}

\providecommand{\claimname}{Claim}
\providecommand{\corollaryname}{Corollary}
\providecommand{\definitionname}{Definition}
\providecommand{\lemmaname}{Lemma}
\providecommand{\propositionname}{Proposition}
\providecommand{\remarkname}{Remark}
\providecommand{\theoremname}{Theorem}
\providecommand{\theoremname}{Theorem}

\usepackage{scalerel}
\usepackage{stackengine,wasysym}

\newcommand\reallywidetilde[1]{\ThisStyle{%
		\setbox0=\hbox{$\SavedStyle#1$}%
		\stackengine{-.1\LMpt}{$\SavedStyle#1$}{%
			\stretchto{\scaleto{\SavedStyle\mkern.2mu\AC}{.5150\wd0}}{.6\ht0}%
		}{O}{c}{F}{T}{S}%
}}

\def\undertilde#1{\mathord{\vtop{\ialign{##\crcr
				$\hfil\displaystyle{#1}\hfil$\crcr\noalign{\kern1.5pt\nointerlineskip}
				$\hfil\tilde{}\hfil$\crcr\noalign{\kern1.5pt}}}}}

\newcommand{\fieldCharacter}{\psi}
\newcommand{\UpperTriangularAdditive}{\mathcal{B}}
\newcommand{\weylElement}[1]{w_{#1}}
\newcommand{\permutationMatrix}[1]{\sigma_{#1}}
\newcommand{\oddPermutationMatrix}{\permutationMatrix{2m+1}}
\newcommand{\evenPermutationMatrix}{\permutationMatrix{2m}}
\newcommand{\ShalikaSubgroup}[1]{S_{#1}}
\newcommand{\ShalikaSubgroupEven}{\ShalikaSubgroup{2m}}
\newcommand{\ShalikaSubgroupOdd}{\ShalikaSubgroup{2m + 1}}
\newcommand{\ShalikaSubgroupAction}{\rho}
\newcommand{\ShalikaCharacter}{\Psi}
\newcommand{\NilpotentLowerTriangular}{\mathcal{N}^{-}}
\newcommand{\JS}[2]{J_{#1, #2}}
\newcommand{\DualJS}[2]{\tilde{J}_{#1, #2}}
\newcommand{\rowvector}[1]{\varepsilon_{#1}}
\newcommand{\firstrowvector}{\rowvector{1}}
\newcommand{\lastrowvector}{\varepsilon}
\newcommand{\exteriorSquareLFunction}[2]{L\left(#1, #2, \wedge^2 \right)}
\newcommand{\ShalikaDiagonalElement}[1]{\begin{pmatrix}
		#1 &\\		
		& #1
\end{pmatrix}}
\newcommand{\ShalikaUnipotentElement}[1]{\begin{pmatrix}
		\IdentityMatrix{m} & #1\\		
		& \IdentityMatrix{m}
\end{pmatrix}}
\newcommand{\SmallShalikaDiagonalElement}[1]{\left(\begin{smallmatrix}
		#1 &\\		
		& #1
	\end{smallmatrix}\right)}

\newcommand{\SmallShalikaUnipotentElement}[1]{\left(\begin{smallmatrix}
		\IdentityMatrix{m} & #1\\		
		& \IdentityMatrix{m}
	\end{smallmatrix}\right)}

\newcommand{\SmallShalikaDiagonalElementOdd}[1]{\left(\begin{smallmatrix}
		#1 & &\\		
		& #1 &\\
		& & 1
	\end{smallmatrix}\right)}

\newcommand{\ShalikaDiagonalElementOdd}[1]{\begin{pmatrix}
		#1 & &\\		
		& #1 &\\
		& & 1
\end{pmatrix}}

\newcommand{\ShalikaUnipotentElementOdd}[1]{\begin{pmatrix}
		\IdentityMatrix{m} & #1 &\\		
		& \IdentityMatrix{m} & \\
		& & 1
\end{pmatrix}}

\newcommand{\SmallShalikaUnipotentElementOdd}[1]{\left(\begin{smallmatrix}
		\IdentityMatrix{m} & #1 &\\		
		& \IdentityMatrix{m} & \\
		& & 1
	\end{smallmatrix}\right)}

\newcommand{\ShalikaLowerUnipotentElementOdd}[1]{\begin{pmatrix}
		\IdentityMatrix{m} & &\\		
		& \IdentityMatrix{m} & \\
		& #1 & 1
\end{pmatrix}}

\newcommand{\ShalikaUpperRightUnipotentElementOdd}[1]{\begin{pmatrix}
		\IdentityMatrix{m} & & #1\\		
		& \IdentityMatrix{m} & \\
		&  & 1
\end{pmatrix}}

\newcommand{\SmallShalikaUpperRightUnipotentElementOdd}[1]{\left(\begin{smallmatrix}
		\IdentityMatrix{m} & & #1\\		
		& \IdentityMatrix{m} & \\
		&  & 1
	\end{smallmatrix}\right)}

\newcommand{\SmallShalikaLowerUnipotentElementOdd}[1]{\left(\begin{smallmatrix}
		\IdentityMatrix{m} & &\\		
		& \IdentityMatrix{m} & \\
		& #1 & 1
	\end{smallmatrix}\right)}

\newcommand{\WhittakerModelOfResidueFieldRepresentation}{\Whittaker \left(\residueFieldRepresentation, \residueFieldCharacter\right)}
\newcommand{\WhittakerModelOfLocalFieldRepresentation}{\Whittaker \left(\localFieldRepresentation, \fieldCharacter\right)}
\newcommand{\gammaFactorOfLocalField}[1]{\gamma \left(#1, \localFieldRepresentation, \wedge^2, \fieldCharacter \right)}
\newcommand{\epsilonFactorOfLocalField}[1]{\epsilon \left(#1, \localFieldRepresentation, \wedge^2, \fieldCharacter \right)}
\newcommand{\gammaFactorOfResidueField}{\gamma \left(\residueFieldRepresentation, \wedge^2, \residueFieldCharacter \right)}
\newcommand{\modifiedGammaFactorOfResidueField}[1]{\gamma \left(#1, \residueFieldRepresentation, \wedge^2, \residueFieldCharacter \right)}
\newcommand{\JSOfResidueFieldRepresentation}[2]{\JS{\residueFieldRepresentation}{\residueFieldCharacter} \left(#1, #2\right)}
\newcommand{\JSOfLocalFieldRepresentation}[3]{\JS{\localFieldRepresentation}{\fieldCharacter} \left(#1,#2, #3\right)}
\newcommand{\DualJSOfLocalFieldRepresentation}[3]{\DualJS{\localFieldRepresentation}{\fieldCharacter} \left(#1,#2, #3\right)}
\newcommand{\DualJSOfResidueFieldRepresentation}[2]{\DualJS{\residueFieldRepresentation}{\residueFieldCharacter} \left(#1, #2\right)}
\newcommand{\realPartHalfRightPlaneLocalFieldRepresentation}{r_{\localFieldRepresentation, \wedge^2}}
\newcommand{\realPartHalfRightPlaneLocalFieldDualRepresentation}{r_{\Contragradient{\localFieldRepresentation}, \wedge^2}}

\newcommand{\WhittakerModelOfFiniteFieldRepresentation}{\Whittaker \left(\finiteFieldRepresentation, \fieldCharacter\right)}
\newcommand{\gammaFactorOfFiniteField}{\gamma \left(\finiteFieldRepresentation, \wedge^2, \fieldCharacter \right)}
\newcommand{\gammaFactorOfContragredientFiniteField}{\gamma \left(\Contragradient{\finiteFieldRepresentation}, \wedge^2, \fieldCharacter^{-1} \right)}
\newcommand{\JSOfFiniteFieldRepresentation}[2]{\JS{\finiteFieldRepresentation}{\fieldCharacter} \left(#1, #2\right)}
\newcommand{\DualJSOfFiniteFieldRepresentation}[2]{\DualJS{\finiteFieldRepresentation}{\fieldCharacter} \left(#1, #2\right)}

\newcommand{\zIntegers}{\mathbb{Z}}
\newcommand{\rReal}{\mathbb{R}}
\newcommand{\cComplex}{\mathbb{C}}
\newcommand{\multiplicativegroup}[1]{#1^{\ast}}
\newcommand{\RealPart}{\mathrm{Re}}
\newcommand{\Hom}{\mathrm{Hom}}
\newcommand{\Span}{\mathrm{span}}
\newcommand{\Supp}{\mathrm{supp}}
\newcommand{\Stab}{\mathrm{stab}}
\newcommand{\idmap}{\mathrm{id}}
\newcommand{\conjugate}[1]{\overline{#1}}
\newcommand{\indicatorFunction}[1]{\delta_{#1}}
\newcommand{\isomorphic}{\cong}
\newcommand{\sizeof}[1]{\left|#1\right|}

\newcommand{\innerproduct}[2]{\left(#1,#2\right)}
\newcommand{\Norm}[1]{\left\Vert #1\right\Vert }
\newcommand{\standardForm}[2]{\left\langle #1,#2\right\rangle}

\newcommand{\centralCharacter}[1]{\omega_{#1}}
\newcommand{\Ind}[3]{\mathrm{Ind}_{#1}^{#2}\left(#3\right)}
\newcommand{\CompactInd}[3]{\mathrm{ind}_{#1}^{#2}\left(#3\right)}

\newcommand{\UnipotentSubgroup}{N}

\newcommand{\UnipotentRadical}[1]{N_{#1}}
\newcommand{\LeviSubgroup}[1]{M_{#1}}

\newcommand{\rightHaarMeasureModulus}[1]{\delta_{#1}^{-1}}

\newcommand{\Schwartz}{\mathcal{S}}
\newcommand{\Whittaker}{\mathcal{W}}
\newcommand{\residueFieldWhittakerFunctional}{T_0}
\newcommand{\localFieldWhittakerFunctional}{T}
\newcommand{\Contragradient}[1]{\widetilde{#1}}

\newcommand{\FourierTransformWithRespectToCharacter}[2]{\mathcal{F}_{#2}#1}
\newcommand{\rightTranslation}{\rho}
\newcommand{\leftTranslation}{\lambda}
\newcommand{\underlyingVectorSpace}[1]{V_{#1}}
\newcommand{\representationDeclaration}[1]{\left(#1, V_{#1}\right)}
\newcommand{\representationLFunction}[2]{L\left(#1,#2\right)}
\newcommand{\besselFunction}{\mathcal{B}}
\newcommand{\besselFunctionOfFiniteFieldRepresentation}{\besselFunction_{\finiteFieldRepresentation, \fieldCharacter}}
\newcommand{\maximalCompactSubgroup}{K}
\newcommand{\borelSubgroup}{B}

\newcommand{\uniformizer}{\varpi}
\newcommand{\integersring}{\mathfrak{o}}
\newcommand{\maximalideal}{\mathfrak{p}}
\newcommand{\residueField}{\mathfrak{f}}
\newcommand{\localField}{F}
\newcommand{\localFieldRepresentation}{\pi}
\newcommand{\quotientMap}{\nu}
\newcommand{\residueFieldCharacter}{\fieldCharacter_0}
\newcommand{\residueFieldShalikaCharacter}{\ShalikaCharacter_0}
\newcommand{\residueFieldRepresentation}{\pi_0}
\newcommand{\multiplicativeMeasure}[1]{{d^{\times}{#1}}}
\newcommand{\abs}[1]{\left|#1\right|}

\newcommand{\rquot}[2]{{#1}\slash{#2}}
\newcommand{\lquot}[2]{{#1}\backslash{#2}}
\newcommand{\grpIndex}[2]{\left[#1:#2\right]}

\newcommand{\transpose}[1]{\, {}^{t}#1}
\newcommand{\inverseTranspose}[1]{#1^{\iota}}
\newcommand{\IdentityMatrix}[1]{I_{#1}}
\newcommand{\diag}{\mathrm{diag}}
\newcommand{\antidiag}{\operatorname{\mathrm{antidiag}}}
\newcommand{\trace}{\mathrm{tr}}
\newcommand{\GL}[2]{\mathrm{GL}_{#1}\left(#2\right)}
\newcommand{\SquareMat}[2]{M_{#1}\left(#2\right)}
\newcommand{\Mat}[3]{M_{#1 \times #2}\left(#3\right)}
\newcommand{\columnOf}{\mathrm{column}}
\newcommand{\standardColumnVector}[1]{e_{#1}}
\newcommand{\diagonalSubgroup}{A}
\newcommand{\smallDiagTwo}[2]{\left(\begin{smallmatrix}
		#1 & \\
		& #2
	\end{smallmatrix}\right)}
\newcommand{\diagTwo}[2]{\begin{pmatrix}
		#1 & \\
		& #2
\end{pmatrix}}

\newcommand{\FiniteField}[1]{\mathbb{F}_{#1}}
\newcommand{\FieldNorm}{\mathrm{N}}
\newcommand{\FieldTrace}{\mathrm{Tr}}
\newcommand{\cuspidalCharacter}{\theta}
\newcommand{\finiteFieldRepresentation}{\pi}
\newcommand{\finiteField}{\mathbb{F}}
\newcommand{\finiteFieldExtension}[1]{\finiteField_{#1}}
\newcommand{\FieldExtension}[2]{{#1} \slash {#2}}
\newcommand{\algebraicClosure}[1]{\overline{#1}}
\newcommand{\representationCharacter}[1]{\chi_{#1}}

\newcommand{\lift}[1]{\mathcal{L}\left({#1}\right)}

\newcommand{\NonstandardLeviSubgroup}[1]{L_{#1}}
\newcommand{\LeviPermutation}[2]{\sigma_{#1,#2}}
\newcommand{\LeviPermutationMatrix}[2]{w_{#1,#2}}
\newcommand{\TwistedLeviSubgroup}[3][n]{H_{#2, #3}^{\left(#1\right)}}
\newcommand{\CharacterStabilizerSubgroup}[3][n]{S_{#2, #3}^{\left(#1\right)}}
\newcommand{\MirabolicUnipotentRadical}[1]{U_{#1}}
\newcommand{\Mirabolic}[1]{P_{#1}}
\newcommand{\MirabolicRepresentation}{\sigma}
\newcommand{\MirabolicPlusFunctor}{\Phi^{+}}
\newcommand{\MirabolicRepresentationVectorSpace}{\underlyingVectorSpace{\MirabolicRepresentation}}
\newcommand{\MirabolicTensorRepresentation}{\MirabolicRepresentation'}
\newcommand{\MirabolicCharacter}{\Psi}
\newcommand{\columnPermutationMatrix}[1]{w_{#1}}

\title[Exterior square gamma factors for cuspidal representations of $\mathrm{GL}_n$]{Exterior square gamma factors for cuspidal representations of $\mathrm{GL}_n$: finite field analogs and level zero representations}

\author{Rongqing Ye}
\address{Department of Mathematics, Purdue University, West Lafayette, IN 47907, USA}
\email{ye271@purdue.edu}

\author{Elad Zelingher}
\address{Department of Mathematics, Yale University, New Haven, CT 06510, USA}
\email{elad.zelingher@yale.edu}

\keywords{exterior square gamma factors, Jacquet-Shalika integral, Bessel functions, level zero representations, regular characters, exponential sums}
\subjclass{Primary: 20C33, 22E50, 11F66; Secondary: 11L05}

\begin{document}

\begin{abstract}
We follow Jacquet-Shalika \cite{jacquet11exterior}, Matringe \cite{matringe2014linear} and Cogdell-Matringe \cite{CogdellMatringe15} to define exterior square gamma factors for irreducible cuspidal representations of $\GL{n}{\FiniteField{q}}$. These exterior square gamma factors are expressed in terms of Bessel functions, or in terms of the regular characters associated with the cuspidal representations. We also relate our exterior square gamma factors over finite fields to those over local fields through level zero representations.
\end{abstract}

\maketitle

\section{Introduction}\label{sec:introduction}

Let $\localField$ be a $p$-adic local field of characteristic zero with residue field $\residueField$. Fix a non-trivial additive character $\fieldCharacter$ of $\localField$. In their work \cite{jacquet11exterior}, Jacquet and Shalika define important integrals which we call local Jacquet-Shalika integrals, see \cite[Sections 7, 9.3]{jacquet11exterior}. These Jacquet-Shalika  integrals enable them to introduce integral representations for the exterior square $L$-function $\exteriorSquareLFunction{s}{\localFieldRepresentation}$ of a (generic) representation $\pi$ of $\GL{n}{\localField}$. Later, Matringe \cite{matringe2014linear} and Cogdell-Matringe \cite{CogdellMatringe15} prove local functional equations for these local exterior square $L$-functions, in which local factors $\gammaFactorOfLocalField{s}$ and $\epsilonFactorOfLocalField{s}$ play an important role. These local factors are related via the following equation:

\begin{equation}\label{eqn:relation-of-local-factors}
\gammaFactorOfLocalField{s} = \frac{\epsilonFactorOfLocalField{s}\exteriorSquareLFunction{1-s}{\Contragradient{\localFieldRepresentation}}}{\exteriorSquareLFunction{s}{\localFieldRepresentation}}.
\end{equation}

If $\localFieldRepresentation$ is an irreducible supercuspidal representation of $\GL{n}{F}$, then it is generic, and the local factors $\gammaFactorOfLocalField{s}$ and $\epsilonFactorOfLocalField{s}$ are defined. By type theory of Bushnell and Kutzko \cite{BushnellKutzko93}, $\localFieldRepresentation$ is constructed from some maximal simple type $(J, \lambda)$, see \cite[Section 6]{BushnellKutzko93} for details. Partially, $\lambda$ comes from some irreducible cuspidal representation $\residueFieldRepresentation$ of $\GL{m}{\mathfrak{e}}$, where $\mathfrak{e}$ is some finite extension of $\residueField$. In this paper, we are interested in defining the exterior square gamma factor $\gammaFactorOfResidueField$ for some non-trivial additive character $\residueFieldCharacter$ of $\mathfrak{e}$, and we relate $\localFieldRepresentation$ to $\residueFieldRepresentation$ via their exterior square gamma factors, in the case where $\localFieldRepresentation$ is of level zero. To be concrete, the main result of this paper is that if $\localFieldRepresentation$ is a level zero representation constructed from $\residueFieldRepresentation$, a cuspidal representation of $\GL{n}{\residueField}$ which does not admit a Shalika vector, then $\gammaFactorOfLocalField{s} = \gammaFactorOfResidueField$.

In \Cref{sec:jacquet-shalika-integral}, we work with a general finite field $\finiteField$ and a non-trivial additive character $\fieldCharacter$. For a generic representation $\finiteFieldRepresentation$ of $\GL{n}{\finiteField}$ where $n = 2m$ or $n = 2m+1$, we follow \cite{jacquet11exterior} to define the Jacquet-Shalika integral $\JSOfFiniteFieldRepresentation{W}{\phi}$ and its dual $\DualJSOfFiniteFieldRepresentation{W}{\phi}$, where $W$ is a Whittaker function of $\finiteFieldRepresentation$ and $\phi$ is a complex valued function on $\finiteField^m$. Using ideas from \cite{matringe2014linear,CogdellMatringe15}, we define the exterior square gamma factor $\gammaFactorOfFiniteField$ of $\finiteFieldRepresentation$ in

\begin{thm*}
Let $\finiteFieldRepresentation$ be an irreducible cuspidal representation of $\GL{n}{\finiteField}$ that does not admit a Shalika vector. Then there exists a non-zero constant $\gammaFactorOfFiniteField$ such that
$$\DualJS{\finiteFieldRepresentation}{\fieldCharacter}\left(W, \phi\right) = \gammaFactorOfFiniteField \cdot \JSOfFiniteFieldRepresentation{W}{\phi},$$
for any Whittaker function $W \in \WhittakerModelOfFiniteFieldRepresentation$ and any complex valued function $\phi$ on $\finiteField^m$. Here $n = 2m$ or $n = 2m+1$.
\end{thm*}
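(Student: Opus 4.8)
The plan is to realize both $\JSOfFiniteFieldRepresentation{W}{\phi}$ and $\DualJSOfFiniteFieldRepresentation{W}{\phi}$ as elements of a suitable $\Hom$-space of maps $\WhittakerModelOfFiniteFieldRepresentation \otimes C(\finiteField^m) \to \cComplex$ satisfying an appropriate equivariance property, and then show this $\Hom$-space is at most one-dimensional when $\finiteFieldRepresentation$ is cuspidal without a Shalika vector; the existence and non-vanishing of $\gammaFactorOfFiniteField$ then follows once one exhibits a single pair $(W, \phi)$ on which $\JSOfFiniteFieldRepresentation{W}{\phi} \neq 0$. Concretely, I would identify the group under which both integrals transform --- this should be (a finite-field analog of) the Shalika-type subgroup $\ShalikaSubgroup{n}$ together with translations in the $\phi$-variable by $\finiteField^m$ --- and compute the common quasi-character by which both sides transform, using the standard change-of-variables manipulations on the defining sums (as in \cite{jacquet11exterior,matringe2014linear,CogdellMatringe15}), paying attention to how the Weyl element $\permutationMatrix{n}$ (the $(2,m)$-shuffle) and the Fourier transform in $\phi$ intertwine the two integrals.

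The first step is to write down the defining sums for $\JSOfFiniteFieldRepresentation{W}{\phi}$ and $\DualJSOfFiniteFieldRepresentation{W}{\phi}$ from \Cref{sec:jacquet-shalika-integral} and record their transformation properties: for $\JS{\finiteFieldRepresentation}{\fieldCharacter}$, invariance (up to an explicit character) under left translation of $W$ by the Shalika subgroup and under the shift $\phi \mapsto \phi(\cdot + v)$; for $\DualJS{\finiteFieldRepresentation}{\fieldCharacter}$, the analogous property after conjugation by the relevant Weyl element. The second step is a uniqueness statement: any bilinear form $B(W, \phi)$ on $\WhittakerModelOfFiniteFieldRepresentation \times C(\finiteField^m)$ with these equivariance properties is determined up to scalar. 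This is where cuspidality and the no-Shalika-vector hypothesis enter --- cuspidality forces the Whittaker model to be ``as large as possible'' (the Bessel function is supported on the big cell in a controlled way), and the absence of a Shalika functional rules out the degenerate term that would otherwise obstruct uniqueness. I expect to prove this by restricting $W$ to the mirabolic subgroup and using the finite-field analog of the theory of derivatives / the fact that a cuspidal representation restricted to the mirabolic is the full induced representation $\CompactInd{\UnipotentSubgroup{n}}{\Mirabolic{n}}{\fieldCharacter}$, so that $W \mapsto W|_{\Mirabolic{n}}$ identifies the relevant space of functionals with something one-dimensional.

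The main obstacle will be the uniqueness/one-dimensionality step, specifically controlling the ``boundary'' contributions in the Jacquet-Shalika sum. Over $p$-adic fields this is handled by asymptotics and the theory of the $L$-function; over a finite field there is no analytic input, so one must argue combinatorially. The key leverage is that for $\finiteFieldRepresentation$ cuspidal the Whittaker function $W$ vanishes on a large part of the diagonal torus (equivalently, $\besselFunctionOfFiniteFieldRepresentation$ has restricted support), which collapses the sum defining $\JSOfFiniteFieldRepresentation{W}{\phi}$ to a sum over a single $\Mirabolic{n}$-orbit; the hypothesis that $\finiteFieldRepresentation$ admits no Shalika vector guarantees exactly that the functional $W \mapsto$ (the obstructing Shalika period) vanishes, so no second orbit contributes. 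Once the space of such bilinear forms is shown to be one-dimensional, one checks $\JSOfFiniteFieldRepresentation{W}{\phi}$ is not identically zero --- e.g. by taking $W$ with $W(\IdentityMatrix{n}) \neq 0$ and $\phi$ a suitable indicator function --- and then $\gammaFactorOfFiniteField$ is defined as the ratio, automatically non-zero because $\DualJS{\finiteFieldRepresentation}{\fieldCharacter}$ is likewise not identically zero (apply the same argument to $\Contragradient{\finiteFieldRepresentation}$, or invert the Weyl-element/Fourier intertwiner).
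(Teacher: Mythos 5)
Your proposal follows essentially the same route as the paper's proof: both Jacquet--Shalika sums are realized as non-zero elements of the space of Shalika-equivariant bilinear forms on $\WhittakerModelOfFiniteFieldRepresentation \times \Schwartz\left(\finiteField^m\right)$, the no-Shalika-vector hypothesis is used exactly as you describe to kill the contribution of functions supported at $0$ (making restriction to $\Schwartz\left(\finiteField^m \setminus \left\{0\right\}\right)$ injective in the even case), and one-dimensionality is then obtained via Frobenius reciprocity and multiplicity-one statements proved through the mirabolic restriction $\finiteFieldRepresentation$ restricted to $\Mirabolic{n}$ being $\left(\MirabolicPlusFunctor\right)^{n-1}\left(1\right)$, i.e. the derivative-theoretic input you invoke. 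The only point to tighten is non-vanishing: an arbitrary $W$ with $W\left(\IdentityMatrix{n}\right) \ne 0$ does not obviously give a non-zero sum --- the paper takes a translate of the Bessel function $\besselFunctionOfFiniteFieldRepresentation$ together with a suitable indicator function and uses the support theorem to collapse the sum to a single non-zero term.
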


This theorem is presented as \Cref{thm:functional-equation-finite-field} in \Cref{sec:jacquet-shalika-integral}. Its proof is separated into the even and odd cases, since the Jacquet-Shalika integrals differ in these cases. The proof relies heavily on some multiplicity one theorems, whose details can be found in \cref{sec:multiplicity-one}. If $\finiteFieldRepresentation$ admits a Shalika vector, we need to borrow results from the local field case in order to define the exterior square gamma factor, which is discussed in \cref{thm:modified-functional-equation}. By choosing some suitable $W$ and $\phi$, we are able to express $\gammaFactorOfFiniteField$ as a summation of the Bessel function $\besselFunctionOfFiniteFieldRepresentation$ over some tori of $\GL{n}{\finiteField}$, see \cref{thm:mellin-inverse-even-case} and \cref{thm:mellin-inverse-odd-case}. The ability to write local factors closely related to the exterior square gamma factor in terms of integrals of partial Bessel functions over tori as in \cite[Proposition 4.6]{CogdellShahidiTsai14}, is one of the key ingredients that Cogdell, Shahidi and Tsai use in order to prove the stability of exterior square gamma factors for local fields.

In \Cref{subsec:computation-regular-character}, we perform some computations using the correspondence between irreducible cuspidal representations of $\GL{n}{\finiteField}$ and equivalence classes of regular multiplicative characters of $\multiplicativegroup{\finiteFieldExtension{n}}$, where $\FieldExtension{\finiteFieldExtension{n}}{\finiteField}$ is a field extension of degree $n$. This correspondence can be seen as an analog of the Langlands correspondence for finite fields. Let $\finiteFieldRepresentation$ be an irreducible cuspidal representation of $\GL{n}{\finiteField}$ associated with a regular character $\cuspidalCharacter$ of $\multiplicativegroup{\finiteFieldExtension{n}}$, as in \cite[Section 6]{gel1970representations}. We are able to express $\gammaFactorOfFiniteField$ explicitly in terms of $\cuspidalCharacter$ and $\fieldCharacter$ only, for $n = 2, 3, 4$. 

In \Cref{sec:gamma-factor}, we assume that $\localFieldRepresentation$ is a level zero representation of $\GL{n}{F}$, where $\localField$ is a $p$-adic local field of characteristic zero with residue field $\residueField$. Let $\fieldCharacter$ be a non-trivial additive character of $\localField$, which descends to a non-trivial additive character $\residueFieldCharacter$ of $\residueField$. By type theory of Bushnell and Kutzko, $\localFieldRepresentation$ is constructed from a maximal simple type $(\GL{n}{\integersring}, \residueFieldRepresentation)$, where $\integersring$ is the ring of integers of $\localField$ and $\residueFieldRepresentation$ is an irreducible cuspidal representation of $\GL{n}{\residueField}$. Our main result is in \Cref{thm:equality-of-gamma-factors-non-shalika-vector}, which states that if $\residueFieldRepresentation$ does not admit a Shalika vector, then
$$\gammaFactorOfLocalField{s} = \gammaFactorOfResidueField.$$
$\gammaFactorOfResidueField$ is a non-zero constant, thus the above equation implies that $\gammaFactorOfLocalField{s}$ is a non-zero constant independent of $s$. The case where $\residueFieldRepresentation$ admits a Shalika vector is also treated in \Cref{thm:equality-of-gamma-factors-with-shalika-vector}. We are also able to specify the local exterior square factors $\exteriorSquareLFunction{s}{\localFieldRepresentation}$ and $\epsilonFactorOfLocalField{s}$ explicitly along with the equalities of the exterior square gamma factors.

This paper grows out of part of a thesis project \cite{yephd19} of the first author, and the master's thesis \cite{Zelingher17} of the second author.

\section{The Jacquet-Shalika integral over a finite field}\label{sec:jacquet-shalika-integral}

In this section, we define analogs of the local Jacquet-Shalika integrals \cite[Section 7]{jacquet11exterior}, \cite[Section 3.2]{CogdellMatringe15} over a finite field. We then prove that they satisfy a functional equation, which defines an important invariant, the exterior square gamma factor. This functional equation is valid only under some assumption on the relevant representation, which we restate in several equivalent ways. We then express the exterior square gamma factor in terms of the Bessel function of Gelfand \cite{gel1970representations}. Finally, we express the exterior square gamma factor in terms of the multiplicative regular character associated with the representation for $n = 2,3,4$.

\subsection{Preliminaries and notations}\label{subsection:finite-field-notations}

\subsubsection{Notations}

Let $\finiteField$ be a finite field and denote $q = \sizeof{\finiteField}$. Fix an algebraic closure $\algebraicClosure{\finiteField}$ of $\finiteField$.  For every positive integer $m$, we denote by $\finiteFieldExtension{m}$ the unique field extension of $\finiteField$ in $\algebraicClosure{\finiteField}$ of degree $m$.

Let $\fieldCharacter:\finiteField \rightarrow \multiplicativegroup{\cComplex}$ be a non-trivial additive character.

For a non-negative integer $m$, we denote $\Schwartz \left( \finiteField^m \right) = \left\{ f:\finiteField^{m} \rightarrow \cComplex \right\}$, the space of complex valued functions on $\finiteField^m$.

If $\phi \in \Schwartz \left( \finiteField^m \right)$, we define its Fourier transform with respect to $\fieldCharacter$ by the formula
$$\FourierTransformWithRespectToCharacter{\phi}{\fieldCharacter}\left(y\right) =  q^{-\frac{m}{2}} \sum_{x \in \finiteField^m} {f\left(x\right) \fieldCharacter\left(\standardForm{x}{y}\right)},$$
where $\standardForm{x}{y}$ is the standard bilinear form on $\finiteField^m$. We have the following Fourier inversion formulas:
\begin{equation*}
	\begin{split}
		\FourierTransformWithRespectToCharacter{\FourierTransformWithRespectToCharacter{\phi}{\fieldCharacter}}{\fieldCharacter}\left(x\right) = \phi\left(-x\right), \\
		\FourierTransformWithRespectToCharacter{\FourierTransformWithRespectToCharacter{\phi}{\fieldCharacter}}{\fieldCharacter^{-1}}\left(x\right) = \phi\left(x\right).
	\end{split}
\end{equation*}

For an irreducible generic representation $\finiteFieldRepresentation$ of $ \GL{n}{\finiteField}$, we denote by $\WhittakerModelOfFiniteFieldRepresentation$ the Whittaker model of $\finiteFieldRepresentation$ with respect to the character $\fieldCharacter$.

For $W \in \WhittakerModelOfFiniteFieldRepresentation$, we define $\Contragradient{W} \in \Whittaker\left(\Contragradient{\finiteFieldRepresentation}, \fieldCharacter^{-1}\right)$ by $\Contragradient{W}\left(g\right) = W\left( \weylElement{n} \inverseTranspose{g} \right),$ where $\inverseTranspose{g} = \transpose{g^{-1}}$ and $\weylElement{n} = \left(\begin{smallmatrix}
			& & 1 \\
			& \iddots &   \\
			1 &  &
		\end{smallmatrix}\right)$.

Let $n_1, \dots, n_r \ge 1$ with $n_1 + \dots + n_r = n $ and $ g_i \in \GL{n_i}{\finiteField} $ for every $ i $. We denote
$$ \antidiag\left(g_1, \dots, g_r\right) =
	\begin{pmatrix}
		      &         &     & g_1 \\
		      &         & g_2 &     \\
		      & \iddots &     &     \\
		g_{r} &         &     &
	\end{pmatrix} \in \GL{n}{\finiteField}.$$

\subsubsection{The Bessel function}

Let $\representationDeclaration{\finiteFieldRepresentation}$ be an irreducible generic representation of $\GL{n}{\finiteField}$. By \cite[Propositions 4.2, 4.3]{gel1970representations} there exists a unique element $\besselFunctionOfFiniteFieldRepresentation \in \WhittakerModelOfFiniteFieldRepresentation$  satisfying  $\besselFunctionOfFiniteFieldRepresentation \left( \IdentityMatrix{n}\right) = 1$ and $ \besselFunctionOfFiniteFieldRepresentation \left( u_1gu_2 \right) = \fieldCharacter\left( u_1 \right)\fieldCharacter\left( u_2 \right) \besselFunctionOfFiniteFieldRepresentation \left(g \right) $, for every $ g \in \GL{n}{\finiteField}$ and $u_1, u_2 \in N_n$, where $N_n$ is the upper triangular unipotent subgroup of $\GL{n}{\finiteField}$. We gather some properties of $\besselFunctionOfFiniteFieldRepresentation$.

\begin{thm}[{\cite[Proposition 4.9]{gel1970representations}}]\label{thm:support-of-bessel-function}
	Let $w, d \in \GL{n}{\finiteField}$, where $w$ is a permutation matrix and $d$ is a diagonal matrix. Suppose that $\besselFunctionOfFiniteFieldRepresentation \left(dw\right) \ne 0$. Then $dw = \antidiag \left(\lambda_{1} \IdentityMatrix{n_1}, \dots, \lambda_{r} \IdentityMatrix{n_r} \right)$, where $n_1 + \dots + n_r = n$ and $\lambda_{1}, \dots, \lambda_{r} \in \multiplicativegroup{\finiteField}$.
\end{thm}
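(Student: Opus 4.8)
The plan is to squeeze all the information out of $\besselFunctionOfFiniteFieldRepresentation(dw) \ne 0$ using the bi-equivariance $\besselFunctionOfFiniteFieldRepresentation(u_1 g u_2) = \fieldCharacter(u_1)\fieldCharacter(u_2)\besselFunctionOfFiniteFieldRepresentation(g)$ for $u_1,u_2 \in N_n$, thereby reducing everything to a combinatorial statement about the permutation $\tau = w^{-1}$. The basic observation is that whenever $u_1 g u_2 = g$ with $u_1, u_2 \in N_n$, the equivariance forces $\fieldCharacter(u_1)\fieldCharacter(u_2) = 1$ as soon as $\besselFunctionOfFiniteFieldRepresentation(g) \ne 0$.

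I would apply this with $g = dw$ and, for each $k \in \{1,\dots,n-1\}$ and $t \in \finiteField$, the pair $u_1 = \IdentityMatrix{n} - t E_{k,k+1}$, $u_2 = g^{-1}(\IdentityMatrix{n} + t E_{k,k+1})g$ (here $E_{k,k+1}$ is the matrix unit); since $(E_{k,k+1})^2 = 0$ one indeed has $u_1 g u_2 = g$. Writing $d = \diag(d_1,\dots,d_n)$ and $\tau = w^{-1}$, a direct computation — the diagonal part merely rescales the off-diagonal entry, the permutation part merely moves it — gives $u_2 = \IdentityMatrix{n} + t\, d_k^{-1} d_{k+1}\, E_{\tau(k),\tau(k+1)}$. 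For $u_2$ to lie in $N_n$ we need $\tau(k) < \tau(k+1)$, and in that case $\fieldCharacter(u_2) = \fieldCharacter(t\, d_k^{-1} d_{k+1})$ if $\tau(k+1) = \tau(k)+1$ and $\fieldCharacter(u_2) = 1$ otherwise. Since $\fieldCharacter(u_1) = \fieldCharacter(-t)$ and $\besselFunctionOfFiniteFieldRepresentation(dw) \ne 0$, the identity $\fieldCharacter(u_1)\fieldCharacter(u_2) = 1$ for all $t$ yields, for every $k$, the dichotomy: either $\tau(k) > \tau(k+1)$, or else $\tau(k+1) = \tau(k)+1$ and $d_k = d_{k+1}$ (the last equality because $\fieldCharacter$ is a nontrivial character, so $\fieldCharacter(tc) = \fieldCharacter(t)$ for all $t$ forces $c = 1$).

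It remains to unwind this dichotomy. It says precisely that each maximal ascending run of $\tau$ is an interval of $\{1,\dots,n\}$ on which $\tau$ acts as an order-preserving shift onto an interval of values; writing these runs in order as $R_1, \dots, R_r$ with $\sizeof{R_j} = n_j$, the descents between consecutive runs force the image intervals $\tau(R_1), \dots, \tau(R_r)$ to be stacked, from top to bottom, in that order. Dualizing, $w = \tau^{-1}$ carries the $j$-th column block (counted from the right) isomorphically, via $\IdentityMatrix{n_j}$, onto the $j$-th row block (counted from the top), i.e. $w = \antidiag(\IdentityMatrix{n_1},\dots,\IdentityMatrix{n_r})$. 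Finally the conditions $d_k = d_{k+1}$ within each run show that $d$ is a scalar $\lambda_j \in \multiplicativegroup{\finiteField}$ on the block $R_j$, so $dw = \antidiag(\lambda_1 \IdentityMatrix{n_1},\dots,\lambda_r \IdentityMatrix{n_r})$.

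The root-subgroup computation is routine; the one step that needs care is the combinatorial unwinding — matching the ascending runs of $\tau = w^{-1}$ to the blocks of $w$ with the correct (reversed) ordering of row-blocks against column-blocks, so that it lines up with the convention for $\antidiag$. Of course, since this is \cite[Proposition 4.9]{gel1970representations} one may simply cite it, but the argument above is short and self-contained.
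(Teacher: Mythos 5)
Your argument is correct. Note that the paper itself gives no proof of this statement: it is quoted directly from Gel'fand (Proposition 4.9 of \cite{gel1970representations}), so the only comparison to make is with that citation. Your root-subgroup argument is the standard self-contained proof of such support statements: from $\besselFunctionOfFiniteFieldRepresentation(g)=\besselFunctionOfFiniteFieldRepresentation(u_1gu_2)=\fieldCharacter(u_1)\fieldCharacter(u_2)\besselFunctionOfFiniteFieldRepresentation(g)$ with $u_1=\IdentityMatrix{n}-tE_{k,k+1}$ and $u_2=g^{-1}(\IdentityMatrix{n}+tE_{k,k+1})g = \IdentityMatrix{n}+t\,d_k^{-1}d_{k+1}E_{\tau(k),\tau(k+1)}$, the case $\tau(k)<\tau(k+1)$ with $\tau(k+1)>\tau(k)+1$ gives $\fieldCharacter(-t)=1$ for all $t$, which is impossible, and the case $\tau(k+1)=\tau(k)+1$ forces $d_k=d_{k+1}$; this is exactly the dichotomy you state, and the combinatorial unwinding is also sound: within each maximal run the monomial matrix $dw$ has a scalar block $\lambda_j\IdentityMatrix{n_j}$ occupying an interval of rows and an interval of columns, and at each boundary the descent $\tau(a_{j+1})<\max\tau(R_j)$ together with disjointness of the image intervals forces $\tau(R_{j+1})$ to lie entirely below $\tau(R_j)$, so the scalar blocks stack antidiagonally. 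The one convention issue you flag (reading the permutation off $w$ by rows versus by columns) is genuinely harmless here, since the $n_i$ and $\lambda_i$ in the statement are existentially quantified and the set of matrices of the form $\antidiag(\lambda_1\IdentityMatrix{n_1},\dots,\lambda_r\IdentityMatrix{n_r})$ is stable under the resulting reversal of block order. So your proof is a complete substitute for the citation; it is the same mechanism one uses for the analogous support statements for Bessel and partial Bessel functions in the $p$-adic setting.
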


\begin{thm}[{\cite[Proposition 4.5]{gel1970representations}}]\label{thm:bessel-function-in-terms-of-representation-character}
	$$ \besselFunctionOfFiniteFieldRepresentation\left(g\right) = \frac{1}{\sizeof{\UnipotentRadical{n}}} \sum_{u \in \UnipotentRadical{n}} { \trace \left(\finiteFieldRepresentation\left(gu\right)\right) \fieldCharacter^{-1} \left(u\right)}.$$
\end{thm}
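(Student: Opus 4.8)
The plan is to recognize the right-hand side as the Whittaker function attached to the (essentially unique) Whittaker vector of $\finiteFieldRepresentation$, and then to match it against the two properties that characterize $\besselFunctionOfFiniteFieldRepresentation$ among elements of $\WhittakerModelOfFiniteFieldRepresentation$. First I would introduce the averaging operator
\[
	P = \frac{1}{\sizeof{\UnipotentRadical{n}}} \sum_{u \in \UnipotentRadical{n}} \fieldCharacter^{-1}(u)\, \finiteFieldRepresentation(u) \in \mathrm{End}\left(\underlyingVectorSpace{\finiteFieldRepresentation}\right).
\]
A short computation counting, for each $u'' \in \UnipotentRadical{n}$, the pairs $(u, u')$ with $u u' = u''$ shows $P^2 = P$, so $P$ is the projection onto $\underlyingVectorSpace{\finiteFieldRepresentation}^{\UnipotentRadical{n}, \fieldCharacter} := \left\{ v \in \underlyingVectorSpace{\finiteFieldRepresentation} : \finiteFieldRepresentation(u) v = \fieldCharacter(u) v \text{ for all } u \in \UnipotentRadical{n} \right\}$. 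Genericity of $\finiteFieldRepresentation$ makes this space nonzero, and the uniqueness of the Whittaker functional for $\GL{n}{\finiteField}$ — the same multiplicity one input underlying the existence and uniqueness of $\besselFunctionOfFiniteFieldRepresentation$ recalled above — makes it one dimensional. Hence $P$ is a rank one idempotent: it can be written as $v \mapsto \ell(v) v_0$ for some $0 \ne v_0 \in \underlyingVectorSpace{\finiteFieldRepresentation}^{\UnipotentRadical{n}, \fieldCharacter}$ and some $\ell \in \underlyingVectorSpace{\finiteFieldRepresentation}^{\ast}$, and since $P$ restricts to the identity on its image, $\ell(v_0) = 1$. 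From the substitutions $P \finiteFieldRepresentation(u_0) = \fieldCharacter(u_0) P$ and $\finiteFieldRepresentation(u_0) P = \fieldCharacter(u_0) P$ one reads off, respectively, that $\ell$ is a $\fieldCharacter$-Whittaker functional, $\ell(\finiteFieldRepresentation(u)v) = \fieldCharacter(u)\ell(v)$, and that $v_0$ is a $\fieldCharacter$-eigenvector (consistent with its definition).

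Next I would fix a basis $\{e_i\}$ of $\underlyingVectorSpace{\finiteFieldRepresentation}$ with dual basis $\{e_i^{\ast}\}$ and expand the trace via $\trace(\finiteFieldRepresentation(gu)) = \sum_i \langle \finiteFieldRepresentation(g)\finiteFieldRepresentation(u) e_i, e_i^{\ast}\rangle$. Averaging over $u \in \UnipotentRadical{n}$ against $\fieldCharacter^{-1}$ and pulling $\finiteFieldRepresentation(g)$ outside gives
\[
	\frac{1}{\sizeof{\UnipotentRadical{n}}} \sum_{u \in \UnipotentRadical{n}} \trace\left(\finiteFieldRepresentation(gu)\right) \fieldCharacter^{-1}(u) = \sum_i \langle \finiteFieldRepresentation(g) P e_i, e_i^{\ast}\rangle = \sum_i \ell(e_i) \langle \finiteFieldRepresentation(g) v_0, e_i^{\ast}\rangle = \ell\left(\finiteFieldRepresentation(g) v_0\right),
\]
where the last equality is just the expansion of $\finiteFieldRepresentation(g) v_0$ in the basis $\{e_i\}$. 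So the right-hand side of the asserted identity equals the function $g \mapsto \ell(\finiteFieldRepresentation(g) v_0)$, which is by definition the element $W_{v_0}$ of $\WhittakerModelOfFiniteFieldRepresentation$ corresponding to $v_0$ under the Whittaker functional $\ell$.

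It then remains to identify $W_{v_0}$ with $\besselFunctionOfFiniteFieldRepresentation$, for which, by the uniqueness in \cite[Propositions 4.2, 4.3]{gel1970representations}, it suffices to check the two normalizing properties. First, $W_{v_0}(\IdentityMatrix{n}) = \ell(v_0) = 1$. Second, for $u_1, u_2 \in \UnipotentRadical{n}$, using that $\ell$ is a Whittaker functional and $v_0 \in \underlyingVectorSpace{\finiteFieldRepresentation}^{\UnipotentRadical{n}, \fieldCharacter}$,
\[
	W_{v_0}(u_1 g u_2) = \ell\left(\finiteFieldRepresentation(u_1)\finiteFieldRepresentation(g)\finiteFieldRepresentation(u_2) v_0\right) = \fieldCharacter(u_1)\fieldCharacter(u_2)\, \ell\left(\finiteFieldRepresentation(g) v_0\right) = \fieldCharacter(u_1)\fieldCharacter(u_2)\, W_{v_0}(g).
\]
Hence $W_{v_0} = \besselFunctionOfFiniteFieldRepresentation$, which is exactly the claimed formula.

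The only nonformal ingredient here is that $P$ has rank \emph{exactly} one, i.e.\ Whittaker multiplicity one for $\GL{n}{\finiteField}$; once that is available, the rest is bookkeeping with matrix coefficients and the equivariance of $\ell$ and $v_0$, so I do not expect a genuine obstacle. (Alternatively one can argue via Peter--Weyl that $g \mapsto \trace(\finiteFieldRepresentation(gu))$ lies in the $\finiteFieldRepresentation$-isotypic component of the space of functions on $\GL{n}{\finiteField}$ for the right regular action, and that the left $(\UnipotentRadical{n}, \fieldCharacter)$-equivariant vectors there form precisely the one-copy Whittaker model; the matrix-coefficient computation above is the most direct route.)
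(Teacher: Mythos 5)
Your proof is correct. Note that the paper itself does not prove this statement; it is quoted directly from Gelfand (\cite[Proposition 4.5]{gel1970representations}), so there is no internal argument to compare against. Your route is the standard one and is complete: the operator $P = \frac{1}{\sizeof{\UnipotentRadical{n}}}\sum_{u}\fieldCharacter^{-1}(u)\finiteFieldRepresentation(u)$ is indeed the projection onto the $\left(\UnipotentRadical{n},\fieldCharacter\right)$-eigenspace, the trace identity $\frac{1}{\sizeof{\UnipotentRadical{n}}}\sum_u \trace\left(\finiteFieldRepresentation(gu)\right)\fieldCharacter^{-1}(u) = \trace\left(\finiteFieldRepresentation(g)P\right) = \ell\left(\finiteFieldRepresentation(g)v_0\right)$ is just the trace of a rank-one operator, and the two normalizing properties you verify are exactly those that characterize $\besselFunctionOfFiniteFieldRepresentation$ in the paper's statement of \cite[Propositions 4.2, 4.3]{gel1970representations}. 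The only genuine external input is Whittaker multiplicity one for $\GL{n}{\finiteField}$ (forcing $P$ to have rank exactly one), which you identify explicitly and which is legitimate to quote.
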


\begin{prop}[{\cite[Proposition 2.15]{Nien14} or \cite[Proposition 6.1.2]{Roditty10}}]\label{prop:argument-inverse-and-conjugate-of-Bessel-function}
	$$ \besselFunctionOfFiniteFieldRepresentation \left( g^{-1} \right) = \conjugate{\besselFunctionOfFiniteFieldRepresentation \left( g \right)}. $$
\end{prop}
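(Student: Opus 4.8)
The plan is to derive the identity from the character formula of \Cref{thm:bessel-function-in-terms-of-representation-character}, which reduces everything to two standard facts: first, since $\finiteFieldRepresentation$ is a representation of the finite group $\GL{n}{\finiteField}$, every element $\finiteFieldRepresentation(g)$ has finite order, so its eigenvalues are roots of unity and hence $\conjugate{\trace\left(\finiteFieldRepresentation(g)\right)} = \trace\left(\finiteFieldRepresentation(g^{-1})\right)$ (equivalently, one may realize $\finiteFieldRepresentation$ as a unitary representation); second, the character $\fieldCharacter$, restricted to $\UnipotentRadical{n}$, takes values in the unit circle, so $\conjugate{\fieldCharacter(u)} = \fieldCharacter^{-1}(u) = \fieldCharacter(u^{-1})$ and $u\mapsto u^{-1}$ is a bijection of $\UnipotentRadical{n}$.

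Granting these, I would compute directly. Starting from the formula of \Cref{thm:bessel-function-in-terms-of-representation-character} and conjugating,
$$\conjugate{\besselFunctionOfFiniteFieldRepresentation\left(g\right)} = \frac{1}{\sizeof{\UnipotentRadical{n}}} \sum_{u \in \UnipotentRadical{n}} \conjugate{\trace\left(\finiteFieldRepresentation\left(gu\right)\right)}\,\fieldCharacter\left(u\right) = \frac{1}{\sizeof{\UnipotentRadical{n}}} \sum_{u \in \UnipotentRadical{n}} \trace\left(\finiteFieldRepresentation\left(u^{-1}g^{-1}\right)\right)\,\fieldCharacter\left(u\right),$$
then substitute $u \mapsto u^{-1}$, use $\fieldCharacter\left(u^{-1}\right) = \fieldCharacter^{-1}\left(u\right)$ together with the cyclicity of the trace $\trace\left(\finiteFieldRepresentation\left(u g^{-1}\right)\right) = \trace\left(\finiteFieldRepresentation\left(g^{-1}u\right)\right)$, and read off exactly the expression for $\besselFunctionOfFiniteFieldRepresentation\left(g^{-1}\right)$ supplied by \Cref{thm:bessel-function-in-terms-of-representation-character}. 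This closes the argument.

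The computation is entirely routine; the only input that deserves a sentence of justification is $\conjugate{\trace\left(\finiteFieldRepresentation(g)\right)} = \trace\left(\finiteFieldRepresentation(g^{-1})\right)$, which is the sole nontrivial ingredient and is standard finite-group representation theory. An essentially equivalent alternative would avoid the character formula and instead identify both $\besselFunctionOfFiniteFieldRepresentation\left(g^{-1}\right)$ and $\conjugate{\besselFunctionOfFiniteFieldRepresentation\left(g\right)}$ via the uniqueness of the normalized Whittaker function: after the substitution $g \mapsto g^{-1}$ both transform by $\left(u_1,u_2\right) \mapsto \fieldCharacter\left(u_1\right)\fieldCharacter\left(u_2\right)$ under left and right translation by $\UnipotentRadical{n}$ and take the value $1$ at $\IdentityMatrix{n}$, so they must agree once one checks that $g \mapsto \conjugate{\besselFunctionOfFiniteFieldRepresentation\left(g^{-1}\right)}$ again lies in $\WhittakerModelOfFiniteFieldRepresentation$; this check is most naturally made through the matrix-coefficient description of the Bessel function, so the character-formula route is the shorter and more self-contained one given the results already quoted.
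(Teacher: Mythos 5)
Your argument is correct, and all the small points it relies on do hold: $\conjugate{\trace\left(\finiteFieldRepresentation\left(h\right)\right)} = \trace\left(\finiteFieldRepresentation\left(h^{-1}\right)\right)$ for a finite group, $\fieldCharacter$ restricted to $\UnipotentRadical{n}$ is a genuine homomorphism into the unit circle (so $\fieldCharacter\left(u^{-1}\right)=\fieldCharacter^{-1}\left(u\right)=\conjugate{\fieldCharacter\left(u\right)}$), the substitution $u\mapsto u^{-1}$ is a bijection of $\UnipotentRadical{n}$, and cyclicity of the trace converts $\trace\left(\finiteFieldRepresentation\left(ug^{-1}\right)\right)$ into $\trace\left(\finiteFieldRepresentation\left(g^{-1}u\right)\right)$, after which the display is literally the formula of \cref{thm:bessel-function-in-terms-of-representation-character} evaluated at $g^{-1}$. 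Note, however, that the paper does not prove this proposition at all; it simply imports it from Nien and Roditty, where the standard argument is the one you sketch as an alternative: choose an inner product making $\finiteFieldRepresentation$ unitary, write $\besselFunctionOfFiniteFieldRepresentation\left(g\right)=\innerproduct{\finiteFieldRepresentation\left(g\right)v_0}{v_0}$ for a normalized Whittaker vector $v_0$, and read off $\besselFunctionOfFiniteFieldRepresentation\left(g^{-1}\right)=\innerproduct{v_0}{\finiteFieldRepresentation\left(g\right)v_0}=\conjugate{\besselFunctionOfFiniteFieldRepresentation\left(g\right)}$. Your character-formula derivation is a genuine alternative and is arguably the better fit here, since it uses only \cref{thm:bessel-function-in-terms-of-representation-character}, which the paper has already quoted, and requires no discussion of the matrix-coefficient realization or of why the conjugated function again lies in the Whittaker model; the matrix-coefficient route, by contrast, generalizes more readily (e.g.\ to partial Bessel functions or to settings where a character formula is unavailable). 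Either way the unitarity of representations of finite groups is the single nontrivial input, exactly as you identify.
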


Since representations of finite groups are unitary, we get the following

\begin{cor}[{\cite[Proposition 3.5]{Nien14}}]
	$$ \conjugate{\besselFunctionOfFiniteFieldRepresentation \left( g \right)} = \besselFunction_{\Contragradient{\finiteFieldRepresentation}, \fieldCharacter^{-1}} \left( g \right). $$
\end{cor}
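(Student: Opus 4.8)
The plan is to reduce the identity to the character formula for the Bessel function (\Cref{thm:bessel-function-in-terms-of-representation-character}) together with the fact that $\finiteFieldRepresentation$ may be realized as a unitary representation. First observe that $\Contragradient{\finiteFieldRepresentation}$ is again irreducible, and it is generic with respect to $\fieldCharacter^{-1}$ (indeed $\Contragradient{W}\left(g\right) = W\left(\weylElement{n}\inverseTranspose{g}\right)$ furnishes its Whittaker model), so $\besselFunction_{\Contragradient{\finiteFieldRepresentation}, \fieldCharacter^{-1}}$ is defined and, by \Cref{thm:bessel-function-in-terms-of-representation-character} applied to the pair $\bigl(\Contragradient{\finiteFieldRepresentation}, \fieldCharacter^{-1}\bigr)$, satisfies
$$ \besselFunction_{\Contragradient{\finiteFieldRepresentation}, \fieldCharacter^{-1}}\left(g\right) = \frac{1}{\sizeof{\UnipotentRadical{n}}} \sum_{u \in \UnipotentRadical{n}} \trace\left(\Contragradient{\finiteFieldRepresentation}\left(gu\right)\right) \fieldCharacter\left(u\right), $$
since the additive character inverse to $\fieldCharacter^{-1}$ is $\fieldCharacter$ itself.

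Next I would take the complex conjugate of the character formula $\besselFunctionOfFiniteFieldRepresentation\left(g\right) = \sizeof{\UnipotentRadical{n}}^{-1} \sum_{u} \trace\left(\finiteFieldRepresentation\left(gu\right)\right)\fieldCharacter^{-1}\left(u\right)$ and simplify each summand. Because $\finiteField$ is finite, $\fieldCharacter$ is valued in roots of unity, so $\conjugate{\fieldCharacter^{-1}\left(u\right)} = \fieldCharacter\left(u\right)$. Because $\finiteFieldRepresentation$ is unitary, $\conjugate{\trace\left(\finiteFieldRepresentation\left(x\right)\right)} = \trace\left(\finiteFieldRepresentation\left(x^{-1}\right)\right)$ for all $x \in \GL{n}{\finiteField}$, and by the definition of the contragredient $\trace\left(\finiteFieldRepresentation\left(x^{-1}\right)\right) = \trace\left(\Contragradient{\finiteFieldRepresentation}\left(x\right)\right)$. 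Applying these termwise with $x = gu$ yields
$$ \conjugate{\besselFunctionOfFiniteFieldRepresentation\left(g\right)} = \frac{1}{\sizeof{\UnipotentRadical{n}}} \sum_{u \in \UnipotentRadical{n}} \trace\left(\Contragradient{\finiteFieldRepresentation}\left(gu\right)\right) \fieldCharacter\left(u\right), $$
which is precisely $\besselFunction_{\Contragradient{\finiteFieldRepresentation}, \fieldCharacter^{-1}}\left(g\right)$ by the formula above, proving the corollary.

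Alternatively, one can avoid invoking unitarity a second time by combining \Cref{prop:argument-inverse-and-conjugate-of-Bessel-function} with the identity $\besselFunction_{\Contragradient{\finiteFieldRepresentation}, \fieldCharacter^{-1}}\left(g\right) = \besselFunctionOfFiniteFieldRepresentation\left(g^{-1}\right)$; the latter follows from \Cref{thm:bessel-function-in-terms-of-representation-character} by writing $\trace\left(\Contragradient{\finiteFieldRepresentation}\left(gu\right)\right) = \trace\left(\finiteFieldRepresentation\left(u^{-1}g^{-1}\right)\right)$, substituting $u \mapsto u^{-1}$ in the sum over $\UnipotentRadical{n}$ (so $\fieldCharacter\left(u^{-1}\right) = \fieldCharacter^{-1}\left(u\right)$), and using the conjugation invariance of the trace to replace $\finiteFieldRepresentation\left(ug^{-1}\right)$ by $\finiteFieldRepresentation\left(g^{-1}u\right)$. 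In either route there is essentially no obstacle: the only points requiring care are the bookkeeping of the two additive characters $\fieldCharacter$ and $\fieldCharacter^{-1}$ and the correct direction of the contragredient character identity, both of which are routine.
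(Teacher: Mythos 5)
Your argument is correct, and it is essentially the paper's approach: the paper simply cites unitarity (and Nien's Proposition 3.5) for this corollary, and your conjugation of the character formula in \Cref{thm:bessel-function-in-terms-of-representation-character}, using $\conjugate{\trace\left(\finiteFieldRepresentation\left(x\right)\right)} = \trace\left(\finiteFieldRepresentation\left(x^{-1}\right)\right) = \trace\left(\Contragradient{\finiteFieldRepresentation}\left(x\right)\right)$, is exactly the routine verification being invoked. Your alternative route via \Cref{prop:argument-inverse-and-conjugate-of-Bessel-function} is likewise fine and amounts to the same bookkeeping.
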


\subsection{The Jacquet-Shalika integral}\label{subsec:jacquet-shalika-integral}

In this section, we define the Jacquet-Shalika integral analog over the finite field $\finiteField$.

\subsubsection{The even case}

Let $\representationDeclaration{\finiteFieldRepresentation}$ be an irreducible generic representation of $\GL{2m}{\finiteField}$.
For $W \in \WhittakerModelOfFiniteFieldRepresentation$ and $\phi \in \Schwartz \left( \finiteField^m \right)$ we define the Jacquet-Shalika integral as

$$ \JSOfFiniteFieldRepresentation{W}{\phi} = \frac{1}{\grpIndex{G}{N} \grpIndex{M}{\UpperTriangularAdditive}} \sum_{g \in \lquot{\UnipotentSubgroup}{G}} \sum_{X \in \lquot{\UpperTriangularAdditive}{M}} W\left( \evenPermutationMatrix \ShalikaUnipotentElement{X} \ShalikaDiagonalElement{g}
	\right) \fieldCharacter\left(-\trace X\right) \phi \left( \lastrowvector g \right).$$
Here $G = \GL{m}{\finiteField}$, $N \le G$ is the upper triangular unipotent subgroup, $M = \SquareMat{m}{\finiteField}$, $\UpperTriangularAdditive \le M$ is the upper triangular matrix subspace, $\lastrowvector = \rowvector{m} = \begin{pmatrix}
		0 & \cdots & 0 & 1
	\end{pmatrix}$. $\evenPermutationMatrix$ is the column permutation matrix corresponding to the permutation

$$\sigma = \begin{pmatrix}
		1 & 2 & 3 & \cdots & m    & \mid & m+1 & m+2 & \cdots & 2m \\
		1 & 3 & 5 & \cdots & 2m-1 & \mid & 2   & 4   & \cdots & 2m
	\end{pmatrix}.$$
We also define the dual Jacquet-Shalika integral as
$$ \DualJS{\finiteFieldRepresentation}{\fieldCharacter}\left(W, \phi\right) = \JS{\Contragradient{\finiteFieldRepresentation}}{\fieldCharacter^{-1}}\left(\Contragradient{\finiteFieldRepresentation} \begin{pmatrix}
		                   & \IdentityMatrix{m} \\
		\IdentityMatrix{m} &
	\end{pmatrix} \Contragradient{W}, \FourierTransformWithRespectToCharacter{\phi}{\fieldCharacter} \right).$$
It is immediate from the definition that
\begin{prop}[Double-duality]\label{prop:double-duality-of-jacquet-shalika-integral-even}
	$$\DualJS{\Contragradient{\finiteFieldRepresentation}}{\fieldCharacter^{-1}}\left( \Contragradient{\finiteFieldRepresentation} \begin{pmatrix}
			 & \IdentityMatrix{m} \\
			\IdentityMatrix{m}
		\end{pmatrix} \Contragradient{W}, \FourierTransformWithRespectToCharacter{\phi}{\fieldCharacter} \right) = \JSOfFiniteFieldRepresentation{W}{\phi}.$$
\end{prop}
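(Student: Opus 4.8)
The plan is to prove \Cref{prop:double-duality-of-jacquet-shalika-integral-even} by unwinding the definition of the dual Jacquet-Shalika integral twice and invoking the Fourier inversion formula. Writing $w = \left(\begin{smallmatrix} & \IdentityMatrix{m} \\ \IdentityMatrix{m} & \end{smallmatrix}\right)$ for brevity, the outer dual integral is, by definition, $\JS{\Contragradient{\Contragradient{\finiteFieldRepresentation}}}{\fieldCharacter}\left(\Contragradient{\Contragradient{\finiteFieldRepresentation}}(w)\, \Contragradient{(\Contragradient{\finiteFieldRepresentation}(w)\Contragradient{W})},\, \FourierTransformWithRespectToCharacter{(\FourierTransformWithRespectToCharacter{\phi}{\fieldCharacter})}{\fieldCharacter^{-1}}\right)$. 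So the argument splits into three essentially independent pieces: the identification of the iterated contragredient, the effect of the $w$-twist under double-duality, and the iterated Fourier transform.

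First I would handle the Schwartz-function slot: by the second Fourier inversion formula stated in \Cref{subsection:finite-field-notations}, $\FourierTransformWithRespectToCharacter{(\FourierTransformWithRespectToCharacter{\phi}{\fieldCharacter})}{\fieldCharacter^{-1}} = \phi$, so that slot is already exactly what we want with no further work. Next I would deal with the Whittaker-function slot. Unravelling $\Contragradient{W}(g) = W(\weylElement{2m}\inverseTranspose{g})$, one checks that the map $W \mapsto \Contragradient{W}$ is an isomorphism $\WhittakerModelOfFiniteFieldRepresentation \to \Whittaker(\Contragradient{\finiteFieldRepresentation}, \fieldCharacter^{-1})$ whose square, under the canonical identification $\Contragradient{\Contragradient{\finiteFieldRepresentation}} \isomorphic \finiteFieldRepresentation$, is the identity — this uses $\weylElement{2m}^2 = \IdentityMatrix{2m}$ and $\inverseTranspose{(\inverseTranspose{g})} = g$. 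Combining this with the interaction between the $w$-twist and the contragredient operation (the key computation being $\Contragradient{(\Contragradient{\finiteFieldRepresentation}(w)\Contragradient{W})}(g) = \Contragradient{W}(\weylElement{2m}\inverseTranspose{g}\, \inverseTranspose{w})$, which after applying $\,\widetilde{\phantom{W}}\,$ once more and simplifying $\weylElement{2m}\inverseTranspose{w}\weylElement{2m}$ returns $W(wg)$ up to the identification), the Whittaker slot collapses to $\finiteFieldRepresentation(w)\cdot\finiteFieldRepresentation(w)^{-1} W = W$ — or more precisely, the $w$'s introduced in the two applications of the dual cancel, leaving $W$. Here $\weylElement{2m}$ and $w$ are both order-two permutation matrices and $\weylElement{2m}\inverseTranspose{w}\weylElement{2m} = w^{-1} = w$, so the bookkeeping is clean.

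With both slots reduced to $W$ and $\phi$, the outer integral is literally $\JS{\finiteFieldRepresentation}{\fieldCharacter}(W,\phi)$, which is the claim. The main obstacle — really the only subtle point — is the second paragraph's bookkeeping: one must be careful that the character passes correctly from $\fieldCharacter$ to $\fieldCharacter^{-1}$ and back under each application of $\,\widetilde{\phantom{W}}\,$, that the transpose-inverse and the Weyl element $\weylElement{2m}$ conjugate the block-swap $w$ to itself, and that the canonical double-contragredient identification is the one implicitly used when we write $\Contragradient{\Contragradient{\finiteFieldRepresentation}} = \finiteFieldRepresentation$. Once those identifications are pinned down, everything is formal; as the excerpt says, the proposition is immediate from the definitions, so I would keep the write-up to a few lines, citing the Fourier inversion formula and the explicit formula for $\Contragradient{W}$.
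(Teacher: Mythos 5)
Your unwinding is correct and is exactly what the paper means by its ``immediate from the definition'': apply the definition of $\DualJS{\Contragradient{\finiteFieldRepresentation}}{\fieldCharacter^{-1}}$ once more, use the inversion formula $\FourierTransformWithRespectToCharacter{\FourierTransformWithRespectToCharacter{\phi}{\fieldCharacter}}{\fieldCharacter^{-1}} = \phi$ for the Schwartz slot, and use that $W \mapsto \Contragradient{W}$ squares to the identity (via $\weylElement{2m}^2 = \IdentityMatrix{2m}$) together with $\inverseTranspose{w} = w$ and $w^{2} = \IdentityMatrix{2m}$ for the block-swap matrix $w$, so the two $w$-twists cancel in the Whittaker slot. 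One cosmetic slip: the double contragredient of $\Contragradient{\finiteFieldRepresentation}\left(w\right)\Contragradient{W}$ is $g \mapsto W\left(gw\right)$ (right translation by $w$), not $W\left(wg\right)$, but since the twist $\Contragradient{\Contragradient{\finiteFieldRepresentation}}\left(w\right)$ is also right translation this does not affect the cancellation you describe.
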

One can show the following
\begin{prop}\label{prop:dual-jacquet-shalika-formula-even}
	$$ \DualJSOfFiniteFieldRepresentation{W}{\phi} = \frac{1}{\grpIndex{G}{N} \grpIndex{M}{\UpperTriangularAdditive}} \sum_{g \in \lquot{\UnipotentSubgroup}{G}} \sum_{X \in \lquot{\UpperTriangularAdditive}{M}} W\left( \evenPermutationMatrix \ShalikaUnipotentElement{X}	\ShalikaDiagonalElement{g}
		\right) \fieldCharacter\left(-\trace X\right) \FourierTransformWithRespectToCharacter{\phi}{\fieldCharacter} \left( \firstrowvector \inverseTranspose{g} \right),$$
	where $\firstrowvector=\begin{pmatrix}
			1 & 0 & \cdots & 0
		\end{pmatrix}.$
\end{prop}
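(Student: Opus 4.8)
The plan is to unwind the definition of $\DualJSOfFiniteFieldRepresentation{W}{\phi}$ and reduce it to a Jacquet-Shalika integral of $\finiteFieldRepresentation$ by a change of variables. Set $J=\begin{pmatrix} & \IdentityMatrix{m}\\ \IdentityMatrix{m}&\end{pmatrix}$. Expanding $\DualJSOfFiniteFieldRepresentation{W}{\phi}=\JS{\Contragradient{\finiteFieldRepresentation}}{\fieldCharacter^{-1}}\left(\Contragradient{\finiteFieldRepresentation}\left(J\right)\Contragradient{W},\FourierTransformWithRespectToCharacter{\phi}{\fieldCharacter}\right)$ through the even-case formula for $\JS{\Contragradient{\finiteFieldRepresentation}}{\fieldCharacter^{-1}}$, each summand carries the value $\left(\Contragradient{\finiteFieldRepresentation}\left(J\right)\Contragradient{W}\right)\left(\evenPermutationMatrix\ShalikaUnipotentElement{X}\ShalikaDiagonalElement{g}\right)=\Contragradient{W}\left(\evenPermutationMatrix\ShalikaUnipotentElement{X}\ShalikaDiagonalElement{g}J\right)$, the character value $\fieldCharacter^{-1}\left(-\trace X\right)=\fieldCharacter\left(\trace X\right)$, and $\FourierTransformWithRespectToCharacter{\phi}{\fieldCharacter}\left(\lastrowvector g\right)$, the last of which is carried along untouched.

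The computational heart is the identity
$$\left(\Contragradient{\finiteFieldRepresentation}\left(J\right)\Contragradient{W}\right)\left(\evenPermutationMatrix\ShalikaUnipotentElement{X}\ShalikaDiagonalElement{g}\right)=W\left(\evenPermutationMatrix\ShalikaUnipotentElement{-\weylElement{m}\transpose{X}\weylElement{m}}\ShalikaDiagonalElement{\weylElement{m}\inverseTranspose{g}}\right).$$
I would prove it as follows. Since $\Contragradient{W}\left(h\right)=W\left(\weylElement{2m}\inverseTranspose{h}\right)$ and $h\mapsto\inverseTranspose{h}$ is a group automorphism fixing every permutation matrix, with $\inverseTranspose{\ShalikaDiagonalElement{g}}=\ShalikaDiagonalElement{\inverseTranspose{g}}$ and $\inverseTranspose{\ShalikaUnipotentElement{X}}=\left(\begin{smallmatrix}\IdentityMatrix{m}&\\-\transpose{X}&\IdentityMatrix{m}\end{smallmatrix}\right)$, one gets $\inverseTranspose{\left(\evenPermutationMatrix\ShalikaUnipotentElement{X}\ShalikaDiagonalElement{g}J\right)}=\evenPermutationMatrix J\ShalikaUnipotentElement{-\transpose{X}}\ShalikaDiagonalElement{\inverseTranspose{g}}$, after moving $\ShalikaDiagonalElement{\inverseTranspose{g}}$ past $J$ and using $\left(\begin{smallmatrix}\IdentityMatrix{m}&\\-\transpose{X}&\IdentityMatrix{m}\end{smallmatrix}\right)J=J\ShalikaUnipotentElement{-\transpose{X}}$. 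Then two permutation identities finish it: $\weylElement{2m}\evenPermutationMatrix=\evenPermutationMatrix\weylElement{2m}$ --- a one-line check comparing the interleaving permutation $\sigma$ with the reversal --- and $\weylElement{2m}J=\diag\left(\weylElement{m},\weylElement{m}\right)$; conjugating $\diag\left(\weylElement{m},\weylElement{m}\right)$ past $\ShalikaUnipotentElement{-\transpose{X}}$ (which turns it into $\ShalikaUnipotentElement{-\weylElement{m}\transpose{X}\weylElement{m}}$) and absorbing it into $\ShalikaDiagonalElement{\inverseTranspose{g}}$ yields the right-hand side. Plugging this back gives
$$\DualJSOfFiniteFieldRepresentation{W}{\phi}=\frac{1}{\grpIndex{G}{N}\grpIndex{M}{\UpperTriangularAdditive}}\sum_{g\in\lquot{\UnipotentSubgroup}{G}}\sum_{X\in\lquot{\UpperTriangularAdditive}{M}}W\left(\evenPermutationMatrix\ShalikaUnipotentElement{-\weylElement{m}\transpose{X}\weylElement{m}}\ShalikaDiagonalElement{\weylElement{m}\inverseTranspose{g}}\right)\fieldCharacter\left(\trace X\right)\FourierTransformWithRespectToCharacter{\phi}{\fieldCharacter}\left(\lastrowvector g\right).$$

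To conclude, I would substitute $g\mapsto\weylElement{m}\inverseTranspose{g}$ and $X\mapsto-\weylElement{m}\transpose{X}\weylElement{m}$. Both are involutions: the first is a bijection of $\lquot{\UnipotentSubgroup}{G}$, since $\inverseTranspose{\left(\cdot\right)}$ is a homomorphism carrying $\UnipotentSubgroup$ onto the lower triangular unipotent subgroup and $\weylElement{m}$ conjugates the latter back onto $\UnipotentSubgroup$; the second is a bijection of $\lquot{\UpperTriangularAdditive}{M}$, since $X\mapsto\weylElement{m}\transpose{X}\weylElement{m}$ preserves upper triangularity and the trace. Under these substitutions $\ShalikaDiagonalElement{\weylElement{m}\inverseTranspose{g}}\mapsto\ShalikaDiagonalElement{g}$, $\ShalikaUnipotentElement{-\weylElement{m}\transpose{X}\weylElement{m}}\mapsto\ShalikaUnipotentElement{X}$, $\fieldCharacter\left(\trace X\right)\mapsto\fieldCharacter\left(-\trace X\right)$, and $\lastrowvector g\mapsto\lastrowvector\weylElement{m}\inverseTranspose{g}=\firstrowvector\inverseTranspose{g}$, which is exactly the asserted formula.

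I expect the permutation identities $\weylElement{2m}\evenPermutationMatrix=\evenPermutationMatrix\weylElement{2m}$ and $\weylElement{2m}J=\diag\left(\weylElement{m},\weylElement{m}\right)$ to be the crux: without them the expression for $\DualJSOfFiniteFieldRepresentation{W}{\phi}$ keeps $\weylElement{2m}$ on the left of $\evenPermutationMatrix$ and will not match a Jacquet-Shalika integral; with them, the left multiplication by $\weylElement{2m}$ is traded for a right multiplication that respects the Shalika shape. One should also check, when making the change of variables in the double sum, that $g\mapsto\weylElement{m}\inverseTranspose{g}$ descends to the coset space (it does, because $\inverseTranspose{\left(\cdot\right)}$ is a homomorphism) and that the summand is well-defined on $\lquot{\UnipotentSubgroup}{G}\times\lquot{\UpperTriangularAdditive}{M}$ both before and after the substitution; this is part of the well-posedness of the Jacquet-Shalika integral established earlier.
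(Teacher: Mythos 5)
Your argument is correct, and it is the intended one: the paper states this proposition without proof ("one can show"), and the natural verification is exactly what you do — unwind the definition via $\Contragradient{W}\left(h\right)=W\left(\weylElement{2m}\inverseTranspose{h}\right)$, use the identities $\weylElement{2m}\evenPermutationMatrix=\evenPermutationMatrix\weylElement{2m}$ and $\weylElement{2m}\left(\begin{smallmatrix}&\IdentityMatrix{m}\\ \IdentityMatrix{m}&\end{smallmatrix}\right)=\diag\left(\weylElement{m},\weylElement{m}\right)$ to restore the Shalika shape, and then change variables $g\mapsto\weylElement{m}\inverseTranspose{g}$, $X\mapsto-\weylElement{m}\transpose{X}\weylElement{m}$ on $\lquot{\UnipotentSubgroup}{G}\times\lquot{\UpperTriangularAdditive}{M}$, which produces $\fieldCharacter\left(-\trace X\right)$ and $\FourierTransformWithRespectToCharacter{\phi}{\fieldCharacter}\left(\firstrowvector\inverseTranspose{g}\right)$ as required. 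All the intermediate identities you cite check out, so there is nothing to correct.
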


The following proposition is a simple exercise that follows from \cref{thm:support-of-bessel-function}. It will also follow as a special case of \cref{lem:support-of-bessel-function-jacquet-shalika-integral}.

\begin{prop}\label{prop:jacquet-shalika-integral-of-bessel-function-even-case}
	Let $W = \grpIndex{G}{N} \grpIndex{M}{\UpperTriangularAdditive} \finiteFieldRepresentation \left( \evenPermutationMatrix^{-1} \right) \besselFunctionOfFiniteFieldRepresentation$, and let $$\phi \left(x\right) = \indicatorFunction{\lastrowvector}\left(x\right) = \begin{cases}
	1 & x=\lastrowvector \\
	0 & \text{else}
	\end{cases}.$$
	Then $\JSOfFiniteFieldRepresentation{W}{\phi} = 1$.
\end{prop}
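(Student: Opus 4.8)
The plan is to unwind the definition of $\JSOfFiniteFieldRepresentation{W}{\phi}$ with the specific choices $W = \grpIndex{G}{N} \grpIndex{M}{\UpperTriangularAdditive} \finiteFieldRepresentation \left( \evenPermutationMatrix^{-1} \right) \besselFunctionOfFiniteFieldRepresentation$ and $\phi = \indicatorFunction{\lastrowvector}$, and show that the only surviving term in the double sum is $g = \IdentityMatrix{m}$, $X = 0$, which contributes exactly $1$. First I would substitute $W$ into the integral: since $W\left( \evenPermutationMatrix \ShalikaUnipotentElement{X} \ShalikaDiagonalElement{g} \right) = \grpIndex{G}{N} \grpIndex{M}{\UpperTriangularAdditive}\, \besselFunctionOfFiniteFieldRepresentation\left( \ShalikaUnipotentElement{X} \ShalikaDiagonalElement{g} \right)$, the normalizing factor $\frac{1}{\grpIndex{G}{N} \grpIndex{M}{\UpperTriangularAdditive}}$ cancels and we are left with
$$ \JSOfFiniteFieldRepresentation{W}{\phi} = \sum_{g \in \lquot{\UnipotentSubgroup}{G}} \sum_{X \in \lquot{\UpperTriangularAdditive}{M}} \besselFunctionOfFiniteFieldRepresentation\left( \ShalikaUnipotentElement{X} \ShalikaDiagonalElement{g} \right) \fieldCharacter\left(-\trace X\right) \indicatorFunction{\lastrowvector}\left( \lastrowvector g \right).$$
The factor $\indicatorFunction{\lastrowvector}\left(\lastrowvector g\right)$ forces $\lastrowvector g = \lastrowvector$, i.e. the last row of $g$ is $\begin{pmatrix} 0 & \cdots & 0 & 1\end{pmatrix}$.

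Next I would analyze which cosets $g \in \lquot{\UnipotentSubgroup}{G}$ survive. The key tool is \Cref{thm:support-of-bessel-function}: using the Bruhat decomposition write $g = u_1 d w u_2$ with $u_1, u_2$ upper unipotent in $G$, $d$ diagonal, $w$ a permutation matrix; since $\besselFunctionOfFiniteFieldRepresentation$ transforms by $\fieldCharacter$ on both sides under $N_{2m}$, and the Shalika unipotent $\ShalikaUnipotentElement{X}$ together with the block-diagonal embedding of $u_1$ can be absorbed (up to modifying $X$ and picking up a character value), nonvanishing of $\besselFunctionOfFiniteFieldRepresentation$ at the relevant argument restricts $dw$ — hence $g$ — to lie in a very small set. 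Combined with the constraint $\lastrowvector g = \lastrowvector$, I expect this to force $g \in \UnipotentSubgroup$, so there is a single coset $g = \IdentityMatrix{m}$. Here the bilateral $N_{2m}$-equivariance of $\besselFunctionOfFiniteFieldRepresentation$ and the explicit form of $\evenPermutationMatrix$ must be used to see that $\ShalikaDiagonalElement{u}$ for $u \in N$ and $\ShalikaUnipotentElement{X}$ are carried into $N_{2m}$ by conjugation/multiplication by $\evenPermutationMatrix$ in the way needed; this bookkeeping with the permutation $\sigma$ is the part requiring care.

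Finally, with $g = \IdentityMatrix{m}$ the inner sum becomes $\sum_{X \in \lquot{\UpperTriangularAdditive}{M}} \besselFunctionOfFiniteFieldRepresentation\left( \ShalikaUnipotentElement{X} \right) \fieldCharacter\left(-\trace X\right)$. Again by $N_{2m}$-equivariance, $\besselFunctionOfFiniteFieldRepresentation\left( \ShalikaUnipotentElement{X} \right)$ depends only on the "lower triangular part" of $X$ modulo the unipotent directions, and one checks that $\ShalikaUnipotentElement{X}$ lies in $N_{2m}$ precisely when the strictly-lower-triangular part of $X$ vanishes; on the quotient $\lquot{\UpperTriangularAdditive}{M}$ the representatives have no upper-triangular part, so the surviving representative is $X = 0$, giving $\besselFunctionOfFiniteFieldRepresentation(\IdentityMatrix{2m}) \fieldCharacter(0) = 1$. (If instead one keeps the full sum, the character $\fieldCharacter\left(-\trace X\right)$ together with the equivariance of $\besselFunctionOfFiniteFieldRepresentation$ produces an orthogonality relation collapsing the sum to the $X=0$ term.) The main obstacle is the second step: correctly tracking how $\evenPermutationMatrix \ShalikaUnipotentElement{X} \ShalikaDiagonalElement{g}$ interacts with the Bruhat cell structure and the two-sided $\fieldCharacter$-equivariance so that \Cref{thm:support-of-bessel-function} genuinely pins $g$ down to the identity coset. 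Once that is in hand, the computation is immediate. I note that \Cref{lem:support-of-bessel-function-jacquet-shalika-integral} is cited as subsuming this proposition, so in the write-up one may alternatively just invoke that lemma; but the direct argument above is the natural self-contained route.
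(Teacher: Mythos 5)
There is a genuine gap at the very first step, and it propagates. In the Whittaker model $\finiteFieldRepresentation$ acts by right translation, so for $W=\grpIndex{G}{N}\grpIndex{M}{\UpperTriangularAdditive}\,\finiteFieldRepresentation\left(\evenPermutationMatrix^{-1}\right)\besselFunctionOfFiniteFieldRepresentation$ one has $W\left(h\right)=\grpIndex{G}{N}\grpIndex{M}{\UpperTriangularAdditive}\,\besselFunctionOfFiniteFieldRepresentation\left(h\evenPermutationMatrix^{-1}\right)$, hence
$$W\left(\evenPermutationMatrix \SmallShalikaUnipotentElement{X}\SmallShalikaDiagonalElement{g}\right)=\grpIndex{G}{N}\grpIndex{M}{\UpperTriangularAdditive}\,\besselFunctionOfFiniteFieldRepresentation\left(\evenPermutationMatrix \SmallShalikaUnipotentElement{X}\SmallShalikaDiagonalElement{g}\evenPermutationMatrix^{-1}\right),$$
not $\besselFunctionOfFiniteFieldRepresentation\left(\SmallShalikaUnipotentElement{X}\SmallShalikaDiagonalElement{g}\right)$ as in your first display (compare the identical substitution carried out in the proof of \cref{thm:mellin-inverse-even-case}). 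This is not a cosmetic slip: the conjugation by $\evenPermutationMatrix$, which interleaves the two $m\times m$ blocks, is precisely what makes the support analysis work. With your un-conjugated argument, $\SmallShalikaUnipotentElement{X}$ is an upper triangular unipotent matrix for \emph{every} $X$ (so your claim that it lies in $N_{2m}$ exactly when the strictly lower part of $X$ vanishes is false), and left $\fieldCharacter$-equivariance gives $\besselFunctionOfFiniteFieldRepresentation\left(\SmallShalikaUnipotentElement{X}\SmallShalikaDiagonalElement{g}\right)=\fieldCharacter\left(x_{m1}\right)\besselFunctionOfFiniteFieldRepresentation\left(\SmallShalikaDiagonalElement{g}\right)$ on lower-triangular nilpotent representatives $X=\left(x_{ij}\right)$; summing $\fieldCharacter\left(x_{m1}\right)$ over such $X$ gives $0$ for $m\ge 2$, so the expression you wrote would evaluate to $0$, not $1$. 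Also, $\trace X=0$ on those representatives, so the ``orthogonality from $\fieldCharacter\left(-\trace X\right)$'' you offer as a fallback does nothing.

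Once the conjugation is restored, your outline becomes the intended argument, which the paper treats as an exercise in \cref{thm:support-of-bessel-function} and, more precisely, as the special case of \cref{lem:support-of-bessel-function-jacquet-shalika-integral}: if $\besselFunctionOfFiniteFieldRepresentation\left(\evenPermutationMatrix\SmallShalikaUnipotentElement{X}\SmallShalikaDiagonalElement{g}\evenPermutationMatrix^{-1}\right)\ne 0$, then writing the coset representative as $g=wdu$ the lemma forces $wd=\antidiag\left(\lambda_1 \IdentityMatrix{m_1},\dots,\lambda_r \IdentityMatrix{m_r}\right)$; the factor $\indicatorFunction{\lastrowvector}\left(\lastrowvector g\right)$ then forces the last row of $g$ to be $\lastrowvector$, and since that row has the entry $\lambda_r$ in column $m_r$, this gives $m_r=m$, hence $r=1$, $\lambda_1=1$, so $g\in\UnipotentSubgroup$ and $\tau=\idmap$; part (2) of the lemma then forces $X=0$, and the final statement of the lemma gives the value $\besselFunctionOfFiniteFieldRepresentation\left(\IdentityMatrix{2m}\right)=1$ for the unique surviving pair of cosets, which cancels the normalization against the prefactor of $W$. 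So the overall strategy is right, but as written the computation is incorrect, and the step pinning down $g$ and $X$ needs the conjugated argument and the precise bookkeeping of \cref{lem:support-of-bessel-function-jacquet-shalika-integral} rather than the absorption heuristics you describe.
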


\begin{defn}\label{defn:shalika-subgroup-even}
	\textbf{The Shalika subgroup} $\ShalikaSubgroupEven \le \GL{2m}{\finiteField}$ is defined as
	$$ \ShalikaSubgroupEven = \left\{ \begin{pmatrix}
			g & X \\
			  & g
		\end{pmatrix}
		\mid g \in \GL{m}{\finiteField}, \, X \in \SquareMat{m}{\finiteField}
		\right\}.$$
	$\ShalikaSubgroupEven$ acts on $ \Schwartz \left( \finiteField^m \right)$ by $$\left(\ShalikaSubgroupAction \begin{pmatrix}
				g & X \\
				  & g
			\end{pmatrix}\phi\right)\left(y\right) = \phi(y g).$$
	We define a character $\ShalikaCharacter : \ShalikaSubgroupEven \rightarrow \multiplicativegroup{\cComplex}$ by $$\ShalikaCharacter \begin{pmatrix}
			g & X \\
			  & g
		\end{pmatrix} = \fieldCharacter\left( \trace\left( X g^{-1} \right) \right).$$
\end{defn}

One easily checks that

\begin{prop}[Equivariance property]\label{prop:equivariance-of-jacquet-shalika-integral-even-case}
	Let $s \in \ShalikaSubgroupEven$ and $B \in \left\{ \JS{\finiteFieldRepresentation}{\fieldCharacter}, \DualJS{\finiteFieldRepresentation}{\fieldCharacter} \right\} $, we have
	$$B\left( \finiteFieldRepresentation \left(s\right) W,
		\rightTranslation\left( s \right) \phi\right)  = \ShalikaCharacter\left( s \right) B\left( W, \phi\right).$$
\end{prop}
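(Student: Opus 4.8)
The plan is to verify the identity directly from the defining sums, doing the case $B=\JSOfFiniteFieldRepresentation{\cdot}{\cdot}$ first and then deducing $B=\DualJSOfFiniteFieldRepresentation{\cdot}{\cdot}$ from it, since the dual integral is built out of $\JS{\Contragradient{\finiteFieldRepresentation}}{\fieldCharacter^{-1}}$. The structural fact used throughout is: for $C\in\UpperTriangularAdditive$ the matrix $\evenPermutationMatrix\ShalikaUnipotentElement{C}\evenPermutationMatrix^{-1}$ lies in $\UnipotentRadical{2m}$ and the standard character of $\UnipotentRadical{2m}$ evaluates on it to $\fieldCharacter(\trace C)$. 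This is immediate from the shape of $\evenPermutationMatrix$ --- the $(i,j)$-entry of the upper-right block of $\ShalikaUnipotentElement{C}$ moves to position $(2i-1,2j)$, which is strictly above the diagonal precisely because $C$ is upper triangular and lies on the first superdiagonal only when $j=i$, where it contributes $C_{ii}$ --- and it is exactly what makes $W\left(\evenPermutationMatrix\ShalikaUnipotentElement{X}\ShalikaDiagonalElement{g}\right)\fieldCharacter(-\trace X)$ a well-defined function of $X\bmod\UpperTriangularAdditive$.

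For $B=\JSOfFiniteFieldRepresentation{\cdot}{\cdot}$, write $s\in\ShalikaSubgroupEven$ with diagonal block $h\in\GL{m}{\finiteField}$ and upper-right block $Z\in\SquareMat{m}{\finiteField}$, so that $\ShalikaCharacter(s)=\fieldCharacter(\trace(Zh^{-1}))$ and $\rightTranslation(s)\phi=\phi(\,\cdot\,h)$. The identity comes from two substitutions in the sum for $\JSOfFiniteFieldRepresentation{\finiteFieldRepresentation(s)W}{\rightTranslation(s)\phi}$. First, the braiding relation $\ShalikaDiagonalElement{g}\,s=\ShalikaUnipotentElement{gZh^{-1}g^{-1}}\ShalikaDiagonalElement{gh}$ and the substitution $g\mapsto gh^{-1}$ (a bijection of $\lquot{\UnipotentSubgroup}{G}$) turn the argument of $W$ into $\evenPermutationMatrix\ShalikaUnipotentElement{X+Y_g}\ShalikaDiagonalElement{g}$ with $Y_g=gh^{-1}Zg^{-1}$ and simultaneously restore the factor $\phi(\lastrowvector g)$. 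Second, the structural fact says $Z'\mapsto W\left(\evenPermutationMatrix\ShalikaUnipotentElement{Z'}\ShalikaDiagonalElement{g}\right)\fieldCharacter(-\trace Z')$ descends to $\lquot{\UpperTriangularAdditive}{M}$, so in the inner sum the translation $X\mapsto X+Y_g$ is legitimate and, using $\fieldCharacter(-\trace X)=\fieldCharacter(\trace Y_g)\fieldCharacter(-\trace(X+Y_g))$, pulls out the scalar $\fieldCharacter(\trace Y_g)=\fieldCharacter(\trace(Zh^{-1}))$ while returning the inner sum to the one defining $\JSOfFiniteFieldRepresentation{W}{\phi}$; since $\trace Y_g$ does not depend on $g$ this yields $\fieldCharacter(\trace(Zh^{-1}))\,\JSOfFiniteFieldRepresentation{W}{\phi}=\ShalikaCharacter(s)\,\JSOfFiniteFieldRepresentation{W}{\phi}$. (Alternatively one checks the two generating cases $s=\ShalikaDiagonalElement{h}$ and $s=\ShalikaUnipotentElement{Z}$ separately and multiplies, using that $\ShalikaCharacter$ is a character.)

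For $B=\DualJSOfFiniteFieldRepresentation{\cdot}{\cdot}$, set $w=\antidiag(\IdentityMatrix{m},\IdentityMatrix{m})$ and unwind the definition to $\DualJSOfFiniteFieldRepresentation{\finiteFieldRepresentation(s)W}{\rightTranslation(s)\phi}=\JS{\Contragradient{\finiteFieldRepresentation}}{\fieldCharacter^{-1}}\left(\Contragradient{\finiteFieldRepresentation}(w)\,\Contragradient{\finiteFieldRepresentation(s)W},\ \FourierTransformWithRespectToCharacter{(\rightTranslation(s)\phi)}{\fieldCharacter}\right)$. Three compatibilities are then needed. (i) From $\Contragradient{W}(g)=W(\weylElement{2m}\inverseTranspose{g})$ one reads off $\Contragradient{\finiteFieldRepresentation(s)W}=\Contragradient{\finiteFieldRepresentation}(\inverseTranspose{s})\,\Contragradient{W}$. (ii) A block computation gives $w\,\inverseTranspose{s}\,w^{-1}=s'\in\ShalikaSubgroupEven$, with diagonal block $\inverseTranspose{h}$ and upper-right block $-\transpose{(h^{-1}Zh^{-1})}$, hence $\Contragradient{\finiteFieldRepresentation}(w)\Contragradient{\finiteFieldRepresentation}(\inverseTranspose{s})\Contragradient{W}=\Contragradient{\finiteFieldRepresentation}(s')\,\Contragradient{\finiteFieldRepresentation}(w)\Contragradient{W}$. (iii) A change of variables in the Fourier sum, using $\standardForm{xh^{-1}}{y}=\standardForm{x}{y\inverseTranspose{h}}$ together with $\rightTranslation(s)\phi=\phi(\,\cdot\,h)$ and the fact that $\inverseTranspose{h}$ is the diagonal block of $s'$, gives $\FourierTransformWithRespectToCharacter{(\rightTranslation(s)\phi)}{\fieldCharacter}=\rightTranslation(s')\,\FourierTransformWithRespectToCharacter{\phi}{\fieldCharacter}$. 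Plugging these three in and applying the already-proven equivariance of $\JS{\Contragradient{\finiteFieldRepresentation}}{\fieldCharacter^{-1}}$ at $s'$ produces the Shalika character of $s'$ taken with respect to $\fieldCharacter^{-1}$; a one-line trace manipulation (using $(\inverseTranspose{h})^{-1}=\transpose{h}$) shows this equals $\fieldCharacter^{-1}(-\trace(Zh^{-1}))=\fieldCharacter(\trace(Zh^{-1}))=\ShalikaCharacter(s)$, which is the claim.

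The even $\JSOfFiniteFieldRepresentation{\cdot}{\cdot}$ computation and the Fourier change of variables are routine once the $\evenPermutationMatrix$-conjugation fact is in hand. The step calling for care is the dual case: one must keep the transpose-inverse $\inverseTranspose{s}$, the Weyl conjugation $w\,\inverseTranspose{s}\,w^{-1}$, and the switch $\fieldCharacter\leftrightarrow\fieldCharacter^{-1}$ consistently aligned, so that the scalar produced by the $\JS{\Contragradient{\finiteFieldRepresentation}}{\fieldCharacter^{-1}}$-equivariance at $s'$ comes out as $\ShalikaCharacter(s)$ and not as $\ShalikaCharacter(s)^{-1}$.
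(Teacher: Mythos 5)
Your verification is correct, and it is essentially the argument the paper intends: the proposition is stated with ``one easily checks'' (no proof is given), and your direct check --- the substitutions $g\mapsto gh^{-1}$ and $X\mapsto X+gh^{-1}Zg^{-1}$ together with the fact that $\sigma_{2m}$-conjugation of an upper-triangular block $C$ lands in $N_{2m}$ with character value $\psi(\mathrm{tr}\,C)$, then the reduction of the dual case to the proven case via $\widetilde{\pi(s)W}=\widetilde{\pi}({}^{t}s^{-1})\widetilde{W}$, the conjugation $s'=w\,{}^{t}s^{-1}w^{-1}$, and the Fourier intertwining --- is the standard routine computation modeled on the local-field case. Your sign bookkeeping in the dual case, showing that the Shalika character taken with respect to $\psi^{-1}$ evaluated at $s'$ equals $\Psi(s)$ rather than $\Psi(s)^{-1}$, is also correct.
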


\begin{defn}\label{defn:shalika-vector}
	A vector $0 \ne v \in \underlyingVectorSpace{\finiteFieldRepresentation}$ is called a \textbf{Shalika vector}, if for every $s \in \ShalikaSubgroupEven$ we have
	$$\finiteFieldRepresentation \left( s \right)v = \ShalikaCharacter \left( s \right)v.$$
\end{defn}

\begin{rem}\label{rem:shalika-vector-trivial-character}
	Suppose that $\finiteFieldRepresentation$ admits a Shalika vector $v$. Then for every $a \in \multiplicativegroup{\finiteField}$ we have $aI_{2m} \in \ShalikaSubgroupEven$ and $\centralCharacter{\finiteFieldRepresentation}\left(a\right)v = \finiteFieldRepresentation \left( a \IdentityMatrix{2m} \right) v = v$, and therefore $\finiteFieldRepresentation$ has trivial central character.
\end{rem}

\subsubsection{The odd case}

Let $\representationDeclaration{\finiteFieldRepresentation}$ be an irreducible generic representation of $\GL{2m+1}{\finiteField}$. For $W \in \WhittakerModelOfFiniteFieldRepresentation$ and $\phi \in \Schwartz \left( \finiteField^m \right)$ we define the Jacquet-Shalika integral as
\begin{equation*}
	\begin{split}
		\JSOfFiniteFieldRepresentation{W}{\phi} = & \frac{1}{\grpIndex{G}{N} \grpIndex{M}{\UpperTriangularAdditive}\sizeof{\Mat{1}{m}{\finiteField}}} \sum_{g \in \lquot{\UnipotentSubgroup}{G}} \sum_{X \in \lquot{\UpperTriangularAdditive}{M}} \sum_{Z \in \Mat{1}{m}{\finiteField}} \\
		&W\left( \oddPermutationMatrix \ShalikaUnipotentElementOdd{X} \ShalikaDiagonalElementOdd{g} \ShalikaLowerUnipotentElementOdd{Z}
		\right) \fieldCharacter\left(-\trace X\right) \phi \left( Z \right).
	\end{split}
\end{equation*}
Here the notations are the same as in the even case, except for $\oddPermutationMatrix$ which is the column permutation matrix corresponding to the permutation

$$\sigma = \begin{pmatrix}
		1 & 2 & 3 & \cdots & m    & \mid & m+1 & m+2 & \cdots & 2m & \mid & 2m+1 \\
		1 & 3 & 5 & \cdots & 2m-1 & \mid & 2   & 4   & \cdots & 2m & \mid & 2m+1
	\end{pmatrix}.$$
We also define the dual Jacquet-Shalika integral as

$$\DualJS{\finiteFieldRepresentation}{\fieldCharacter}\left(W, \phi\right) = \JS{\Contragradient{\finiteFieldRepresentation}}{\fieldCharacter^{-1}}\left(\Contragradient{\finiteFieldRepresentation} \begin{pmatrix}
		                   & \IdentityMatrix{m} &   \\
		\IdentityMatrix{m} &                    &   \\
		                   &                    & 1
	\end{pmatrix} \Contragradient{W}, \FourierTransformWithRespectToCharacter{\phi}{\fieldCharacter} \right).$$
Again, we get immediately from the definition the following analog of \cref{prop:double-duality-of-jacquet-shalika-integral-even}:
\begin{prop}[Double-duality]\label{prop:double-duality-of-jacquet-shalika-integral-odd}
	$$\DualJS{\Contragradient{\finiteFieldRepresentation}}{\fieldCharacter^{-1}}\left( \Contragradient{\finiteFieldRepresentation} \begin{pmatrix}
			                   & \IdentityMatrix{m} &   \\
			\IdentityMatrix{m} &                        \\
			                   &                    & 1
		\end{pmatrix} \Contragradient{W}, \FourierTransformWithRespectToCharacter{\phi}{\fieldCharacter} \right) = \JSOfFiniteFieldRepresentation{W}{\phi}.$$
\end{prop}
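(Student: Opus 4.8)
\emph{Proof proposal.} The plan is to unwind the definition of the dual Jacquet-Shalika integral twice, reducing the identity to three elementary facts: the canonical identification $\Contragradient{\Contragradient{\finiteFieldRepresentation}} = \finiteFieldRepresentation$, the Fourier inversion formula $\FourierTransformWithRespectToCharacter{\FourierTransformWithRespectToCharacter{\phi}{\fieldCharacter}}{\fieldCharacter^{-1}} = \phi$ recorded in \Cref{subsection:finite-field-notations}, and the way the map $W \mapsto \Contragradient{W}$ interacts with right translation. I would set $\tau = \left(\begin{smallmatrix} & \IdentityMatrix{m} & \\ \IdentityMatrix{m} & & \\ & & 1 \end{smallmatrix}\right)$ and note that the only feature of $\tau$ used below is that it is a symmetric permutation matrix with $\tau^{2} = \IdentityMatrix{2m+1}$, so that $\inverseTranspose{\tau} = \transpose{\tau^{-1}} = \tau$.

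First I would establish the translation identity: for every $g_0 \in \GL{2m+1}{\finiteField}$ and $W \in \WhittakerModelOfFiniteFieldRepresentation$,
$$\Contragradient{\left( \Contragradient{\finiteFieldRepresentation}\left(g_0\right) \Contragradient{W} \right)} = \finiteFieldRepresentation\left(\inverseTranspose{g_0}\right) W,$$
where the outer operation is the map $\Whittaker\left(\Contragradient{\finiteFieldRepresentation}, \fieldCharacter^{-1}\right) \to \WhittakerModelOfFiniteFieldRepresentation$ given by the same recipe $V \mapsto \left(g \mapsto V\left(\weylElement{2m+1}\inverseTranspose{g}\right)\right)$. This is a short computation from $\Contragradient{W}\left(g\right) = W\left(\weylElement{2m+1}\inverseTranspose{g}\right)$ and $\left(\Contragradient{\finiteFieldRepresentation}\left(g_0\right)\Contragradient{W}\right)\left(h\right) = \Contragradient{W}\left(hg_0\right)$, using $\weylElement{2m+1}^{2} = \IdentityMatrix{2m+1}$, the symmetry of $\weylElement{2m+1}$, and the consequent identity $\inverseTranspose{\left(\weylElement{2m+1}\inverseTranspose{g}\right)} = \weylElement{2m+1} g$; specializing to $g_0 = \IdentityMatrix{2m+1}$ already gives $\Contragradient{\Contragradient{W}} = W$.

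Then I would substitute the pair $\left(\Contragradient{\finiteFieldRepresentation}, \fieldCharacter^{-1}\right)$ and the inputs $\Contragradient{\finiteFieldRepresentation}\left(\tau\right)\Contragradient{W}$, $\FourierTransformWithRespectToCharacter{\phi}{\fieldCharacter}$ into the definition of $\DualJS{\cdot}{\cdot}$, obtaining
$$\DualJS{\Contragradient{\finiteFieldRepresentation}}{\fieldCharacter^{-1}}\left(\Contragradient{\finiteFieldRepresentation}\left(\tau\right)\Contragradient{W}, \FourierTransformWithRespectToCharacter{\phi}{\fieldCharacter}\right) = \JS{\finiteFieldRepresentation}{\fieldCharacter}\left(\finiteFieldRepresentation\left(\tau\right)\Contragradient{\left(\Contragradient{\finiteFieldRepresentation}\left(\tau\right)\Contragradient{W}\right)}, \FourierTransformWithRespectToCharacter{\FourierTransformWithRespectToCharacter{\phi}{\fieldCharacter}}{\fieldCharacter^{-1}}\right),$$
where I use $\Contragradient{\Contragradient{\finiteFieldRepresentation}} = \finiteFieldRepresentation$ and $\left(\fieldCharacter^{-1}\right)^{-1} = \fieldCharacter$. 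Fourier inversion collapses the function argument to $\phi$, and the translation identity together with $\inverseTranspose{\tau} = \tau$ and $\tau^{2} = \IdentityMatrix{2m+1}$ collapses the Whittaker argument to $\finiteFieldRepresentation\left(\tau\right)\finiteFieldRepresentation\left(\tau\right)W = W$. The right-hand side is then exactly $\JSOfFiniteFieldRepresentation{W}{\phi}$, which proves the proposition.

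There is no genuine obstacle here; as the paper notes, the statement is immediate from the definitions. The only care needed is to keep the two contragredient operations — on representations and on Whittaker functions — and the involution $g \mapsto \inverseTranspose{g}$ distinct, and to check that $\tau$, which now carries the extra trailing $1$ absent in the even case of \Cref{prop:double-duality-of-jacquet-shalika-integral-even}, is still fixed by $g \mapsto \inverseTranspose{g}$; it is, since placing a $1$ in the last diagonal entry affects neither the symmetry of $\tau$ nor the relation $\tau^{2} = \IdentityMatrix{2m+1}$. Every step is thus identical in substance to the even case.
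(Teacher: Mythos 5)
Your proposal is correct and is exactly the unwinding of the definitions that the paper has in mind when it says the proposition is immediate (the paper gives no further proof); the three ingredients you isolate — $\Contragradient{\Contragradient{\finiteFieldRepresentation}} = \finiteFieldRepresentation$ together with $\Contragradient{\left(\Contragradient{\finiteFieldRepresentation}\left(g_0\right)\Contragradient{W}\right)} = \finiteFieldRepresentation\left(\inverseTranspose{g_0}\right)W$, the Fourier inversion $\FourierTransformWithRespectToCharacter{\FourierTransformWithRespectToCharacter{\phi}{\fieldCharacter}}{\fieldCharacter^{-1}} = \phi$, and $\inverseTranspose{\tau} = \tau$ with $\tau^{2} = \IdentityMatrix{2m+1}$ — are precisely what is needed, and your computations of them are accurate.
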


One can show the following two propositions which are similar to \cref{prop:dual-jacquet-shalika-formula-even} and \ref{prop:jacquet-shalika-integral-of-bessel-function-even-case}, respectively.

\begin{prop}\label{prop:dual-jacquet-shalika-formula-odd}
	\begin{equation*}
		\begin{split}
			\DualJS{\finiteFieldRepresentation}{\fieldCharacter}&\left(W, \phi\right) = \frac{1}{\grpIndex{G}{N} \grpIndex{M}{\UpperTriangularAdditive}\sizeof{\Mat{1}{m}{\finiteField}} } \sum_{g \in \lquot{\UnipotentSubgroup}{G}} \sum_{X \in \lquot{\UpperTriangularAdditive}{M}} \sum_{Z \in \Mat{1}{m}{\finiteField}} \\
			&W\left(
			\begin{pmatrix}& 1 \\ I_{2m} & \end{pmatrix}
			\oddPermutationMatrix \ShalikaUnipotentElementOdd{X}
			\ShalikaDiagonalElementOdd{g}
			\ShalikaUpperRightUnipotentElementOdd{-\transpose{Z}}
			\right)
			\fieldCharacter\left(-\trace X\right)
			\FourierTransformWithRespectToCharacter{\phi}{\fieldCharacter} \left( Z \right).
		\end{split}
	\end{equation*}
\end{prop}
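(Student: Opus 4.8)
The plan is to argue exactly as for \cref{prop:dual-jacquet-shalika-formula-even}, the only genuinely new features being the trivial $1 \times 1$ block in the bottom right corner and the extra sum over $Z$. Write $\delta = \left(\begin{smallmatrix} & \IdentityMatrix{m} & \\ \IdentityMatrix{m} & & \\ & & 1 \end{smallmatrix}\right)$ for the permutation matrix occurring in the definition of the dual integral. Since by definition $\DualJSOfFiniteFieldRepresentation{W}{\phi}$ is $\JS{\Contragradient{\finiteFieldRepresentation}}{\fieldCharacter^{-1}}$ evaluated on $\Contragradient{\finiteFieldRepresentation}\left(\delta\right)\Contragradient{W}$ and on $\FourierTransformWithRespectToCharacter{\phi}{\fieldCharacter}$, I would first expand it using the odd-case definition of the Jacquet--Shalika integral, substitute $\Contragradient{W}\left(h\right) = W\left(\weylElement{2m+1}\inverseTranspose{h}\right)$, and use $\fieldCharacter^{-1}\left(-\trace X\right) = \fieldCharacter\left(\trace X\right)$, to reach
\begin{equation*}
\DualJSOfFiniteFieldRepresentation{W}{\phi} = \frac{1}{\grpIndex{G}{N}\grpIndex{M}{\UpperTriangularAdditive}\sizeof{\Mat{1}{m}{\finiteField}}} \sum_{g, X, Z} W\left(\weylElement{2m+1}\, \inverseTranspose{\left(\oddPermutationMatrix\, \ShalikaUnipotentElementOdd{X}\, \ShalikaDiagonalElementOdd{g}\, \ShalikaLowerUnipotentElementOdd{Z}\, \delta\right)}\right) \fieldCharacter\left(\trace X\right) \FourierTransformWithRespectToCharacter{\phi}{\fieldCharacter}\left(Z\right),
\end{equation*}
the sum being over $g \in \lquot{\UnipotentSubgroup}{G}$, $X \in \lquot{\UpperTriangularAdditive}{M}$ and $Z \in \Mat{1}{m}{\finiteField}$.

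The heart of the matter is the simplification of the matrix inside $W$. Since $h \mapsto \inverseTranspose{h}$ is an anti-automorphism, it reverses the product; every permutation matrix, in particular $\oddPermutationMatrix$ and $\delta$, is fixed by it; $\inverseTranspose{\left(\ShalikaDiagonalElementOdd{g}\right)} = \ShalikaDiagonalElementOdd{\inverseTranspose{g}}$; and
\begin{equation*}
\inverseTranspose{\left(\ShalikaUnipotentElementOdd{X}\right)} = \begin{pmatrix} \IdentityMatrix{m} & & \\ -\transpose{X} & \IdentityMatrix{m} & \\ & & 1 \end{pmatrix}, \qquad \inverseTranspose{\left(\ShalikaLowerUnipotentElementOdd{Z}\right)} = \begin{pmatrix} \IdentityMatrix{m} & & \\ & \IdentityMatrix{m} & -\transpose{Z} \\ & & 1 \end{pmatrix}.
\end{equation*}
The structural reason the odd case differs from the even one is the identity $\weylElement{2m+1}\, \delta = \left(\begin{smallmatrix} & 1 \\ \IdentityMatrix{2m} & \end{smallmatrix}\right) \diag\left(\weylElement{m}, \weylElement{m}, 1\right)$, whereas in the even case the analogous product $\weylElement{2m}\, \delta$ equals the block diagonal matrix $\diag\left(\weylElement{m}, \weylElement{m}\right)$; this is exactly why the cyclic matrix $\left(\begin{smallmatrix} & 1 \\ \IdentityMatrix{2m} & \end{smallmatrix}\right)$ appears here and was absent in \cref{prop:dual-jacquet-shalika-formula-even}. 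Inserting this identity and then, precisely as in the even case, repeatedly commuting the Shalika torus and unipotent elements past the remaining permutation matrices $\diag\left(\weylElement{m}, \weylElement{m}, 1\right)$ and $\oddPermutationMatrix$ and absorbing the factors that become upper unipotent into the character via $W\left(uh\right) = \fieldCharacter\left(u\right)W\left(h\right)$ for $u$ in the upper unipotent subgroup of $\GL{2m+1}{\finiteField}$, one rewrites the matrix inside $W$ as $\left(\begin{smallmatrix} & 1 \\ \IdentityMatrix{2m} & \end{smallmatrix}\right) \oddPermutationMatrix\, \ShalikaUnipotentElementOdd{X'}\, \ShalikaDiagonalElementOdd{g'}\, \ShalikaUpperRightUnipotentElementOdd{-\transpose{Z}}$, where $X'$ differs from $-\transpose{X}$ by a permutation conjugation (which does not change the trace) and $g \mapsto g'$ is a bijection of $\GL{m}{\finiteField}$, and the total character picked up from the absorbed unipotents is trivial.

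It remains to rewrite the sum over $\lquot{\UnipotentSubgroup}{G}$ as $\sizeof{G}^{-1}$ times the sum over all of $G$, and likewise the sum over $\lquot{\UpperTriangularAdditive}{M}$ as $\sizeof{M}^{-1}$ times the sum over all of $M$ (legitimate because the summand is constant on cosets, by the very transformation rules just used), to carry out there the bijective substitutions $X \mapsto X'$ and $g \mapsto g'$, and to re-express the result in terms of coset sums. These substitutions leave $\FourierTransformWithRespectToCharacter{\phi}{\fieldCharacter}\left(Z\right)$ and the normalizing constant untouched, and turn $\fieldCharacter\left(\trace X\right)$ into $\fieldCharacter\left(-\trace X\right)$ because $\trace X' = -\trace X$; the outcome is exactly the asserted formula. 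The main obstacle is the second paragraph: verifying the permutation identity and, above all, tracking the three Shalika blocks through the composite permutation and checking that the factors which get absorbed really do land in the upper unipotent subgroup with trivial $\fieldCharacter$-value. This bookkeeping is routine but delicate; it is the exact odd-case analog of the computation behind \cref{prop:dual-jacquet-shalika-formula-even}.
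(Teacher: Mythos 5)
The proposition is true, and your overall plan (expand the definition of $\DualJS{\finiteFieldRepresentation}{\fieldCharacter}$, apply $h \mapsto \inverseTranspose{h}$, isolate a permutation-matrix identity responsible for the cyclic factor, then conjugate the Shalika elements and change variables) is exactly the intended route — the paper omits this proof as being "similar to the even case". But there is a genuine error at the pivotal step: $h \mapsto \inverseTranspose{h}$ is \emph{not} an anti-automorphism and does not reverse products. It is the composite of the two anti-automorphisms $h \mapsto \transpose{h}$ and $h \mapsto h^{-1}$, hence an automorphism: $\inverseTranspose{\left(ab\right)} = \inverseTranspose{a}\,\inverseTranspose{b}$. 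Consequently the correct argument of $W$ is $\weylElement{2m+1}\,\oddPermutationMatrix\,\inverseTranspose{\SmallShalikaUnipotentElementOdd{X}}\,\SmallShalikaDiagonalElementOdd{\inverseTranspose{g}}\,\inverseTranspose{\SmallShalikaLowerUnipotentElementOdd{Z}}\,\delta$, with $\delta$ at the far right; only in your (incorrectly) reversed product does $\delta$ sit next to $\weylElement{2m+1}$, and only there can your identity $\weylElement{2m+1}\,\delta = \left(\begin{smallmatrix} & 1\\ \IdentityMatrix{2m} & \end{smallmatrix}\right)\diag\left(\weylElement{m},\weylElement{m},1\right)$ be "inserted" as you describe. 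As written, the matrix you propose to simplify is not the one occurring in the integral, so the deferred bookkeeping would not close up.

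The repair is short and keeps all your ingredients. Working with the correct order, first push $\delta$ leftwards: $\delta$ commutes with $\SmallShalikaDiagonalElementOdd{\inverseTranspose{g}}$, conjugation by $\delta$ sends $\inverseTranspose{\SmallShalikaLowerUnipotentElementOdd{Z}}$ (which has $-\transpose{Z}$ in its $(2,3)$ block) to $\SmallShalikaUpperRightUnipotentElementOdd{-\transpose{Z}}$, and sends $\inverseTranspose{\SmallShalikaUnipotentElementOdd{X}}$ to $\SmallShalikaUnipotentElementOdd{-\transpose{X}}$. This leaves $\weylElement{2m+1}\,\oddPermutationMatrix\,\delta$ on the left, and the identity actually needed is $\weylElement{2m+1}\,\oddPermutationMatrix\,\delta = \left(\begin{smallmatrix} & 1\\ \IdentityMatrix{2m} & \end{smallmatrix}\right)\oddPermutationMatrix\,\diag\left(\weylElement{m},\weylElement{m},1\right)$; your identity is the special case with $\oddPermutationMatrix$ deleted, and since $\oddPermutationMatrix$ and $\delta$ do not commute the two are not interchangeable (the same correction applies to your description of the even case). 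Moving $\diag\left(\weylElement{m},\weylElement{m},1\right)$ through the remaining factors gives exactly $X' = -\weylElement{m}\transpose{X}\weylElement{m}$ and $g' = \weylElement{m}\,\inverseTranspose{g}$, so $\trace X' = -\trace X$, the substitutions descend to bijections of $\lquot{\UpperTriangularAdditive}{M}$ and $\lquot{\UnipotentSubgroup}{G}$, and $\FourierTransformWithRespectToCharacter{\phi}{\fieldCharacter}\left(Z\right)$ is untouched; in particular no factor ever needs to be absorbed into $\fieldCharacter$ — the rewriting is an exact matrix identity — which yields the stated formula. (A minor further slip: unfolding the coset sums introduces $\sizeof{\UnipotentSubgroup}^{-1}$ and $\sizeof{\UpperTriangularAdditive}^{-1}$, not $\sizeof{G}^{-1}$ and $\sizeof{M}^{-1}$, though this cancels when you fold back.)
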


\begin{prop}\label{prop:jacquet-shalika-integral-of-bessel-function-odd-case}
	Let $W = \grpIndex{G}{N} \grpIndex{M}{\UpperTriangularAdditive} \sizeof{\Mat{1}{m}{\finiteField}} \finiteFieldRepresentation \left( \oddPermutationMatrix^{-1} \right) \besselFunctionOfFiniteFieldRepresentation$,
	and $\phi = \indicatorFunction{0}$ be the indicator function of $0 \in \finiteField^m$. Then
	$\JSOfFiniteFieldRepresentation{W}{\phi} = 1.$
\end{prop}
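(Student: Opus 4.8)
The plan is to follow the proof of \cref{prop:jacquet-shalika-integral-of-bessel-function-even-case}; the only new feature in the odd case is the variable $Z$, which disappears at once. First I would use $\phi = \indicatorFunction{0}$: in the defining sum for $\JSOfFiniteFieldRepresentation{W}{\phi}$ only the term with $Z = 0$ survives, where $\ShalikaLowerUnipotentElementOdd{Z} = \IdentityMatrix{2m+1}$ and $\phi\left(0\right) = 1$. Then I would plug in $W = c \cdot \finiteFieldRepresentation\left(\oddPermutationMatrix^{-1}\right)\besselFunctionOfFiniteFieldRepresentation$ with $c = \grpIndex{G}{N}\grpIndex{M}{\UpperTriangularAdditive}\sizeof{\Mat{1}{m}{\finiteField}}$; since $\finiteFieldRepresentation$ acts on $\WhittakerModelOfFiniteFieldRepresentation$ by right translation, $W\left(h\right) = c \cdot \besselFunctionOfFiniteFieldRepresentation\left(h\,\oddPermutationMatrix^{-1}\right)$, and the constant $c$ cancels the normalizing factor of the Jacquet-Shalika integral, leaving
$$\JSOfFiniteFieldRepresentation{W}{\phi} = \sum_{g \in \lquot{\UnipotentSubgroup}{G}} \sum_{X \in \lquot{\UpperTriangularAdditive}{M}} \besselFunctionOfFiniteFieldRepresentation\left( \oddPermutationMatrix\, \ShalikaUnipotentElementOdd{X}\, \ShalikaDiagonalElementOdd{g}\, \oddPermutationMatrix^{-1} \right) \fieldCharacter\left(-\trace X\right).$$

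Next I would show that every summand vanishes except the one attached to the identity cosets $g \in \UnipotentSubgroup$, $X \in \UpperTriangularAdditive$. Conjugation by $\oddPermutationMatrix$ interleaves the two $\GL{m}{\finiteField}$-blocks: $\oddPermutationMatrix\, \ShalikaDiagonalElementOdd{g}\, \oddPermutationMatrix^{-1}$ is $g$ spread over the odd coordinates and again over the even coordinates, the last coordinate fixed, while $\oddPermutationMatrix\, \ShalikaUnipotentElementOdd{X}\, \oddPermutationMatrix^{-1}$ is unipotent and is upper triangular exactly when $X$ is. Writing the argument $\oddPermutationMatrix\, \ShalikaUnipotentElementOdd{X}\, \ShalikaDiagonalElementOdd{g}\, \oddPermutationMatrix^{-1}$ in its Bruhat cell $\UnipotentRadical{2m+1}\, d w\, \UnipotentRadical{2m+1}$ and using that $\besselFunctionOfFiniteFieldRepresentation$ transforms under $\UnipotentRadical{2m+1}$ by $\fieldCharacter$ on both sides, \cref{thm:support-of-bessel-function} forces such a summand to vanish unless $dw = \antidiag\left(\lambda_1 \IdentityMatrix{n_1}, \dots, \lambda_r \IdentityMatrix{n_r}\right)$. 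The core point — the special case of \cref{lem:support-of-bessel-function-jacquet-shalika-integral} — is to verify, representative by representative, that this Bruhat datum is anti-block-diagonal only for the trivial cosets $g \in \UnipotentSubgroup$, $X \in \UpperTriangularAdditive$.

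Granting this, exactly one summand survives; taking the representatives $g = \IdentityMatrix{m}$, $X = 0$ gives $\oddPermutationMatrix\, \ShalikaUnipotentElementOdd{0}\, \ShalikaDiagonalElementOdd{\IdentityMatrix{m}}\, \oddPermutationMatrix^{-1} = \IdentityMatrix{2m+1}$, so that summand equals $\besselFunctionOfFiniteFieldRepresentation\left(\IdentityMatrix{2m+1}\right)\fieldCharacter\left(0\right) = 1$, whence $\JSOfFiniteFieldRepresentation{W}{\phi} = 1$. The hard part will be the Bruhat-cell analysis: tracking how $\oddPermutationMatrix$ reshuffles the product $\ShalikaUnipotentElementOdd{X}\ShalikaDiagonalElementOdd{g}$ and ruling out anti-block-diagonality for every non-trivial pair of cosets. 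The collapse of the $Z$-sum, the cancellation of $c$, and the final evaluation are routine and identical to the even case.
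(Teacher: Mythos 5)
Your proposal is correct and is precisely the argument the paper intends: the paper leaves this proposition as a check "similar to the even case," resting on \cref{thm:support-of-bessel-function} and the special case $r=1$, $\lambda_1=1$ of \cref{lem:support-of-bessel-function-jacquet-shalika-integral}, which is exactly the route you describe (collapse of the $Z$-sum, cancellation of the normalizing constant, support analysis forcing the trivial cosets). The one step worth making explicit in your "Bruhat-cell analysis" is the bridge from size $2m+1$ to size $2m$: the argument of the Bessel function is $\diag\left(Y,1\right)$ with $Y = \evenPermutationMatrix \SmallShalikaUnipotentElement{X} \SmallShalikaDiagonalElement{g} \evenPermutationMatrix^{-1}$, and comparing last rows shows its Bruhat datum can have the anti-block-diagonal shape of \cref{thm:support-of-bessel-function} only when that datum is $\IdentityMatrix{2m+1}$, i.e. $Y \in \UnipotentSubgroup_{2m}$, after which the cited lemma forces $g \in \UnipotentSubgroup$ and $X \in \UpperTriangularAdditive$, leaving the single summand $\besselFunctionOfFiniteFieldRepresentation\left(\IdentityMatrix{2m+1}\right) = 1$.
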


\begin{defn}
	\textbf{The Shalika subgroup} $\ShalikaSubgroupOdd \le \GL{2m+1}{\finiteField}$ is defined as
	\begin{equation*}
		\ShalikaSubgroupOdd = \left\{ \begin{pmatrix}
			g & X & Y \\
			  & g &   \\
			  & Z & 1
		\end{pmatrix}
		\mid g \in \GL{m}{\finiteField}, \, X \in \SquareMat{m}{\finiteField}, \,
		Y \in \Mat{m}{1}{\finiteField}, \, Z \in \Mat{1}{m}{\finiteField}
		\right\}.
	\end{equation*}
	Let $\Mirabolic{2m+1} \le \GL{2m+1}{\finiteField}$ be the Mirabolic subgroup, i.e., the subgroup of matrices having $\rowvector{2m+1} = \begin{pmatrix}0 & \cdots & 0 & 1\end{pmatrix}$ as their last row. We define a character $\ShalikaCharacter : \ShalikaSubgroupOdd \cap \Mirabolic{2m+1} \rightarrow \multiplicativegroup{\cComplex}$ by
	$$\ShalikaCharacter \begin{pmatrix}
			g & X & Y \\
			  & g &   \\
			  &   & 1
		\end{pmatrix}
		= \fieldCharacter\left( \trace\left( X g^{-1} \right) \right).$$
	The Shalika subgroup acts on $\Schwartz \left(\finiteField^m\right)$ by the following relations \cite[Proposition 3.1]{CogdellMatringe15}
	\begin{itemize}
		\item $\ShalikaSubgroupAction \ShalikaDiagonalElementOdd{g} \phi \left(x\right) = \phi\left(xg\right)$.
		\item $\ShalikaSubgroupAction \ShalikaUnipotentElementOdd{z_0} \phi \left(x\right)  = \fieldCharacter\left(-\trace z_0\right) \phi \left(x\right)$.
		\item $\ShalikaSubgroupAction \ShalikaUpperRightUnipotentElementOdd{y_0} \phi \left(x\right) = \fieldCharacter\left(\standardForm{x}{y_0}\right) \phi \left(x\right) $.
		\item $\ShalikaSubgroupAction \ShalikaLowerUnipotentElementOdd{x_0} \phi \left(x\right) = \phi \left(x + x_0\right).$
	\end{itemize}
\end{defn}
As in \cite[Proposition 3.1]{CogdellMatringe15}, one has
\begin{prop}\label{prop:shalika-subgroup-odd-representation}
	The action of $\ShalikaSubgroupOdd$ on $\Schwartz \left(\finiteField^m\right)$ is equivalent to $\Ind{\Mirabolic{2m+1}\cap \ShalikaSubgroupOdd}{\ShalikaSubgroupOdd}{\ShalikaCharacter^{-1}}$ given by mapping $f \in \Ind{\Mirabolic{2m+1}\cap \ShalikaSubgroupOdd}{\ShalikaSubgroupOdd}{\ShalikaCharacter^{-1}}$ to $\phi \in \Schwartz \left(\finiteField^m\right)$, defined as $\phi \left(x\right) = f \SmallShalikaLowerUnipotentElementOdd{x}$.
\end{prop}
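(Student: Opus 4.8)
The plan is to follow the proof of \cite[Proposition 3.1]{CogdellMatringe15}: exhibit an explicit $\ShalikaSubgroupOdd$-equivariant linear isomorphism from $\Ind{\Mirabolic{2m+1}\cap \ShalikaSubgroupOdd}{\ShalikaSubgroupOdd}{\ShalikaCharacter^{-1}}$, with $\ShalikaSubgroupOdd$ acting by right translation, onto $\left(\Schwartz\left(\finiteField^m\right),\ShalikaSubgroupAction\right)$, namely the map $f \mapsto \phi$ with $\phi\left(x\right) = f\left(\SmallShalikaLowerUnipotentElementOdd{x}\right)$. Throughout write $n\left(x\right) = \SmallShalikaLowerUnipotentElementOdd{x}$ for $x \in \finiteField^m$.

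First I would record the coset structure. A direct block-matrix computation shows that every $s = \left(\begin{smallmatrix} g & X & Y \\ & g & \\ & Z & 1 \end{smallmatrix}\right) \in \ShalikaSubgroupOdd$ factors \emph{uniquely} as $s = p \cdot n\left(Z\right)$ with $p = \left(\begin{smallmatrix} g & X - YZ & Y \\ & g & \\ & 0 & 1 \end{smallmatrix}\right) \in \Mirabolic{2m+1}\cap \ShalikaSubgroupOdd$, so $\left\{ n\left(x\right) : x \in \finiteField^m \right\}$ is a complete set of representatives for $\left(\Mirabolic{2m+1}\cap \ShalikaSubgroupOdd\right)\backslash \ShalikaSubgroupOdd$. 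One also checks that $\ShalikaCharacter$ is genuinely a character of $\Mirabolic{2m+1}\cap \ShalikaSubgroupOdd$, since $\trace\left(\left(g_1 X_2 + X_1 g_2\right)\left(g_1 g_2\right)^{-1}\right) = \trace\left(X_1 g_1^{-1}\right) + \trace\left(X_2 g_2^{-1}\right)$ by cyclicity of the trace. Consequently the induced space has dimension $\sizeof{\finiteField^m} = \dim \Schwartz\left(\finiteField^m\right)$, and the map $f \mapsto \left(x \mapsto f\left(n\left(x\right)\right)\right)$ is injective, because any $f$ in the induced space is recovered from its values on the representatives via $f\left(p \cdot n\left(x\right)\right) = \ShalikaCharacter^{-1}\left(p\right) f\left(n\left(x\right)\right)$; hence it is a linear isomorphism.

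It then remains to check equivariance, i.e. that $f\left(n\left(x\right) h_0\right) = \left(\ShalikaSubgroupAction\left(h_0\right)\phi\right)\left(x\right)$ for all $h_0 \in \ShalikaSubgroupOdd$. Since both sides are group actions, it suffices to verify this for $h_0$ in a generating set, and $\ShalikaSubgroupOdd$ is generated by the four families $\ShalikaDiagonalElementOdd{g_0}$, $\ShalikaUnipotentElementOdd{z_0}$, $\ShalikaUpperRightUnipotentElementOdd{y_0}$, $\ShalikaLowerUnipotentElementOdd{x_0}$ (every element of $\ShalikaSubgroupOdd$ is an ordered product of these, as one sees by solving for the parameters). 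For each such $h_0$ the plan is to compute the product $n\left(x\right) h_0$, re-factor it as $p' \cdot n\left(x'\right)$ by the uniqueness above, and read off $\left(\ShalikaSubgroupAction\left(h_0\right)\phi\right)\left(x\right) = \ShalikaCharacter^{-1}\left(p'\right)\phi\left(x'\right)$. The diagonal case gives $x' = x g_0$, $p' = \ShalikaDiagonalElementOdd{g_0}$ with $\ShalikaCharacter^{-1}\left(p'\right) = 1$; the case $\ShalikaUnipotentElementOdd{z_0}$ gives $x' = x$, $p' = \ShalikaUnipotentElementOdd{z_0}$, contributing $\fieldCharacter\left(-\trace z_0\right)$; the case $\ShalikaLowerUnipotentElementOdd{x_0}$ is immediate since the $n\left(x\right)$ form an abelian subgroup with $n\left(x\right) n\left(x_0\right) = n\left(x + x_0\right)$, so $p' = \IdentityMatrix{2m+1}$; and the case $\ShalikaUpperRightUnipotentElementOdd{y_0}$ gives $x' = x$ and $p' = \left(\begin{smallmatrix} \IdentityMatrix{m} & -y_0 x & y_0 \\ & \IdentityMatrix{m} & \\ & 0 & 1 \end{smallmatrix}\right)$, so $\ShalikaCharacter^{-1}\left(p'\right) = \fieldCharacter\left(\trace\left(y_0 x\right)\right) = \fieldCharacter\left(\standardForm{x}{y_0}\right)$. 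These match exactly the four relations defining $\ShalikaSubgroupAction$, completing the proof.

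The argument is essentially bookkeeping — tracking the block multiplications and the re-factorization — so there is no conceptual obstacle; the one step with any content is the identity $\trace\left(y_0 x\right) = \standardForm{x}{y_0}$, which turns the extra upper-middle block $-y_0 x$ produced by $\ShalikaUpperRightUnipotentElementOdd{y_0}$ into the bilinear-form twist, and keeping the signs and transposes straight is the only real hazard.
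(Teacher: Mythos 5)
Your proof is correct, and it is essentially the argument the paper has in mind: the paper simply cites the local-field analogue \cite[Proposition 3.1]{CogdellMatringe15}, whose proof is exactly this verification — the unique factorization $s = p\cdot\SmallShalikaLowerUnipotentElementOdd{Z}$ with $p \in \Mirabolic{2m+1}\cap\ShalikaSubgroupOdd$, followed by checking equivariance on the four generating families and matching the resulting twists (including $\trace\left(y_0 x\right) = \standardForm{x}{y_0}$) with the defining relations of $\ShalikaSubgroupAction$. No gaps.
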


As in \cite[Lemma 3.2, 3.3]{CogdellMatringe15}, one easily checks that we have the following analog of \cref{prop:equivariance-of-jacquet-shalika-integral-even-case}:
\begin{prop}[Equivariance property]\label{prop:equivariance-of-jacquet-shalika-integral-odd-case}
	Let $B \in \left\{ \JS{\finiteFieldRepresentation}{\fieldCharacter}, \DualJS{\finiteFieldRepresentation}{\fieldCharacter} \right\} $. Then for every $s \in \ShalikaSubgroupOdd$, $$B\left({\finiteFieldRepresentation \left(s\right) W},{\ShalikaSubgroupAction \left(s\right) \phi}\right) = B\left({W},{\phi}\right).$$
\end{prop}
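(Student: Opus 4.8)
The plan is to follow the local argument of \cite[Lemma 3.2, 3.3]{CogdellMatringe15}: check the identity on a generating set of $\ShalikaSubgroupOdd$, and reduce the statement for $\DualJS{\finiteFieldRepresentation}{\fieldCharacter}$ to the one for $\JS{\finiteFieldRepresentation}{\fieldCharacter}$. Since $\finiteFieldRepresentation$ and $\ShalikaSubgroupAction$ are homomorphisms, the identity $B\left(\finiteFieldRepresentation(s)W,\ShalikaSubgroupAction(s)\phi\right) = B(W,\phi)$ (for all $W,\phi$) is stable under products in $s$, so it is enough to verify it for $s$ in each of the four generating families $\SmallShalikaDiagonalElementOdd{h}$ ($h\in\GL{m}{\finiteField}$), $\SmallShalikaUnipotentElementOdd{z_0}$ ($z_0\in\SquareMat{m}{\finiteField}$), $\SmallShalikaUpperRightUnipotentElementOdd{y_0}$ ($y_0\in\Mat{m}{1}{\finiteField}$) and $\SmallShalikaLowerUnipotentElementOdd{x_0}$ ($x_0\in\Mat{1}{m}{\finiteField}$). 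Write $s_0 = \SmallShalikaUnipotentElementOdd{X}\SmallShalikaDiagonalElementOdd{g}\SmallShalikaLowerUnipotentElementOdd{Z}$, so that the argument of $W$ in the defining sum of $\JSOfFiniteFieldRepresentation{W}{\phi}$ is $\oddPermutationMatrix s_0$. I will use two elementary bookkeeping facts — right multiplication by any $h\in\GL{m}{\finiteField}$ is a bijection of $\lquot{\UnipotentSubgroup}{\GL{m}{\finiteField}}$, and translation by any strictly lower triangular matrix (possibly depending on the remaining summation variables) is a bijection of $\lquot{\UpperTriangularAdditive}{\SquareMat{m}{\finiteField}}$ that fixes $\fieldCharacter(-\trace X)$ — together with the one structural input, namely that $\oddPermutationMatrix$ conjugates $\SmallShalikaUnipotentElementOdd{B}$ (for upper triangular $B$) into the upper triangular unipotent subgroup $\UnipotentRadical{2m+1}$ of $\GL{2m+1}{\finiteField}$ with $\fieldCharacter$-value $\fieldCharacter(\trace B)$, and conjugates $\SmallShalikaUpperRightUnipotentElementOdd{Y}$ into $\UnipotentRadical{2m+1}$ with trivial $\fieldCharacter$-value; this is precisely what makes $\JS{\finiteFieldRepresentation}{\fieldCharacter}$ well defined on coset representatives.

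For $B=\JS{\finiteFieldRepresentation}{\fieldCharacter}$, the three ``diagonal/unipotent'' generators are routine. For $s=\SmallShalikaDiagonalElementOdd{h}$ one computes $s_0 s=\SmallShalikaUnipotentElementOdd{X}\SmallShalikaDiagonalElementOdd{gh}\SmallShalikaLowerUnipotentElementOdd{Zh}$, and the substitutions $g\mapsto gh^{-1}$, $Z\mapsto Zh^{-1}$ in the triple sum match the factor $\phi(Zh)$ produced by $\ShalikaSubgroupAction(s)\phi$. For $s=\SmallShalikaUnipotentElementOdd{z_0}$ one gets $s_0 s=\SmallShalikaUnipotentElementOdd{X+gz_0g^{-1}}\SmallShalikaDiagonalElementOdd{g}\SmallShalikaLowerUnipotentElementOdd{Z}$; writing $gz_0g^{-1}=B+L$ with $B$ upper triangular and $L$ strictly lower triangular, the Whittaker property extracts the factor $\fieldCharacter(\trace B)=\fieldCharacter(\trace z_0)$, which cancels the $\fieldCharacter(-\trace z_0)$ coming from $\ShalikaSubgroupAction(s)\phi$, while $X\mapsto X-L$ removes $L$. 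For $s=\SmallShalikaLowerUnipotentElementOdd{x_0}$ one has simply $s_0 s=\SmallShalikaUnipotentElementOdd{X}\SmallShalikaDiagonalElementOdd{g}\SmallShalikaLowerUnipotentElementOdd{Z+x_0}$ and $Z\mapsto Z-x_0$ matches $\phi(Z+x_0)$.

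The main obstacle is the generator $s=\SmallShalikaUpperRightUnipotentElementOdd{y_0}$, since conjugating it past $s_0$ does not stay within the ``diagonal'' shape. A direct matrix computation (using that the off-diagonal blocks of $\SmallShalikaUnipotentElementOdd{\cdot}$ and $\SmallShalikaUpperRightUnipotentElementOdd{\cdot}$ do not interact) gives $s_0\SmallShalikaUpperRightUnipotentElementOdd{y_0}=\SmallShalikaUnipotentElementOdd{A}\,\SmallShalikaUpperRightUnipotentElementOdd{gy_0}\,s_0$ with $A=-(gy_0)(Zg^{-1})$ an \emph{arbitrary} (rank $\le 1$) matrix in $\SquareMat{m}{\finiteField}$. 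Writing $A=B+L$ as before, the factor $\oddPermutationMatrix\SmallShalikaUpperRightUnipotentElementOdd{gy_0}\oddPermutationMatrix^{-1}$ contributes nothing by the Whittaker property, the upper triangular part contributes $\fieldCharacter(\trace B)=\fieldCharacter(\trace A)=\fieldCharacter\left(-\standardForm{Z}{y_0}\right)$ — using $\trace\left((gy_0)(Zg^{-1})\right)=\trace(Zy_0)=\standardForm{Z}{y_0}$ — which exactly cancels the factor $\fieldCharacter\left(\standardForm{Z}{y_0}\right)$ produced by $\ShalikaSubgroupAction\left(\SmallShalikaUpperRightUnipotentElementOdd{y_0}\right)\phi$, and the strictly lower triangular part $L$ is absorbed by $X\mapsto X-L$. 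This settles the case $B=\JS{\finiteFieldRepresentation}{\fieldCharacter}$.

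For $B=\DualJS{\finiteFieldRepresentation}{\fieldCharacter}$, I would deduce the statement from the case just proved. From $\Contragradient{\finiteFieldRepresentation(s)W}=\Contragradient{\finiteFieldRepresentation}\left(\inverseTranspose{s}\right)\Contragradient{W}$ and the definition of $\DualJS{\finiteFieldRepresentation}{\fieldCharacter}$, the desired equality becomes the equivariance of $\JS{\Contragradient{\finiteFieldRepresentation}}{\fieldCharacter^{-1}}$ at the element $s'=w'\inverseTranspose{s}(w')^{-1}$, where $w'=\left(\begin{smallmatrix} & \IdentityMatrix{m} & \\ \IdentityMatrix{m} & & \\ & & 1 \end{smallmatrix}\right)$, applied to the pair $\left(\Contragradient{\finiteFieldRepresentation}(w')\Contragradient{W},\FourierTransformWithRespectToCharacter{\phi}{\fieldCharacter}\right)$. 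One checks that $s\mapsto w'\inverseTranspose{s}(w')^{-1}$ preserves the Shalika shape, so $s'\in\ShalikaSubgroupOdd$ (it interchanges the families $\SmallShalikaUpperRightUnipotentElementOdd{\cdot}$ and $\SmallShalikaLowerUnipotentElementOdd{\cdot}$ up to transpose and sign), and that $\FourierTransformWithRespectToCharacter{\left(\ShalikaSubgroupAction(s)\phi\right)}{\fieldCharacter}=\ShalikaSubgroupAction'(s')\FourierTransformWithRespectToCharacter{\phi}{\fieldCharacter}$, where $\ShalikaSubgroupAction'$ denotes the Shalika action attached to $\fieldCharacter^{-1}$; the latter is a short generator-by-generator verification based on $\FourierTransformWithRespectToCharacter{\phi(\cdot+x_0)}{\fieldCharacter}(y)=\fieldCharacter\left(\standardForm{x_0}{y}\right)\FourierTransformWithRespectToCharacter{\phi}{\fieldCharacter}(y)$ and $\FourierTransformWithRespectToCharacter{\phi(\cdot h)}{\fieldCharacter}(y)=\FourierTransformWithRespectToCharacter{\phi}{\fieldCharacter}\left(y\inverseTranspose{h}\right)$. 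Applying the already established equivariance of $\JS{\Contragradient{\finiteFieldRepresentation}}{\fieldCharacter^{-1}}$ at $s'$ then gives the result. (Alternatively, one runs the direct argument on the explicit formula for $\DualJSOfFiniteFieldRepresentation{W}{\phi}$ in \cref{prop:dual-jacquet-shalika-formula-odd}, where the hard generator is instead $\SmallShalikaLowerUnipotentElementOdd{x_0}$ and the Fourier identities above supply the cancellation.)
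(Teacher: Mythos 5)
Your treatment of $\JS{\finiteFieldRepresentation}{\fieldCharacter}$ is correct and is exactly the generator-by-generator check the paper has in mind (the paper itself gives no details and simply defers to Cogdell--Matringe): the reduction to the four generating families, the commutation $s_0\SmallShalikaUpperRightUnipotentElementOdd{y_0}=\SmallShalikaUnipotentElementOdd{A}\SmallShalikaUpperRightUnipotentElementOdd{gy_0}s_0$ with $A=-(gy_0)(Zg^{-1})$, the extraction of $\fieldCharacter(\trace B)=\fieldCharacter\left(-\standardForm{Z}{y_0}\right)$ through the Whittaker property of $\oddPermutationMatrix(\cdot)\oddPermutationMatrix^{-1}$, and the changes of variables in $X$, $Z$, $g$ are all sound.

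The dual half, however, is not yet a proof, and the gap is precisely in the step you defer as a ``short generator-by-generator verification.'' With the paper's normalization $\FourierTransformWithRespectToCharacter{\phi}{\fieldCharacter}(y)=q^{-m/2}\sum_x\phi(x)\fieldCharacter\left(\standardForm{x}{y}\right)$, the identity you quote is false as stated: one has $\FourierTransformWithRespectToCharacter{\phi(\cdot+x_0)}{\fieldCharacter}(y)=\fieldCharacter\left(-\standardForm{x_0}{y}\right)\FourierTransformWithRespectToCharacter{\phi}{\fieldCharacter}(y)$, with a minus sign. Once this is corrected, the map on the $\phi$-side that actually intertwines $\rho$ with the $\fieldCharacter^{-1}$-Shalika action sends $\SmallShalikaLowerUnipotentElementOdd{x_0}\mapsto\SmallShalikaUpperRightUnipotentElementOdd{\transpose{x_0}}$ and $\SmallShalikaUpperRightUnipotentElementOdd{y_0}\mapsto\SmallShalikaLowerUnipotentElementOdd{\transpose{y_0}}$, whereas the $W$-side forces $s'=w'\inverseTranspose{s}(w')^{-1}$, which sends these generators to $\SmallShalikaUpperRightUnipotentElementOdd{-\transpose{x_0}}$ and $\SmallShalikaLowerUnipotentElementOdd{-\transpose{y_0}}$; the two elements differ by conjugation by $\SmallShalikaDiagonalElementOdd{-\IdentityMatrix{m}}$. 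Thus the two arguments of $\JS{\Contragradient{\finiteFieldRepresentation}}{\fieldCharacter^{-1}}$ transform by \emph{different} elements of $\ShalikaSubgroupOdd$, and you cannot simply invoke the equivariance already proved; nor can the leftover element be discarded, since $\JS{\Contragradient{\finiteFieldRepresentation}}{\fieldCharacter^{-1}}$ is certainly not invariant under translating $W$ alone by $\SmallShalikaUpperRightUnipotentElementOdd{u}$ (that shifts the integral by a nontrivial multiplicative factor on the $\phi$-argument). So the dual case hinges on exactly the sign bookkeeping your write-up gets wrong. The honest repair is your alternative route: prove the equivariance of $\DualJS{\finiteFieldRepresentation}{\fieldCharacter}$ directly from the explicit expression in \cref{prop:dual-jacquet-shalika-formula-odd}, redoing there the same commutator manipulations (the hard generator is now $\SmallShalikaLowerUnipotentElementOdd{x_0}$, which produces a factor $\fieldCharacter\left(\pm\standardForm{x_0}{Z}\right)$ from the Whittaker side against $\fieldCharacter\left(\mp\standardForm{x_0}{Z}\right)$ coming from $\FourierTransformWithRespectToCharacter{\phi}{\fieldCharacter}$), and checking that the signs there really cancel against the precise sign conventions ($-\transpose{Z}$, the extra cyclic permutation in front, and the normalization of $\FourierTransformWithRespectToCharacter{\phi}{\fieldCharacter}$); as it stands, this cancellation is asserted, not verified, and it is the only nontrivial content of the dual half.
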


Since in the odd case, the character $\ShalikaCharacter$ is defined only on the subgroup $\ShalikaSubgroupOdd \cap \Mirabolic{2m+1}$, we don't have a definition for a Shalika vector in this case.

\subsection{The functional equation}

In this section, we prove the functional equation satisfied by the Jacquet-Shalika integral. This allows us to define the exterior square gamma factor of an irreducible cuspidal representation of $\GL{n}{\finiteField}$.

\begin{thm}[The functional equation]\label{thm:functional-equation-finite-field}
	Let $n = 2m$ or $n = 2m + 1$, and let $\finiteFieldRepresentation$ be an irreducible cuspidal representation of $\GL{n}{\finiteField}$. If $n$ is even, suppose that $\finiteFieldRepresentation$ does not admit a Shalika vector. Then there exists a non-zero constant $ \gammaFactorOfFiniteField \in \multiplicativegroup{\cComplex}$, such that for every $W \in \WhittakerModelOfFiniteFieldRepresentation$ and $\phi \in \Schwartz\left( \finiteField^m \right) $, we have
	$$\DualJS{\finiteFieldRepresentation}{\fieldCharacter}\left(W, \phi\right) = \gammaFactorOfFiniteField \cdot \JSOfFiniteFieldRepresentation{W}{\phi}.$$
\end{thm}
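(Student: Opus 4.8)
The plan is to realize both sides of the claimed functional equation as elements of a one-dimensional space of bilinear forms, and then compare them on a single convenient vector. Concretely, fix a cuspidal $\finiteFieldRepresentation$ as in the statement (non-Shalika if $n$ is even), and consider the space of bilinear forms $B \colon \WhittakerModelOfFiniteFieldRepresentation \times \Schwartz(\finiteField^m) \to \cComplex$ satisfying the equivariance relation of \Cref{prop:equivariance-of-jacquet-shalika-integral-even-case} (in the even case) or \Cref{prop:equivariance-of-jacquet-shalika-integral-odd-case} (in the odd case) under the Shalika subgroup $\ShalikaSubgroup{n}$ acting on $\WhittakerModelOfFiniteFieldRepresentation$ via $\finiteFieldRepresentation$ and on $\Schwartz(\finiteField^m)$ via $\ShalikaSubgroupAction$. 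Both $\JS{\finiteFieldRepresentation}{\fieldCharacter}$ and $\DualJS{\finiteFieldRepresentation}{\fieldCharacter}$ lie in this space: for $\JS{\finiteFieldRepresentation}{\fieldCharacter}$ this is exactly the cited equivariance propositions, and for $\DualJS{\finiteFieldRepresentation}{\fieldCharacter}$ one obtains it by unwinding the definition $\DualJS{\finiteFieldRepresentation}{\fieldCharacter}(W,\phi) = \JS{\Contragradient{\finiteFieldRepresentation}}{\fieldCharacter^{-1}}(\cdots)$, applying the same equivariance for $\Contragradient{\finiteFieldRepresentation}$, and checking that conjugation by the relevant antidiagonal Weyl element together with the Fourier transform intertwines the two $\ShalikaSubgroup{n}$-actions — this is a routine (if slightly tedious) matrix computation using $\inverseTranspose{(\cdot)}$ and the behavior of $\FourierTransformWithRespectToCharacter{\cdot}{\fieldCharacter}$ under $\phi \mapsto \ShalikaSubgroupAction(s)\phi$.

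Granting that this space of equivariant bilinear forms is one-dimensional, we get $\DualJS{\finiteFieldRepresentation}{\fieldCharacter} = c \cdot \JS{\finiteFieldRepresentation}{\fieldCharacter}$ for some scalar $c = \gammaFactorOfFiniteField$. To see that $c \ne 0$ and to pin it down, evaluate both sides on the distinguished pair from \Cref{prop:jacquet-shalika-integral-of-bessel-function-even-case} (respectively \Cref{prop:jacquet-shalika-integral-of-bessel-function-odd-case}): taking $W$ to be the appropriate normalization of $\finiteFieldRepresentation(\permutationMatrix{n}^{-1})\besselFunctionOfFiniteFieldRepresentation$ and $\phi$ the indicated indicator function, the right-hand side equals $1$, so $\gammaFactorOfFiniteField = \DualJS{\finiteFieldRepresentation}{\fieldCharacter}(W,\phi)$, a specific (a priori complex) number. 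Non-vanishing of $\gammaFactorOfFiniteField$ then follows from the double-duality relation \Cref{prop:double-duality-of-jacquet-shalika-integral-even} (resp. \Cref{prop:double-duality-of-jacquet-shalika-integral-odd}): applying the functional equation for $\finiteFieldRepresentation$ and then for $\Contragradient{\finiteFieldRepresentation}$, double-duality forces a product of two gamma factors to equal $1$, hence each is a unit in $\multiplicativegroup{\cComplex}$. (One must check the non-Shalika hypothesis is preserved under passing to $\Contragradient{\finiteFieldRepresentation}$, which is clear since $\Contragradient{\finiteFieldRepresentation} \isomorphic \finiteFieldRepresentation \circ \inverseTranspose{(\cdot)}$ and the Shalika subgroup is stable under $\inverseTranspose{(\cdot)}$ up to conjugation.)

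The main obstacle — and the place where the hypotheses really enter — is the multiplicity-one statement: $\dim \Hom_{\ShalikaSubgroup{n}}\!\big(\finiteFieldRepresentation \otimes \ShalikaSubgroupAction, \cComplex\big) \le 1$ (with the appropriate twist by $\ShalikaCharacter$ in the even case). In the odd case, \Cref{prop:shalika-subgroup-odd-representation} identifies $\ShalikaSubgroupAction$ with $\Ind{\Mirabolic{2m+1}\cap \ShalikaSubgroupOdd}{\ShalikaSubgroupOdd}{\ShalikaCharacter^{-1}}$, so by Frobenius reciprocity the Hom-space becomes $\Hom_{\ShalikaSubgroupOdd \cap \Mirabolic{2m+1}}(\finiteFieldRepresentation, \ShalikaCharacter)$, which one analyzes by restricting $\finiteFieldRepresentation$ to the mirabolic and invoking the cuspidality of $\finiteFieldRepresentation$ (Gelfand–Kazhdan / Bernstein–Zelevinsky theory over the finite field). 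In the even case the analogous uniqueness is a linear-Shalika model statement whose proof genuinely uses the absence of a Shalika vector — this is where the hypothesis is not merely convenient but necessary. I would defer the full argument for these uniqueness results to \Cref{sec:multiplicity-one} as the excerpt promises, and here only reduce the theorem to them via the two bullet points above (equivariance of both forms, then evaluation on the Bessel vector together with double-duality for non-vanishing), treating the even and odd cases in parallel with the only difference being the extra $Z$-integration and the role of $\ShalikaCharacter$ versus its absence.
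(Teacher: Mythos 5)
Your skeleton is the paper's: both $\JS{\finiteFieldRepresentation}{\fieldCharacter}$ and $\DualJS{\finiteFieldRepresentation}{\fieldCharacter}$ are nonzero $\ShalikaSubgroup{n}$-equivariant bilinear forms on $\WhittakerModelOfFiniteFieldRepresentation\times\Schwartz(\finiteField^m)$, the space of such forms has dimension at most one, and nonvanishing of the constant follows from the Bessel-vector evaluation together with double duality. Your odd-case reduction (identify $\Schwartz(\finiteField^m)$ with $\Ind{\ShalikaSubgroupOdd\cap\Mirabolic{2m+1}}{\ShalikaSubgroupOdd}{\ShalikaCharacter^{-1}}$, dualize, apply Frobenius reciprocity, then use cuspidality through a mirabolic-restriction multiplicity-one statement) is exactly what the paper does; also note the equivariance propositions already cover the dual integral, so the intertwining computation you describe is unnecessary.

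The gap is in the even case. You defer the even-case uniqueness wholesale to the appendix, assert that its proof there "genuinely uses the absence of a Shalika vector," and claim the two parities run in parallel up to the $Z$-integration and the character $\ShalikaCharacter$. Neither claim is correct, and together they skip the one step where the hypothesis actually does work. The appendix theorems (\cref{thm:multiplicity-one-even-case}, \cref{thm:multiplicity-one-odd-case}) hold for \emph{every} irreducible cuspidal $\finiteFieldRepresentation$ and never invoke the hypothesis; the hypothesis is used in the body of the proof, in a reduction your parallel-to-the-odd-case plan cannot reproduce. Indeed, in the even case $\Schwartz(\finiteField^m)$ is \emph{not} induced from $\ShalikaSubgroupEven\cap\Mirabolic{2m}$: the orbit map gives $\lquot{\left(\ShalikaSubgroupEven\cap\Mirabolic{2m}\right)}{\ShalikaSubgroupEven}\isomorphic\finiteField^m\setminus\{0\}$, so the origin is missing, and $\Schwartz(\finiteField^m)$ splits as the invariant line $\cComplex\indicatorFunction{0}$ (on which $\ShalikaSubgroupEven$ acts trivially under $\ShalikaSubgroupAction$) plus the functions vanishing at $0$; Frobenius reciprocity applies only to the latter piece. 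The missing step is the injectivity of restriction of a form $B\in\Hom_{\ShalikaSubgroupEven}\left(\finiteFieldRepresentation\otimes\Schwartz(\finiteField^m),\ShalikaCharacter\right)$ to $\finiteFieldRepresentation\otimes\Schwartz(\finiteField^m\setminus\{0\})$: if it failed, $v\mapsto B(v,\indicatorFunction{0})$ would be a nonzero element of $\Hom_{\ShalikaSubgroupEven}\left(\finiteFieldRepresentation,\ShalikaCharacter\right)$, which via a unitary inner product yields a Shalika vector, contradicting the hypothesis. That is precisely, and only, where "no Shalika vector" enters — consistently with the fact, noted in the paper, that the functional equation fails outright when a Shalika vector exists, so no argument confining the hypothesis to a deferred lemma that is actually hypothesis-free can be right. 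Finally, even after Frobenius reciprocity you land in $\Hom_{\ShalikaSubgroupEven\cap\Mirabolic{2m}}\left(\finiteFieldRepresentation,\ShalikaCharacter\right)$, and a further averaging argument (\cref{lem:multiplicity-one-shalika-subgroup-even}, not part of the appendix) is needed to inject this into $\Hom_{\LeviSubgroup{m,m}\cap\Mirabolic{2m}}\left(\finiteFieldRepresentation,1\right)$ before the appendix theorem can be applied.
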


The proof of this theorem is based on the proofs for the local field case of the local Jacquet-Shalika integrals \cite[Section 4]{matringe2014linear}, \cite[Section 3.3]{CogdellMatringe15}. The Shalika subgroup $\ShalikaSubgroup{n}$ plays an important role in the proof. We treat the even case and the odd case separately. In both cases, the main idea is to show that the space of $\ShalikaSubgroup{n}$ equivariant bilinear forms $$B : \WhittakerModelOfFiniteFieldRepresentation \times \Schwartz\left(\finiteField^m\right) \rightarrow \cComplex$$ is at most one dimensional. Since $\JS{\finiteFieldRepresentation}{\fieldCharacter}$ and $\DualJS{\finiteFieldRepresentation}{\fieldCharacter}$ define non-zero elements of this space (see Propositions \ref{prop:jacquet-shalika-integral-of-bessel-function-even-case} and \ref{prop:equivariance-of-jacquet-shalika-integral-even-case} for the even case, and Propositions \ref{prop:jacquet-shalika-integral-of-bessel-function-odd-case} and \ref{prop:equivariance-of-jacquet-shalika-integral-odd-case} for the odd case), the theorem follows.

\begin{defn}\label{defn:exterior-square-gamma-factor-even}
	We call $\gammaFactorOfFiniteField$ in the above theorem the \textbf{exterior square gamma factor} of $\finiteFieldRepresentation$ with respect to the character $\fieldCharacter$.
\end{defn}

\begin{rem}\label{rem:exterior-square-gamma-factor-is-unitrary}
	By double duality (\cref{prop:double-duality-of-jacquet-shalika-integral-even} and \cref{prop:double-duality-of-jacquet-shalika-integral-odd}), we have that $$\gammaFactorOfFiniteField \cdot \gammaFactorOfContragredientFiniteField = 1.$$
	Substituting in the functional equation the functions from \cref{prop:jacquet-shalika-integral-of-bessel-function-even-case} and \cref{prop:jacquet-shalika-integral-of-bessel-function-odd-case} in the even case and the odd case respectively, and using the fact that $\besselFunction_{\Contragradient{\finiteFieldRepresentation}, \fieldCharacter^{-1}} = \conjugate{\besselFunctionOfFiniteFieldRepresentation}$, we get that $\conjugate{\gammaFactorOfFiniteField} = \gammaFactorOfContragredientFiniteField$, and therefore $\abs{\gammaFactorOfFiniteField} = 1$.
\end{rem}

\subsubsection{The even case}

The proof of the functional equation in the even case relies on the following lemma.

\begin{lem}\label{lem:multiplicity-one-shalika-subgroup-even}
	Let $\representationDeclaration{\finiteFieldRepresentation}$ be an irreducible cuspidal representation of $\GL{2m}{\finiteField}$. Then $$\dim_\cComplex{\Hom_{\ShalikaSubgroupEven \cap \Mirabolic{2m}}\left(\finiteFieldRepresentation, \ShalikaCharacter\right)} \le 1.$$
	Here $\Mirabolic{2m} \le \GL{2m}{\finiteField}$ is the mirabolic subgroup.
\end{lem}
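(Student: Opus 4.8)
The plan is to reduce the statement to a restriction problem on the Kirillov model and then use the cuspidality of $\finiteFieldRepresentation$ together with an analysis of $\ShalikaSubgroupEven \cap \Mirabolic{2m}$-orbits. First I would observe that $\ShalikaSubgroupEven \cap \Mirabolic{2m}$ consists of matrices $\left(\begin{smallmatrix} g & X \\ & g \end{smallmatrix}\right)$ with $g \in \Mirabolic{m}$ (the mirabolic subgroup of $\GL{m}{\finiteField}$) and $X \in \SquareMat{m}{\finiteField}$ arbitrary. The character $\ShalikaCharacter$ is trivial on the block-diagonal part coming from $g$ and picks up $\fieldCharacter(\trace(Xg^{-1}))$ on the unipotent part, so a vector in $\Hom_{\ShalikaSubgroupEven \cap \Mirabolic{2m}}(\finiteFieldRepresentation, \ShalikaCharacter)$ must in particular be a functional transforming under a nondegenerate character of the unipotent radical $\left\{\left(\begin{smallmatrix} \IdentityMatrix{m} & X \\ & \IdentityMatrix{m}\end{smallmatrix}\right)\right\}$ of the $(m,m)$-parabolic.

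The key step is to exploit cuspidality: since $\finiteFieldRepresentation$ is cuspidal, its restriction to the mirabolic $\Mirabolic{2m}$ is isomorphic to $\CompactInd{\UnipotentSubgroup_{2m}}{\Mirabolic{2m}}{\fieldCharacter}$ (the full Gelfand--Graev representation), equivalently $\finiteFieldRepresentation|_{\Mirabolic{2m}}$ is the model where the space is functions on $\lquot{\UnipotentSubgroup_{2m}}{\Mirabolic{2m}}$ — this is the finite-field Kirillov theory. I would then realize $\Hom_{\ShalikaSubgroupEven \cap \Mirabolic{2m}}(\finiteFieldRepresentation, \ShalikaCharacter)$ via Frobenius reciprocity / Mackey theory as a space of functions on a double-coset space $\lquot{\UnipotentSubgroup_{2m}}{\Mirabolic{2m}}\slash(\ShalikaSubgroupEven \cap \Mirabolic{2m})$ satisfying compatibility between $\fieldCharacter$ on the left and $\ShalikaCharacter$ on the right. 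The core computation is to enumerate those double cosets on which the two characters agree, show there is exactly one such coset, and check that the stabilizer condition forces the value at that coset to be a single scalar — so the Hom-space is at most one-dimensional. Here it is natural to use the geometric lemma / Bruhat-style decomposition of $\Mirabolic{2m}$ with respect to these two subgroups, exactly as in \cite[Section 3.3]{CogdellMatringe15} and \cite[Section 4]{matringe2014linear}.

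The main obstacle I expect is the orbit bookkeeping: one must verify that on every Bruhat cell other than the distinguished one the characters $\fieldCharacter$ and $\ShalikaCharacter$ disagree on some one-parameter subgroup, which kills all contributions, and that this uses cuspidality (via the Kirillov/Gelfand--Graev identification) in an essential way rather than just genericity — the non-cuspidal case would produce extra cells and break uniqueness. A secondary technical point is handling the fact that $\ShalikaCharacter$ is only defined on $\ShalikaSubgroupEven \cap \Mirabolic{2m}$ and not on all of $\ShalikaSubgroupEven$, so the reduction must stay inside the mirabolic throughout; the payoff (extending to the functional equation for the full $\ShalikaSubgroupEven$-equivariant bilinear forms) is deferred to the subsequent argument. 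I would model the write-up closely on the $p$-adic arguments of Matringe and Cogdell--Matringe, replacing their inductive/Jacquet-functor analysis with the finite-field statement that a cuspidal representation restricted to the mirabolic is the regular Gelfand--Graev module.
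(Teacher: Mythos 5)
Your route is genuinely different from the paper's, and it is viable in outline, but as written it has a gap exactly where all the content lies. You correctly describe $\ShalikaSubgroupEven \cap \Mirabolic{2m}$, correctly invoke the cuspidal restriction theorem $\finiteFieldRepresentation \restriction_{\Mirabolic{2m}} \isomorphic \Ind{\UnipotentRadical{2m}}{\Mirabolic{2m}}{\fieldCharacter}$ (this is \cref{thm:cuspidal-restriction-to-mirabolic}, since $\left(\MirabolicPlusFunctor\right)^{2m-1}\left(1\right)$ is the Gelfand--Graev module of $\Mirabolic{2m}$), and correctly reduce by Mackey theory to counting double cosets in $\lquot{\UnipotentRadical{2m}}{\Mirabolic{2m}}\slash\left(\ShalikaSubgroupEven\cap\Mirabolic{2m}\right)$ on which the conjugated Whittaker character agrees with $\ShalikaCharacter$. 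But in this approach the lemma \emph{is} the statement that at most one double coset is compatible, and you do not prove it: you announce it and flag the ``orbit bookkeeping'' as the expected obstacle. A complete argument needs explicit representatives (a Bruhat-type decomposition of $\Mirabolic{2m}$ relative to these two subgroups; the surviving coset should be the one through $\evenPermutationMatrix$, which is where the paper's explicit Shalika-type vector is supported), and for every other representative $x$ an element of $x^{-1}\UnipotentRadical{2m}x \cap \left(\ShalikaSubgroupEven\cap\Mirabolic{2m}\right)$, typically inside the abelian radical $\left\{\SmallShalikaUnipotentElement{X}\right\}$, on which the two characters differ. Without that computation there is no proof. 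A smaller inaccuracy: in your setup cuspidality enters only through the identification of $\finiteFieldRepresentation\restriction_{\Mirabolic{2m}}$ with the full Gelfand--Graev module; the double-coset count is independent of $\finiteFieldRepresentation$, so for a generic non-cuspidal representation the bound would fail because of extra summands in the restriction to $\Mirabolic{2m}$, not because of extra cells.

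For comparison, the paper avoids any orbit analysis at this point. Given $L \in \Hom_{\ShalikaSubgroupEven\cap\Mirabolic{2m}}\left(\finiteFieldRepresentation,\ShalikaCharacter\right)$, it forms $\Lambda\left(L\right)\left(v\right)$ by averaging $L\left(\finiteFieldRepresentation\left(\diag\left(g,\IdentityMatrix{m}\right)\right)v\right)$ over $g \in \GL{m}{\finiteField}$, landing in $\Hom_{\LeviSubgroup{m,m}\cap\Mirabolic{2m}}\left(\finiteFieldRepresentation,1\right)$; injectivity of $\Lambda$ follows by evaluating on smeared vectors $v_\Phi = \sum_{X}\Phi\left(X\right)\finiteFieldRepresentation\SmallShalikaUnipotentElement{X}v$ with $\FourierTransformWithRespectToCharacter{\Phi}{\fieldCharacter} = \indicatorFunction{\IdentityMatrix{m}}$. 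The problem is thereby shifted to $\dim\Hom_{\LeviSubgroup{m,m}\cap\Mirabolic{2m}}\left(\finiteFieldRepresentation,1\right)\le 1$ (\cref{thm:multiplicity-one-theorem-even-case}), which the appendix proves using the same mirabolic restriction theorem you invoke, but processed through successive embeddings of Hom spaces in the style of Matringe rather than a single global double-coset enumeration. So your plan would have to reproduce in one step what the paper does inductively; it is a legitimate alternative only once that enumeration is actually carried out.
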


\begin{proof}
	We define a homomorphism $$ \Lambda : \Hom_{\ShalikaSubgroupEven \cap \Mirabolic{2m}}\left(\finiteFieldRepresentation,\ShalikaCharacter\right) \rightarrow \Hom_{\LeviSubgroup{m,m} \cap \Mirabolic{2m}}\left(\finiteFieldRepresentation,1\right)$$ by $$\Lambda \left(L\right)\left(v\right) = \frac{1}{\sizeof{\GL{m}{\finiteField}}} \sum_{g \in \GL{m}{\finiteField}}{L\left( \finiteFieldRepresentation\begin{pmatrix}
					g &                    \\
					  & \IdentityMatrix{m}
				\end{pmatrix} v\right)},$$
	where $\LeviSubgroup{m,m} \le \GL{2m}{\finiteField}$ is the Levi subgroup corresponding to the partition $\left(m,m\right)$. We claim that $\Lambda$ is injective: suppose that $L \ne 0$ and that $v \in \underlyingVectorSpace{\finiteFieldRepresentation}$ satisfies $L\left(v\right) \ne 0$. We define for a function $\Phi : \SquareMat{m}{\finiteField} \rightarrow \cComplex$, a vector $v_\Phi \in \underlyingVectorSpace{\finiteFieldRepresentation}$ by
	$$v_\Phi = \frac{1}{\sqrt{\sizeof{\SquareMat{m}{\finiteField}}}}\sum_{X \in \SquareMat{m}{\finiteField}}{\Phi\left(X\right) \finiteFieldRepresentation \ShalikaUnipotentElement{X}}v.$$
	Then $$\Lambda\left(L\right)v_\Phi = \frac{1}{\sizeof{\GL{m}{\finiteField}}}\sum_{g \in \GL{m}{\finiteField}}{\FourierTransformWithRespectToCharacter{\Phi}{\fieldCharacter}\left(g\right)}L\left( \finiteFieldRepresentation\begin{pmatrix}
				g &                    \\
				  & \IdentityMatrix{m}
			\end{pmatrix} v\right).$$
	Choosing $\Phi$ such that $\FourierTransformWithRespectToCharacter{\Phi}{\fieldCharacter} = \indicatorFunction{\IdentityMatrix{m}}$, we get that $\Lambda\left(L\right)v_\Phi =
		\frac{1}{\sizeof{\GL{m}{\finiteField}}} v \ne 0$, and therefore $\Lambda \left( L \right) \ne 0$.
	Thus, $\Lambda$ is injective. The lemma then follows from the multiplicity one theorem below.
\end{proof}

\begin{thm}\label{thm:multiplicity-one-theorem-even-case}
	Let $\representationDeclaration{\finiteFieldRepresentation}$ be an irreducible cuspidal representation of $\GL{2m}{\finiteField}$. Then
	$$\dim_\cComplex{\Hom_{\LeviSubgroup{m,m} \cap \Mirabolic{2m}}\left(\finiteFieldRepresentation, 1\right)} \le 1.$$
	Here $\LeviSubgroup{m,m} \le \GL{2m}{\finiteField}$ is the Levi subgroup corresponding to the partition $\left(m,m\right)$.
\end{thm}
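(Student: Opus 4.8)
The theorem asserts that the space of $\LeviSubgroup{m,m} \cap \Mirabolic{2m}$-invariant functionals on an irreducible cuspidal $\finiteFieldRepresentation$ of $\GL{2m}{\finiteField}$ is at most one-dimensional. Dually, this amounts to showing that the restriction of $\finiteFieldRepresentation$ to the subgroup $H := \LeviSubgroup{m,m} \cap \Mirabolic{2m}$ contains the trivial representation with multiplicity at most one, i.e. $\dim_\cComplex \Hom_H(1, \finiteFieldRepresentation) \le 1$. I would approach this via the theory of derivatives / the Bernstein--Zelevinsky filtration for $\GL{m}{\finiteField}$ adapted to the mirabolic, exactly in the spirit of the ``$\GL{n} \times \GL{n-1}$'' multiplicity-one results. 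The group $H$ consists of matrices $\diagTwo{a}{b}$ with $a \in \GL{m}{\finiteField}$ and $b \in \Mirabolic{m}$ (the mirabolic subgroup of $\GL{m}{\finiteField}$). So one is really comparing $\finiteFieldRepresentation$, as a representation of $\GL{m}{\finiteField} \times \GL{m}{\finiteField}$ restricted to the ``diagonal-ish'' subgroup $\GL{m}{\finiteField} \times \Mirabolic{m}$ — this is the finite-field analog of the local statement used by Matringe and Cogdell--Matringe, and the natural tool is Gelfand--Graev / Bernstein--Zelevinsky restriction to the mirabolic.

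**Key steps.**
First I would identify $H \cong \GL{m}{\finiteField} \times \Mirabolic{m}$ and reduce the claim to a statement about $\Hom_{\GL{m}{\finiteField} \times \Mirabolic{m}}(\finiteFieldRepresentation|_{\GL{m}{\finiteField} \times \GL{m}{\finiteField}}, 1)$, where the restriction of $\finiteFieldRepresentation$ to the Levi $\GL{m}{\finiteField} \times \GL{m}{\finiteField}$ is governed by the Bruhat/parabolic-descent structure. Second, since $\finiteFieldRepresentation$ is cuspidal, its restriction to the maximal parabolic $\Parabolic{m,m}$ is well understood: as a $\GL{m}{\finiteField}\times\GL{m}{\finiteField}$-module, $\finiteFieldRepresentation$ restricted to $H$ decomposes according to $\GL{m}{\finiteField}$-orbits on the flag-type space, and the cuspidality forces the relevant Jacquet module to vanish, collapsing the sum to a single open-orbit contribution. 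Third, on that open orbit I would invoke the mirabolic analog — the finite-field Gelfand--Graev argument, i.e. the Bernstein--Zelevinsky ``derivative'' filtration of the restriction of a representation of $\GL{m}{\finiteField}$ to $\Mirabolic{m}$ — to see that the contribution is a single irreducible piece (the top derivative, which for cuspidal $\finiteFieldRepresentation$ is again irreducible cuspidal of $\GL{m}{\finiteField}$ composed with Whittaker-type induction), yielding multiplicity exactly one by Frobenius reciprocity together with the uniqueness of the Whittaker model for $\GL{m}{\finiteField}$. An alternative, and perhaps cleaner, route is a Mackey-theory computation: decompose $\Parabolic{m,m}\backslash \GL{2m}{\finiteField} / H$ into double cosets, show that cuspidality kills all but the relevant one, and on the surviving coset invoke uniqueness of Whittaker.

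**Main obstacle.**
The technical heart is controlling the contribution of the non-open double cosets (equivalently, the lower terms of the Bernstein--Zelevinsky filtration): a priori each could contribute an invariant functional, and one must use cuspidality of $\finiteFieldRepresentation$ decisively to annihilate all of them. Concretely, the obstacle is proving that the Jacquet module of $\finiteFieldRepresentation$ along every nontrivial unipotent radical appearing in these cosets vanishes — this is exactly where cuspidality enters and where the finite-field combinatorics (as opposed to $p$-adic, where one has more room) must be handled carefully. Once those terms are gone, what remains is the well-known chain of uniqueness-of-Whittaker statements for $\GL{m}{\finiteField}$, and the bound $\le 1$ follows. I expect the bookkeeping for which cosets survive and the precise form of the residual Whittaker-type functional to be the part requiring the most care.
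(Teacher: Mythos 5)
Your identification of $H=\LeviSubgroup{m,m}\cap\Mirabolic{2m}$ with $\GL{m}{\finiteField}\times\Mirabolic{m}$ is correct and you are circling the right tools, but the plan as written has a structural misstep and leaves the actual content of the theorem unproved. There is no Mackey decomposition of $\Hom_{H}\left(\finiteFieldRepresentation,1\right)$ indexed by $\Parabolic{m,m}\backslash\GL{2m}{\finiteField}/H$: a cuspidal $\finiteFieldRepresentation$ is not parabolically induced, and its restriction to the maximal parabolic is \emph{not} ``well understood'' --- the vanishing of the Jacquet module controls coinvariants, not the full restriction, so the claim that ``cuspidality collapses the sum to the open orbit'' has no precise meaning in the setup you describe. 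The input that is actually available (and is where cuspidality is spent, once and for all) is the restriction to the mirabolic: $\finiteFieldRepresentation\restriction_{\Mirabolic{2m}}\isomorphic\left(\MirabolicPlusFunctor\right)^{2m-1}\left(1\right)\isomorphic\Ind{\UnipotentRadical{2m}}{\Mirabolic{2m}}{\fieldCharacter}$ (\cref{thm:cuspidal-restriction-to-mirabolic}), with $\fieldCharacter$ the non-degenerate character of $\UnipotentRadical{2m}$. Since $H\le\Mirabolic{2m}$, this reduces the theorem to showing that at most one double coset in $\UnipotentRadical{2m}\backslash\Mirabolic{2m}/H$ carries a conjugate of $\fieldCharacter$ that is trivial on its intersection with $H$; note that at this point each coset contributes $0$ or $1$ automatically because one is restricting a character, so uniqueness of Whittaker models is not what finishes the argument. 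That coset analysis is exactly the ``bookkeeping requiring the most care'' which you defer --- but it is the entire theorem, and cuspidality cannot be invoked a second time to kill the unwanted cosets: after the mirabolic restriction theorem the problem is purely combinatorial.

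For comparison, the paper conjugates $H$ into the twisted position $\TwistedLeviSubgroup[2m]{m}{m}\cap\Mirabolic{2m}$, quotes the mirabolic restriction theorem once, and then carries out the orbit analysis inductively rather than in one combinatorial stroke: the two embedding propositions $\Hom_{\Mirabolic{n}\cap\TwistedLeviSubgroup{p}{q}}\left(\MirabolicPlusFunctor\left(\MirabolicRepresentation\right),1\right)\hookrightarrow\Hom_{\Mirabolic{n-1}\cap\TwistedLeviSubgroup{p}{q-1}}\left(\MirabolicRepresentation,1\right)$ and its companion (\cref{prop:embedding-n-minus-1}, \cref{prop:embedding-n-minus-2}) are proved by a distribution-and-support argument on the mirabolic together with the stabilizer computations of \cref{lem:stabilizer-of-character-p-q} and \cref{lem:stabilizer-of-character-p-q-minus-1}, and iterating them $2m-1$ times lands in $\Hom_{\Mirabolic{2}\cap\TwistedLeviSubgroup[2]{1}{0}}\left(1,1\right)$, which is one-dimensional for trivial reasons. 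If you wish to pursue your route, you would have to prove the analogous support statements directly for the cosets $\UnipotentRadical{2m}\backslash\Mirabolic{2m}/\left(\GL{m}{\finiteField}\times\Mirabolic{m}\right)$, which is essentially the same combinatorics that the paper's induction organizes; as it stands, your proposal records the reduction but not the argument.
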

The proof of \cref{thm:multiplicity-one-theorem-even-case} will be given in the appendix, as it is a detour from the main line of the paper.  \cref{thm:multiplicity-one-theorem-even-case} is restated and proved in \cref{thm:multiplicity-one-even-case}, while the one for the odd case is \cref{thm:multiplicity-one-odd-case}.

\begin{proof}[Proof of \cref{thm:functional-equation-finite-field}, $n=2m$]
	The idea is to show that $\dim_\cComplex \Hom_{\ShalikaSubgroupEven}\left(\finiteFieldRepresentation \otimes \Schwartz\left(\finiteField^m\right), \ShalikaCharacter \right) \le 1$, and since $\JS{\finiteFieldRepresentation}{\fieldCharacter}$ and $\DualJS{\finiteFieldRepresentation}{\fieldCharacter}$ define non-zero elements of $\Hom_{\ShalikaSubgroupEven}\left(\finiteFieldRepresentation \otimes \Schwartz\left(\finiteField^m\right), \ShalikaCharacter \right)$, it follows that such a constant exists.

	We first claim that the restriction map $$\Hom_{\ShalikaSubgroupEven}\left(\finiteFieldRepresentation \otimes \Schwartz\left(\finiteField^m\right), \ShalikaCharacter \right) \rightarrow \Hom_{\ShalikaSubgroupEven}\left(\finiteFieldRepresentation \otimes \Schwartz\left(\finiteField^m \setminus \left\{0\right\} \right), \ShalikaCharacter \right)$$ is injective, where $\Schwartz\left(\finiteField^m \setminus \left\{0\right\} \right)$ is realized as a subspace of $\Schwartz \left(\finiteField^m\right)$ by the set of elements of $\Schwartz \left(\finiteField^m\right)$, vanishing at zero.

	Suppose that this restriction map is not injective. Then there exists $ 0 \ne B : \underlyingVectorSpace{\finiteFieldRepresentation} \times \Schwartz\left(\finiteField^m\right) \rightarrow \cComplex$ satisfying $$B\left(\finiteFieldRepresentation\left(s\right) v,\rightTranslation \left(s\right) \phi\right) = \ShalikaCharacter \left(s\right) B\left(v, \phi\right)$$ for every $s \in \ShalikaSubgroupEven$, such that $b \in \Contragradient{\underlyingVectorSpace{\finiteFieldRepresentation}}$ defined by $b\left(v\right)=B \left(v , \indicatorFunction{0} \right)$ is not the zero functional, where $\indicatorFunction{0}$ is the indicator function of $0$ in $\finiteField^m$. Let $\innerproduct{\cdot}{\cdot}$ be an inner product on $\underlyingVectorSpace{\finiteFieldRepresentation}$ with respect to which $\finiteFieldRepresentation$ is unitary, and let $0 \ne v_0 \in \underlyingVectorSpace{\finiteFieldRepresentation}$, such that $b \left( v \right) = \innerproduct{v}{v_0}$. Then since $\rightTranslation \left(s\right) \indicatorFunction{0} = \indicatorFunction{0}$ for every $s \in \ShalikaSubgroupEven$, it follows from the equivariance property of $B$ that
    $$b \left(\finiteFieldRepresentation\left(s\right)v \right) = B \left(\finiteFieldRepresentation\left(s\right) v, \indicatorFunction{0} \right) = B \left(\finiteFieldRepresentation\left(s\right) v, \rightTranslation\left(s\right) \indicatorFunction{0} \right) = \ShalikaCharacter \left(s\right) B \left(v, \indicatorFunction{0}\right) = \ShalikaCharacter \left(s\right)b \left(v\right).$$
    Thus, for any $s \in \ShalikaSubgroupEven$ and $v \in \underlyingVectorSpace{\finiteFieldRepresentation}$, we have
    $$\innerproduct{v}{\finiteFieldRepresentation\left(s\right)v_0} = \innerproduct{\finiteFieldRepresentation(s^{-1})v}{v_0} = b \left(\finiteFieldRepresentation(s^{-1})v\right) = \ShalikaCharacter(s^{-1})b(v) = \conjugate{\ShalikaCharacter\left(s\right)}\innerproduct{v}{v_0} = \innerproduct{v}{\ShalikaCharacter\left(s\right)v_0}.$$
    This shows that $v_0$ is a Shalika vector, which is a contradiction.

	Next we write a sequence of isomorphisms from which it follows that $$\dim_{\cComplex}{\Hom_{\ShalikaSubgroupEven}\left(\finiteFieldRepresentation \otimes \Schwartz\left(\finiteField^m \setminus \left\{0\right\} \right),  \ShalikaCharacter\right)} \le 1.$$
	Since $\lquot{\left(\ShalikaSubgroupEven \cap \Mirabolic{2m}\right)}{\ShalikaSubgroupEven} \isomorphic \finiteField^m \setminus \left\{ 0 \right\}$ by the map $\left(\begin{smallmatrix}
				g & X\\
				& g\end{smallmatrix}\right) \mapsto \lastrowvector g$, we have that $\Schwartz\left(\finiteField^m \setminus \left\{0\right\} \right) \isomorphic \Ind{\ShalikaSubgroupEven \cap \Mirabolic{2m}}{\ShalikaSubgroupEven}{1}$ and we have the following isomorphisms:

	\begin{equation*}
		\begin{split}
			\Hom_{\ShalikaSubgroupEven}\left(\finiteFieldRepresentation \otimes \Ind{\ShalikaSubgroupEven \cap \Mirabolic{2m}}{\ShalikaSubgroupEven}{1}, \ShalikaCharacter\right) &\isomorphic  \Hom_{\ShalikaSubgroupEven}\left(\ShalikaCharacter^{-1} \otimes \finiteFieldRepresentation, \reallywidetilde{\Ind{\ShalikaSubgroupEven \cap \Mirabolic{2m}}{\ShalikaSubgroupEven}{1}} \right) \\
			& \isomorphic \Hom_{\ShalikaSubgroupEven}\left(\ShalikaCharacter^{-1} \otimes \finiteFieldRepresentation, {\Ind{\ShalikaSubgroupEven \cap \Mirabolic{2m}}{\ShalikaSubgroupEven}{\Contragradient{1}}} \right).
		\end{split}
	\end{equation*}

	Thanks to Frobenius reciprocity, we get that $$\Hom_{\ShalikaSubgroupEven}\left(\ShalikaCharacter^{-1} \otimes \finiteFieldRepresentation, {\Ind{\ShalikaSubgroupEven \cap \Mirabolic{2m}}{\ShalikaSubgroupEven}{1}} \right) \isomorphic \Hom_{\ShalikaSubgroupEven \cap \Mirabolic{2m}}\left(\ShalikaCharacter^{-1} \otimes \finiteFieldRepresentation, 1 \right) \isomorphic \Hom_{\ShalikaSubgroupEven \cap \Mirabolic{2m}}\left( \finiteFieldRepresentation, \ShalikaCharacter \right),$$
	and by \cref{lem:multiplicity-one-shalika-subgroup-even}, the last space has dimension $\le 1$.
\end{proof}

\begin{rem}
	As seen in the proof, the proof fails if $\finiteFieldRepresentation$ admits a Shalika vector. In this case, a modified functional equation is valid. This is discussed in \cref{thm:modified-functional-equation}.
\end{rem}

\subsubsection{The odd case}
Similarly to the even case, the proof relies on the following lemma:

\begin{lem}\label{lem:multiplicity-one-shalika-subgroup-odd}
	Let $\representationDeclaration{\finiteFieldRepresentation}$ be an irreducible cuspidal representation of $\GL{2m+1}{\finiteField}$. Then
	$$\dim_{\cComplex}{\Hom_{\ShalikaSubgroupOdd \cap \Mirabolic{2m + 1}}} \left(\finiteFieldRepresentation, \ShalikaCharacter \right) \le 1,$$
	where $\Mirabolic{2m+1}$ is the mirabolic subgroup.
\end{lem}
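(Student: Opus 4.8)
The plan is to follow the proof of \cref{lem:multiplicity-one-shalika-subgroup-even} line by line, replacing the even-case multiplicity-one input by an odd-case analog. Write $G_m = \GL{m}{\finiteField}$ and embed it into $\GL{2m+1}{\finiteField}$ as the top-left corner, $\iota(g) = \diag\left(g, \IdentityMatrix{m}, 1\right)$. I would construct a homomorphism
$$\Lambda : \Hom_{\ShalikaSubgroupOdd \cap \Mirabolic{2m+1}}\left(\finiteFieldRepresentation, \ShalikaCharacter\right) \longrightarrow \Hom_{H}\left(\finiteFieldRepresentation, 1\right), \qquad \Lambda(L)(v) = \frac{1}{\sizeof{G_m}} \sum_{g \in G_m} L\left(\finiteFieldRepresentation\left(\iota(g)\right) v\right),$$
where $H \le \GL{2m+1}{\finiteField}$ is the subgroup of matrices $\left(\begin{smallmatrix} g_1 & 0 & Y \\ 0 & g_2 & 0 \\ 0 & 0 & 1 \end{smallmatrix}\right)$ with $g_1, g_2 \in G_m$ and $Y \in \Mat{m}{1}{\finiteField}$. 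Conjugating by a permutation matrix that moves the last $1 \times 1$ block to sit between the two $\GL m$ blocks identifies $H$ with the block-diagonal copy of $\Mirabolic{m+1} \times G_m$ inside $\GL{m+1}{\finiteField} \times G_m$, so a bound on $\dim_\cComplex \Hom_H\left(\finiteFieldRepresentation, 1\right)$ is a ``mirabolic-times-$\GL m$'' multiplicity statement for the cuspidal $\finiteFieldRepresentation$.

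The first step is to check that $\Lambda$ is well defined, i.e.\ that $\Lambda(L)$ is $H$-invariant. Here $\ShalikaSubgroupOdd \cap \Mirabolic{2m+1}$ consists of the matrices $\left(\begin{smallmatrix} g & X & Y \\ & g & \\ & & 1 \end{smallmatrix}\right)$, and $\ShalikaCharacter$ restricts trivially both to the diagonal subgroup $\left\{\diag(h, h, 1) : h \in G_m\right\}$ and to the column subgroup $U' = \left\{\left(\begin{smallmatrix} \IdentityMatrix{m} & 0 & Y \\ & \IdentityMatrix{m} & \\ & & 1 \end{smallmatrix}\right) : Y \in \Mat{m}{1}{\finiteField}\right\}$. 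Using the commutation relations $\iota(g)\diag(h,h,1) = \diag(h,h,1)\iota(h^{-1}gh)$ and $\iota(g) U' \iota(g)^{-1} = U'$ together with the $\ShalikaCharacter$-equivariance of $L$, a re-indexing of the averaging variable shows that $\Lambda(L)$ is invariant under $\left\{\diag(g_1, g_2, 1)\right\}$ and under $U'$; these generate $H$, so $\Lambda(L) \in \Hom_H\left(\finiteFieldRepresentation, 1\right)$. This part is pure bookkeeping. The second step is the injectivity of $\Lambda$, which copies the Fourier-transform argument in the proof of \cref{lem:multiplicity-one-shalika-subgroup-even}: if $L \ne 0$, pick $v$ with $L(v) \ne 0$ and set $v_\Phi = \frac{1}{\sqrt{\sizeof{\SquareMat{m}{\finiteField}}}} \sum_{X \in \SquareMat{m}{\finiteField}} \Phi(X)\, \finiteFieldRepresentation\left(\SmallShalikaUnipotentElementOdd{X}\right) v$ for $\Phi : \SquareMat{m}{\finiteField} \to \cComplex$; using $\iota(g)\SmallShalikaUnipotentElementOdd{X} = \SmallShalikaUnipotentElementOdd{gX}\iota(g)$ and $\ShalikaCharacter\left(\SmallShalikaUnipotentElementOdd{gX}\right) = \fieldCharacter\left(\trace(gX)\right)$ one gets $\Lambda(L)(v_\Phi) = \frac{1}{\sizeof{G_m}} \sum_{g \in G_m} \FourierTransformWithRespectToCharacter{\Phi}{\fieldCharacter}(g)\, L\left(\finiteFieldRepresentation\left(\iota(g)\right) v\right)$ after identifying $\SquareMat{m}{\finiteField}$ with its dual via the trace pairing, and choosing $\Phi$ with $\FourierTransformWithRespectToCharacter{\Phi}{\fieldCharacter} = \indicatorFunction{\IdentityMatrix{m}}$ gives $\Lambda(L)(v_\Phi) = \frac{1}{\sizeof{G_m}} L(v) \ne 0$.

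Granting these two steps, $\dim_\cComplex \Hom_{\ShalikaSubgroupOdd \cap \Mirabolic{2m+1}}\left(\finiteFieldRepresentation, \ShalikaCharacter\right) \le \dim_\cComplex \Hom_H\left(\finiteFieldRepresentation, 1\right)$, and the lemma reduces to proving $\dim_\cComplex \Hom_H\left(\finiteFieldRepresentation, 1\right) \le 1$. This is the odd-$n$ analog of \cref{thm:multiplicity-one-theorem-even-case}, and it is where the genuine difficulty lies; I would prove it in the appendix, along the same lines as \cref{thm:multiplicity-one-theorem-even-case}. The subtlety is that $H$ is not (a reductive group times) a parabolic unipotent radical, so cuspidality of $\finiteFieldRepresentation$ does not annihilate it for free; after the permutation conjugation above the statement becomes that $\finiteFieldRepresentation|_{\GL{m+1}{\finiteField} \times G_m}$ admits at most a one-dimensional space of $\Mirabolic{m+1} \times G_m$-invariant functionals, which I expect to establish by feeding the Bernstein--Zelevinsky (Gelfand--Graev) description of $\finiteFieldRepresentation|_{\Mirabolic{m+1}}$ into an induction. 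This appendix input, not the construction of $\Lambda$, is the main obstacle.
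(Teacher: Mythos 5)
Your proposal is correct and takes essentially the same route as the paper: your subgroup $H$ is exactly $\NonstandardLeviSubgroup{2m+1} \cap \Mirabolic{2m+1}$, your averaging map $\Lambda$ over the top-left copy of $\GL{m}{\finiteField}$ and the Fourier-transform injectivity argument are precisely the paper's, and the deferred input $\dim_{\cComplex}\Hom_{H}\left(\finiteFieldRepresentation, 1\right) \le 1$ is the paper's \cref{thm:multiplicity-one-theorem-odd-case}, proved in the appendix by the same conjugation to the block-diagonal $\Mirabolic{m+1} \times \GL{m}{\finiteField}$ and a Bernstein--Zelevinsky derivative induction. The only difference is cosmetic: you spell out the $H$-invariance of $\Lambda(L)$, which the paper leaves implicit.
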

\begin{proof}
	Let $\NonstandardLeviSubgroup{2 m + 1}$ be the following maximal (non-standard) Levi subgroup corresponding to the partition $\left(m+1, m\right)$:
	$$
		\NonstandardLeviSubgroup{2m + 1} = \left\{
		\begin{pmatrix}
			g_1 &     & u       \\
			    & g_2 &         \\
			v   &     & \lambda
		\end{pmatrix} \mid u \in \Mat{m}{1}{\finiteField}, v \in \Mat{1}{m}{\finiteField}, \lambda \in \finiteField, g_1, g_2 \in \GL{m}{\finiteField}
		\right\} \cap \GL{2m+1}{\finiteField}.
	$$
	We define a homomorphism $$\Lambda : \Hom_{\ShalikaSubgroupOdd \cap \Mirabolic{2m + 1}} \left(\finiteFieldRepresentation, \ShalikaCharacter \right) \rightarrow \Hom_{\NonstandardLeviSubgroup{2m + 1} \cap \Mirabolic{2m + 1}} \left(\finiteFieldRepresentation, 1 \right),$$ by
	$$ \Lambda\left(L\right)\left(v\right) = \frac{1}{\sizeof{\GL{m}{\finiteField}}}\sum_{g \in \GL{m}{\finiteField}}{L \left(\finiteFieldRepresentation\diagTwo{g}{\IdentityMatrix{m+1}} v \right)}.$$
	We claim that $\Lambda$ is injective. Let $L \ne 0$ and let $v \in \underlyingVectorSpace{\finiteFieldRepresentation}$ such that $L \left(v\right) \ne 0$. For a function $\Phi : \SquareMat{m}{\finiteField} \rightarrow \cComplex$, we define $$v_{\Phi} = \frac{1}{\sqrt{\sizeof{\SquareMat{m}{\finiteField}}}} \sum_{X \in \SquareMat{m}{\finiteField}}{\Phi \left(X\right) \finiteFieldRepresentation \ShalikaUnipotentElementOdd{X}}v.$$
	Then $$\Lambda\left(L\right)\left(v_\Phi\right) = \frac{1}{\sizeof{\GL{m}{\finiteField}}} \sum_{g \in \GL{m}{\finiteField}} \FourierTransformWithRespectToCharacter{\Phi}{\fieldCharacter} \left(g\right) L\left( \finiteFieldRepresentation\diagTwo{g}{\IdentityMatrix{m+1}}v \right).$$
	As in the even case, choosing $\Phi$ with $\FourierTransformWithRespectToCharacter{\Phi}{\fieldCharacter} = \indicatorFunction{\IdentityMatrix{m}}$, we get that $\Lambda \left(L\right) \left(v\right) \ne 0$, thus $\Lambda$ is injective. The lemma now follows from the following multiplicity one theorem.
\end{proof}
\begin{thm}\label{thm:multiplicity-one-theorem-odd-case}
	Let $\representationDeclaration{\finiteFieldRepresentation}$ be an irreducible cuspidal representation of $\GL{2m+1}{\finiteField}$. Then
	$$\dim_{\cComplex}\Hom_{\NonstandardLeviSubgroup{2m+1} \cap \Mirabolic{2m+1}}\left(\finiteFieldRepresentation, 1\right) \le 1.$$
\end{thm}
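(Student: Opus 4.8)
The plan is to reduce the statement to a Mackey-theoretic count on the mirabolic subgroup. First one makes the group explicit: intersecting $\NonstandardLeviSubgroup{2m+1}$ with $\Mirabolic{2m+1}$ forces $v = 0$ and $\lambda = 1$, so, writing $H := \NonstandardLeviSubgroup{2m+1} \cap \Mirabolic{2m+1}$,
$$H = \left\{ \begin{pmatrix} g_1 & & u \\ & g_2 & \\ & & 1 \end{pmatrix} \mid g_1, g_2 \in \GL{m}{\finiteField}, \, u \in \Mat{m}{1}{\finiteField} \right\} \isomorphic \Mirabolic{m+1} \times \GL{m}{\finiteField},$$
and, crucially, $H \subseteq \Mirabolic{2m+1}$. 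Hence the statement only involves $\finiteFieldRepresentation|_{\Mirabolic{2m+1}}$.

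Second, since $\finiteFieldRepresentation$ is cuspidal, I would invoke the classical description of the restriction of a cuspidal representation of $\GL{n}{\finiteField}$ to its mirabolic subgroup --- equivalently, the vanishing of all proper Bernstein--Zelevinsky derivatives of a cuspidal representation --- namely $\finiteFieldRepresentation|_{\Mirabolic{2m+1}} \isomorphic \Ind{\UnipotentRadical{2m+1}}{\Mirabolic{2m+1}}{\fieldCharacter}$, the Gelfand--Graev representation of the mirabolic, with $\UnipotentRadical{2m+1}$ the full upper unipotent subgroup and $\fieldCharacter$ the fixed nondegenerate character. Combining the Mackey decomposition of $\Ind{\UnipotentRadical{2m+1}}{\Mirabolic{2m+1}}{\fieldCharacter}|_{H}$ with Frobenius reciprocity then gives
$$\dim_{\cComplex} \Hom_{H}\left(\finiteFieldRepresentation, 1\right) = \#\left\{ \, x \in H \backslash \Mirabolic{2m+1} / \UnipotentRadical{2m+1} \ : \ \fieldCharacter\big|_{x^{-1} H x \, \cap \, \UnipotentRadical{2m+1}} \equiv 1 \, \right\},$$
so it suffices to show that at most one double coset contributes.

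Third, one cuts down the double-coset set. The abelian radical $V$ of $\Mirabolic{2m+1}$ lies in $\UnipotentRadical{2m+1}$ and is normal in $\Mirabolic{2m+1}$, while $H \cdot V = \LeviSubgroup{m,m} \cdot V$ and $H \cap V$ is the ``first $m$ coordinates'' subspace of $V$; quotienting by $V$ therefore identifies $H \backslash \Mirabolic{2m+1} / \UnipotentRadical{2m+1}$ with $\LeviSubgroup{m,m} \backslash \GL{2m}{\finiteField} / \UnipotentRadical{2m}$, which by the Bruhat decomposition has an explicit finite set of representatives (the diagonal torus being absorbed into $\LeviSubgroup{m,m}$). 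For each representative $x$ one reads off $x^{-1} H x \cap \UnipotentRadical{2m+1}$ as a product of root subgroups of $\GL{2m+1}{\finiteField}$, and ``$\fieldCharacter$ trivial there'' means this group contains no simple root subgroup; a direct root-by-root inspection should show this holds for exactly one representative, yielding $\dim_{\cComplex} \Hom_{H}(\finiteFieldRepresentation, 1) \le 1$. The companion \cref{thm:multiplicity-one-theorem-even-case} is the analogous statement for $\GL{2m}{\finiteField}$ --- with $\LeviSubgroup{m,m-1} \le \GL{2m-1}{\finiteField}$ in place of $\LeviSubgroup{m,m}$ --- and is handled the same way.

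The main obstacle is exactly this last combinatorial step: proving that the linear constraints coming from the nondegeneracy of $\fieldCharacter$ on the various $\LeviSubgroup{m,m}$-conjugates of $H$ are simultaneously satisfiable for a unique Weyl-group representative and for no other. A more structural alternative to the global Mackey count would be an induction on $m$, peeling one $\GL{1}{\finiteField}$ factor off the mirabolic at a time via the Bernstein--Zelevinsky functors $\Phi^{\pm}, \Psi^{\pm}$ to pass between $\Mirabolic{k}$-modules and $\GL{k-1}{\finiteField}$-modules, reducing at each stage to a strictly smaller $\Hom$-space of the same type and bottoming out at the uniqueness of the Whittaker functional.
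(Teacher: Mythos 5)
Your reductions are fine as far as they go: the identification of $H=\NonstandardLeviSubgroup{2m+1}\cap\Mirabolic{2m+1}$ with a copy of $\Mirabolic{m+1}\times\GL{m}{\finiteField}$ is correct; the isomorphism $\finiteFieldRepresentation\restriction_{\Mirabolic{2m+1}}\isomorphic\Ind{\UnipotentRadical{2m+1}}{\Mirabolic{2m+1}}{\fieldCharacter}$ is exactly \cref{thm:cuspidal-restriction-to-mirabolic}, since $\left(\MirabolicPlusFunctor\right)^{2m}\left(1\right)$ is induced in stages from $\fieldCharacter$ on $\UnipotentRadical{2m+1}$; the Mackey--Frobenius count of contributing double cosets is valid; and so is the passage modulo the abelian radical $\MirabolicUnipotentRadical{2m+1}$ to $\lquot{\LeviSubgroup{m,m}}{\GL{2m}{\finiteField}}/\UnipotentRadical{2m}$. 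The genuine gap is the final step, which is the entire content of the theorem and which you leave as ``a direct root-by-root inspection should show''. Two concrete problems. First, $\lquot{\LeviSubgroup{m,m}}{\GL{2m}{\finiteField}}/\UnipotentRadical{2m}$ is \emph{not} parametrized by Weyl cosets: after absorbing the torus, a general double coset has representative $x=nw$ with $w$ a permutation matrix and $n$ running over $\lquot{\left(\UnipotentRadical{2m}\cap\LeviSubgroup{m,m}\right)}{\UnipotentRadical{2m}}/\left(\UnipotentRadical{2m}\cap w\UnipotentRadical{2m}w^{-1}\right)$, a set whose size grows with $q$ (already for $m=1$ there are about $q+1$ cosets, not $2$), and the triviality condition on $\fieldCharacter$ genuinely depends on $n$, not only on $w$. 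Second, for such $x=nw$ the group $x^{-1}Hx\cap\UnipotentRadical{2m+1}$ is not $T$-stable, hence not a product of root subgroups, so your criterion ``$\fieldCharacter$ trivial there iff no simple root subgroup is contained'' is not correct as stated; it is only valid for pure Weyl representatives. So the uniqueness assertion is neither correctly set up nor proved, and it is precisely the hard part you flag as the main obstacle.

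It is worth noting that the ``more structural alternative'' you sketch at the end is in substance the paper's actual proof: after conjugating $\NonstandardLeviSubgroup{2m+1}\cap\Mirabolic{2m+1}$ into $\TwistedLeviSubgroup[2m+2]{m+1}{m}\cap\Mirabolic{2m+1}$, one applies \cref{thm:cuspidal-restriction-to-mirabolic} and then iterates the two embedding steps \cref{prop:embedding-n-minus-1} and \cref{prop:embedding-n-minus-2}, whose proofs are distribution-support arguments resting on \cref{lem:stabilizer-of-character-p-q} and \cref{lem:stabilizer-of-character-p-q-minus-1}; this is exactly the orbit/character analysis your global Mackey count requires, but carried out one derivative at a time, where the support computation is tractable, bottoming out at the one-dimensional space $\Hom_{\Mirabolic{2}\cap\TwistedLeviSubgroup[2]{1}{0}}\left(1,1\right)$. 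To complete your write-up you should either carry out the full double-coset and character analysis (in effect redoing those lemmas globally, over the larger set of representatives described above), or switch to the stepwise reduction.
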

As \cref{thm:multiplicity-one-theorem-even-case}, the proof of \cref{thm:multiplicity-one-theorem-odd-case} will be given in the appendix, as it detours from the main line of the paper.

\begin{proof}[Proof of \cref{thm:functional-equation-finite-field}, $n=2m+1$]
	By \cref{prop:equivariance-of-jacquet-shalika-integral-even-case},
	$\JS{\finiteFieldRepresentation}{\fieldCharacter}, \DualJS{\finiteFieldRepresentation}{\fieldCharacter}$ define non-zero elements of $ \Hom_{\ShalikaSubgroupOdd}(\finiteFieldRepresentation \otimes \Schwartz(\finiteField^m), 1)$. Therefore, to conclude the theorem, it suffices to prove
	$$\dim \Hom_{\ShalikaSubgroupOdd}(\finiteFieldRepresentation \otimes \Schwartz(\finiteField^m), 1) \le 1.$$

	By \cref{prop:shalika-subgroup-odd-representation}, the action of $\ShalikaSubgroupOdd$ on $\Schwartz\left( \finiteField^m \right)$ is equivalent to the representation $\Ind{\ShalikaSubgroupOdd \cap \Mirabolic{2m+1}}{\ShalikaSubgroupOdd}{\ShalikaCharacter^{-1}}$.

	Using the same steps as in the even case, we have
	\begin{align*}
		\Hom_{\ShalikaSubgroupOdd}\left(\finiteFieldRepresentation \otimes \Ind{\ShalikaSubgroupOdd \cap \Mirabolic{2m+1}}{\ShalikaSubgroupOdd}{\ShalikaCharacter^{-1}}, 1\right) & \isomorphic \Hom_{\ShalikaSubgroupOdd}\left(\finiteFieldRepresentation, \reallywidetilde{\Ind{\ShalikaSubgroupOdd \cap \Mirabolic{2m+1}}{\ShalikaSubgroupOdd}{\ShalikaCharacter^{-1}}}\right)   \\
		                                                                                                                                                                          & \isomorphic \Hom_{\ShalikaSubgroupOdd}\left(\finiteFieldRepresentation, {\Ind{\ShalikaSubgroupOdd \cap \Mirabolic{2m+1}}{\ShalikaSubgroupOdd}{\Contragradient{\ShalikaCharacter^{-1}}}}\right),
	\end{align*}
	By Frobenius reciprocity, we have that the last space is isomorphic to $\Hom_{\ShalikaSubgroupOdd \cap \Mirabolic{2m + 1}}\left(\finiteFieldRepresentation, \ShalikaCharacter\right)$, and therefore by \cref{lem:multiplicity-one-shalika-subgroup-odd}, the dimension of this space is at most $1$.
\end{proof}

\subsubsection{Equivalent conditions for admitting a Shalika vector}

Let $\representationDeclaration{\finiteFieldRepresentation}$ be an irreducible cuspidal representation of $\GL{2m}{\finiteField}$. In this section we state equivalent conditions for $\finiteFieldRepresentation$ to admit a Shalika vector.

\begin{prop}
	The representation $\finiteFieldRepresentation$ admits a Shalika vector if and only if there exists $W \in \WhittakerModelOfFiniteFieldRepresentation$ such that $\JSOfFiniteFieldRepresentation{W}{1} \ne 0$, where $1$ denotes the constant function valued $1$ on $\finiteField^m$.
\end{prop}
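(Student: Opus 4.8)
The plan is to prove both directions by directly relating the existence of a Shalika vector to the non-vanishing of the bilinear form $W \mapsto \JSOfFiniteFieldRepresentation{W}{1}$ restricted to the constant function $\phi = 1$. Observe that $1 \in \Schwartz(\finiteField^m)$ is stabilized by the $\ShalikaSubgroupAction$-action of the mirabolic part: for $s = \left(\begin{smallmatrix} g & X \\ & g \end{smallmatrix}\right)$ with $\lastrowvector g = \lastrowvector$, and more relevantly, the constant function is fixed by $\ShalikaSubgroupAction(s)$ for \emph{all} $s \in \ShalikaSubgroupEven$ exactly when $g$ ranges over all of $\GL{m}{\finiteField}$ — but here $\ShalikaSubgroupAction(s)\mathbf{1}(y) = \mathbf{1}(yg) = \mathbf{1}(y) = 1$, so in fact $\ShalikaSubgroupAction(s)\mathbf{1} = \mathbf{1}$ for every $s \in \ShalikaSubgroupEven$. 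Hence, by the equivariance property (\cref{prop:equivariance-of-jacquet-shalika-integral-even-case}), the linear functional $\ell(W) = \JSOfFiniteFieldRepresentation{W}{1}$ satisfies $\ell(\finiteFieldRepresentation(s)W) = \ShalikaCharacter(s)\ell(W)$ for every $s \in \ShalikaSubgroupEven$; that is, $\ell \in \Hom_{\ShalikaSubgroupEven}(\finiteFieldRepresentation, \ShalikaCharacter)$.

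For the ``only if'' direction, suppose $\finiteFieldRepresentation$ admits a Shalika vector $v_0$. Using an inner product $\innerproduct{\cdot}{\cdot}$ with respect to which $\finiteFieldRepresentation$ is unitary, the functional $v \mapsto \innerproduct{v}{v_0}$ lies in $\Hom_{\ShalikaSubgroupEven}(\finiteFieldRepresentation, \ShalikaCharacter)$ and is non-zero, exactly as in the argument inside the proof of \cref{thm:functional-equation-finite-field} in the even case. One then needs to produce a $W$ with $\ell(W) = \JSOfFiniteFieldRepresentation{W}{1} \ne 0$. I would argue that $\Hom_{\ShalikaSubgroupEven}(\finiteFieldRepresentation, \ShalikaCharacter)$ is at most one-dimensional — this follows from \cref{lem:multiplicity-one-shalika-subgroup-even} together with Frobenius reciprocity / the compact induction formalism, since $\Hom_{\ShalikaSubgroupEven}(\finiteFieldRepresentation, \ShalikaCharacter) \isomorphic \Hom_{\ShalikaSubgroupEven \cap \Mirabolic{2m}}(\finiteFieldRepresentation, \ShalikaCharacter)$ cannot be, wait — more carefully: restriction to the mirabolic gives an embedding $\Hom_{\ShalikaSubgroupEven}(\finiteFieldRepresentation,\ShalikaCharacter) \hookrightarrow \Hom_{\ShalikaSubgroupEven \cap \Mirabolic{2m}}(\finiteFieldRepresentation,\ShalikaCharacter)$, and the latter has dimension $\le 1$ by \cref{lem:multiplicity-one-shalika-subgroup-even}. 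Therefore if a Shalika vector exists, the space is exactly one-dimensional and spanned by the functional coming from $v_0$. It remains to check that $\ell \ne 0$: since $\ell$ lies in this one-dimensional space, it suffices to exhibit one vector on which $\ell$ does not vanish, or equivalently to rule out $\ell = 0$ — for this I would use the explicit formula for $\JSOfFiniteFieldRepresentation{W}{\phi}$ with a test function built from the Bessel function as in \cref{prop:jacquet-shalika-integral-of-bessel-function-even-case}, adapting the computation there with $\phi = 1$ in place of $\phi = \indicatorFunction{\lastrowvector}$ and tracking the sum over $g \in \lquot{\UnipotentSubgroup}{G}$; the support properties of $\besselFunctionOfFiniteFieldRepresentation$ from \cref{thm:support-of-bessel-function} keep this sum finite and, one expects, non-zero.

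For the ``if'' direction, suppose some $W$ has $\JSOfFiniteFieldRepresentation{W}{1} \ne 0$. Then $\ell \ne 0$, so $\Hom_{\ShalikaSubgroupEven}(\finiteFieldRepresentation, \ShalikaCharacter) \ne 0$. A non-zero $L$ in this space, pulled back through a unitary inner product as in the proof of \cref{thm:functional-equation-finite-field}, produces a vector $v_0$ with $\innerproduct{v}{\finiteFieldRepresentation(s)v_0} = \innerproduct{v}{\ShalikaCharacter(s)v_0}$ for all $v$ and all $s \in \ShalikaSubgroupEven$, whence $\finiteFieldRepresentation(s)v_0 = \ShalikaCharacter(s)v_0$; that is, $v_0$ is a Shalika vector. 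This is essentially the contrapositive of the argument already run in the even-case proof of the functional equation, so it can be cited almost verbatim.

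The main obstacle is the non-vanishing claim $\ell \ne 0$ in the ``only if'' direction: knowing abstractly that $\Hom_{\ShalikaSubgroupEven}(\finiteFieldRepresentation,\ShalikaCharacter)$ is one-dimensional does not by itself tell us that the specific functional $W \mapsto \JSOfFiniteFieldRepresentation{W}{1}$ is the non-zero one rather than identically zero. The cleanest route is probably to show that the Jacquet--Shalika bilinear form $B(W,\phi)$ is non-degenerate in an appropriate sense, or to directly unwind $\JSOfFiniteFieldRepresentation{W}{1}$ using $\FourierTransformWithRespectToCharacter{(\mathbf 1)}{\fieldCharacter} = q^{m/2}\indicatorFunction{0}$ and the dual formula of \cref{prop:dual-jacquet-shalika-formula-even}, reducing the question to one about $\DualJSOfFiniteFieldRepresentation{W}{\indicatorFunction{0}}$, which is more tractable because $\indicatorFunction{0}$ picks out a single term. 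I expect this bookkeeping, combined with the Bessel-function support theorem, to close the gap.
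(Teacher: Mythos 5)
Your ``if'' direction is fine: since $\rho(s)\mathbf{1}=\mathbf{1}$ for all $s\in\ShalikaSubgroupEven$, the functional $\ell(W)=\JSOfFiniteFieldRepresentation{W}{1}$ lies in $\Hom_{\ShalikaSubgroupEven}(\finiteFieldRepresentation,\ShalikaCharacter)$, and the unitary inner-product trick turns a non-zero element of that space into a Shalika vector; this is essentially the same argument the paper runs (the paper instead builds the Shalika vector directly by averaging $W$ against $\fieldCharacter(-\trace X)$ over the Shalika group, but the two are equivalent). The genuine gap is exactly the one you flag: in the ``only if'' direction you never show $\ell\neq 0$. Knowing that $\Hom_{\ShalikaSubgroupEven}(\finiteFieldRepresentation,\ShalikaCharacter)$ is one-dimensional does not decide whether $\ell$ is a non-zero multiple of its generator or the zero functional; indeed, if $W_0$ is a Shalika vector in the Whittaker model one computes $\JSOfFiniteFieldRepresentation{W_0}{1}=W_0(\evenPermutationMatrix)$, so the whole content of this direction is the non-vanishing of the Shalika--Whittaker vector at the specific point $\evenPermutationMatrix$ -- a real compatibility statement, not bookkeeping. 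Neither of your two suggested repairs closes it: (i) taking $W$ proportional to $\finiteFieldRepresentation(\evenPermutationMatrix^{-1})\besselFunctionOfFiniteFieldRepresentation$ and $\phi=\mathbf{1}$ produces, after the support analysis, a full sum of Bessel values over $\antidiag(\lambda_1 I_{2m_1},\dots,\lambda_r I_{2m_r})$ (the sum of \cref{thm:mellin-inverse-even-case} without the $\fieldCharacter$-factor), and \cref{thm:support-of-bessel-function} only tells you which terms survive, not that they fail to cancel; (ii) the Fourier/dual route is a dead end, because $\FourierTransformWithRespectToCharacter{\mathbf{1}}{\fieldCharacter}=q^{m/2}\indicatorFunction{0}$ and in \cref{prop:dual-jacquet-shalika-formula-even} the argument $\firstrowvector\inverseTranspose{g}$ is never $0$, so $\DualJSOfFiniteFieldRepresentation{W}{\mathbf{1}}=0$ identically -- that is precisely why the functional equation breaks down in the Shalika case, and it carries no information about $\JSOfFiniteFieldRepresentation{W}{\mathbf{1}}$.

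The paper closes this gap constructively. It defines $W(h)=\sum_{g\in\lquot{\UnipotentSubgroup}{P}}\sum_{X\in\lquot{\UpperTriangularAdditive}{M}}\besselFunctionOfFiniteFieldRepresentation\bigl(h\SmallShalikaUnipotentElement{X}\SmallShalikaDiagonalElement{g}\evenPermutationMatrix^{-1}\bigr)\fieldCharacter(-\trace X)$, where $P=\Mirabolic{m}(\finiteField)\le\GL{m}{\finiteField}$ is the mirabolic; a change of variables shows $W(hs)=\ShalikaCharacter(s)W(h)$ for $s\in\ShalikaSubgroupEven\cap\Mirabolic{2m}$. Because a Shalika vector exists, $0\neq\Hom_{\ShalikaSubgroupEven}(\finiteFieldRepresentation,\ShalikaCharacter)\subseteq\Hom_{\ShalikaSubgroupEven\cap\Mirabolic{2m}}(\finiteFieldRepresentation,\ShalikaCharacter)$, and by \cref{lem:multiplicity-one-shalika-subgroup-even} these spaces coincide; hence the $(\ShalikaSubgroupEven\cap\Mirabolic{2m},\ShalikaCharacter)$-equivariance of $W$ upgrades to full $(\ShalikaSubgroupEven,\ShalikaCharacter)$-equivariance, i.e.\ $W$ is itself a Shalika vector. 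Finally $\JSOfFiniteFieldRepresentation{W}{1}=W(\evenPermutationMatrix)=1$, the last equality by the same support computation as in \cref{prop:jacquet-shalika-integral-of-bessel-function-even-case}. It is this averaged Bessel function over the mirabolic part of the Shalika group -- not $\finiteFieldRepresentation(\evenPermutationMatrix^{-1})\besselFunctionOfFiniteFieldRepresentation$ itself -- together with the multiplicity-one bootstrap, that your proposal is missing.
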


\begin{proof}
	Suppose that there exists $W \in \WhittakerModelOfFiniteFieldRepresentation$ such that $\JSOfFiniteFieldRepresentation{W}{1} \ne 0$. Define $W_0 \in \WhittakerModelOfFiniteFieldRepresentation$ by
	$$ W_0 \left( h \right) =
		\frac{1}{\grpIndex{G}{\UnipotentSubgroup}} \frac{1}{\grpIndex{M}{\UpperTriangularAdditive}} \sum_{g \in \lquot{\UnipotentSubgroup}{G}}\sum_{X \in \lquot{\UpperTriangularAdditive}{M}} W \left( h \ShalikaUnipotentElement{X} \ShalikaDiagonalElement{g}
		\right)
		\fieldCharacter \left( -\trace X \right).$$ Then $W_0 \ne 0$ as $W_0 \left(\evenPermutationMatrix\right) = \JSOfFiniteFieldRepresentation{W}{1}$. By changing variables, we can show that
        $$W_0 \left( h s \right) = \ShalikaCharacter \left( s \right) W_0 \left( h \right),$$ for every $s \in \ShalikaSubgroupEven$,
        thus $W_0$ is a Shalika vector.

	For the other direction, assume $\finiteFieldRepresentation$ admits a non-zero Shalika vector $W_0 \in \WhittakerModelOfFiniteFieldRepresentation$. Choose an inner product $\innerproduct{\cdot}{\cdot}$ on $\WhittakerModelOfFiniteFieldRepresentation$, with respect to which $\finiteFieldRepresentation$ is unitary. Then $W_0$ defines a non-zero element $T_{W_0} \in \Hom_{\ShalikaSubgroupEven}\left(\finiteFieldRepresentation, \ShalikaCharacter\right)$ by $T_{W_0}\left(W'\right) = \innerproduct{W'}{W_0}$.
	We have that $$\Hom_{\ShalikaSubgroupEven}\left(\finiteFieldRepresentation, \ShalikaCharacter\right) \subseteq \Hom_{\ShalikaSubgroupEven \cap \Mirabolic{2m}}\left(\finiteFieldRepresentation, \ShalikaCharacter\right).$$ By \cref{lem:multiplicity-one-shalika-subgroup-even}, we have in this case that $0 \ne \Hom_{\ShalikaSubgroupEven}\left(\finiteFieldRepresentation, \ShalikaCharacter\right) = \Hom_{\ShalikaSubgroupEven \cap \Mirabolic{2m}}\left(\finiteFieldRepresentation, \ShalikaCharacter\right)$. We define $W \in \WhittakerModelOfFiniteFieldRepresentation$ by
	$$W \left(h\right) =
		\sum_{g \in \lquot{\UnipotentSubgroup}{P}}\sum_{X \in \lquot{\UpperTriangularAdditive}{M}}
		{
			\besselFunctionOfFiniteFieldRepresentation \left(h
			\ShalikaUnipotentElement{X}
			\ShalikaDiagonalElement{g}
			\evenPermutationMatrix^{-1}
			\right)
			\fieldCharacter \left( -\trace X \right)
		},$$
	where $P = \Mirabolic{m}\left(\finiteField\right) = \left\{ g \in \GL{m}{\finiteField} \mid \rowvector{m} g = \rowvector{m} \right\} $ is the mirabolic subgroup.
	We can show that for $s \in \ShalikaSubgroupEven \cap \Mirabolic{2m}$,
	$$W\left(hs\right) = \ShalikaCharacter \left( s \right)W \left(h\right).$$
	Therefore, $W$  defines an element  $T_W \in \Hom_{\ShalikaSubgroupEven \cap \Mirabolic{2m}}\left(\finiteFieldRepresentation, \ShalikaCharacter\right)$ by $T_W \left( W' \right) = \innerproduct{W'}{W}$. Thus $T_W \in \Hom_{\ShalikaSubgroupEven}\left(\finiteFieldRepresentation, \ShalikaCharacter\right)$. This implies that $W$ is a Shalika vector. We can also show that $\JSOfFiniteFieldRepresentation{W}{1} = W\left( \evenPermutationMatrix \right)$. By \cref{prop:jacquet-shalika-integral-of-bessel-function-even-case}, we have that $W \left(\evenPermutationMatrix\right) = 1$, hence $\JSOfFiniteFieldRepresentation{W}{1} = W\left( \evenPermutationMatrix \right) \ne 0$.
\end{proof}

\begin{cor}
	If $\finiteFieldRepresentation$ admits a Shalika vector, then the functional equation in \cref{thm:functional-equation-finite-field} does not hold.
\end{cor}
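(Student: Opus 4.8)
The plan is to argue by contradiction. Suppose the functional equation of \cref{thm:functional-equation-finite-field} held for $\finiteFieldRepresentation$ (so necessarily $n = 2m$, since ``admitting a Shalika vector'' only makes sense in the even case), say $\DualJSOfFiniteFieldRepresentation{W}{\phi} = \gamma \cdot \JSOfFiniteFieldRepresentation{W}{\phi}$ for every $W \in \WhittakerModelOfFiniteFieldRepresentation$ and $\phi \in \Schwartz(\finiteField^m)$, with $\gamma \in \cComplex$. I would test this identity against the constant function $\phi = 1$ and deduce $\gamma = 0$, contradicting $\gamma \in \multiplicativegroup{\cComplex}$.

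The first step is to show that $\DualJSOfFiniteFieldRepresentation{W}{1} = 0$ for every $W \in \WhittakerModelOfFiniteFieldRepresentation$. By orthogonality of additive characters, the Fourier transform of the constant function is $\FourierTransformWithRespectToCharacter{1}{\fieldCharacter} = q^{\frac{m}{2}} \indicatorFunction{0}$, so by the definition of the dual integral $\DualJSOfFiniteFieldRepresentation{W}{1}$ equals $q^{\frac{m}{2}}$ times a Jacquet--Shalika integral of $\Contragradient{\finiteFieldRepresentation}$ evaluated at the test function $\indicatorFunction{0}$. In the even Jacquet--Shalika integral, however, the test function enters only through the value $\phi(\lastrowvector g)$ with $g$ ranging over $\GL{m}{\finiteField}$, and $\lastrowvector g$ — the last row of an invertible matrix — is never the zero vector; hence $\indicatorFunction{0}(\lastrowvector g) = 0$ for all such $g$, and the whole sum vanishes.

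The second step invokes the preceding proposition: since $\finiteFieldRepresentation$ admits a Shalika vector, there is some $W_1 \in \WhittakerModelOfFiniteFieldRepresentation$ with $\JSOfFiniteFieldRepresentation{W_1}{1} \ne 0$. Substituting $(W_1, 1)$ into the assumed functional equation yields $0 = \DualJSOfFiniteFieldRepresentation{W_1}{1} = \gamma \cdot \JSOfFiniteFieldRepresentation{W_1}{1}$, hence $\gamma = 0$; this already contradicts the claim $\gamma \in \multiplicativegroup{\cComplex}$ of \cref{thm:functional-equation-finite-field}. To rule out even the degenerate possibility $\gamma = 0$, i.e.\ that $\DualJS{\finiteFieldRepresentation}{\fieldCharacter} \equiv 0$, one notes that the assignment $(W, \phi) \mapsto \left(\Contragradient{\finiteFieldRepresentation}(w_0)\Contragradient{W}, \FourierTransformWithRespectToCharacter{\phi}{\fieldCharacter}\right)$ — with $w_0 = \left(\begin{smallmatrix} & \IdentityMatrix{m} \\ \IdentityMatrix{m} & \end{smallmatrix}\right)$ — is a bijection from $\WhittakerModelOfFiniteFieldRepresentation \times \Schwartz(\finiteField^m)$ onto $\Whittaker(\Contragradient{\finiteFieldRepresentation}, \fieldCharacter^{-1}) \times \Schwartz(\finiteField^m)$ (because $g \mapsto \weylElement{n}\inverseTranspose{g}$ is an involution and the Fourier transform is invertible), so by the definition of $\DualJS{\finiteFieldRepresentation}{\fieldCharacter}$ together with \cref{prop:jacquet-shalika-integral-of-bessel-function-even-case} applied to $\Contragradient{\finiteFieldRepresentation}$ and $\fieldCharacter^{-1}$, the form $\DualJS{\finiteFieldRepresentation}{\fieldCharacter}$ is not identically zero.

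I expect no genuinely hard step here. The only delicate point is the vanishing $\DualJSOfFiniteFieldRepresentation{\cdot}{1} \equiv 0$ established in the first step: it is exactly where the even case parts ways with the odd one (in which \cref{thm:functional-equation-finite-field} carries no Shalika-vector hypothesis), and it hinges solely on the fact that the bottom row of an element of $\GL{m}{\finiteField}$ cannot be zero. Everything else — the Fourier transform of the constant function, and the bijectivity used to see $\DualJS{\finiteFieldRepresentation}{\fieldCharacter} \not\equiv 0$ — is routine.
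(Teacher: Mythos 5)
Your proposal is correct and is essentially the paper's own argument: take $\phi=1$, note $\FourierTransformWithRespectToCharacter{1}{\fieldCharacter}=q^{m/2}\indicatorFunction{0}$ so that $\DualJSOfFiniteFieldRepresentation{W}{1}=0$ for all $W$ (since the argument of $\indicatorFunction{0}$ is a row of an invertible matrix and never vanishes), and pair this with the preceding proposition producing $W$ with $\JSOfFiniteFieldRepresentation{W}{1}\ne 0$. Your additional step ruling out the degenerate case $\gamma=0$ is harmless but not needed, since the theorem asserts the constant is non-zero.
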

\begin{proof}
	Let $W \in \WhittakerModelOfFiniteFieldRepresentation$, such that $\JSOfFiniteFieldRepresentation{W}{1} = 1$. Then since $\FourierTransformWithRespectToCharacter{1}{\fieldCharacter} = q^{\frac{m}{2}} \cdot \indicatorFunction{0}$, we have that $\DualJSOfFiniteFieldRepresentation{W}{1} = 0$, and therefore no such non-zero constant exists.
\end{proof}

Suppose that $\finiteFieldRepresentation$ is associated with the regular character $ \cuspidalCharacter : \multiplicativegroup{\finiteFieldExtension{2m}} \rightarrow  \multiplicativegroup{\cComplex}$ \cite{green1955characters}. We now recall the work of Prasad \cite{prasad2000space} in order to classify when $\finiteFieldRepresentation$ admits a Shalika vector, in terms of $\cuspidalCharacter$.

\newcommand{\prasadRepresentation}{\finiteFieldRepresentation_{\UnipotentRadical{m,m}, \ShalikaCharacter}}

Let $\UnipotentRadical{m,m}$ be the unipotent radical of $\GL{2m}{\finiteField}$ corresponding to the partition $\left(m,m\right)$. Denote by $\underlyingVectorSpace{\prasadRepresentation}$ the twisted Jacquet submodule of $\UnipotentRadical{m,m}$ with respect to the character $\ShalikaCharacter$:
$$
	\underlyingVectorSpace{\prasadRepresentation} = \left\{ v \in \underlyingVectorSpace{\finiteFieldRepresentation} \mid \forall X \in \SquareMat{m}{\finiteField}, \, \finiteFieldRepresentation \ShalikaUnipotentElement{X} v = \fieldCharacter \left(\trace X\right) v  \right\}.
$$
Then $\underlyingVectorSpace{\prasadRepresentation}$ is a subspace invariant under the action $\prasadRepresentation$ of $\GL{m}{\finiteField}$, defined by $\prasadRepresentation \left(g\right) v = \finiteFieldRepresentation \SmallShalikaDiagonalElement{g} v$, where $g \in \GL{m}{\finiteField}$, $v \in \underlyingVectorSpace{\prasadRepresentation}$. Prasad shows
\begin{thm}[{\cite[Theorem 1]{prasad2000space}}]
	$ \prasadRepresentation \isomorphic \Ind{\multiplicativegroup{\finiteFieldExtension{m}}}{\GL{m}{\finiteField}}{\cuspidalCharacter \restriction_{\multiplicativegroup{\finiteFieldExtension{m}}}}.$
\end{thm}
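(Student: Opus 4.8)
The plan is to establish the isomorphism on the level of $\GL{m}{\finiteField}$-characters, using Green's character formula for the cuspidal representation $\finiteFieldRepresentation$. First I would record both characters. The space $\underlyingVectorSpace{\prasadRepresentation}$ is the image of the idempotent $e=\frac{1}{\sizeof{\SquareMat{m}{\finiteField}}}\sum_{X\in\SquareMat{m}{\finiteField}}\fieldCharacter(-\trace X)\,\finiteFieldRepresentation\SmallShalikaUnipotentElement{X}$, and $\finiteFieldRepresentation\SmallShalikaDiagonalElement{g}$ commutes with $e$ because conjugation by $\SmallShalikaDiagonalElement{g}$ preserves $\UnipotentRadical{m,m}$ together with the character $X\mapsto\fieldCharacter(\trace X)$; taking traces on $\underlyingVectorSpace{\finiteFieldRepresentation}$ gives
$$\representationCharacter{\prasadRepresentation}(g)=\frac{1}{\sizeof{\SquareMat{m}{\finiteField}}}\sum_{X\in\SquareMat{m}{\finiteField}}\fieldCharacter(-\trace X)\,\representationCharacter{\finiteFieldRepresentation}\!\left(\SmallShalikaDiagonalElement{g}\SmallShalikaUnipotentElement{X}\right).$$
On the other side, the induced character formula gives $g\mapsto\frac{1}{q^m-1}\sum_{x}\cuspidalCharacter\!\left(x^{-1}gx\right)$, the sum running over those $x\in\GL{m}{\finiteField}$ with $x^{-1}gx\in\multiplicativegroup{\finiteFieldExtension{m}}$, where $\multiplicativegroup{\finiteFieldExtension{m}}$ sits in $\GL{m}{\finiteField}$ as the anisotropic maximal torus of order $q^m-1$ and $\cuspidalCharacter$ is restricted along $\multiplicativegroup{\finiteFieldExtension{m}}\hookrightarrow\multiplicativegroup{\finiteFieldExtension{2m}}$. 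Thus the theorem amounts to an equality of these two class functions on $\GL{m}{\finiteField}$; as a warm-up one checks $g=\IdentityMatrix{m}$, i.e. $\dim\underlyingVectorSpace{\prasadRepresentation}=\sizeof{\GL{m}{\finiteField}}/(q^m-1)$.

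Next I would insert Green's formula. Write $\finiteFieldRepresentation=\pm R_{T}^{\cuspidalCharacter}$ with $T\isomorphic\multiplicativegroup{\finiteFieldExtension{2m}}$ the Coxeter torus of $\GL{2m}{\finiteField}$; then $\representationCharacter{\finiteFieldRepresentation}(h)$, for $h$ with Jordan decomposition $h=h_sh_u$, is expressed through the values of $\cuspidalCharacter$ on the $\GL{2m}{\finiteField}$-conjugates of $h_s$ meeting $T$, weighted by Green functions of the centralizer of $h_s$ evaluated at $h_u$. Substituting $h=\SmallShalikaDiagonalElement{g}\SmallShalikaUnipotentElement{X}$ (whose semisimple part is conjugate to $\SmallShalikaDiagonalElement{g_s}$), performing the sum over $X\in\SquareMat{m}{\finiteField}$ against $\fieldCharacter(-\trace X)$, and reorganizing by conjugacy class of $g$, one should recover the induced character: for $g$ regular semisimple the $X$-sum collapses and the bookkeeping reduces to counting the $X$ for which the eigenvalue data of $g$ over $\algebraicClosure{\finiteField}$ assembles into an element of $\finiteFieldExtension{2m}$ with $\finiteFieldExtension{m}$-part conjugate to $g$ — which matches the count of $x$ with $x^{-1}gx\in\multiplicativegroup{\finiteFieldExtension{m}}$, the surviving values of $\cuspidalCharacter$ agreeing term by term — while for $g$ with a nontrivial unipotent part one matches the Green-function contributions on the two sides.

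The main obstacle is precisely this explicit evaluation: carrying out the exponential sum $\sum_{X}\fieldCharacter(-\trace X)\representationCharacter{\finiteFieldRepresentation}(\SmallShalikaDiagonalElement{g}\SmallShalikaUnipotentElement{X})$ and showing that it reassembles into the induced character requires controlling how semisimple classes of $\GL{2m}{\finiteField}$ meet $T$, how the nesting $\multiplicativegroup{\finiteFieldExtension{m}}\subset\multiplicativegroup{\finiteFieldExtension{2m}}$ behaves under $\GL{2m}{\finiteField}$-conjugacy, and how the unipotent Green-function terms cancel. An alternative route avoiding Green functions would exploit the finite-field Bernstein–Zelevinsky theory: since $\finiteFieldRepresentation$ is cuspidal, $\finiteFieldRepresentation|_{\Mirabolic{2m}}\isomorphic\Ind{\UnipotentRadical{2m}}{\Mirabolic{2m}}{\fieldCharacter}$, and one could compute the $(\UnipotentRadical{m,m},\fieldCharacter\circ\trace)$-twisted Jacquet module by Mackey theory over $\UnipotentRadical{2m}\backslash\Mirabolic{2m}/\UnipotentRadical{m,m}$; the difficulty there is that the diagonal copy of $\GL{m}{\finiteField}$ is not contained in $\Mirabolic{2m}$, so one directly obtains only the module over its mirabolic subgroup and needs an additional intertwining or irreducibility input to upgrade to the full $\GL{m}{\finiteField}$-action.
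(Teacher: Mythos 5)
You should first note that the paper does not prove this statement at all: it is imported verbatim as \cite[Theorem 1]{prasad2000space}, so the only ``paper proof'' to compare against is Prasad's own argument. Your outline does in fact track Prasad's method — compute the $\GL{m}{\finiteField}$-character of the twisted Jacquet module via the projector $e=\frac{1}{\sizeof{\SquareMat{m}{\finiteField}}}\sum_{X}\fieldCharacter(-\trace X)\,\finiteFieldRepresentation\SmallShalikaUnipotentElement{X}$, write the right-hand side by the induced character formula for the anisotropic torus $\multiplicativegroup{\finiteFieldExtension{m}}\subset\GL{m}{\finiteField}$, and match the two class functions using the explicit (Green/Deligne--Lusztig) character of the cuspidal representation attached to $\cuspidalCharacter$. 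The setup you record (the formula for $\representationCharacter{}$ of the twisted Jacquet module, the commutation of $\finiteFieldRepresentation\SmallShalikaDiagonalElement{g}$ with $e$, the induced character $g\mapsto\frac{1}{q^m-1}\sum_{x:\,x^{-1}gx\in\multiplicativegroup{\finiteFieldExtension{m}}}\cuspidalCharacter(x^{-1}gx)$) is correct.

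However, as written there is a genuine gap, and you name it yourself: the entire content of the theorem lies in the evaluation of $\frac{1}{\sizeof{\SquareMat{m}{\finiteField}}}\sum_{X}\fieldCharacter(-\trace X)\,\representationCharacter{\finiteFieldRepresentation}\!\left(\SmallShalikaDiagonalElement{g}\SmallShalikaUnipotentElement{X}\right)$, and no part of that computation is carried out. What is missing is precisely the hard input: (i) the support properties of cuspidal characters, i.e.\ that $\representationCharacter{\finiteFieldRepresentation}$ vanishes unless the semisimple part of the argument has eigenvalue data coming from subfields of $\finiteFieldExtension{2m}$, and its explicit value there in terms of $\cuspidalCharacter$ and Green functions of the centralizer; (ii) the classification of which $X$ make the semisimple part of $\SmallShalikaDiagonalElement{g}\SmallShalikaUnipotentElement{X}$ meet the Coxeter torus, together with the count of such $X$ and of the $x$ with $x^{-1}gx\in\multiplicativegroup{\finiteFieldExtension{m}}$; and (iii) the Gauss-sum/Green-function cancellations for elements with nontrivial unipotent part. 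Even your ``warm-up'' identity $\dim=\sizeof{\GL{m}{\finiteField}}/(q^m-1)$ is not verified and already requires this machinery. The alternative Bernstein--Zelevinsky/Mackey route you sketch is likewise acknowledged to stop short (it only yields the module structure over the mirabolic of $\GL{m}{\finiteField}$, not over $\GL{m}{\finiteField}$ itself, without an extra intertwining argument). So the proposal is a correct plan that mirrors the cited proof, but it is not yet a proof: the decisive character computation that constitutes Prasad's Theorem 1 is left undone.
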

From the definition of a Shalika vector and the definition of the representation $\prasadRepresentation$, we see that $\finiteFieldRepresentation$ admits a Shalika vector if and only if $\Hom_{\GL{m}{\finiteField}}\left(1, \prasadRepresentation\right) \ne 0$. From Frobenius reciprocity we have that $\Hom_{\GL{m}{\finiteField}}\left(1, \prasadRepresentation\right) \isomorphic \Hom_{\multiplicativegroup{\finiteFieldExtension{m}}}\left(1, \cuspidalCharacter\restriction_{\multiplicativegroup{\finiteFieldExtension{m}}} \right)$, and the last space is non-zero if and only if $\cuspidalCharacter \restriction_{\multiplicativegroup{\finiteFieldExtension{m}}} = 1$, and then it is one dimensional. Therefore we get the following
\begin{cor}
	$\finiteFieldRepresentation$ admits a Shalika vector if and only if $\cuspidalCharacter \restriction_{\multiplicativegroup{\finiteFieldExtension{m}}} = 1$, and in this case, the space of Shalika vectors is one dimensional.
\end{cor}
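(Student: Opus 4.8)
The plan is to unwind the definitions and then feed them into Prasad's theorem and Frobenius reciprocity; essentially all the work has already been done in the paragraph preceding the corollary, so the proof amounts to assembling it. First I would record that the space of Shalika vectors of $\finiteFieldRepresentation$ is \emph{canonically} the space of $\prasadRepresentation$-invariant vectors $\underlyingVectorSpace{\prasadRepresentation}^{\GL{m}{\finiteField}}$. To see this, factor an arbitrary element of $\ShalikaSubgroupEven$ as
$$\begin{pmatrix} g & X \\ & g \end{pmatrix} = \SmallShalikaDiagonalElement{g}\,\SmallShalikaUnipotentElement{g^{-1}X},$$
and note that by cyclicity of the trace $\ShalikaCharacter\begin{pmatrix} g & X \\ & g \end{pmatrix} = \fieldCharacter(\trace(X g^{-1})) = \fieldCharacter(\trace(g^{-1}X))$, while $\ShalikaCharacter\SmallShalikaDiagonalElement{g} = 1$. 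Hence $\finiteFieldRepresentation(s)v = \ShalikaCharacter(s)v$ for all $s \in \ShalikaSubgroupEven$ if and only if simultaneously $\finiteFieldRepresentation\SmallShalikaUnipotentElement{X}v = \fieldCharacter(\trace X)v$ for all $X \in \SquareMat{m}{\finiteField}$, i.e. $v \in \underlyingVectorSpace{\prasadRepresentation}$, and $\prasadRepresentation(g)v = \finiteFieldRepresentation\SmallShalikaDiagonalElement{g}v = v$ for all $g \in \GL{m}{\finiteField}$. So the space of Shalika vectors is literally $\Hom_{\GL{m}{\finiteField}}(1, \prasadRepresentation)$.

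Next I would apply Prasad's theorem (quoted above) to replace $\prasadRepresentation$ by $\Ind{\multiplicativegroup{\finiteFieldExtension{m}}}{\GL{m}{\finiteField}}{\cuspidalCharacter \restriction_{\multiplicativegroup{\finiteFieldExtension{m}}}}$, and then Frobenius reciprocity to obtain
$$\Hom_{\GL{m}{\finiteField}}(1, \prasadRepresentation) \isomorphic \Hom_{\multiplicativegroup{\finiteFieldExtension{m}}}\left(1, \cuspidalCharacter \restriction_{\multiplicativegroup{\finiteFieldExtension{m}}}\right).$$
Since $\cuspidalCharacter \restriction_{\multiplicativegroup{\finiteFieldExtension{m}}}$ is a one-dimensional character of the group $\multiplicativegroup{\finiteFieldExtension{m}}$, this last Hom-space vanishes unless that character is trivial, in which case it is one-dimensional. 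Combining these identifications yields simultaneously the "if and only if" and the one-dimensionality of the space of Shalika vectors when $\cuspidalCharacter \restriction_{\multiplicativegroup{\finiteFieldExtension{m}}} = 1$.

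There is no real obstacle here beyond the cited inputs: the substantive content is Prasad's computation of the twisted Jacquet module $\prasadRepresentation$, which is used as a black box, together with \cref{lem:multiplicity-one-shalika-subgroup-even} if one wants to double-check finiteness. The only point that needs a moment's care is the first step — verifying via the decomposition $s = \SmallShalikaDiagonalElement{g}\SmallShalikaUnipotentElement{g^{-1}X}$ and the cyclicity of the trace that the two characters match, so that "$\finiteFieldRepresentation$ admits a Shalika vector" and "$\underlyingVectorSpace{\prasadRepresentation}$ contains a nonzero $\GL{m}{\finiteField}$-fixed vector" are the same condition. Everything after that is purely formal.
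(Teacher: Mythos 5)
Your proposal is correct and follows essentially the same route as the paper: identify Shalika vectors with $\GL{m}{\finiteField}$-fixed vectors in the twisted Jacquet module $\prasadRepresentation$, invoke Prasad's theorem, and finish with Frobenius reciprocity. The explicit factorization $s = \SmallShalikaDiagonalElement{g}\SmallShalikaUnipotentElement{g^{-1}X}$ just spells out the step the paper summarizes as following "from the definition of a Shalika vector and the definition of $\prasadRepresentation$."
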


We conclude this section with a theorem.

\begin{thm}[Equivalent conditions for admitting a Shalika vector]\label{thm:equivalent-conditions-for-a-shalika-vector}
	Suppose $\finiteFieldRepresentation$ is an irreducible cuspidal representation of $\GL{2m}{\finiteField}$ associated with the regular character $$ \theta : \multiplicativegroup{\finiteFieldExtension{2m}} \rightarrow \multiplicativegroup{\cComplex}.$$
	The following are equivalent.
	\begin{enumerate}
		\item $\finiteFieldRepresentation$ admits a Shalika vector.
		\item There exists $W \in \WhittakerModelOfFiniteFieldRepresentation$, such that $ \JSOfFiniteFieldRepresentation{W}{1} \ne 0$.
		\item $\cuspidalCharacter \restriction_{\multiplicativegroup{\finiteFieldExtension{m}}} = 1$.
	\end{enumerate}
	Moreover, in these cases, the space of Shalika vectors is one-dimensional.
\end{thm}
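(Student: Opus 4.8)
The plan is to treat this theorem as an assembly of the two results established just above, so the proof merely has to arrange their implications. For $(1) \Leftrightarrow (2)$ I would cite the proposition proved immediately before the discussion of Prasad's work: it already shows that $\finiteFieldRepresentation$ admits a Shalika vector exactly when there is some $W \in \WhittakerModelOfFiniteFieldRepresentation$ with $\JSOfFiniteFieldRepresentation{W}{1} \neq 0$. Recall how that argument runs --- given such a $W$, averaging it against $\fieldCharacter(-\trace X)$ over the Shalika unipotent and diagonal cosets produces a Shalika vector $W_0$ with $W_0(\evenPermutationMatrix) = \JSOfFiniteFieldRepresentation{W}{1} \neq 0$; conversely, from a Shalika vector one uses \cref{lem:multiplicity-one-shalika-subgroup-even} to replace it by the Whittaker function built out of $\besselFunctionOfFiniteFieldRepresentation$, for which \cref{prop:jacquet-shalika-integral-of-bessel-function-even-case} gives $\JSOfFiniteFieldRepresentation{W}{1} = W(\evenPermutationMatrix) = 1$.

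For $(1) \Leftrightarrow (3)$ together with the one-dimensionality assertion, I would unwind the definition of a Shalika vector. Writing a general element of $\ShalikaSubgroupEven$ as $\SmallShalikaUnipotentElement{Y}\SmallShalikaDiagonalElement{g}$, a vector $v$ is a Shalika vector precisely when $\finiteFieldRepresentation \SmallShalikaUnipotentElement{Y} v = \fieldCharacter(\trace Y) v$ for every $Y \in \SquareMat{m}{\finiteField}$ --- that is, $v \in \underlyingVectorSpace{\prasadRepresentation}$, the twisted Jacquet submodule of $\UnipotentRadical{m,m}$ with respect to $\ShalikaCharacter$ --- and simultaneously $\finiteFieldRepresentation \SmallShalikaDiagonalElement{g} v = v$ for every $g \in \GL{m}{\finiteField}$. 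Hence the space of Shalika vectors is exactly $\Hom_{\GL{m}{\finiteField}}(1, \prasadRepresentation)$. Applying Prasad's theorem $\prasadRepresentation \isomorphic \Ind{\multiplicativegroup{\finiteFieldExtension{m}}}{\GL{m}{\finiteField}}{\cuspidalCharacter \restriction_{\multiplicativegroup{\finiteFieldExtension{m}}}}$ and Frobenius reciprocity gives $\Hom_{\GL{m}{\finiteField}}(1, \prasadRepresentation) \isomorphic \Hom_{\multiplicativegroup{\finiteFieldExtension{m}}}(1, \cuspidalCharacter \restriction_{\multiplicativegroup{\finiteFieldExtension{m}}})$, which is non-zero if and only if $\cuspidalCharacter \restriction_{\multiplicativegroup{\finiteFieldExtension{m}}}$ is trivial, and in that case it is one-dimensional. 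This is precisely the corollary recorded above, so $(1) \Leftrightarrow (3)$ and the final dimension count follow.

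I do not expect a genuine obstacle at this stage: all the real work has already been done in the preceding proposition (the change-of-variables identities $W_0(hs) = \ShalikaCharacter(s)W_0(h)$ and $\JSOfFiniteFieldRepresentation{W}{1} = W(\evenPermutationMatrix)$) and in Prasad's computation of the twisted Jacquet module $\prasadRepresentation$. The only point that warrants a line of care in the write-up is the decomposition of the Shalika condition into its unipotent part (membership in $\underlyingVectorSpace{\prasadRepresentation}$) and its Levi part ($\GL{m}{\finiteField}$-invariance), since it is the conjunction of the two --- and not either one alone --- that characterizes Shalika vectors.
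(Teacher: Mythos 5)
Your proposal is correct and follows the same route as the paper: the equivalence $(1)\Leftrightarrow(2)$ is exactly the proposition proved just before (averaging to build a Shalika vector, and conversely the Bessel-function construction combined with \cref{lem:multiplicity-one-shalika-subgroup-even} and \cref{prop:jacquet-shalika-integral-of-bessel-function-even-case}), while $(1)\Leftrightarrow(3)$ and the one-dimensionality are obtained, as in the paper, by identifying the Shalika vectors with the $\GL{m}{\finiteField}$-invariants of the twisted Jacquet module $\prasadRepresentation$ and applying Prasad's theorem together with Frobenius reciprocity. The theorem in the paper is indeed just an assembly of these two previously established results, so no further argument is needed.
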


\subsection{An expression for the exterior square gamma factor}\label{subsec:melin-transform}

Let $\finiteFieldRepresentation$ be an irreducible cuspidal representation of $\GL{n}{\finiteField}$. If $n$ is even, suppose that $\finiteFieldRepresentation$ does not admit a Shalika vector.

In this section we express the exterior square gamma factor of $\finiteFieldRepresentation$ in terms of the Bessel function. One may use \cref{prop:jacquet-shalika-integral-of-bessel-function-even-case} and \cref{prop:jacquet-shalika-integral-of-bessel-function-odd-case} in order to get such an expression, but the Jacquet-Shalika integral runs over too many elements, some of which are not in the support of the Bessel function. We find a more accurate expression which involves only elements of the form of \cref{thm:support-of-bessel-function}.

Our main results of this section are the following theorems:

\begin{thm}\label{thm:mellin-inverse-even-case}
	If $n = 2m$, the exterior square gamma factor is given by the formula
	\begin{equation*}\begin{split}\gammaFactorOfFiniteField = q^{-\frac{m}{2} + 2\binom{m}{2}} \sum_{\substack{
			m_1, \dots, m_r \ge 1\\
			m_1 + \dots + m_r = m\\
			\lambda_1, \dots, \lambda_r \in \multiplicativegroup{\finiteField}
		}} & q^{-\sum_{i=1}^{r}{2 \binom{m_i}{2}}} \cdot {\besselFunctionOfFiniteFieldRepresentation} \left(\antidiag\left(\lambda_1 \IdentityMatrix{2 m_1}, \dots, \lambda_r \IdentityMatrix{2 m_r}\right)^{-1}\right)  \cdot \\
		& \cdot \fieldCharacter\left( \lambda_{r} \cdot \indicatorFunction{m_r,1} \right),\end{split}\end{equation*} where $\indicatorFunction{m_r,1} = \begin{cases}
			1 & m_r = 1,   \\
			0 & m_r \ne 1.
		\end{cases}$
\end{thm}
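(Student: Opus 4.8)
The plan is to test the functional equation of \Cref{thm:functional-equation-finite-field} on the distinguished pair of \Cref{prop:jacquet-shalika-integral-of-bessel-function-even-case}: with $W = \grpIndex{G}{\UnipotentSubgroup}\grpIndex{M}{\UpperTriangularAdditive}\,\finiteFieldRepresentation(\evenPermutationMatrix^{-1})\besselFunctionOfFiniteFieldRepresentation$ and $\phi = \indicatorFunction{\lastrowvector}$ that proposition gives $\JSOfFiniteFieldRepresentation{W}{\phi} = 1$, whence $\gammaFactorOfFiniteField = \DualJSOfFiniteFieldRepresentation{W}{\phi}$. First I would expand the right-hand side using \Cref{prop:dual-jacquet-shalika-formula-even}; since $\FourierTransformWithRespectToCharacter{\indicatorFunction{\lastrowvector}}{\fieldCharacter}(y) = q^{-\frac{m}{2}}\fieldCharacter(\standardForm{\lastrowvector}{y})$, the index constants $\grpIndex{G}{\UnipotentSubgroup}\grpIndex{M}{\UpperTriangularAdditive}$ cancel against the normalization hidden in $W$, leaving
\[
\gammaFactorOfFiniteField = q^{-\frac{m}{2}}\sum_{g \in \lquot{\UnipotentSubgroup}{G}}\ \sum_{X \in \lquot{\UpperTriangularAdditive}{M}}\besselFunctionOfFiniteFieldRepresentation\!\left(\evenPermutationMatrix\,\ShalikaUnipotentElement{X}\ShalikaDiagonalElement{g}\,\evenPermutationMatrix^{-1}\right)\fieldCharacter(-\trace X)\,\fieldCharacter\!\left((\firstrowvector\inverseTranspose{g})_m\right).
\]

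Next I would determine which pairs $(g, X)$ contribute. Decomposing $g$ into Bruhat cells and rewriting the inner sum as a Fourier-type sum over $\lquot{\UpperTriangularAdditive}{M}$, the $(\UnipotentSubgroup_{2m}, \fieldCharacter)$-bi-equivariance of $\besselFunctionOfFiniteFieldRepresentation$ together with \Cref{thm:support-of-bessel-function} shows that a term survives only when $\evenPermutationMatrix\,\ShalikaUnipotentElement{X}\ShalikaDiagonalElement{g}\,\evenPermutationMatrix^{-1}$ lies in a Bruhat cell $\UnipotentSubgroup_{2m}\, d w\, \UnipotentSubgroup_{2m}$ with $dw = \antidiag(\lambda_1\IdentityMatrix{2m_1}, \dots, \lambda_r\IdentityMatrix{2m_r})$; the interleaving permutation $\evenPermutationMatrix$ forces the block sizes to be even, and the doubled shape of $\ShalikaDiagonalElement{g}$ forces them to occur in equal pairs, which is exactly why the final sum is indexed by compositions $m_1 + \dots + m_r = m$ together with one scalar $\lambda_i \in \multiplicativegroup{\FiniteField{q}}$ per pair. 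Using \Cref{prop:argument-inverse-and-conjugate-of-Bessel-function} to put the surviving values in the form $\besselFunctionOfFiniteFieldRepresentation(\antidiag(\lambda_1\IdentityMatrix{2m_1}, \dots, \lambda_r\IdentityMatrix{2m_r})^{-1})$, I would then reindex the whole sum by $(m_1, \dots, m_r)$ and $(\lambda_1, \dots, \lambda_r)$.

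The heart of the argument, and the main obstacle, is the combinatorial bookkeeping in this last step. For a fixed composition and tuple of scalars I must count, with the correct power of $q$, the pairs $(g, X)$ whose reduction produces that Bessel value, and check that these counts, together with $\grpIndex{M}{\UpperTriangularAdditive} = q^{\binom{m}{2}}$ and the Fourier factor $q^{-\frac{m}{2}}$, assemble into the prefactors $q^{-\frac{m}{2} + 2\binom{m}{2}}$ and $\prod_{i} q^{-2\binom{m_i}{2}}$; the per-block factor is the cardinality of the relevant Schubert cell inside the $i$-th block (of size $2m_i$), equivalently the number of off-diagonal $X$-entries in that block surviving the $\fieldCharacter(-\trace X)$-twisted sum. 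In parallel I would check that the additive-character contributions collapse to the single factor $\fieldCharacter(\lambda_r\cdot\indicatorFunction{m_r, 1})$: the twist $\fieldCharacter(-\trace X)$ only affects the power of $q$, while $(\firstrowvector\inverseTranspose{g})_m$ vanishes unless the last block of the composition has size $1$, in which case it equals $\lambda_r$. As a sanity check I would specialize to $r = 1$, where the claim reads $\gammaFactorOfFiniteField = q^{-\frac{m}{2}}\sum_{\lambda \in \multiplicativegroup{\FiniteField{q}}}\besselFunctionOfFiniteFieldRepresentation(\lambda^{-1}\IdentityMatrix{2m})\,\fieldCharacter(\lambda\cdot\indicatorFunction{m,1})$, and cross-check this against \Cref{thm:bessel-function-in-terms-of-representation-character} and against the case $n = 2$, where it should reproduce a normalized Gauss sum attached to the central character (consistent with $\abs{\gammaFactorOfFiniteField} = 1$).
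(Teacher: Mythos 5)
Your route is correct and genuinely different from the paper's, in a way that slightly shortens the endgame. The paper does not use the pair $\left(W,\indicatorFunction{\lastrowvector}\right)$: it takes the same $W$ but $\phi\left(x\right)=\fieldCharacter\left(-x_1\right)$, so that $\FourierTransformWithRespectToCharacter{\phi}{\fieldCharacter}=q^{\frac{m}{2}}\indicatorFunction{\firstrowvector}$ makes the \emph{dual} integral collapse to $q^{\frac{m}{2}}$, and it is the Jacquet--Shalika integral itself that is expanded over Bruhat cells, yielding a formula for $q^{\frac{m}{2}}\gammaFactorOfFiniteField^{-1}$; the stated formula for $\gammaFactorOfFiniteField$ is then recovered at the very end via $\abs{\gammaFactorOfFiniteField}=1$ (\cref{rem:exterior-square-gamma-factor-is-unitrary}) together with $\besselFunctionOfFiniteFieldRepresentation\left(g^{-1}\right)=\conjugate{\besselFunctionOfFiniteFieldRepresentation\left(g\right)}$. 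You instead keep $\phi=\indicatorFunction{\lastrowvector}$, so $\JSOfFiniteFieldRepresentation{W}{\phi}=1$ and $\gammaFactorOfFiniteField=\DualJSOfFiniteFieldRepresentation{W}{\phi}$ is computed directly, the factor $q^{-\frac{m}{2}}\fieldCharacter\left(\left(\firstrowvector\inverseTranspose{g}\right)_m\right)$ replacing $\phi\left(\lastrowvector g\right)$; this avoids the unitarity detour entirely. Both routes then rest on exactly the support-and-counting analysis you single out as the main obstacle, which the paper isolates as \cref{lem:support-of-bessel-function-jacquet-shalika-integral} (contribution forces $wd=\antidiag\left(\lambda_1\IdentityMatrix{m_1},\dots,\lambda_r\IdentityMatrix{m_r}\right)$, constrains $X$, and makes the Bessel value independent of $u$ and $X$) and \cref{lem:jacquet-shalika-support-orbits-size} (each cell admits $q^{\binom{m}{2}-\sum_{i}\binom{m_i}{2}}$ choices of $u$ and the same number of admissible $X$), and these counts do assemble into your expected prefactors $q^{-\frac{m}{2}+2\binom{m}{2}}\prod_i q^{-2\binom{m_i}{2}}$.

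One detail to repair: you should not invoke \cref{prop:argument-inverse-and-conjugate-of-Bessel-function} to "put the values in inverse form" --- in your direct computation of the dual side it would wrongly introduce a complex conjugate. The inverse in the theorem costs nothing: the inverse of $\antidiag\left(\mu_1\IdentityMatrix{2k_1},\dots,\mu_r\IdentityMatrix{2k_r}\right)$ is again of the same shape (reverse the blocks, invert the scalars), so the stated form is just the relabeling $\lambda_i=\mu_{r+1-i}^{-1}$, $m_i=k_{r+1-i}$; with that labeling of the cells your claim that $\left(\firstrowvector\inverseTranspose{g}\right)_m=\left(g^{-1}\right)_{m1}$ equals $\lambda_r\indicatorFunction{m_r,1}$ is exactly right, whereas in the unrelabeled coordinates it is $\mu_1^{-1}\indicatorFunction{k_1,1}$, so keep track of which end of the antidiagonal you are reading. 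In the paper's route the inverse and the sign flip $\fieldCharacter\left(-\lambda_r\indicatorFunction{m_r,1}\right)\mapsto\fieldCharacter\left(\lambda_r\indicatorFunction{m_r,1}\right)$ come instead from the conjugation step, which is the only place that proposition is actually needed.
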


\begin{thm}\label{thm:mellin-inverse-odd-case}
	If $n = 2m + 1$, the exterior square gamma factor is given by the formula
	$$
		\gammaFactorOfFiniteField = q^{\frac{m}{2}+2 \binom{m}{2}} \sum_{\substack{
				m_1, \dots, m_r \ge 1\\
				m_1 + \dots + m_r = m\\
				\lambda_1, \dots, \lambda_r \in \multiplicativegroup{\finiteField}
			}}
		q^{-\sum_{i=1}^{r}{2 \binom{m_i}{2}}} \cdot {\besselFunctionOfFiniteFieldRepresentation} \left(\antidiag\left(\lambda_1 \IdentityMatrix{2 m_1}, \dots, \lambda_r \IdentityMatrix{2 m_r}, 1 \IdentityMatrix{1}\right)^{-1}\right).
	$$
\end{thm}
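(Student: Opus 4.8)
Throughout, write $G=\GL{m}{\finiteField}$, $N$ for the upper triangular unipotent subgroup, $M=\SquareMat{m}{\finiteField}$ and $\UpperTriangularAdditive$ for the upper triangular subspace, as in \Cref{subsec:jacquet-shalika-integral}. The plan is to feed the functional equation of \Cref{thm:functional-equation-finite-field} the Bessel test data and to read $\gammaFactorOfFiniteField$ off the dual side. In the even case I would take $W=\grpIndex{G}{N}\grpIndex{M}{\UpperTriangularAdditive}\,\finiteFieldRepresentation\left(\evenPermutationMatrix^{-1}\right)\besselFunctionOfFiniteFieldRepresentation$ and $\phi=\indicatorFunction{\lastrowvector}$; in the odd case, $W=\grpIndex{G}{N}\grpIndex{M}{\UpperTriangularAdditive}\sizeof{\Mat{1}{m}{\finiteField}}\,\finiteFieldRepresentation\left(\oddPermutationMatrix^{-1}\right)\besselFunctionOfFiniteFieldRepresentation$ and $\phi=\indicatorFunction{0}$. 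By \Cref{prop:jacquet-shalika-integral-of-bessel-function-even-case} (resp.\ \Cref{prop:jacquet-shalika-integral-of-bessel-function-odd-case}) the right-hand side $\JSOfFiniteFieldRepresentation{W}{\phi}$ equals $1$, hence $\gammaFactorOfFiniteField=\DualJSOfFiniteFieldRepresentation{W}{\phi}$, and it remains to compute the dual integral using the explicit formula of \Cref{prop:dual-jacquet-shalika-formula-even} (resp.\ \Cref{prop:dual-jacquet-shalika-formula-odd}).

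Substituting the scaled Bessel function for $W$ and using $\left(\finiteFieldRepresentation\left(\evenPermutationMatrix^{-1}\right)\besselFunctionOfFiniteFieldRepresentation\right)\left(h\right)=\besselFunctionOfFiniteFieldRepresentation\left(h\evenPermutationMatrix^{-1}\right)$, the normalizing index $\grpIndex{G}{N}\grpIndex{M}{\UpperTriangularAdditive}$ cancels (together with $\sizeof{\Mat{1}{m}{\finiteField}}$ in the odd case), leaving a sum over cosets $g\in\lquot{\UnipotentSubgroup}{G}$, $X\in\lquot{\UpperTriangularAdditive}{M}$ (and over $Z\in\Mat{1}{m}{\finiteField}$ in the odd case) of $\besselFunctionOfFiniteFieldRepresentation$ evaluated on the conjugated Shalika element $\evenPermutationMatrix\ShalikaUnipotentElement{X}\ShalikaDiagonalElement{g}\evenPermutationMatrix^{-1}$ (resp.\ its odd analogue), weighted by $\fieldCharacter\left(-\trace X\right)$ and by the value of $\FourierTransformWithRespectToCharacter{\phi}{\fieldCharacter}$ at $\firstrowvector\inverseTranspose{g}$ — a nowhere-vanishing quantity in the even case, a constant in the odd case. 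The crucial reduction is the refinement \Cref{lem:support-of-bessel-function-jacquet-shalika-integral} of the Bessel support theorem \Cref{thm:support-of-bessel-function}: the value $\besselFunctionOfFiniteFieldRepresentation\left(\evenPermutationMatrix\ShalikaUnipotentElement{X}\ShalikaDiagonalElement{g}\evenPermutationMatrix^{-1}\right)$ vanishes unless the element is of the form $u_1\,\antidiag\left(\lambda_1\IdentityMatrix{2m_1},\dots,\lambda_r\IdentityMatrix{2m_r}\right)u_2$ with $u_1,u_2$ unipotent, for some composition $m_1+\dots+m_r=m$ and scalars $\lambda_i\in\multiplicativegroup{\finiteField}$; and in that case the inner summation over $X$ (and over $g$ within its coset) collapses by a Gauss-sum-type cancellation to a power of $q$ times a single Bessel value at such an anti-diagonal scalar-block element. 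I would then count, for each datum $\left(m_1,\dots,m_r;\lambda_1,\dots,\lambda_r\right)$, the number of surviving cosets $\left(g,X\right)$ (and $Z$); this count should supply the global factor $q^{2\binom{m}{2}}$ coming from the $\UpperTriangularAdditive$-direction, the factor $q^{-\sum_i 2\binom{m_i}{2}}$ coming from the stabilizer of the anti-diagonal element, and — via the Fourier normalization $q^{-m/2}$ of $\indicatorFunction{\lastrowvector}$, respectively $q^{-m/2}$ of $\indicatorFunction{0}$ multiplied by the factor $q^{m}$ from the collapsed $Z$-sum — the leading powers $q^{-m/2+2\binom{m}{2}}$ and $q^{m/2+2\binom{m}{2}}$ of the statements.

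It then remains to identify the character factor and the Bessel argument. In the even case $\FourierTransformWithRespectToCharacter{\indicatorFunction{\lastrowvector}}{\fieldCharacter}\left(y\right)=q^{-m/2}\fieldCharacter\left(\standardForm{\lastrowvector}{y}\right)$; evaluated at $y=\firstrowvector\inverseTranspose{g}$ for a surviving representative $g$, the argument $\standardForm{\lastrowvector}{\firstrowvector\inverseTranspose{g}}$ is a fixed matrix coefficient of $g^{-1}$ which equals $\lambda_r$ when the last block has size $m_r=1$ and vanishes when $m_r>1$; this is precisely the origin of the factor $\fieldCharacter\left(\lambda_r\cdot\indicatorFunction{m_r,1}\right)$ in \Cref{thm:mellin-inverse-even-case}, while $\fieldCharacter\left(-\trace X\right)$ contributes trivially on the surviving locus. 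Finally I would identify the surviving Bessel value with $\besselFunctionOfFiniteFieldRepresentation\left(\antidiag\left(\lambda_1\IdentityMatrix{2m_1},\dots,\lambda_r\IdentityMatrix{2m_r}\right)^{-1}\right)$ — where the passage to the inverse is forced by the bookkeeping of the Bruhat decomposition of $\evenPermutationMatrix\ShalikaUnipotentElement{X}\ShalikaDiagonalElement{g}\evenPermutationMatrix^{-1}$ (and can be double-checked using $\besselFunctionOfFiniteFieldRepresentation\left(h^{-1}\right)=\conjugate{\besselFunctionOfFiniteFieldRepresentation\left(h\right)}$, \Cref{prop:argument-inverse-and-conjugate-of-Bessel-function}) — which assembles the claimed formula. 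In the odd case $\FourierTransformWithRespectToCharacter{\indicatorFunction{0}}{\fieldCharacter}$ is the constant $q^{-m/2}$, so no $\fieldCharacter\left(\lambda_r\right)$-type factor appears, the extra bottom-right block is the fixed $1\times1$ block $\IdentityMatrix{1}$, the additional $Z$-summation is absorbed in the counting, and the same analysis yields the cleaner formula of \Cref{thm:mellin-inverse-odd-case}.

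The hard part will be the combinatorial bookkeeping of the two middle paragraphs: pinning down exactly which cosets $\left(g,X\right)$ (and $Z$, in the odd case) survive, computing the precise cardinalities that produce the powers $q^{2\binom{m}{2}}$ and $q^{-\sum_i 2\binom{m_i}{2}}$, and verifying that the argument of the Bessel function in the final sum is the \emph{inverse} of the anti-diagonal element rather than the element itself or its transpose. This rests on a careful analysis of how $\evenPermutationMatrix$ (resp.\ $\oddPermutationMatrix$) interleaves rows and columns, of how a Bruhat decomposition of $g$ in $\GL{m}{\finiteField}$ interacts with the additive $X$-sum — where the nontrivial characters $\fieldCharacter\left(-\trace X\right)$ combine with \Cref{thm:support-of-bessel-function} to annihilate every non-anti-diagonal Weyl component — and, in the odd case, of the further upper-right unipotent direction parametrized by $Z$; everything else is formal.
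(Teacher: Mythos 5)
For the odd case your plan is exactly the paper's proof: the same test data $\phi=\indicatorFunction{0}$ and $W=\grpIndex{G}{N}\grpIndex{M}{\UpperTriangularAdditive}\sizeof{\Mat{1}{m}{\finiteField}}\,\finiteFieldRepresentation\left(\oddPermutationMatrix^{-1}\right)\besselFunctionOfFiniteFieldRepresentation$, the dual formula of \cref{prop:dual-jacquet-shalika-formula-odd}, the collapse of the $Z$-sum via the right $N_{2m+1}$-equivariance of the Bessel function (giving $q^{-m/2}\cdot q^{m}=q^{m/2}$), and then \cref{lem:support-of-bessel-function-jacquet-shalika-integral} together with the counts of \cref{lem:jacquet-shalika-support-orbits-size}, which produce $q^{\frac{m}{2}+2\binom{m}{2}}\sum q^{-2\sum_{i}\binom{m_i}{2}}\besselFunctionOfFiniteFieldRepresentation\left(\antidiag\left(\IdentityMatrix{1},\lambda_1\IdentityMatrix{2m_1},\dots,\lambda_r\IdentityMatrix{2m_r}\right)\right)$. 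The only point to streamline is the last step: here the inverse in the stated formula comes from the trivial re-indexing $\left(m_i,\lambda_i\right)\mapsto\left(m_{r+1-i},\lambda_{r+1-i}^{-1}\right)$ of the sum, not from \cref{prop:argument-inverse-and-conjugate-of-Bessel-function} (which introduces a complex conjugate and is what the paper needs only in the even case, where it computes $\gammaFactorOfFiniteField^{-1}$ first and the factor $\fieldCharacter\left(\lambda_r\indicatorFunction{m_r,1}\right)$ prevents free re-indexing).
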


The following lemma indicates which representatives for $g$ and $X$ in the Jacquet-Shalika integral contribute to the sum. It is a key to the proofs of \cref{thm:mellin-inverse-even-case} and \cref{thm:mellin-inverse-odd-case}.

\begin{lem}\label{lem:support-of-bessel-function-jacquet-shalika-integral}
	Let $g \in \GL{m}{\finiteField}$ and $X \in \NilpotentLowerTriangular_m \left({\finiteField}\right) $ a lower triangular nilpotent matrix (i.e., a lower triangular matrix with zeros on its diagonal). Suppose that $g =  w d u $, where $u \in \UnipotentRadical{m}\left( \finiteField \right)$, $w$ is a permutation matrix and $d$ is a diagonal matrix. Denote by $\tau$ the permutation defined by the columns of $w$. Write $X = \left(x_{i j}\right)$. Suppose that 
	\begin{equation}\label{eqn:assumption-of-shape}
	\evenPermutationMatrix \ShalikaUnipotentElement{X}		\ShalikaDiagonalElement{g}
		\evenPermutationMatrix^{-1} \in \UnipotentSubgroup_{2m} \antidiag \left(\lambda_1 \IdentityMatrix{n_1}, \dots, \lambda_r \IdentityMatrix{n_r}  \right) \UnipotentSubgroup_{2m},
	\end{equation}
	where $\lambda_1,\dots,\lambda_r \in \multiplicativegroup{\finiteField}$, and $n_1 + \dots + n_r = 2m$. Then
	\begin{enumerate}
		\item $wd = \antidiag\left(\lambda_1 \IdentityMatrix{m_1}, \dots, \lambda_r \IdentityMatrix{m_r}\right)$, where $n_i = 2m_i$, for every $1 \le i \le r$ (and therefore $m_1 + \dots + m_r = m$).
		\item $x_{ij} = 0$ for every $\left(i,j\right)$ satisfying $j < i$ and $\tau^{-1}\left(j\right) < \tau^{-1}\left(i\right)$.
	\end{enumerate}
	Furthermore, in this case, $\evenPermutationMatrix \SmallShalikaUnipotentElement{X}		\SmallShalikaDiagonalElement{g}
		\evenPermutationMatrix^{-1} = \antidiag \left(\lambda_1 \IdentityMatrix{2 m_1}, \dots, \lambda_r \IdentityMatrix{2 m_r}  \right) \cdot v$, where $v \in \UnipotentSubgroup_{2m}(\finiteField)$ is an upper triangular unipotent matrix with zeros right above its diagonal.
\end{lem}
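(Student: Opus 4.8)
The plan is to conjugate $h := \evenPermutationMatrix \SmallShalikaUnipotentElement{X}\SmallShalikaDiagonalElement{g}\evenPermutationMatrix^{-1}$ into a monomial element up to a unipotent factor, so that the hypothesis $h \in \UnipotentSubgroup_{2m}\,\antidiag(\lambda_1\IdentityMatrix{n_1},\dots,\lambda_r\IdentityMatrix{n_r})\,\UnipotentSubgroup_{2m}$ becomes a statement that can be read off from the (essentially trivial) uniqueness of the Bruhat decomposition in $\GL{2m}{\finiteField}$: every element lies in a unique double coset $\UnipotentSubgroup_{2m}\, t w\,\UnipotentSubgroup_{2m}$ with $t$ diagonal and $w$ a permutation matrix, and the pair $(t,w)$ is determined by the element.

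First I would substitute $g = wdu$ and perform the (routine) conjugation of each resulting block factor by $\evenPermutationMatrix$, where $\evenPermutationMatrix$ interleaves coordinates via $\sigma(i)=2i-1$, $\sigma(m+i)=2i$. Writing $E_{a,b}$ for elementary matrices, this yields $h = \tilde{X}\,\tilde{w}\,\tilde{d}\,\tilde{u}$ with: $\tilde{d}=\diag(d_{11},d_{11},\dots,d_{mm},d_{mm})$; $\tilde{w}$ the permutation matrix that moves consecutive pairs $\{2k-1,2k\}$ exactly as $\tau$ moves $\{1,\dots,m\}$; $\tilde{u}\in\UnipotentSubgroup_{2m}$ with zero superdiagonal; and --- this is where the strict lower-triangularity of $X\in\NilpotentLowerTriangular_m(\finiteField)$ enters --- $\tilde{X}=\IdentityMatrix{2m}+\sum_{i>j}x_{ij}E_{2i-1,2j}\in\OppositeUnipotentSubgroup_{2m}$, supported only in ``odd-row/even-column'' positions. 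Because these elementary matrices multiply to zero in pairs, $\tilde{X}$ factors completely; I would split it as $\tilde{X}=\tilde{X}_{\mathrm{g}}\tilde{X}_{\mathrm{b}}$, where $\tilde{X}_{\mathrm{g}}$ collects the factors with $\tau^{-1}(i)<\tau^{-1}(j)$ and $\tilde{X}_{\mathrm{b}}$ those with $\tau^{-1}(i)>\tau^{-1}(j)$. A short check gives $\tilde{X}_{\mathrm{g}}\in\tilde{w}\,\UnipotentSubgroup_{2m}\,\tilde{w}^{-1}$, so it can be moved to the right of $\tilde{w}$ into $\UnipotentSubgroup_{2m}$; with $\delta':=\tilde{w}\tilde{d}$ (monomial) and $Y:=\tilde{X}_{\mathrm{b}}\in\OppositeUnipotentSubgroup_{2m}$ one obtains $h=Y\,\delta'\,n$ for some $n\in\UnipotentSubgroup_{2m}$, and $Y=\IdentityMatrix{2m}$ precisely when conclusion (2) holds.

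Next I would compute the Bruhat cell of $h$ --- equivalently of $Y\delta'$, since right multiplication by $\UnipotentSubgroup_{2m}$ leaves it unchanged --- column by column, tracking the flag $V_\ell$ spanned by the first $\ell$ columns against the standard flag. The odd columns of $Y\delta'$ are the pure vectors $d_{kk}e_{2\tau(k)-1}$, and the even columns are $d_{kk}\bigl(e_{2\tau(k)}+\sum_i x_{i\tau(k)}e_{2i-1}\bigr)$, the sum over $i>\tau(k)$ with $\tau^{-1}(i)>k$. One checks unconditionally that the Bruhat permutation $\pi$ satisfies $\pi(2k-1)=2\tau(k)-1$ (the odd content of any $V_\ell$ is a coordinate subspace because the even columns carry distinct even ``heads'' that block cancellation), and, by induction on $k$ (equivalently, by considering a hypothetical nonzero forbidden entry with $\tau^{-1}$ of its column minimal), that once all forbidden entries attached to indices before $k$ vanish, $V_{2k-1}$ is a coordinate subspace and $\pi(2k)=\max\bigl(\{2\tau(k)\}\cup\{2i-1:\ i>\tau(k),\ \tau^{-1}(i)>k,\ x_{i\tau(k)}\ne 0\}\bigr)$. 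Now the hypothesis forces $\pi=w_\delta$, the permutation underlying $\delta=\antidiag(\lambda_1\IdentityMatrix{n_1},\dots,\lambda_r\IdentityMatrix{n_r})$. Since $\pi(2k-1)$ is always odd, $w_\delta$ preserves parity; but $w_\delta$ sends each column block to a row block by an order-preserving affine map whose shift has the parity of the corresponding block size $n_i$, so parity preservation forces every $n_i=2m_i$ to be even (hence $m_1+\dots+m_r=m$), and reading $\pi$ off the odd indices identifies $\tau$ with the block-reversal permutation of $\{1,\dots,m\}$ with block sizes $m_1,\dots,m_r$. Feeding this back, $\pi(2k)=w_\delta(2k)=2\tau(k)$, and since every $2i-1$ appearing in the maximum exceeds $2\tau(k)$, all those $x_{i\tau(k)}$ must vanish; running over $k$, this advances the induction and yields conclusion (2).

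With (2) established, $Y=\tilde{X}_{\mathrm{b}}=\IdentityMatrix{2m}$, so $h=\delta'\,n$ is monomial times upper unipotent; comparing with $h\in\UnipotentSubgroup_{2m}\,\delta\,\UnipotentSubgroup_{2m}$ and invoking the uniqueness of the pair (diagonal, permutation) in the Bruhat decomposition gives $\delta'=\delta$, i.e., $\tilde{w}\tilde{d}=\antidiag(\lambda_1\IdentityMatrix{2m_1},\dots,\lambda_r\IdentityMatrix{2m_r})$, and restricting to the diagonal blocks yields $wd=\antidiag(\lambda_1\IdentityMatrix{m_1},\dots,\lambda_r\IdentityMatrix{m_r})$, which is conclusion (1). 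Finally $h=\delta\,v$ with $v=n=\bigl(\tilde{d}^{-1}\tilde{w}^{-1}\tilde{X}_{\mathrm{g}}\tilde{w}\tilde{d}\bigr)\tilde{u}$, a product of two upper unipotent matrices each with zero superdiagonal; that property survives multiplication, so $v$ has zero superdiagonal, giving the last assertion. The main obstacle is the computation of $\pi(2k)$: a forbidden odd tail in column $2k$ could in principle be cancelled against an earlier even column, and it is exactly to preclude this that the columns must be processed in order, carrying the vanishing of the earlier forbidden entries as an induction hypothesis.
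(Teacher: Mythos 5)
Your reduction is sound up to the decisive step: the conjugation giving $h=\tilde X\,\tilde w\,\tilde d\,\tilde u$, the factorization $\tilde X=\tilde X_{\mathrm{g}}\tilde X_{\mathrm{b}}$ with $\tilde X_{\mathrm{g}}$ absorbed past $\tilde w\tilde d$, the column description of $Y\delta'$, your formula for $\pi(2k)$ \emph{under} the inductive hypothesis, and the closure of the zero-superdiagonal property are all correct. The genuine gap is the claim that $\pi(2k-1)=2\tau(k)-1$ holds ``unconditionally'': that claim is false, and the justification you give (distinct even heads block cancellation) misses the actual danger, which is that the odd \emph{tail} of an earlier even column can occupy the row $2\tau(k)-1$ before the odd column $2k-1$ reaches it. Concretely, take $m=2$, $\tau=\mathrm{id}$, $u=\IdentityMatrix{2}$, $d=\diag(a_1,a_2)$, and $x_{21}=x\neq 0$ (a forbidden entry); then $Y\delta'$ has columns $a_1e_1$, $a_1(e_2+xe_3)$, $a_2e_3$, $a_2e_4$, and its Bruhat permutation sends $1\mapsto 1$, $2\mapsto 3$, $3\mapsto 2$, $4\mapsto 4$, so $\pi(3)=2\neq 3=2\tau(2)-1$. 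Because of this, your endgame is circular: the parity count that forces every $n_i$ even and identifies $\tau$ as the block-reversal permutation needs $\pi(2k-1)$ odd for \emph{every} $k$, but that is only true once the forbidden entries $x_{\tau(k),\tau(k')}$ with $k'<k$ are known to vanish --- which is exactly what you intend to extract afterwards by ``feeding back'' $\pi(2k)=2\tau(k)$.

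The alternative you mention in passing (a nonzero forbidden entry whose column has $\tau^{-1}$ minimal) can be completed, but it requires a different finish from the global parity argument, and that finish is not in your write-up: under minimality all earlier even columns are pure, so $\pi(2k-1)=2\tau(k)-1$ and $\pi(2k)=2i_0-1$ are both odd with $\pi(2k)\ge\pi(2k-1)+2$, and one must check directly that the permutation underlying $\antidiag\left(\lambda_1\IdentityMatrix{n_1},\dots,\lambda_r\IdentityMatrix{n_r}\right)$ cannot do this on the consecutive columns $2k-1,2k$: if they lie in the same block the images are consecutive integers, and if column $2k$ opens a new block its image is a strictly smaller row index than $\pi(2k-1)$, since moving right through the column blocks moves the rows upward. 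Either this local contradiction, or the paper's own device --- processing the columns of $Z$ from the left, identifying the bottom block of size $n_r=2m_r$ column by column, killing each odd tail at the even columns via uniqueness of the Bruhat decomposition, and recursing on the smaller matrix $Z'$ --- is the missing ingredient. Once the forbidden entries are known to vanish, your deduction of $\delta'=\delta$, of conclusion (1), and of the final form $h=\delta v$ with $v$ upper unipotent with zero superdiagonal goes through as you wrote it.
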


\begin{proof}
	Since $\evenPermutationMatrix \SmallShalikaDiagonalElement{u} \evenPermutationMatrix^{-1} \in \UnipotentSubgroup_{2m}$, we have by \cref{eqn:assumption-of-shape} that $$\evenPermutationMatrix \ShalikaUnipotentElement{X} \ShalikaDiagonalElement{wd} \evenPermutationMatrix^{-1} \in \UnipotentSubgroup_{2m} \antidiag\left(\lambda_1 \IdentityMatrix{n_1}, \dots, \lambda_r \IdentityMatrix{n_r} \right) \UnipotentSubgroup_{2m}.$$

	Let $w' = \evenPermutationMatrix \SmallShalikaDiagonalElement{w} \evenPermutationMatrix^{-1}$. Then $w'$ is a column permutation matrix of the permutation $\tau'$, where $\tau'\left(2j\right) = 2\tau\left(j\right)$, and $\tau' \left(2j - 1\right) = 2 \tau \left(j\right) - 1$, for every $1 \le j \le m$, therefore $$w' = \begin{pmatrix}
			\standardColumnVector{2 \tau\left(1\right) - 1} & \standardColumnVector{2 \tau\left(1\right)} & \dots & \standardColumnVector{2 \tau\left(m\right) - 1} & \standardColumnVector{2 \tau\left(m\right)}
		\end{pmatrix},$$
	where $\standardColumnVector{i}$ is the $i$-th standard column vector in $\finiteField^{2m}$.

	Let $d = \diag \left(a_1, \dots, a_m\right)$ and  $d' = \evenPermutationMatrix \SmallShalikaDiagonalElement{d} \evenPermutationMatrix^{-1}$. Then $d' = \diag\left(a_1, a_1, a_2, a_2, \dots, a_m, a_m\right)$.

	Let $U_X = \evenPermutationMatrix \SmallShalikaUnipotentElement{X} \evenPermutationMatrix^{-1}$. Since $X = \left(x_{i j}\right)$ is a lower triangular nilpotent matrix, $U_X$ is a lower triangular unipotent matrix, and its columns are given by $\columnOf_{2j-1}\left(U_X\right) = \standardColumnVector{2j - 1}$ and $$ \columnOf_{2j}\left(U_X\right) = \standardColumnVector{2j} + \sum_{i = j + 1}^{m}{x_{i j} \standardColumnVector{2i - 1}}. $$

	Let $Z = \evenPermutationMatrix \SmallShalikaUnipotentElement{X} \SmallShalikaDiagonalElement{wd} \evenPermutationMatrix^{-1} = U_X w' d'$. We get that $\columnOf_{2j-1} \left(Z \right) = a_j \standardColumnVector{2 \tau\left(j \right) - 1}$ and $$ \columnOf_{2j}\left(Z\right) = a_j \standardColumnVector{2\tau \left(j\right)} + \sum_{i = \tau \left(j\right) + 1}^{m}{a_j x_{i \tau\left(j\right)} \standardColumnVector{2i - 1}}. $$

	We claim that $Z$ has the form $$ Z = \begin{pmatrix}
			0                                  & 0                                        & \dots   & 0                                & \lambda_1 \IdentityMatrix{2 m_1} \\
			0                                  & 0                                        & \dots   & \lambda_2 \IdentityMatrix{2 m_2} & \ast                             \\
			0                                  & 0                                        & \iddots & \ast                             & \ast                             \\
			0                                  & \lambda_{r-1} \IdentityMatrix{2 m_{r-1}} & \cdots  & \ast                             & \ast                             \\
			\lambda_{r} \IdentityMatrix{2 m_r} & \ast                                     & \cdots  & \ast                             & \ast                             \\
		\end{pmatrix},$$
	where $2m_i = n_i$, for every $1 \le i \le r$.

	The proof of the claim is by applying row and column reduction carefully, in order to obtain the diagonal and permutation matrices that are involved in the Bruhat decomposition of $Z$. Every $h \in \GL{n}{\finiteField}$ has a Bruhat decomposition $h = u_1 w_h d_h u_2$, where $u_1, u_2 \in \UnipotentSubgroup_n(\finiteField)$, $w_h$ is a permutation matrix and $d_h$ is a diagonal matrix. Such $w_h$ and $d_h$ are unique. Throughout the text, we refer to the uniqueness of $w_h$ and $d_h$ by the \emph{uniqueness of the Bruhat decomposition}.
	We will first show that $Z$ is of the form $$Z = \begin{pmatrix}
			0                                & \ast \\
			\lambda_r \IdentityMatrix{2 m_r} & \ast
		\end{pmatrix}.$$
	By the above description of the columns of $Z$, we have that the first column of $Z$ is $a_1 \cdot e_{2 \tau \left(1\right) - 1}$. Let $2m - 2\tau\left(1\right) + 2 = 2m_r$. We show by induction that for every $1 \le l \le 2m_r$, $\columnOf_{l}\left(Z\right) = \lambda_r \standardColumnVector{2m - 2m_r + l}$. For $l = 1$, we have $\columnOf_{1}\left(Z\right) = a_1 \standardColumnVector{2m - 2m_r + 1}$. By assumption $Z \in \UnipotentSubgroup_{2m} \antidiag \left(\lambda_1 \IdentityMatrix{n_1}, \dots, \lambda_r \IdentityMatrix{n_r}  \right) \UnipotentSubgroup_{2m}$ and therefore by the uniqueness of the Bruhat decomposition of $Z$, we must have $a_1 = \lambda_r$. Assume that the claim is true for all columns before $l$, that is to say, $\columnOf_{i}\left(Z\right) = \lambda_r \standardColumnVector{2m - 2m_r + i}$ for $1 \le i < l$. Since $\columnOf_{l-1}\left(Z\right) = \lambda_r \standardColumnVector{2m - 2m_r + l-1}$, we expect $\columnOf_{l}\left(Z\right) = \lambda_r \standardColumnVector{2m - 2m_r + l}$.

	If $l = 2j-1$ is odd, then since $\columnOf_{2j-1} \left(Z \right) = a_j \standardColumnVector{2 \tau\left(j \right) - 1}$, in order for $Z$ to have the required Bruhat decomposition $Z = u_1 \antidiag \left(\lambda_1 \IdentityMatrix{n_1}, \dots, \lambda_r \IdentityMatrix{n_r}  \right) u_2$ for some $u_1, u_2 \in \UnipotentSubgroup_{2m}(\finiteField)$, we must have that $a_j = \lambda_r$ and $2 \tau\left(j \right) - 1 = 2m - 2m_r + 2j - 1$, i.e., $\tau \left(j\right) = m - m_r + j$.

	If $l = 2j$ is even, then we have that $$ \columnOf_{2j}\left(Z\right) = \lambda_r \standardColumnVector{2m - 2 m_r + 2 j} + \sum_{i = m - m_r + j + 1}^{m}{\lambda_r x_{i, m - m_r + j} \standardColumnVector{2i - 1}}.$$
	If any of the $x_{i, m - m_r + j}$ is non-zero, then by applying row reduction (by multiplying from the left by an upper triangular unipotent matrix that annihilates all the elements above the lowest element of $\columnOf_{2j}\left(Z\right)$), we get that the Bruhat decomposition of $Z$ is not of the required form. Therefore we get that $\columnOf_{2j}\left(Z\right) = \lambda_r \standardColumnVector{2m - 2 m_r + 2 j}$, as required.

	We have shown that for every $1 \le l \le 2m_r$, $\columnOf_{l}\left(Z\right) = \lambda_r \standardColumnVector{2m - 2m_r + l}$. By the uniqueness of the Bruhat decomposition of $Z$, we must have $n_r = 2m_r$.

	We now have that $$Z = \begin{pmatrix}
			0                                & Z' \\
			\lambda_r \IdentityMatrix{2 m_r} & A
		\end{pmatrix},$$ where $Z' \in \GL{2m - 2m_r}{\finiteField}$ and $A \in \Mat{2 m_r}{\left( 2m - 2m_r \right)}{\finiteField} $.
	We have that $$Z \cdot \begin{pmatrix}
			\IdentityMatrix{2m_r} & -\lambda_r^{-1} A          \\
			0                     & \IdentityMatrix{2m - 2m_r}
		\end{pmatrix} = \begin{pmatrix}
			0                                & Z' \\
			\lambda_r \IdentityMatrix{2 m_r} & 0
		\end{pmatrix}.$$
	By uniqueness of the Bruhat decomposition of $Z$, we must have that $$Z' \in \UnipotentSubgroup_{2m-2m_r} \antidiag \left(\lambda_1\IdentityMatrix{n_1}, \dots, \lambda_{n_{r-1}} \IdentityMatrix{n_{r - 1}} \right) \UnipotentSubgroup_{2m-2m_r}.$$ Since $Z'$ inherits an analogous column description from $Z$ but is of smaller size than that of $Z$, we get by induction on the size of the matrix, i.e., by repeating the above steps applied to $Z$, that $$ Z' = \begin{pmatrix}
			0                                        & 0                                        & \dots   & 0                                & \lambda_1 \IdentityMatrix{2 m_1} \\
			0                                        & 0                                        & \dots   & \lambda_2 \IdentityMatrix{2 m_2} & \ast                             \\
			0                                        & 0                                        & \iddots & \ast                             & \ast                             \\
			0                                        & \lambda_{r-2} \IdentityMatrix{2 m_{r-2}} & \cdots  & \ast                             & \ast                             \\
			\lambda_{r-1} \IdentityMatrix{2 m_{r-1}} & \ast                                     & \cdots  & \ast                             & \ast                             \\
		\end{pmatrix},$$ and that $n_i = 2m_i$ for every $1 \le i \le r -1$, and therefore $Z$ has the desired form.

	By the above claim, we get that the Bruhat decomposition of $Z$ is $Z = w_Z d_Z u_Z$, where $w_Z d_Z = \antidiag\left(\lambda_1 \IdentityMatrix{2 m_1}, \dots, \lambda_r \IdentityMatrix{2 m_r} \right)$ ($w_Z$ is a permutation matrix, $d_Z$ is a diagonal matrix, and $u_Z$ is an upper triangular unipotent matrix). We conclude from $\evenPermutationMatrix \SmallShalikaDiagonalElement{wd} \evenPermutationMatrix^{-1} = w_Z d_Z$ that $w d = \antidiag \left( \lambda_1 \IdentityMatrix{m_1}, \dots, \lambda_{r} \IdentityMatrix{m_r} \right)$, which is the first part of the lemma. We also conclude from the induction process of the above claim that $x_{\tau \left(i\right) \tau \left(j\right)} = 0$ for every $i > j$ with $\tau' \left(2i - 1 \right) > \tau'\left(2j\right)$, which is equivalent to $x_{i j} = 0$ for every $i > j$ with $\tau^{-1} \left(i \right) > \tau^{-1} \left(j \right)$. This finishes the second part of the lemma.

	Finally, we write $Z = w' d' \left(d'^{-1} w'^{-1} U_X w' d'\right) $. We claim that $w'^{-1} U_X w'$ is an upper triangular matrix with zeros right above its diagonal: the only non-zero non-diagonal components of $U_X$ are located in the positions of the form $\left(2i-1, 2j\right)$ with values $x_{i j}$ for $j < i$, and these move in the conjugation $w'^{-1} U_X w'$ to $\left(\tau'^{-1}\left(2i-1\right), \tau'^{-1}\left(2j\right)\right) = \left(2\tau^{-1}\left(i\right) - 1, 2\tau^{-1}\left(j\right)\right)$. If $2\tau^{-1}\left(j\right)<2\tau^{-1} \left(i \right) - 1$, then we get that $x_{ij} = 0$, and therefore $d'^{-1} w'^{-1} U_X w' d'$ is an upper unipotent matrix with zeros right above its diagonal. Since $$\evenPermutationMatrix \ShalikaUnipotentElement{X} \ShalikaDiagonalElement{g} \evenPermutationMatrix^{-1} = Z \evenPermutationMatrix \ShalikaDiagonalElement{u} \evenPermutationMatrix^{-1},$$ and since $\evenPermutationMatrix \SmallShalikaDiagonalElement{u} \evenPermutationMatrix^{-1}$ is an upper triangular unipotent matrix with zeros right above its diagonal, it follows that $\evenPermutationMatrix \SmallShalikaUnipotentElement{X}		\SmallShalikaDiagonalElement{g}
		\evenPermutationMatrix^{-1} = \antidiag \left(\lambda_1 \IdentityMatrix{2 m_1}, \dots, \lambda_r \IdentityMatrix{2 m_r}  \right) \cdot v$, for some upper triangular unipotent matrix $v \in \UnipotentSubgroup_{2m}(\finiteField)$ with zeros right above its diagonal.

\end{proof}

We need one more lemma, regarding the number of elements involved in the Jacquet-Shalika integral, such that $g$ has a given Bruhat decomposition. As we'll see in the proofs of \cref{thm:mellin-inverse-even-case} and \cref{thm:mellin-inverse-odd-case}, in order for a coset $g \in \lquot{\UnipotentSubgroup}{G}$ to contribute to the Jacquet-Shalika integral, we must have $g \in  \UnipotentSubgroup \antidiag\left(\lambda_1 \IdentityMatrix{m_1}, \dots, \lambda_r \IdentityMatrix{m_r} \right) \UnipotentSubgroup$. Let $wd = \antidiag\left(\lambda_1 \IdentityMatrix{m_1}, \dots, \lambda_r \IdentityMatrix{m_r} \right)$. Given $g \in \UnipotentSubgroup w d \UnipotentSubgroup$ and $X \in \NilpotentLowerTriangular$ as in \cref{lem:support-of-bessel-function-jacquet-shalika-integral}, we'll see in the following proofs that the summand of the Jacquet-Shalika integral on a special choice of functions, depends only on $w d$. To evaluate the Jacquet-Shalika integral, we should count the number of cosets in the set $\left\{N w d u \mid u \in \UnipotentSubgroup \right\} \subseteq \lquot{\UnipotentSubgroup}{G}$ and the number of options for a matrix $X \in \NilpotentLowerTriangular$ satisfying the condition of \cref{lem:support-of-bessel-function-jacquet-shalika-integral}.

\begin{lem}\label{lem:jacquet-shalika-support-orbits-size}
	Let $g = wd = \antidiag \left( \lambda_1 \IdentityMatrix{m_1}, \dots, \lambda_r \IdentityMatrix{m_r}  \right)$, where $w = \antidiag \left( \IdentityMatrix{m_1}, \dots, \IdentityMatrix{m_r}  \right)$ is a permutation matrix, $d = \diag \left(\lambda_r \IdentityMatrix{m_r}, \dots, \lambda_1 \IdentityMatrix{m_1}\right)$ is a diagonal matrix, $m_1 + \dots + m_r = m$, $\lambda_1, \dots, \lambda_r \in \multiplicativegroup{\finiteField}$.
	\begin{enumerate}
		\item Consider the right action of the upper triangular unipotent subgroup $\UnipotentSubgroup = \UnipotentSubgroup_m$ on $\lquot{\UnipotentSubgroup}{G}$, where $G = \GL{m}{\finiteField}$. Then the orbit of $\UnipotentSubgroup g$ is of size $q^{\binom{m}{2} - \sum_{i=1}^{r}{\binom{m_i}{2}}}$.
		\item Let $\tau$ be the permutation corresponding to columns of $w$. Then the set $$\left\{X \in \NilpotentLowerTriangular_m  \mid x_{ij}=0, \, \forall 1 \le j < i \le m, \text{ s.t. } \tau^{-1}\left(j\right) < \tau^{-1}\left(i\right) \right\}$$ is of cardinality $q^{\binom{m}{2} - \sum_{i=1}^{r}{\binom{m_i}{2}}}$.
	\end{enumerate}
\end{lem}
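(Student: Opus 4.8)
The plan is to prove both parts by reducing the relevant count to the number of inversions of the permutation $\tau$ attached to $w$, and then to evaluate that inversion number for the block-antidiagonal permutation. For part (1), I would first rewrite the orbit size as a group index: since $\UnipotentSubgroup g u_1 = \UnipotentSubgroup g u_2$ if and only if $u_1 u_2^{-1} \in \UnipotentSubgroup \cap g^{-1}\UnipotentSubgroup g$, the orbit of $\UnipotentSubgroup g$ has cardinality $\grpIndex{\UnipotentSubgroup}{\UnipotentSubgroup \cap g^{-1}\UnipotentSubgroup g}$. Next I would discard the diagonal factor: writing $g = wd$ with $d = \diag(\lambda_r \IdentityMatrix{m_r}, \dots, \lambda_1 \IdentityMatrix{m_1})$, conjugation by the diagonal matrix $d$ only rescales each off-diagonal entry by a non-zero scalar, hence preserves the space of matrices supported on any fixed set of positions; therefore $g^{-1}\UnipotentSubgroup g = w^{-1}\UnipotentSubgroup w$ and $\UnipotentSubgroup \cap g^{-1}\UnipotentSubgroup g = \UnipotentSubgroup \cap w^{-1}\UnipotentSubgroup w$. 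A direct computation of conjugation by the permutation matrix $w$ shows that $u \in \UnipotentSubgroup$ lies in $w^{-1}\UnipotentSubgroup w$ exactly when $u_{cd} = 0$ for every $c < d$ with $\tau(c) > \tau(d)$; hence $\sizeof{\UnipotentSubgroup \cap w^{-1}\UnipotentSubgroup w} = q^{\#\{c<d\,:\,\tau(c)<\tau(d)\}}$, and since $\sizeof{\UnipotentSubgroup} = q^{\binom{m}{2}}$, the orbit has size $q^{\ell(\tau)}$ where $\ell(\tau) = \#\{c<d : \tau(c) > \tau(d)\}$ is the number of inversions of $\tau$.

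For part (2), I would observe that after imposing strict lower-triangularity together with the stated vanishing condition, the entries of $X$ that remain free are precisely the $x_{ij}$ with $j < i$ and $\tau^{-1}(j) > \tau^{-1}(i)$ (the hypothesis kills every other sub-diagonal entry, and injectivity of $\tau^{-1}$ excludes equality). Hence the set in question has cardinality $q^{\#\{(i,j)\,:\,j<i,\ \tau^{-1}(j)>\tau^{-1}(i)\}}$, and the exponent is exactly the number of inversions of the permutation $\tau^{-1}$. Since a permutation and its inverse have equally many inversions, this exponent equals $\ell(\tau)$ as well, so parts (1) and (2) produce the same power of $q$.

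It then remains to evaluate $\ell(\tau)$ for $w = \antidiag(\IdentityMatrix{m_1}, \dots, \IdentityMatrix{m_r})$. Here $\tau$ carries the $k$-th block of $m_k$ consecutive indices onto the $(r+1-k)$-th block while preserving the internal order of each block, so a pair $c < d$ is an inversion of $\tau$ exactly when $c$ and $d$ lie in two distinct blocks. Subtracting the within-block pairs from the total number of pairs gives $\ell(\tau) = \binom{m}{2} - \sum_{i=1}^{r}\binom{m_i}{2}$, which is the asserted exponent in both parts.

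The computations involved — the conjugation formula for $wuw^{-1}$ and the identity $g^{-1}\UnipotentSubgroup g = w^{-1}\UnipotentSubgroup w$ — are routine. The one place requiring care is keeping the convention for $\tau$ (defined by the columns of $w$) fixed throughout and verifying that the two index conditions that arise, one phrased via $\tau$ in part (1) and one via $\tau^{-1}$ in part (2), both collapse to the inversion count of $\tau$; I expect this bookkeeping, rather than any genuine difficulty, to be the main obstacle.
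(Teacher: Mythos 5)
Your proposal is correct and follows essentially the same route as the paper: part (1) via orbit--stabilizer with the stabilizer identified as $\UnipotentSubgroup \cap w^{-1}\UnipotentSubgroup w$, and both parts reduced to counting the admissible matrix positions, which amount to the inversion count of $\tau$ (equivalently $\tau^{-1}$), namely $\binom{m}{2}-\sum_{i=1}^{r}\binom{m_i}{2}$. The only difference is cosmetic: the paper simply asserts that the number of non-inversion pairs equals $\sum_{i}\binom{m_i}{2}$, whereas you spell out the block-reversal computation (your labeling of which domain block has size $m_k$ is off by the reversal $k\mapsto r+1-k$, but this does not affect the count).
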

\begin{proof}
	By the orbit-stabilizer theorem, we have that the orbit of $\UnipotentSubgroup w d$ is of size $ \grpIndex{\UnipotentSubgroup}{\Stab_\UnipotentSubgroup \left(N wd\right)} $. We have that $$\Stab_\UnipotentSubgroup \left(N wd\right) = \UnipotentSubgroup \cap w^{-1} \UnipotentSubgroup w = \left\{ \left( u_{ij} \right) \in \UnipotentSubgroup \mid u_{ij} = 0,\, \forall i<j \text{ s.t. } \tau^{-1}\left(i\right) > \tau^{-1}\left(j\right) \right\}.$$
	Therefore $$\log_q{\sizeof{\Stab_\UnipotentSubgroup \left(N wd\right)}} = \sizeof{\left\{ \left(i,j\right) \mid i<j  \text{ and } \tau^{-1}\left(i\right) < \tau^{-1}\left(j\right) \right\}} = \sum_{i=1}^r{\binom{m_i}{2}},$$
	and the first part is proved. The second part follows from the fact that
	$$\sizeof{\left\{ \left(i,j\right) \mid j<i  \text{ and } \tau^{-1}\left(j\right) < \tau^{-1}\left(i\right) \right\}} = \sum_{i=1}^r{\binom{m_i}{2}}.$$
\end{proof}

We are now ready to prove \cref{thm:mellin-inverse-even-case} and \cref{thm:mellin-inverse-odd-case}.

\begin{proof}[Proof of \cref{thm:mellin-inverse-even-case}]
	We have that $ \FourierTransformWithRespectToCharacter{\indicatorFunction{-\firstrowvector}}{\fieldCharacter}\left( x \right) = q^{-\frac{m}{2}} \fieldCharacter \left( \standardForm{-x}{\firstrowvector} \right) = q^{-\frac{m}{2}} \fieldCharacter \left( -x_1 \right) $ and therefore by the Fourier inversion formula, if $\phi \left(x\right) = \fieldCharacter \left(-x_1\right)$, then $\FourierTransformWithRespectToCharacter{\phi}{\fieldCharacter} = q^{\frac{m}{2}} \indicatorFunction{\firstrowvector}$.

	We compute $\DualJSOfFiniteFieldRepresentation{W}{\phi}$, for $W = \grpIndex{G}{\UnipotentSubgroup} \grpIndex{M}{\UpperTriangularAdditive} \finiteFieldRepresentation \left( \evenPermutationMatrix^{-1} \right) \besselFunctionOfFiniteFieldRepresentation$ and $\phi \left(x\right) = \fieldCharacter \left(-x_1\right) $. We have from \cref{prop:dual-jacquet-shalika-formula-even} that
	$$ q^{-\frac{m}{2}} \DualJSOfFiniteFieldRepresentation{W}{\phi} = \sum_{X \in \lquot{\UpperTriangularAdditive}{M}} \sum_{g \in \lquot{\UnipotentSubgroup}{G}} \besselFunctionOfFiniteFieldRepresentation \left( \evenPermutationMatrix \ShalikaUnipotentElement{X}	\ShalikaDiagonalElement{g} \evenPermutationMatrix^{-1}
		\right) \fieldCharacter\left(-\trace X\right) \indicatorFunction{\firstrowvector} \left( \firstrowvector \inverseTranspose{g} \right).$$
	Notice that $ \firstrowvector \inverseTranspose{g} = \firstrowvector $ if and only if $g$ has $\standardColumnVector{1} = \transpose{\firstrowvector}$ as its first column. It follows now (similarly to the proof of \cref{prop:jacquet-shalika-integral-of-bessel-function-even-case}) that $  \DualJSOfFiniteFieldRepresentation{W}{\phi} = q^{\frac{m}{2}} $, and therefore
	\begin{equation}\label{eqn:integral-representation-of-gamma-inverse}
	\begin{split}
	q^{\frac{m}{2}} \cdot \gammaFactorOfFiniteField^{-1} &=  \JSOfFiniteFieldRepresentation{W}{\phi} \\
	&= \sum_{g \in \lquot{\UnipotentSubgroup}{G}} \sum_{X \in \lquot{\UpperTriangularAdditive}{M}} \besselFunctionOfFiniteFieldRepresentation \left( \evenPermutationMatrix \ShalikaUnipotentElement{X} \ShalikaDiagonalElement{g} \evenPermutationMatrix^{-1}
		\right) \fieldCharacter\left(-\trace X\right) \fieldCharacter \left( -g_{m 1} \right).
	\end{split}
	\end{equation}

	Since the Jacquet-Shalika integral runs over cosets of the form $g \in \lquot{\UnipotentSubgroup}{G}$ (and is constant on these), it follows from the Bruhat decomposition that it suffices to consider elements of the form $g = wdu$, where $w$ is a permutation matrix, $d$ is a diagonal matrix and $u$ is an upper triangular unipotent matrix. By \cref{lem:support-of-bessel-function-jacquet-shalika-integral}, we only need to consider $w$, $d$ such that $wd = \antidiag\left(\lambda_1 \IdentityMatrix{m_1}, \dots, \lambda_r \IdentityMatrix{m_r} \right)$.

	We get from \cref{lem:support-of-bessel-function-jacquet-shalika-integral} that $$ \besselFunctionOfFiniteFieldRepresentation \left( \evenPermutationMatrix \ShalikaUnipotentElement{X} \ShalikaDiagonalElement{wdu} \evenPermutationMatrix^{-1} \right) = \besselFunctionOfFiniteFieldRepresentation \left( \antidiag \left( \lambda_1 \IdentityMatrix{2m_1}, \dots, \lambda_r \IdentityMatrix{2m_r} \right) \right).$$
	Implicitly, we see that it does not depend on $X$ and $u$. By \cref{lem:jacquet-shalika-support-orbits-size}, given such $wd$, we have $q^{\binom{m}{2} - \sum_{i=1}^{r}{\binom{m_i}{2}}}$ options for $u \in \UnipotentSubgroup$, and the same number of options for $X \in \NilpotentLowerTriangular$.

	Therefore we get the following formula $$\JSOfFiniteFieldRepresentation{W}{\phi} = \sum_{\substack{
				m_1, \dots, m_r \ge 1\\
				m_1 + \dots + m_r = m\\
				\lambda_1,\dots,\lambda_r \ \in \multiplicativegroup{\finiteField}
			}} q^{2\left( \binom{m}{2} - \sum_{i=1}^{r} \binom{m_i}{2} \right)} \cdot  \besselFunctionOfFiniteFieldRepresentation \left( \antidiag \left( \lambda_1 \IdentityMatrix{2m_1}, \dots, \lambda_r \IdentityMatrix{2m_r} \right) \right) \cdot \fieldCharacter \left( -\lambda_{r} \cdot \indicatorFunction{m_r, 1} \right). $$
	The theorem now follows from \Cref{eqn:integral-representation-of-gamma-inverse}, \cref{prop:argument-inverse-and-conjugate-of-Bessel-function} and \cref{rem:exterior-square-gamma-factor-is-unitrary}.

\end{proof}

\begin{rem}\label{rem:gamma-factor-gauss-sum}
	Let
	$$S_0 = \sum_{\substack{
				m_0 > 1\\
				m_1, \dots, m_{r-1} \ge 1\\
				m_0 + \dots + m_{r-1} = m\\
				\lambda_0,\dots,\lambda_{r-1} \ \in \multiplicativegroup{\finiteField}
			}} q^{- 2\sum_{i=0}^{r-1} \binom{m_i}{2} } \cdot  \besselFunctionOfFiniteFieldRepresentation \left( \antidiag \left( \lambda_0 \IdentityMatrix{2m_0}, \lambda_1 \IdentityMatrix{2m_1}, \dots, \lambda_{r-1} \IdentityMatrix{2m_{r-1}} \right) \right)$$

	Then for every $a \in \multiplicativegroup{F}$, $S_0 = \centralCharacter{\finiteFieldRepresentation}\left(a\right) \cdot  S_0$, and therefore if $\finiteFieldRepresentation$ has a non-trivial central character, then $S_0 = 0$.

	Also let $$S_1 = \sum_{\substack{
				m_1, \dots, m_{r-1} \ge 1\\
				m_1 + \dots + m_{r-1} = m - 1\\
				\lambda_1,\dots,\lambda_{r-1} \ \in \multiplicativegroup{\finiteField}
			}} q^{- 2\sum_{i=1}^{r - 1} \binom{m_i}{2} } \cdot  \besselFunctionOfFiniteFieldRepresentation \left( \antidiag  \left(1 \IdentityMatrix{2},  \lambda_1 \IdentityMatrix{2m_1}, \dots, \lambda_{r-1} \IdentityMatrix{2m_{r-1}} \right) \right).$$
	Then by a change of variables, we have that $$ \gammaFactorOfFiniteField = q^{-\frac{m}{2} + 2 \binom{m}{2}} \left(S_0 + S_1 \cdot \sum_{a \in \multiplicativegroup{\finiteField}}{\centralCharacter{\finiteFieldRepresentation}\left(a^{-1}\right)  \fieldCharacter \left(a\right)}  \right).$$

\end{rem}

We now move to prove the odd case of the theorem:
\begin{proof}[Proof of \cref{thm:mellin-inverse-odd-case}]
	Choose $\phi = \indicatorFunction{0}$ and $W = \grpIndex{G}{\UnipotentSubgroup} \cdot  \grpIndex{M}{\UpperTriangularAdditive} \cdot \sizeof{\Mat{1}{m}{\finiteField}} \cdot \finiteFieldRepresentation\left(\oddPermutationMatrix^{-1}\right) \besselFunctionOfFiniteFieldRepresentation$, as in \cref{prop:jacquet-shalika-integral-of-bessel-function-odd-case}. Then $\JSOfFiniteFieldRepresentation{W}{\phi} = 1$, and $\FourierTransformWithRespectToCharacter{\phi}{\fieldCharacter} = q^{-\frac{m}{2}}$. By \cref{prop:dual-jacquet-shalika-formula-odd}, we have that \begin{equation*}
		\begin{split}
			\gammaFactorOfFiniteField =  & q^{-\frac{m}{2}}  \sum_{g \in \lquot{\UnipotentSubgroup}{G}} \sum_{X \in \lquot{\UpperTriangularAdditive}{M}} \sum_{Z \in \Mat{1}{m}{\finiteField}} \\
			&\besselFunctionOfFiniteFieldRepresentation\left(
			\begin{pmatrix}& 1 \\ I_{2m} & \end{pmatrix}
			\oddPermutationMatrix \ShalikaUnipotentElementOdd{X}
			\ShalikaDiagonalElementOdd{g}
			\ShalikaUpperRightUnipotentElementOdd{-\transpose{Z}}
			\oddPermutationMatrix^{-1}
			\right)
			\fieldCharacter\left(-\trace X\right).
		\end{split}
	\end{equation*}
	Since  $\oddPermutationMatrix \SmallShalikaUpperRightUnipotentElementOdd{-\transpose{Z}} \oddPermutationMatrix^{-1}$ is an upper triangular unipotent matrix with zeros right above its diagonal, we can evaluate the summation over $Z$ (using the right translation $\UnipotentSubgroup_{2m+1}$-equivariance property of the Bessel function), and we are left with the expression
	\begin{equation*}
		\begin{split}
			\gammaFactorOfFiniteField =  & q^{\frac{m}{2}}  \sum_{g \in \lquot{\UnipotentSubgroup}{G}} \sum_{X \in \lquot{\UpperTriangularAdditive}{M}} \\
			&\besselFunctionOfFiniteFieldRepresentation\left(
			\begin{pmatrix}& 1 \\ I_{2m} & \end{pmatrix}
			\oddPermutationMatrix \ShalikaUnipotentElementOdd{X}
			\ShalikaDiagonalElementOdd{g}
			\oddPermutationMatrix^{-1}
			\right)
			\fieldCharacter\left(-\trace X\right).
		\end{split}
	\end{equation*}
	Since $\oddPermutationMatrix = \smallDiagTwo{\evenPermutationMatrix}{1} $, we get that the argument of Bessel function in the summand is $\left(\begin{smallmatrix}
				& 1\\
				Y &
			\end{smallmatrix}\right) $, where $Y = \evenPermutationMatrix \SmallShalikaUnipotentElement{X} \SmallShalikaDiagonalElement{g} \evenPermutationMatrix^{-1}$ . Let $g \in G$, $X \in M$, and denote $Y = \evenPermutationMatrix \SmallShalikaUnipotentElement{X} \SmallShalikaDiagonalElement{g} \evenPermutationMatrix^{-1}$. In order for $g$, $X$ to contribute to the sum, we must have that $$\begin{pmatrix}
			  & 1 \\
			Y &
		\end{pmatrix} \in \UnipotentSubgroup_{2m+1} \antidiag\left(\lambda_0 \IdentityMatrix{n_0}, \lambda_1 \IdentityMatrix{n_1} \dots \lambda_r \IdentityMatrix{n_r} \right) \UnipotentSubgroup_{2m+1},$$
	where $n_0 + \dots + n_r = 2m + 1$ and $\lambda_0, \dots , \lambda_r \in \multiplicativegroup{\finiteField}$. Let $u_1, u_2 \in \UnipotentSubgroup_{2m}$, such that $u_1 Y u_2 = w'd'$, where $w'$ is a permutation matrix and $d'$ is a diagonal matrix. Then $$\begin{pmatrix}
			1 &     \\
			  & u_1
		\end{pmatrix}
		\begin{pmatrix}
			  & 1 \\
			Y &
		\end{pmatrix}
		\begin{pmatrix}
			u_2 &   \\
			    & 1
		\end{pmatrix} = \begin{pmatrix}
			     & 1 \\
			w'd' &
		\end{pmatrix}.$$
	By the uniqueness of the Bruhat decomposition, we get that $n_0 = 1$, $\lambda_0=1$, and
	$$u_1 Y u_2 = \antidiag \left(\lambda_{1} \IdentityMatrix{n_1}, \dots, \lambda_{r} \IdentityMatrix{n_r} \right),$$
	i.e., $\evenPermutationMatrix \SmallShalikaUnipotentElement{X} \SmallShalikaDiagonalElement{g} \evenPermutationMatrix^{-1} \in \UnipotentSubgroup_{2m} \antidiag \left(\lambda_{1} \IdentityMatrix{n_1}, \dots, \lambda_{r} \IdentityMatrix{n_r} \right) \UnipotentSubgroup_{2m}$. As in the even case, since $g$ runs on cosets of $\lquot{\UnipotentSubgroup}{G}$, using the Bruhat decomposition, we may assume its representatives are in the form $g = wdu$, where $w$ is a permutation matrix, $d$ is a diagonal matrix, and $u \in \UnipotentSubgroup_{2m}$. By \cref{lem:support-of-bessel-function-jacquet-shalika-integral}, we have that $wd = \antidiag\left(\lambda_1 \IdentityMatrix{m_1}, \dots, \lambda_r \IdentityMatrix{m_r} \right)$ and then $Y = \antidiag\left(\lambda_1 \IdentityMatrix{m_1}, \dots, \lambda_r \IdentityMatrix{m_r} \right) v$, where $n_i = 2m_i$, for every $1 \le i \le r$, and $v \in \UnipotentSubgroup_{2m}$ has zeros right above its diagonal. Since $\left(\begin{smallmatrix}
				& 1\\
				Y v^{-1} &
			\end{smallmatrix}\right) \cdot \smallDiagTwo{v}{1} = \left(\begin{smallmatrix}
				& 1\\
				Y &
			\end{smallmatrix}\right)$, and the matrix $\smallDiagTwo{v}{1}$ has zeros right above its diagonal, we get that $\besselFunctionOfFiniteFieldRepresentation \left( \begin{smallmatrix}
				& 1\\
				Y &
			\end{smallmatrix} \right) = \besselFunctionOfFiniteFieldRepresentation\left(\antidiag\left(1\IdentityMatrix{1},\lambda_1 \IdentityMatrix{2m_1}, \dots, \lambda_r \IdentityMatrix{2m_r} \right)\right)$. Using \cref{lem:jacquet-shalika-support-orbits-size} to count how many such $u$ and $X$ correspond to a given choice of $wd = \antidiag \left(\lambda_1 \IdentityMatrix{m_1}, \dots \lambda_r \IdentityMatrix{m_r} \right)$, we get that
	$$
		\gammaFactorOfFiniteField = q^{\frac{m}{2}+2 \binom{m}{2}} \sum_{\substack{
				m_1, \dots, m_r \ge 1\\
				m_1 + \dots + m_r = m\\
				\lambda_1, \dots, \lambda_r \in \multiplicativegroup{\finiteField}
			}}
		q^{-\sum_{i=1}^{r}{2 \binom{m_i}{2}}} \cdot {\besselFunctionOfFiniteFieldRepresentation} \left(\antidiag\left(\IdentityMatrix{1}, \lambda_1 \IdentityMatrix{2 m_1}, \dots, \lambda_r \IdentityMatrix{2 m_r}\right)\right),$$
	as required.
\end{proof}

\subsection{Computations for small $n$}\label{subsec:computation-regular-character}
In this section, we use the correspondence between irreducible cuspidal representations of $\GL{n}{\finiteField}$ and equivalence classes of regular characters of $\multiplicativegroup{\finiteFieldExtension{n}}$ in order to find explicit expressions for the exterior gamma factor.

Recall that a character $\cuspidalCharacter : \multiplicativegroup{\finiteFieldExtension{n}} \rightarrow \multiplicativegroup{\cComplex}$ is called regular if its Galois orbit is of size $n$, i.e., for every $1 \le k \le n -1$, $\cuspidalCharacter^{q^k} \ne \cuspidalCharacter$. We define an equivalence relation on the set of regular characters of $\multiplicativegroup{\finiteFieldExtension{n}}$, where $\cuspidalCharacter \sim \cuspidalCharacter'$ if they are in the same Galois orbit, i.e., if $\cuspidalCharacter^{q^k} = \cuspidalCharacter'$ for some $k$. By \cite[Section 6]{gel1970representations}, there exists a bijection between equivalence classes of regular characters of $\multiplicativegroup{\finiteFieldExtension{n}}$ and irreducible representations of $\GL{n}{\finiteField}$. Furthermore, if $\finiteFieldRepresentation$ is an irreducible representation that corresponds under this bijection to the equivalence class of the regular character $\cuspidalCharacter$, then there exists a formula that expresses the character $\representationCharacter{\finiteFieldRepresentation} \left(g\right) = \trace\left( \finiteFieldRepresentation \left(g\right)\right)$ in terms of $\cuspidalCharacter$. See also \cite[Section 3.1]{nien2017n} for more details.

Suppose that $\finiteFieldRepresentation$ is an irreducible cuspidal representation of $\GL{n}{\finiteField}$ corresponding under the above bijection to the equivalence class of the regular character $\cuspidalCharacter : \multiplicativegroup{\finiteFieldExtension{n}} \rightarrow \multiplicativegroup{\cComplex}$. If $n = 2m$ is even, assume that $\finiteFieldRepresentation$ does not admit a Shalika vector. By \cref{thm:equivalent-conditions-for-a-shalika-vector}, this is equivalent to the assumption that $\cuspidalCharacter \restriction_{\multiplicativegroup{\finiteFieldExtension{m}}}$ is non-trivial. By \cref{thm:bessel-function-in-terms-of-representation-character}, the Bessel function $\besselFunctionOfFiniteFieldRepresentation$ can be expressed in terms of  $\representationCharacter{\finiteFieldRepresentation}$, the character of the representation $\finiteFieldRepresentation$, and $\fieldCharacter$ only. Therefore, by \Cref{subsec:melin-transform}, the exterior square gamma factor $\gammaFactorOfFiniteField$ is expressible in terms of $\cuspidalCharacter$ and $\fieldCharacter$ only. It is desirable to find such a precise expression for $\gammaFactorOfFiniteField$. A similar computation has been done by Nien in \cite[Theorem 1.1]{nien2017n} for the gamma factor $\gamma\left(\pi \times \tau, \fieldCharacter\right)$, where $\pi$ and $\tau$ are irreducible representations of $\GL{n}{\finiteField}$ and $\GL{1}{\finiteField}$ respectively.

We were not able to achieve this for general $n$. We specify the results for $n = 2,3,4$. These computations demonstrate that the solution for this problem is not straightforward. Furthermore, since by \cref{rem:exterior-square-gamma-factor-is-unitrary}, $\gammaFactorOfFiniteField$ has absolute value one, our computations actually lead to some special exponential sums having absolute value one, which is a rare situation.

In a subsequent work after this paper, we were able to formulate a conjecture expressing $\gammaFactorOfFiniteField$ as a product of Gauss sums. See \cite[Section 5]{ye2020epsilon}.

\begin{rem}\label{rem:evaluation-of-square-sums}
	Towards these computations, we will encounter sums of the form $$\sum_{\lambda
		\in \multiplicativegroup{\finiteField}} {\sum_{\substack{\xi
				\in \multiplicativegroup{\finiteFieldExtension{n}}\\
				\FieldNorm_{\FieldExtension{\finiteFieldExtension{n}}{\finiteField}} \left(\xi\right) = \lambda^2}}{ J \left(\xi, \lambda\right)} },$$
	where $J$ is some complex valued function. One can show that $ \FieldNorm_{\FieldExtension{\finiteFieldExtension{n}}{\finiteField}} \left(\xi\right) $ is a square in $\multiplicativegroup{\finiteField}$ if and only if $\xi \in \multiplicativegroup{\finiteFieldExtension{n}}$ is a square. We therefore have that $$ \left\{ \left(\xi, \lambda\right) \in \multiplicativegroup{\finiteFieldExtension{n}} \times \multiplicativegroup{\finiteField} \mid \FieldNorm_{\FieldExtension{\finiteFieldExtension{n}}{\finiteField}} \left(\xi\right) = \lambda^2 \right\} = \left\{ \left(\xi'^2, \pm \FieldNorm_{\FieldExtension{\finiteFieldExtension{n}}{\finiteField}}\left(\xi'\right) \right) \mid \xi' \in \multiplicativegroup{\finiteFieldExtension{n}} \right\}.$$ Since the map $\xi \mapsto \xi^2$ is two-to-one for fields with characteristic $\ne 2$, we get that for these fields $$ \sum_{\lambda
		\in \multiplicativegroup{\finiteField}} {\sum_{\substack{\xi
				\in \multiplicativegroup{\finiteFieldExtension{n}}\\
				\FieldNorm_{\FieldExtension{\finiteFieldExtension{n}}{\finiteField}} \left(\xi\right) = \lambda^2}}{ J \left(\xi, \lambda\right)} } =  \frac{1}{2} {\sum_{\xi
			\in \multiplicativegroup{\finiteFieldExtension{n}}}}\left( { J \left(\xi^2, \FieldNorm_{\FieldExtension{\finiteFieldExtension{n}}{\finiteField}} \left(\xi\right) \right) + J \left(\xi^2, -\FieldNorm_{\FieldExtension{\finiteFieldExtension{n}}{\finiteField}} \left(\xi\right) \right)} \right).$$
	This formula is also true for fields with characteristic $2$, as in this case the map $\xi \mapsto \xi^2$ is a bijection, and $ \FieldNorm_{\FieldExtension{\finiteFieldExtension{n}}{\finiteField}} \left(\xi\right) = -\FieldNorm_{\FieldExtension{\finiteFieldExtension{n}}{\finiteField}} \left(\xi\right).$
	
	If $n$ is odd, then $\FieldNorm_{\FieldExtension{\finiteFieldExtension{n}}{\finiteField}}\left(-1\right) = -1$, and then by replacing $\xi$ with $-\xi$ in the second sum we get that $$ \sum_{\lambda
		\in \multiplicativegroup{\finiteField}} {\sum_{\substack{\xi
				\in \multiplicativegroup{\finiteFieldExtension{n}}\\
				\FieldNorm_{\FieldExtension{\finiteFieldExtension{n}}{\finiteField}} \left(\xi\right) = \lambda^2}}{ J \left(\xi, \lambda\right)} } =  {\sum_{\xi
			\in \multiplicativegroup{\finiteFieldExtension{n}}}} { J \left(\xi^2, \FieldNorm_{\FieldExtension{\finiteFieldExtension{n}}{\finiteField}} \left(\xi\right) \right)}.$$
\end{rem}

\subsubsection{Computation for $\GL{2}{\finiteField}$}

Since $\cuspidalCharacter \restriction_{\multiplicativegroup{\finiteField}} = \centralCharacter{\finiteFieldRepresentation} $, we have that $\finiteFieldRepresentation$ has a non-trivial central character. We have in \cref{rem:gamma-factor-gauss-sum}, that $S_0 = 0$, $S_1 = 1$, and therefore we get the following

\begin{thm}
	Let $\representationDeclaration{\finiteFieldRepresentation}$ be an irreducible cuspidal representation of $\GL{2}{\finiteField}$, with a non-trivial central character. Then
	$$\gammaFactorOfFiniteField = q^{-\frac{1}{2}} \sum_{a \in \multiplicativegroup{\finiteField}}{ \centralCharacter{\finiteFieldRepresentation}} \left(a^{-1}\right) \fieldCharacter \left(a\right).$$
\end{thm}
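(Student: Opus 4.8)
The plan is to specialize the Gauss-sum reformulation of \cref{rem:gamma-factor-gauss-sum} to the case $n = 2m$ with $m = 1$. First I would record that the standing hypothesis of this subsection applies: since $\finiteFieldRepresentation$ has a non-trivial central character, \cref{rem:shalika-vector-trivial-character} shows that $\finiteFieldRepresentation$ does not admit a Shalika vector, so \cref{thm:functional-equation-finite-field} is in force, the exterior square gamma factor $\gammaFactorOfFiniteField$ is defined, and the expression of \cref{thm:mellin-inverse-even-case} — together with its repackaging in \cref{rem:gamma-factor-gauss-sum} — is available.

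Next I would evaluate the two auxiliary sums $S_0$ and $S_1$ of \cref{rem:gamma-factor-gauss-sum} when $m = 1$. For $S_0$ the summation index demands a composition $m_0 + m_1 + \dots + m_{r-1} = m = 1$ with $m_0 > 1$, which is impossible; hence $S_0$ is an empty sum and $S_0 = 0$. (This is also forced directly by the vanishing of $S_0$ for representations with non-trivial central character, already noted in \cref{rem:gamma-factor-gauss-sum}.) For $S_1$ the index requires $m_1 + \dots + m_{r-1} = m - 1 = 0$ with each $m_i \ge 1$, so $r = 1$ and the sum collapses to the single term $\besselFunctionOfFiniteFieldRepresentation\left(\antidiag\left(\IdentityMatrix{2}\right)\right) = \besselFunctionOfFiniteFieldRepresentation\left(\IdentityMatrix{2}\right) = 1$, whence $S_1 = 1$.

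Finally I would substitute these values, along with the prefactor $q^{-\frac{m}{2} + 2\binom{m}{2}} = q^{-\frac12}$ (using $\binom{1}{2} = 0$), into the formula of \cref{rem:gamma-factor-gauss-sum} to obtain
$$\gammaFactorOfFiniteField = q^{-\frac12}\left(S_0 + S_1 \cdot \sum_{a \in \multiplicativegroup{\finiteField}} \centralCharacter{\finiteFieldRepresentation}\left(a^{-1}\right)\fieldCharacter\left(a\right)\right) = q^{-\frac12} \sum_{a \in \multiplicativegroup{\finiteField}} \centralCharacter{\finiteFieldRepresentation}\left(a^{-1}\right)\fieldCharacter\left(a\right),$$
which is the asserted identity. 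There is no genuine obstacle in this argument: it is entirely bookkeeping on top of \cref{rem:gamma-factor-gauss-sum}, and the only place that warrants a moment of care is parsing the constraints on the compositions $(m_i)$ and scalars $(\lambda_i)$ in the definitions of $S_0$ and $S_1$, so as to see that one sum is empty while the other reduces to the single trivial Bessel value $\besselFunctionOfFiniteFieldRepresentation\left(\IdentityMatrix{2}\right) = 1$. As a consistency check with \cref{rem:exterior-square-gamma-factor-is-unitrary}, one may observe that the right-hand side is, up to the normalization $q^{-\frac12}$, a Gauss sum attached to the non-trivial multiplicative character $\centralCharacter{\finiteFieldRepresentation}$, hence has absolute value $1$.
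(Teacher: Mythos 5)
Your proposal is correct and follows essentially the same route as the paper, which likewise derives the formula by specializing \cref{rem:gamma-factor-gauss-sum} to $m=1$, noting $S_0=0$ and $S_1=1$ and that the prefactor is $q^{-1/2}$. Your additional observations (the empty-sum reason for $S_0=0$ alongside the central-character reason, and the unitarity consistency check) are fine but not needed.
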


\subsubsection{Computation for $\GL{3}{\finiteField}$}

By \cref{thm:mellin-inverse-odd-case}, we have that $$ \gammaFactorOfFiniteField = q^{\frac{1}{2}} \sum_{a \in \multiplicativegroup{\finiteField}} \besselFunctionOfFiniteFieldRepresentation \begin{pmatrix}
0                   & 1 \\
a\IdentityMatrix{2} & 0
\end{pmatrix}.$$

In order to proceed, we use the formula for the value of the Bessel function from \cite[Section 7]{gel1970representations}.

$$ \besselFunctionOfFiniteFieldRepresentation \left( \antidiag \left( \lambda_1 \IdentityMatrix{1}, \lambda_2 \IdentityMatrix{2} \right) \right) = q^{-2} \sum_{\substack{\xi \in \multiplicativegroup{\finiteFieldExtension{3}}\\
		\FieldNorm_{\FieldExtension{\finiteFieldExtension{3}}{\finiteField}}\left(\xi\right) = \lambda_1 \cdot \lambda_2^2
}}{ \fieldCharacter\left(-\lambda_2^{-1} \FieldTrace_{\FieldExtension{\finiteFieldExtension{3}}{\finiteField}} \left(\xi\right) \right) \cuspidalCharacter \left(\xi\right) }.$$

Using \cref{rem:evaluation-of-square-sums}, we get

\begin{thm}
	Let $\localFieldRepresentation$ be an irreducible cuspidal representation of $\GL{3}{\finiteField}$ associated with the regular multiplicative character $\cuspidalCharacter : \multiplicativegroup{\finiteFieldExtension{3}} \rightarrow \multiplicativegroup{\cComplex}$. Then
	$$\gammaFactorOfFiniteField = q^{-\frac{3}{2}} \sum_{\xi \in \multiplicativegroup{\FiniteField{3}}}{ \fieldCharacter\left(-\frac{ \FieldTrace_{\FieldExtension{\finiteFieldExtension{3}}{\finiteField}}\left(\xi^2\right)}{\FieldNorm_{\FieldExtension{\finiteFieldExtension{3}}{\finiteField}}\left(\xi\right)} \right) \cuspidalCharacter \left(\xi^2\right)}. $$
\end{thm}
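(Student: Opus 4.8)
The plan is to chain together three facts already in hand: the Bessel-function expression for $\gammaFactorOfFiniteField$ in the odd case, Gelfand's closed evaluation of the Bessel function at anti-diagonal torus elements, and the square-sum identity of \cref{rem:evaluation-of-square-sums}. Since $\GL{3}{\finiteField}$ is the case $n = 2m+1$ with $m = 1$, and the only composition of $m = 1$ is the trivial one, \cref{thm:mellin-inverse-odd-case} gives directly
$$\gammaFactorOfFiniteField = q^{\frac{1}{2}} \sum_{a \in \multiplicativegroup{\finiteField}} \besselFunctionOfFiniteFieldRepresentation \begin{pmatrix} 0 & 1 \\ a\IdentityMatrix{2} & 0 \end{pmatrix},$$
and the first thing I would record is that the matrix appearing here is exactly $\antidiag\left(1 \cdot \IdentityMatrix{1},\, a \IdentityMatrix{2}\right)$.

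Next I would insert the value of the Bessel function at such an element, taken from \cite[Section 7]{gel1970representations}, namely
$$\besselFunctionOfFiniteFieldRepresentation\left(\antidiag\left(\lambda_1 \IdentityMatrix{1}, \lambda_2 \IdentityMatrix{2}\right)\right) = q^{-2} \sum_{\substack{\xi \in \multiplicativegroup{\finiteFieldExtension{3}} \\ \FieldNorm_{\FieldExtension{\finiteFieldExtension{3}}{\finiteField}}(\xi) = \lambda_1 \lambda_2^2}} \fieldCharacter\left(-\lambda_2^{-1} \FieldTrace_{\FieldExtension{\finiteFieldExtension{3}}{\finiteField}}(\xi)\right) \cuspidalCharacter(\xi),$$
specialized to $\lambda_1 = 1$, $\lambda_2 = a$. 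Combining the powers of $q$ (here $q^{1/2}\cdot q^{-2} = q^{-3/2}$) yields
$$\gammaFactorOfFiniteField = q^{-\frac{3}{2}} \sum_{a \in \multiplicativegroup{\finiteField}} \ \sum_{\substack{\xi \in \multiplicativegroup{\finiteFieldExtension{3}} \\ \FieldNorm_{\FieldExtension{\finiteFieldExtension{3}}{\finiteField}}(\xi) = a^2}} \fieldCharacter\left(-a^{-1}\FieldTrace_{\FieldExtension{\finiteFieldExtension{3}}{\finiteField}}(\xi)\right)\cuspidalCharacter(\xi).$$

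Finally I would observe that the double sum on the right is exactly of the shape treated in \cref{rem:evaluation-of-square-sums}, with $J(\xi,\lambda) = \fieldCharacter\left(-\lambda^{-1}\FieldTrace_{\FieldExtension{\finiteFieldExtension{3}}{\finiteField}}(\xi)\right)\cuspidalCharacter(\xi)$. Because $n = 3$ is odd, that remark collapses the sum over $a$ into a single sum over $\multiplicativegroup{\finiteFieldExtension{3}}$, giving $\sum_{a}\sum_{\FieldNorm(\xi)=a^2} J(\xi,a) = \sum_{\xi \in \multiplicativegroup{\finiteFieldExtension{3}}} J\!\left(\xi^2, \FieldNorm_{\FieldExtension{\finiteFieldExtension{3}}{\finiteField}}(\xi)\right)$, and substituting $J$ produces precisely
$$\gammaFactorOfFiniteField = q^{-\frac{3}{2}} \sum_{\xi \in \multiplicativegroup{\finiteFieldExtension{3}}} \fieldCharacter\left(-\frac{\FieldTrace_{\FieldExtension{\finiteFieldExtension{3}}{\finiteField}}(\xi^2)}{\FieldNorm_{\FieldExtension{\finiteFieldExtension{3}}{\finiteField}}(\xi)}\right) \cuspidalCharacter(\xi^2),$$
the asserted formula.

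The argument is entirely mechanical, so I do not expect a genuine obstacle; the only point requiring care is the matrix bookkeeping — identifying $\left(\begin{smallmatrix} 0 & 1 \\ a\IdentityMatrix{2} & 0\end{smallmatrix}\right)$ with $\antidiag(1\cdot\IdentityMatrix{1},\,a\IdentityMatrix{2})$ so that Gelfand's evaluation is applied with the correct $\lambda_1, \lambda_2$, and recognizing the outer sum over $a \in \multiplicativegroup{\finiteField}$ together with the constraint $\FieldNorm(\xi) = a^2$ as exactly the left-hand side of the identity in \cref{rem:evaluation-of-square-sums}. There is no convergence issue since every sum is finite, and regularity of $\cuspidalCharacter$ (equivalently, cuspidality of $\finiteFieldRepresentation$) is what legitimizes both Gelfand's Bessel-function formula and the underlying parametrization.
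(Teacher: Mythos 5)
Your proposal is correct and follows essentially the same route as the paper: specialize \cref{thm:mellin-inverse-odd-case} to $m=1$ (where the matrix bookkeeping, inverting $\antidiag(\lambda \IdentityMatrix{2}, \IdentityMatrix{1})$ and relabeling $a=\lambda^{-1}$, gives exactly the displayed sum over $\antidiag(\IdentityMatrix{1}, a\IdentityMatrix{2})$), insert Gelfand's evaluation of the Bessel function with $\lambda_1=1$, $\lambda_2=a$, and collapse the double sum via \cref{rem:evaluation-of-square-sums} using that $n=3$ is odd. No gaps; this is the paper's argument.
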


\subsubsection{Computation for $\GL{4}{\finiteField}$}
By \cref{rem:gamma-factor-gauss-sum}, we have that $$\gammaFactorOfFiniteField = q \left( S_0 + S_1 \cdot  \sum_{a \in \multiplicativegroup{\finiteField}}{ \centralCharacter{\finiteFieldRepresentation} \left( a^{-1} \right) \fieldCharacter \left(a\right)} \right),$$ where in our case $S_0 = q^{-2} \sum_{\lambda \in \multiplicativegroup{\finiteField}} \besselFunctionOfFiniteFieldRepresentation \left( \lambda \IdentityMatrix{4} \right) = \begin{cases}
q^{-1}-q^{-2} & \centralCharacter{\finiteFieldRepresentation} \text{ is trivial} \\
0             & \text{otherwise}
\end{cases}$, and $S_1 = \sum_{\lambda \in \multiplicativegroup{\finiteField}} \besselFunctionOfFiniteFieldRepresentation \left( \begin{smallmatrix}
0 &  \IdentityMatrix{2}\\
\lambda \IdentityMatrix{2} & 0
\end{smallmatrix} \right)$.

In order to proceed, we use the formulas for the Bessel function of $\GL{4}{\finiteField}$. These are computed by Deriziotis and Gotsis. We have by \cite[Page 103]{deriziotis1998cuspidal} that for $w = w_6 = \left(\begin{smallmatrix}
0 & \IdentityMatrix{2}\\
\IdentityMatrix{2} & 0
\end{smallmatrix}\right)$ and $t = \smallDiagTwo{\mu \IdentityMatrix{2}}{\nu \IdentityMatrix{2}}$, the value $\besselFunctionOfFiniteFieldRepresentation \left(tw\right)$ is given by the formula $$ \besselFunctionOfFiniteFieldRepresentation \left( tw \right) = \sum_{\substack{\xi \in \multiplicativegroup{\finiteFieldExtension{4}}\\
		\FieldNorm_{\FieldExtension{\finiteFieldExtension{4}}{\finiteField}} \left(\xi\right) = \det t }}{ F_6\left(\xi, t\right)\cuspidalCharacter\left(\xi\right) },$$ where $$F_6 \left(\xi, t\right) = -q^{-4} \left( F'_6\left(\xi, t\right) + \sum_{\beta \in \multiplicativegroup{\finiteField}}{\fieldCharacter \left(-\beta + \frac{a_1 \left(\xi\right) + a_3\left(\xi\right) \mu \nu}{\beta \mu \nu^2}\right)} \right),$$
where $\sum_{i=0}^{4} {a_{i} \left(\xi\right) t^i} = \prod_{i=0}^{3} \left(t - \xi^{q^i}\right)$ and $$F'_6\left(\xi, t\right) = \begin{cases}
-q & \xi \in \finiteFieldExtension{2} \setminus \finiteField \text{ and } \mu \nu = -\FieldNorm_{\FieldExtension{\finiteFieldExtension{2}}{\finiteField}} \left(\xi\right) \\
0  & \text{otherwise}
\end{cases}.$$
Denote \begin{align*}
	I \left(\xi, \lambda, \beta \right) = \cuspidalCharacter \left( \xi \right) \fieldCharacter \left(-\beta + \frac{a_1 \left(\xi\right) + a_3\left(\xi\right) \lambda}{\beta \lambda^2}\right) &  & \xi \in \multiplicativegroup{\finiteFieldExtension{4}}; \, \beta, \lambda \in \multiplicativegroup{\finiteField}.
\end{align*}

We therefore have that $$S_1 = -q^{-4} \sum_{\lambda \in \multiplicativegroup{\finiteField}}\left({\sum_{\substack{\xi \in \multiplicativegroup{\finiteFieldExtension{4}}\\
			\FieldNorm_{\FieldExtension{\finiteFieldExtension{4}}{\finiteField}} \left(\xi\right) = \lambda^2 }}{ \sum_{\beta \in \multiplicativegroup{\finiteField}}{I \left(\xi, \lambda, \beta \right)} }} -q \sum_{\substack{\xi \in \finiteFieldExtension{2} \setminus \finiteField \\
		\FieldNorm_{\FieldExtension{\finiteFieldExtension{2}}{\finiteField}}\left(\xi\right) = -\lambda}}{ \cuspidalCharacter \left(\xi\right)} \right).$$
Regarding the second summand, since $-\lambda$ runs over all the possible norms of elements of $\finiteFieldExtension{2}$, we have that $$\sum_{\lambda \in \multiplicativegroup{\finiteField}}{\sum_{\substack{\xi \in \finiteFieldExtension{2} \setminus \finiteField\\
			\FieldNorm_{\FieldExtension{\finiteFieldExtension{2}}{\finiteField}} \left(\xi\right) = -\lambda}
	} \cuspidalCharacter \left(\xi\right)} = \sum_{\xi \in \multiplicativegroup{\finiteFieldExtension{2}} }{\cuspidalCharacter\left(\xi\right)} - \sum_{\xi \in \multiplicativegroup{\finiteField }}{\cuspidalCharacter\left(\xi\right)}.$$
Since $\cuspidalCharacter \restriction_{\multiplicativegroup{\finiteFieldExtension{2}}}$ is non-trivial, and since $\cuspidalCharacter\restriction_{\multiplicativegroup{\finiteField}} = \centralCharacter{\finiteFieldRepresentation}$ we have that $$\sum_{\lambda \in \multiplicativegroup{\finiteField}}{\sum_{\xi \in \finiteFieldExtension{2} \setminus \finiteField} \cuspidalCharacter \left(\xi\right)} = -\sum_{\xi \in \multiplicativegroup{\finiteField}}{\centralCharacter{\finiteFieldRepresentation}\left(\xi\right)} = \begin{cases}
1-q & \centralCharacter{\finiteFieldRepresentation}\text{ is trivial} \\
0   & \text{otherwise}
\end{cases}.$$
Note that if $\centralCharacter{\finiteFieldRepresentation}$ is trivial, $\sum_{a \in \multiplicativegroup{\finiteField}}{ \centralCharacter{\finiteFieldRepresentation} \left( a^{-1} \right) \fieldCharacter \left(a\right) } = -1$, and therefore we have that $$ \gammaFactorOfFiniteField = q^{-3} \left( q T_0 - S'_1 \cdot \sum_{a \in \multiplicativegroup{\finiteField}}{ \centralCharacter{\finiteFieldRepresentation} \left( a^{-1} \right) \fieldCharacter \left(a\right) }   \right),$$ where $T_0 = \begin{cases}
q^2 - 1 & \centralCharacter{\finiteFieldRepresentation} \text{ is trivial} \\
0       & \text{otherwise}
\end{cases} $, and $$S'_1 = \sum_{\lambda \in \multiplicativegroup{\finiteField}} {\sum_{\substack{\xi \in \multiplicativegroup{\finiteFieldExtension{4}}\\
			\FieldNorm_{\FieldExtension{\finiteFieldExtension{4}}{\finiteField}} \left(\xi\right) = \lambda^2 }}{ \sum_{\beta \in \multiplicativegroup{\finiteField}}{I \left(\xi, \lambda, \beta \right)} }}.$$
Using \cref{rem:evaluation-of-square-sums}, and the formulas $a_3 \left(\xi\right) = -\FieldTrace_{\FieldExtension{\finiteFieldExtension{4}}{\finiteField}} \left(\xi\right)$, $a_1 \left(\xi\right) = -\FieldTrace_{\FieldExtension{\finiteFieldExtension{4}}{\finiteField}}\left(\frac{1}{\xi}\right) \FieldNorm_{\FieldExtension{\finiteFieldExtension{4}}{\finiteField}} \left(\xi\right) $, we get the expression $S_1' = \frac{1}{2} \left(S_1^{'+} + S_1^{'-}\right)$, where
$$ S_1^{'\pm} = \sum_{\xi \in \multiplicativegroup{\finiteFieldExtension{4}} }{\cuspidalCharacter \left(\xi^2 \right) { K_{\fieldCharacter}\left(1,  \FieldTrace_{\FieldExtension{\finiteFieldExtension{4}}{\finiteField}} \left(\frac{1}{\xi^2}\right) \pm \frac{\FieldTrace_{\FieldExtension{\finiteFieldExtension{4}}{\finiteField}}\left(\xi^2\right)}{\FieldNorm_{\FieldExtension{\finiteFieldExtension{4}}{\finiteField}} \left(\xi\right)} \right)}},$$
where $$K_{\fieldCharacter}\left(a, b\right) = \sum_{x \in \multiplicativegroup{\finiteField}}{ \fieldCharacter\left(a x \right) }\fieldCharacter\left(\frac{b}{x}\right)$$ is the Kloosterman sum.

\begin{thm}Let $\representationDeclaration{\finiteFieldRepresentation}$ be an irreducible cuspidal representation of $\GL{4}{\finiteField}$, associated with the regular multiplicative character $\cuspidalCharacter : \multiplicativegroup{\finiteFieldExtension{4}} \rightarrow \multiplicativegroup{\cComplex}$, satisfying that $\cuspidalCharacter \restriction_{\multiplicativegroup{\finiteFieldExtension{2}}} $ is non trivial. Then
	
	\begin{align*}
		\gammaFactorOfFiniteField = & \, q^{-2} T_0 - \frac{1}{2}q^{-3} G_{\fieldCharacter} \left(\cuspidalCharacter\right) \cdot \sum_{\xi \in \multiplicativegroup{\finiteFieldExtension{4}} }{\cuspidalCharacter \left(\xi^2 \right) { K_{\fieldCharacter}\left(1,  \FieldTrace_{\FieldExtension{\finiteFieldExtension{4}}{\finiteField}} \left(\frac{1}{\xi^2}\right) + \frac{\FieldTrace_{\FieldExtension{\finiteFieldExtension{4}}{\finiteField}}\left(\xi^2\right)}{\FieldNorm_{\FieldExtension{\finiteFieldExtension{4}}{\finiteField}} \left(\xi\right)} \right)}} \\
		& - \frac{1}{2}q^{-3} G_{\fieldCharacter} \left(\cuspidalCharacter\right) \cdot \sum_{\xi \in \multiplicativegroup{\finiteFieldExtension{4}} }{\cuspidalCharacter \left(\xi^2 \right) { K_{\fieldCharacter}\left(1,  \FieldTrace_{\FieldExtension{\finiteFieldExtension{4}}{\finiteField}} \left(\frac{1}{\xi^2}\right) - \frac{\FieldTrace_{\FieldExtension{\finiteFieldExtension{4}}{\finiteField}}\left(\xi^2\right)}{\FieldNorm_{\FieldExtension{\finiteFieldExtension{4}}{\finiteField}} \left(\xi\right)} \right)}},
	\end{align*}
	where $G_{\fieldCharacter} \left(\cuspidalCharacter\right) = \sum_{a \in \multiplicativegroup{\finiteField}}{ \cuspidalCharacter\left(a^{-1}\right) \fieldCharacter \left(a\right) }$ is the Gauss sum, $K_{\fieldCharacter}\left(a,b\right) = \sum_{x \in \multiplicativegroup{\finiteField}}{\fieldCharacter \left(ax\right) \fieldCharacter\left( b x^{-1}\right)}$ is the Kloosterman sum, and $T_0 = \begin{cases}
	q^2 - 1 & \centralCharacter{\finiteFieldRepresentation} \text{ is trivial} \\
	0       & \text{otherwise}
	\end{cases} $.
\end{thm}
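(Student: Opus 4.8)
The plan is to specialize the reduction of \cref{rem:gamma-factor-gauss-sum} to the case $m=2$ and then evaluate the two resulting Bessel sums using the explicit Bessel-function formulas for $\GL{4}{\finiteField}$. For $n=2m$ that remark gives $\gammaFactorOfFiniteField = q^{-\frac{m}{2}+2\binom{m}{2}}\bigl(S_0 + S_1\cdot\sum_{a\in\multiplicativegroup{\finiteField}}\centralCharacter{\finiteFieldRepresentation}(a^{-1})\fieldCharacter(a)\bigr)$, which for $m=2$ reads $\gammaFactorOfFiniteField = q\bigl(S_0 + S_1\cdot\sum_{a\in\multiplicativegroup{\finiteField}}\centralCharacter{\finiteFieldRepresentation}(a^{-1})\fieldCharacter(a)\bigr)$, with $S_0 = q^{-2}\sum_{\lambda\in\multiplicativegroup{\finiteField}}\besselFunctionOfFiniteFieldRepresentation(\lambda\IdentityMatrix{4})$ and $S_1 = \sum_{\lambda\in\multiplicativegroup{\finiteField}}\besselFunctionOfFiniteFieldRepresentation\bigl(\antidiag(\IdentityMatrix{2},\lambda\IdentityMatrix{2})\bigr)$; note also that $\sum_{a}\centralCharacter{\finiteFieldRepresentation}(a^{-1})\fieldCharacter(a) = G_\fieldCharacter(\cuspidalCharacter)$ since $\cuspidalCharacter\restriction_{\multiplicativegroup{\finiteField}} = \centralCharacter{\finiteFieldRepresentation}$. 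The term $S_0$ is immediate from \cref{thm:support-of-bessel-function}: only scalar matrices contribute to such a sum, $\besselFunctionOfFiniteFieldRepresentation(\lambda\IdentityMatrix{4}) = \centralCharacter{\finiteFieldRepresentation}(\lambda)$, so $q^{2}S_0 = \sum_{\lambda}\centralCharacter{\finiteFieldRepresentation}(\lambda)$ equals $q-1$ if $\centralCharacter{\finiteFieldRepresentation}$ is trivial and $0$ otherwise --- this is the source of the constant $T_0$.

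The real work is in $S_1$. Writing $\antidiag(\IdentityMatrix{2},\lambda\IdentityMatrix{2}) = t\,w_6$ with $t = \smallDiagTwo{\IdentityMatrix{2}}{\lambda\IdentityMatrix{2}}$ and $w_6 = \antidiag(\IdentityMatrix{2},\IdentityMatrix{2})$, so that $\det t = \lambda^{2}$, I would substitute the Deriziotis--Gotsis evaluation \cite[Page 103]{deriziotis1998cuspidal} of $\besselFunctionOfFiniteFieldRepresentation(t\,w_6)$ as $\sum_{\FieldNorm_{\FieldExtension{\finiteFieldExtension{4}}{\finiteField}}(\xi) = \lambda^{2}} F_6(\xi,t)\cuspidalCharacter(\xi)$. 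The function $F_6$ decomposes into a degenerate piece $F_6'$, supported on $\xi\in\finiteFieldExtension{2}\setminus\finiteField$ with $\FieldNorm_{\FieldExtension{\finiteFieldExtension{2}}{\finiteField}}(\xi) = -\lambda$, and a piece $\sum_{\beta\in\multiplicativegroup{\finiteField}}\fieldCharacter\bigl(-\beta + \frac{a_1(\xi)+a_3(\xi)\lambda}{\beta\lambda^{2}}\bigr)$, where $a_1(\xi),a_3(\xi)$ are coefficients of the characteristic polynomial of $\xi$. These two contributions to $S_1$ are handled separately.

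For the $F_6'$ contribution: summing the constraint $\FieldNorm_{\FieldExtension{\finiteFieldExtension{2}}{\finiteField}}(\xi) = -\lambda$ over all $\lambda\in\multiplicativegroup{\finiteField}$ and using surjectivity of the norm $\multiplicativegroup{\finiteFieldExtension{2}}\to\multiplicativegroup{\finiteField}$, the $\lambda$-sum collapses to $\sum_{\xi\in\multiplicativegroup{\finiteFieldExtension{2}}}\cuspidalCharacter(\xi) - \sum_{\xi\in\multiplicativegroup{\finiteField}}\cuspidalCharacter(\xi)$; by the standing hypothesis that $\cuspidalCharacter\restriction_{\multiplicativegroup{\finiteFieldExtension{2}}}$ is non-trivial (equivalently, by \cref{thm:equivalent-conditions-for-a-shalika-vector}, that $\finiteFieldRepresentation$ has no Shalika vector) the first sum vanishes, while the second is $\sum_{\xi\in\multiplicativegroup{\finiteField}}\centralCharacter{\finiteFieldRepresentation}(\xi)$, again $q-1$ or $0$ according as $\centralCharacter{\finiteFieldRepresentation}$ is trivial or not. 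For the main piece: I would recognize the inner $\beta$-sum as a Kloosterman sum $K_\fieldCharacter$, then apply \cref{rem:evaluation-of-square-sums} to rewrite $\sum_{\lambda}\sum_{\FieldNorm_{\FieldExtension{\finiteFieldExtension{4}}{\finiteField}}(\xi)=\lambda^{2}}(\,\cdot\,)$ as $\frac12\sum_{\xi\in\multiplicativegroup{\finiteFieldExtension{4}}}\bigl((\,\cdot\,)^{+}+(\,\cdot\,)^{-}\bigr)$ with $\xi$ replaced by $\xi^{2}$ and $\lambda$ by $\pm\FieldNorm_{\FieldExtension{\finiteFieldExtension{4}}{\finiteField}}(\xi)$. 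Inserting $a_3(\xi) = -\FieldTrace_{\FieldExtension{\finiteFieldExtension{4}}{\finiteField}}(\xi)$ and $a_1(\xi) = -\FieldTrace_{\FieldExtension{\finiteFieldExtension{4}}{\finiteField}}(1/\xi)\,\FieldNorm_{\FieldExtension{\finiteFieldExtension{4}}{\finiteField}}(\xi)$, and reindexing the Kloosterman variable (via $\beta\mapsto-\beta$ together with a rescaling by $\FieldNorm_{\FieldExtension{\finiteFieldExtension{4}}{\finiteField}}(\xi)$) to reach the symmetric normalization $K_\fieldCharacter(1,\cdot)$ of the statement, turns the main piece into $\frac12(S_1^{'+}+S_1^{'-})$ with $S_1^{'\pm} = \sum_{\xi\in\multiplicativegroup{\finiteFieldExtension{4}}}\cuspidalCharacter(\xi^{2})\,K_\fieldCharacter\bigl(1,\frac{\FieldTrace(\xi^{2})}{\FieldNorm(\xi)^{2}}\pm\FieldNorm(\xi)\FieldTrace(1/\xi^{2})\bigr)$.

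It remains to reassemble $\gammaFactorOfFiniteField = q\bigl(S_0 + S_1\,G_\fieldCharacter(\cuspidalCharacter)\bigr)$. When $\centralCharacter{\finiteFieldRepresentation}$ is trivial one has $G_\fieldCharacter(\cuspidalCharacter) = -1$, and then the degenerate contribution to $S_1\,G_\fieldCharacter(\cuspidalCharacter)$ combines with $S_0$ into the single term $q^{-2}T_0$; when $\centralCharacter{\finiteFieldRepresentation}$ is non-trivial both vanish, as does $T_0$, so in all cases the elementary part of the answer is $q^{-2}T_0$. Collecting the remaining powers of $q$ (the $-q^{-4}$ factored out of $F_6$ and the leading $q$ from the remark) and pulling $G_\fieldCharacter(\cuspidalCharacter)$ out of the Kloosterman piece yields the claimed formula. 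The main obstacle is not any single step but the bookkeeping in the last two paragraphs: keeping the $\pm$ branching of \cref{rem:evaluation-of-square-sums}, the substitution $\xi\mapsto\xi^{2}$, the reindexing $\beta\mapsto-\beta$ and the Kloosterman rescaling mutually consistent, while simultaneously propagating the trivial/non-trivial-central-character dichotomy coherently through $S_0$, the degenerate term, and the Gauss sum. That is precisely where sign and normalization errors are most likely to creep in.
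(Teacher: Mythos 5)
Your proposal is correct and follows essentially the same route as the paper: specializing \cref{rem:gamma-factor-gauss-sum} to $m=2$, evaluating $S_0$ via the central character, inserting the Deriziotis--Gotsis formula for $\besselFunctionOfFiniteFieldRepresentation(tw_6)$, splitting off the degenerate $F_6'$ piece (which together with $S_0$ produces $q^{-2}T_0$ exactly as you describe), and converting the main piece into the two Kloosterman sums via \cref{rem:evaluation-of-square-sums} and the formulas for $a_1(\xi)$, $a_3(\xi)$. The only step you leave as terse as the paper does is the final reindexing/normalization of the Kloosterman argument, but your bookkeeping of the powers of $q$, the Gauss sum, and the trivial/non-trivial central character dichotomy matches the paper's.
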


\section{The relation with the Jacquet-Shalika integral over a local field}\label{sec:gamma-factor}

In this section we relate our theory of the Jacquet-Shalika integral over a finite field, to the theory of the Jacquet-Shalika integral over a local non-archimedean field, via level zero representations.

\subsection{Preliminaries and notations}

\subsubsection{Notations} \label{subsection:local-field-notations}

Let $\localField$ be a local non-archimedean field. Denote by $\integersring$ the ring of integers of $\localField$, by $\maximalideal$ the unique prime ideal of $\integersring$. Let $\uniformizer$ be a uniformizer of $\localField$, a generator of $\maximalideal$. Denote by $\residueField = \rquot{\integersring}{\maximalideal}$ the residue field of $\localField$ and by $q = \sizeof{\residueField}$. We use the standard normalization for the absolute value, so that $\abs{\uniformizer} = \frac{1}{q}$.

Denote by $\quotientMap : \integersring \rightarrow \residueField$ the quotient map. We continue to denote by $\quotientMap$ the maps induced by $\quotientMap$  from $\integersring^m \rightarrow \residueField^m$, $\SquareMat{m}{\integersring} \rightarrow \SquareMat{m}{\residueField}$, $\GL{m}{\integersring} \rightarrow \GL{m}{\residueField}$, etc.

Let $\fieldCharacter : \localField \rightarrow \multiplicativegroup{\cComplex}$ be a non-trivial additive character, with conductor $\maximalideal$, i.e., $\fieldCharacter$ is trivial on $\maximalideal$ but not on $\integersring$. Then $\fieldCharacter$ defines a non-trivial additive character $\residueFieldCharacter : \residueField \rightarrow \multiplicativegroup{\cComplex}$ by $\fieldCharacter_0 \circ \quotientMap = \fieldCharacter\restriction_{\integersring}$.

We denote by $\Schwartz \left( \localField^m \right)$ the space of Schwartz functions $f : \localField^m \rightarrow \cComplex$ - locally constant and compactly supported functions.

We choose the standard normalizations for the Haar measures on $\localField$ and $\multiplicativegroup{\localField}$, i.e.,
\begin{equation*}
	\begin{split}
		\int_{\integersring} dx = 1,\\
		\int_{\multiplicativegroup{\integersring}} \multiplicativeMeasure{x} = 1.
	\end{split}
\end{equation*}

For $f \in \Schwartz \left( \localField^m \right)$, we denote its Fourier transform $$ \FourierTransformWithRespectToCharacter{f}{\fieldCharacter} \left(y\right) = {q^{\frac{m}{2}}} \int_{\localField^m} f \left(x\right) \cdot \fieldCharacter \left( \standardForm{x}{y} \right) dx,$$
where $\standardForm{x}{y}$ is the standard bilinear form on $\localField^m$.
The Fourier inversion formulas for this normalization are given by
\begin{equation*}
	\begin{split}
		\FourierTransformWithRespectToCharacter{\FourierTransformWithRespectToCharacter{f}{\fieldCharacter}}{\fieldCharacter^{-1}} \left(x\right) &= f\left(x\right), \\
		\FourierTransformWithRespectToCharacter{\FourierTransformWithRespectToCharacter{f}{\fieldCharacter}}{\fieldCharacter} \left(x\right) &= f \left(-x\right).
	\end{split}
\end{equation*}

If $\representationDeclaration{\localFieldRepresentation}$ is an irreducible generic representation of $\GL{n}{F}$, we denote by $\WhittakerModelOfLocalFieldRepresentation$, as in \Cref{subsection:finite-field-notations}, its Whittaker model with respect to the character $\fieldCharacter$. We also denote for an element $W \in \WhittakerModelOfLocalFieldRepresentation$, an element $\Contragradient{W} \in \Whittaker\left(\Contragradient{\localFieldRepresentation}, \fieldCharacter^{-1} \right)$, defined by $ \Contragradient{W} \left(g\right) = W\left(\weylElement{n} \inverseTranspose{g}\right),$ where $\inverseTranspose{g} = \transpose{g^{-1}}$ and $\weylElement{n} = \left(\begin{smallmatrix}
			& & 1\\
			& \iddots &\\
			1
		\end{smallmatrix}\right)$.

\subsubsection{The local Jacquet-Shalika integral}
We briefly review the theory of the local Jacquet-Shalika integral.

Let $\representationDeclaration{\localFieldRepresentation}$ be a generic irreducible representation of $\GL{n}{\localField}$. We first give the formal formulas for the Jacquet-Shalika integral and its dual. These should initially be treated as formal expressions. \cref{thm:convergence-of-jacquet-shalika-integral} and the discussion afterwards explain how to interpret these integrals for general $s \in \cComplex$.

Suppose $n = 2m$. The Jacquet-Shalika integral of $\localFieldRepresentation$ with respect to the character $\fieldCharacter$ is defined as follows: for every $s \in \cComplex$, $W \in \WhittakerModelOfLocalFieldRepresentation$, $\phi \in \Schwartz\left(\localField^m\right)$
$$ \JSOfLocalFieldRepresentation{s}{W}{\phi} = \int_{\lquot{\UnipotentSubgroup}{G}} \int_{\lquot{\UpperTriangularAdditive}{M}} W\left(\evenPermutationMatrix \ShalikaUnipotentElement{X} \ShalikaDiagonalElement{g} \right) \fieldCharacter \left(-\trace X \right) dX \cdot \abs{\det g}^s \phi\left( \lastrowvector g \right) \multiplicativeMeasure{g}, $$
where the notations are the same as in \Cref{subsec:jacquet-shalika-integral}, this time defined over $\localField$.

We define the dual Jacquet-Shalika integral as $$ \DualJSOfLocalFieldRepresentation{s}{W}{\phi} = \JS{\Contragradient{\finiteFieldRepresentation}}{\fieldCharacter^{-1}}\left(1-s,\Contragradient{\finiteFieldRepresentation} \begin{pmatrix}
		                   & \IdentityMatrix{m} \\
		\IdentityMatrix{m} &
	\end{pmatrix} \Contragradient{W}, \FourierTransformWithRespectToCharacter{\phi}{\fieldCharacter} \right).$$

Now suppose $n=2m+1$. In this case, the Jacquet-Shalika integral of $\localFieldRepresentation$ with respect to the character $\fieldCharacter$ is defined as

\begin{equation*}
	\begin{split}
		\JSOfLocalFieldRepresentation{s}{W}{\phi} =  \int_{\lquot{\UnipotentSubgroup}{G}}\int_{\lquot{\UpperTriangularAdditive}{M}} \int_{\Mat{1}{m}{\localField}}
		&W\left(
		\oddPermutationMatrix
		\ShalikaUnipotentElementOdd{X}
		\ShalikaDiagonalElementOdd{g}
		\ShalikaLowerUnipotentElementOdd{Z}
		\right) \\
		& \cdot
		\fieldCharacter\left(-\trace X\right)
		\phi \left( Z \right)\abs{\det{g}}^{s-1}
		dZ\,dX\,\multiplicativeMeasure{g}.
	\end{split}
\end{equation*}

In this case, we define the dual Jacquet-Shalika integral as
$$\DualJSOfLocalFieldRepresentation{s}{W}{\phi} = \JS{\Contragradient{\finiteFieldRepresentation}}{\fieldCharacter^{-1}}\left(
	1-s,
	\Contragradient{\localFieldRepresentation} \begin{pmatrix}
		                   & \IdentityMatrix{m} &   \\
		\IdentityMatrix{m} &                    &   \\
		                   &                    & 1
	\end{pmatrix} \Contragradient{W}, \FourierTransformWithRespectToCharacter{\phi}{\fieldCharacter} \right).$$

These Jacquet-Shalika integrals converge in suitable half planes. From \cite{jacquet11exterior}, we have the following theorems (for $n=2m$ or $n=2m+1$) regarding their convergence:

\begin{thm}[{\cite[Section 7, Proposition 1; Section 9, Proposition 3]{jacquet11exterior}}]\label{thm:convergence-of-jacquet-shalika-integral}
	There exists $\realPartHalfRightPlaneLocalFieldRepresentation \in \rReal$, such that for every $s \in \cComplex$ with $\RealPart \left(s\right) > \realPartHalfRightPlaneLocalFieldRepresentation$, the integral $\JSOfLocalFieldRepresentation{s}{W}{\phi}$ converges, for every $W \in \WhittakerModelOfLocalFieldRepresentation$ and $\phi \in \Schwartz \left(\localField^m\right)$.
\end{thm}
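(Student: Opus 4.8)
The statement is exactly \cite[Section 7, Proposition 1; Section 9, Proposition 3]{jacquet11exterior}, and the plan is to reproduce the standard Rankin--Selberg convergence scheme, adapted to the Jacquet--Shalika integral. I would first reduce to the even case $n = 2m$: in the odd case the extra integration over $Z \in \Mat{1}{m}{\localField}$ is paired with $\phi \in \Schwartz\left(\localField^m\right)$, hence is supported on a fixed compact set and is harmless, and the only other change is that $s$ is replaced by $s - 1$. So from now on assume $n = 2m$.

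First I would reduce the outer integration to the diagonal torus. By the Iwasawa decomposition $\GL{m}{\localField} = \UnipotentSubgroup \diagonalSubgroup \maximalCompactSubgroup$, with $\maximalCompactSubgroup = \GL{m}{\integersring}$ and $\diagonalSubgroup$ the diagonal torus, the quotient $\lquot{\UnipotentSubgroup}{\GL{m}{\localField}}$ is parametrised, up to a compact factor, by $\diagonalSubgroup \times \maximalCompactSubgroup$; accordingly $\JSOfLocalFieldRepresentation{s}{W}{\phi}$ becomes, up to a constant, the integral over $a = \diag\left(a_1, \dots, a_m\right) \in \diagonalSubgroup$ and $k \in \maximalCompactSubgroup$ of
\[ \left(\int_{\lquot{\UpperTriangularAdditive}{M}} W\left(\evenPermutationMatrix \ShalikaUnipotentElement{X} \ShalikaDiagonalElement{ak}\right)\fieldCharacter\left(-\trace X\right)\, dX\right)\abs{\det a}^{s}\,\phi\left(\lastrowvector a k\right)\rightHaarMeasureModulus{\borelSubgroup}(a)\,\multiplicativeMeasure{a}\, dk. \]
Since $\maximalCompactSubgroup$ is compact and all the data are smooth, it is enough to bound the inner double integral uniformly in $k$ and to check that the resulting bound is integrable against $\abs{\det a}^{\RealPart(s)}\rightHaarMeasureModulus{\borelSubgroup}(a)\,\multiplicativeMeasure{a}$ once $\RealPart(s)$ is large.

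The core of the argument is a gauge estimate for the inner double integral. Conjugating $\ShalikaDiagonalElement{a}$ to the left past $\ShalikaUnipotentElement{X}$ replaces $X$ by $a X a^{-1}$, with a Jacobian that is a monomial in the $\abs{a_i / a_j}$ (absorbed into the exponents below), and turns $\evenPermutationMatrix \ShalikaDiagonalElement{a} \evenPermutationMatrix^{-1}$ into the diagonal element $\diag\left(a_1, a_1, a_2, a_2, \dots, a_m, a_m\right)$ of $\GL{2m}{\localField}$, exactly the computation of $d'$ in the proof of \cref{lem:support-of-bessel-function-jacquet-shalika-integral}. Using the smoothness of $W$ and the asymptotics of $p$-adic Whittaker functions, one checks that for $a, k$ in a compact set the $X$-integrand is supported on a fixed compact set, so the $X$-integral is a finite sum; and then, invoking the Jacquet--Piatetski-Shapiro--Shalika asymptotic expansion of the $\GL{2m}{\localField}$-Whittaker function $W$, one produces a constant $C > 0$ and a finite family of finite functions $\chi$ on $\diagonalSubgroup$ (products of unramified characters and powers of the valuation) with
\[ \left|\int_{\lquot{\UpperTriangularAdditive}{M}} W\left(\evenPermutationMatrix \ShalikaUnipotentElement{X} \ShalikaDiagonalElement{ak}\right)\fieldCharacter\left(-\trace X\right)\, dX\right| \le \sum_{\chi} \abs{\chi(a)}\prod_{i=1}^{m-1} \indicatorFunction{\abs{a_i / a_{i+1}} \le C}, \]
uniformly in $k \in \maximalCompactSubgroup$. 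I expect this gauge estimate to be the main obstacle: it requires transporting the asymptotic expansion of $W$ through the permutation $\evenPermutationMatrix$ and the diagonal embedding $a \mapsto \diag\left(a_1, a_1, \dots, a_m, a_m\right)$, and verifying that the outcome still has the shape of a Whittaker gauge for $\GL{m}{\localField}$.

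Finally I would combine the gauge with the support condition coming from $\phi$. Since $\lastrowvector a k = a_m \cdot (\text{last row of } k)$ and the last row of any $k \in \GL{m}{\integersring}$ is a unimodular vector, $\phi\left(\lastrowvector a k\right)$ is bounded and vanishes unless $\abs{a_m}$ is bounded above, uniformly in $k$. Passing to the coordinates $b_i = a_i / a_{i+1}$ for $1 \le i \le m - 1$ and $b_m = a_m$, the measure $\abs{\det a}^{s}\rightHaarMeasureModulus{\borelSubgroup}(a)\,\multiplicativeMeasure{a}$ becomes a product of one-dimensional multiplicative measures, each $\chi$ factors through these coordinates, and the bound breaks up into a product of integrals $\int_{\abs{b} \le C} \abs{\chi_i(b)}\abs{b}^{\ell_i(s)}\,\multiplicativeMeasure{b}$, where each $\ell_i(s)$ is affine in $s$ with positive leading coefficient. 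Each of these is a geometric series in $q^{-\RealPart(s)}$ that converges once $\RealPart(s)$ passes a threshold depending only on $\chi_i$, so taking the maximum over the finitely many $\chi$ and the finitely many factors yields the desired $\realPartHalfRightPlaneLocalFieldRepresentation$; absolute convergence of $\JSOfLocalFieldRepresentation{s}{W}{\phi}$ for $\RealPart(s) > \realPartHalfRightPlaneLocalFieldRepresentation$ follows, and the odd case is identical with $\oddPermutationMatrix$ in place of $\evenPermutationMatrix$, exponent $s - 1$, and the extra compact $Z$-integration.
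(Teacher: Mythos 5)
This theorem is not proved in the paper at all: it is imported verbatim from Jacquet--Shalika (Section 7, Proposition 1 and Section 9, Proposition 3), so the only meaningful comparison is with the original argument, which your sketch indeed tries to reproduce (Iwasawa decomposition, Whittaker gauge, geometric series in $q^{-\RealPart(s)}$). The general route is the right one, and the odd-case reduction and the final bookkeeping in the coordinates $b_i = a_i/a_{i+1}$ are fine.

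The genuine gap is in the step you yourself flag as the main obstacle, and as written it does not close. The outer torus variable $a$ is \emph{not} confined to a compact set, so the assertion that ``for $a,k$ in a compact set the $X$-integrand is supported on a fixed compact set'' says nothing about the actual domain of integration, and the claimed gauge bound (uniform in $k$, with indicators only in the simple-root coordinates $\abs{a_i/a_{i+1}}$) is exactly what has to be proved, not something that follows from quoting the asymptotic expansion of $W$. The missing mechanism is the one the paper later extracts from Jacquet--Shalika for its level-zero computation: substitute $Z = a^{-1} X a$ (tracking the Jacobian, a monomial in the $\abs{a_i/a_j}$ which shifts the exponents $\ell_i(s)$ and must be checked not to destroy convergence), write $\evenPermutationMatrix \SmallShalikaUnipotentElement{Z} \evenPermutationMatrix^{-1} = n_Z t_Z k_Z$, and use that $t_Z = \diag(t_1,\dots,t_{2m})$ satisfies $\abs{t_1}=\abs{t_{2m}}=1$, $\abs{t_i}\ge 1$ for odd $i$, $\abs{t_i}\le 1$ for even $i$, together with $\Norm{Z}^{1/\alpha} \le \prod_{i \text{ odd}} \abs{t_i}$ (\cref{lem:diagonal-part-of-iwasawa-decomposition-of-u_Z} and \cref{lem:norm-of-Z-in-terms-of-tz}). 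It is this pair of estimates that makes the support condition of the Whittaker gauge on the simple-root coordinates of $d' t_Z$ (with $d' = \diag(a_1,a_1,\dots,a_m,a_m)$) simultaneously bound the effective $Z$-domain independently of $a$ and dominate the inner integral by finitely many finite functions of $a$; only then does your final geometric-series argument apply. Without that input the inner integral over the non-compact space of lower-triangular nilpotent $X$ is uncontrolled for large $a$, so the proposed proof has a hole precisely at its load-bearing estimate.
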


Correspondingly, the dual Jacquet-Shalika integrals $\DualJSOfLocalFieldRepresentation{s}{W}{\phi}$ converge in a left half plane ($\RealPart \left(s\right) < 1 - \realPartHalfRightPlaneLocalFieldDualRepresentation$).

By \cite[Proposition 2.3]{kewat2011local}, \cite[Lemma 3.1]{CogdellMatringe15}, for fixed $W$ and $\phi$, $\JSOfLocalFieldRepresentation{s}{W}{\phi}$ converges (in its domain of convergence) to an element of $\cComplex \left(q^{-s}\right)$ (a rational function in $q^{-s}$) and therefore has an meromorphic continuation to the entire complex plane, which we continue to denote $\JSOfLocalFieldRepresentation{s}{W}{\phi}$. Similarly, we continue to denote the meromorphic continuation of $\DualJSOfLocalFieldRepresentation{s}{W}{\phi}$ with the same notation. Moreover, the set $$I = \Span_\cComplex \left\{ \JSOfLocalFieldRepresentation{s}{W}{\phi} \mid W \in \WhittakerModelOfLocalFieldRepresentation, \phi \in \Schwartz \left(F^m\right) \right\}$$ is a fractional ideal of $\cComplex \left[q^s, q^{-s} \right]$. By a non-vanishing theorem of D. Belt (see \cite[Theorem 2.2]{belt2011holomorphy}), there exists a unique polynomial $p \in \cComplex \left[ Z \right]$, such that $p \left(0\right) = 1$ and $I = \frac{1}{p \left( q^{-s} \right) }\cComplex \left[q^{-s}, q^s\right]$. We define the exterior square $L$-function $\exteriorSquareLFunction{s}{\localFieldRepresentation} = \frac{1}{p \left( q^{-s} \right) }$ as in \cite[Definition 3.4]{KewatRaghunathan12}.

Similar to \cref{prop:dual-jacquet-shalika-formula-even} and \cref{prop:dual-jacquet-shalika-formula-odd}, we can express $\DualJSOfLocalFieldRepresentation{s}{W}{\phi}$ as in the following

\begin{prop}
	\begin{enumerate}
		\item For $n = 2m$,
		$$ \DualJSOfLocalFieldRepresentation{s}{W}{\phi} = \int_{\lquot{\UnipotentSubgroup}{G}} \int_{\lquot{\UpperTriangularAdditive}{M}} W\left(\evenPermutationMatrix \ShalikaUnipotentElement{X} \ShalikaDiagonalElement{g} \right) \fieldCharacter \left(-\trace X \right) dX \cdot \abs{\det g}^{s-1} \cdot \FourierTransformWithRespectToCharacter{\phi}{\fieldCharacter} \left( \firstrowvector \inverseTranspose{g} \right) \multiplicativeMeasure{g}. $$
		\item For $n = 2m+1$,
		\begin{equation*}
			\begin{split}
				\DualJSOfLocalFieldRepresentation{s}{W}{\phi} =  \int_{\lquot{\UnipotentSubgroup}{G}}\int_{\lquot{\UpperTriangularAdditive}{M}} \int_{\Mat{1}{m}{\localField}}
				&W\left(
				\begin{pmatrix}
					& 1 \\
					\IdentityMatrix{2m}
				\end{pmatrix}
				\oddPermutationMatrix
				\ShalikaUnipotentElementOdd{X}
				\ShalikaDiagonalElementOdd{g}
				\ShalikaUpperRightUnipotentElementOdd{-\transpose{Z}}
				\right) \\
				& \cdot
				\fieldCharacter\left(-\trace X\right)
				\FourierTransformWithRespectToCharacter{\phi}{\fieldCharacter} \left( Z \right)\abs{\det{g}}^{s}
				dZ\,dX\,\multiplicativeMeasure{g}.
			\end{split}
		\end{equation*}
	\end{enumerate}
\end{prop}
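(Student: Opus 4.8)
The plan is to unwind each dual integral directly from its definition and turn it into the asserted formula by a short chain of elementary matrix manipulations followed by two measure-preserving changes of variables --- the same bookkeeping that proves the finite field identities \cref{prop:dual-jacquet-shalika-formula-even} and \cref{prop:dual-jacquet-shalika-formula-odd}, now with the extra weight $\abs{\det g}^{s}$ carried along. By \cref{thm:convergence-of-jacquet-shalika-integral} the integral $\JSOfLocalFieldRepresentation{s}{W}{\phi}$ converges absolutely on a right half-plane and, correspondingly, the dual integral on a left half-plane; I would first prove the identity of absolutely convergent integrals on the overlap of these two regions and then extend it to all $s \in \cComplex$ by uniqueness of meromorphic continuation.

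\emph{The even case.} Set $W'' = \Contragradient{\localFieldRepresentation}\left(\begin{smallmatrix}&\IdentityMatrix{m}\\\IdentityMatrix{m}&\end{smallmatrix}\right)\Contragradient{W}$, so that $\DualJSOfLocalFieldRepresentation{s}{W}{\phi} = \JS{\Contragradient{\localFieldRepresentation}}{\fieldCharacter^{-1}}\left(1-s,\,W'',\,\FourierTransformWithRespectToCharacter{\phi}{\fieldCharacter}\right)$. Writing $W''(h) = W\left(\weylElement{2m}\bigl(h\left(\begin{smallmatrix}&\IdentityMatrix{m}\\\IdentityMatrix{m}&\end{smallmatrix}\right)\bigr)^{\iota}\right)$ and using that $g \mapsto g^{\iota}$ is an automorphism of $\GL{2m}{\localField}$ fixing the permutation matrices $\evenPermutationMatrix$ and $\left(\begin{smallmatrix}&\IdentityMatrix{m}\\\IdentityMatrix{m}&\end{smallmatrix}\right)$, the key steps are pushing the block permutation to the left past $\ShalikaDiagonalElement{\cdot}$ and $\ShalikaUnipotentElement{X}^{\iota}$, and applying the combinatorial identity
$$\weylElement{2m}\,\evenPermutationMatrix\begin{pmatrix}&\IdentityMatrix{m}\\\IdentityMatrix{m}&\end{pmatrix} \;=\; \evenPermutationMatrix\begin{pmatrix}\weylElement{m}&\\&\weylElement{m}\end{pmatrix},$$
which is a short check on the permutation $\sigma$ of \cref{subsec:jacquet-shalika-integral}. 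This yields $W''\left(\evenPermutationMatrix\ShalikaUnipotentElement{X}\ShalikaDiagonalElement{g}\right) = W\left(\evenPermutationMatrix\ShalikaUnipotentElement{-\weylElement{m}\transpose{X}\weylElement{m}}\ShalikaDiagonalElement{\weylElement{m}\inverseTranspose{g}}\right)$. Substituting $X \mapsto -\weylElement{m}\transpose{X}\weylElement{m}$ --- a linear involution of $M$ preserving $\UpperTriangularAdditive$, hence with trivial Jacobian on $\lquot{\UpperTriangularAdditive}{M}$, and sending $\trace X$ to $-\trace X$ --- and $g \mapsto \weylElement{m}\inverseTranspose{g}$ --- a Haar-measure-preserving involution of $\lquot{\UnipotentSubgroup}{G}$ with $\abs{\det g}$ replaced by $\abs{\det g}^{-1}$, so that $\abs{\det g}^{1-s}$ becomes $\abs{\det g}^{s-1}$ --- and using $\lastrowvector\weylElement{m} = \firstrowvector$, one obtains exactly the stated formula.

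\emph{The odd case.} The argument is structurally identical, now with the integration in $Z \in \Mat{1}{m}{\localField}$ carried along unchanged. One uses the shape of $\ShalikaLowerUnipotentElementOdd{Z}^{\iota}$ and the analogue
$$\weylElement{2m+1}\,\oddPermutationMatrix\begin{pmatrix}&\IdentityMatrix{m}&\\\IdentityMatrix{m}&&\\&&1\end{pmatrix} \;=\; \begin{pmatrix}&1\\\IdentityMatrix{2m}&\end{pmatrix}\oddPermutationMatrix\begin{pmatrix}\weylElement{m}&&\\&\weylElement{m}&\\&&1\end{pmatrix}$$
(which reduces to the even identity together with $\weylElement{2m+1}\left(\begin{smallmatrix}&1\\\IdentityMatrix{2m}&\end{smallmatrix}\right) = \left(\begin{smallmatrix}\weylElement{2m}&\\&1\end{smallmatrix}\right)$); this is precisely where the leftmost factor $\left(\begin{smallmatrix}&1\\\IdentityMatrix{2m}&\end{smallmatrix}\right)$ and the shape $\ShalikaUpperRightUnipotentElementOdd{-\transpose{Z}}$ appearing in the statement come from. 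The outcome is $W''\left(\oddPermutationMatrix\ShalikaUnipotentElementOdd{X}\ShalikaDiagonalElementOdd{g}\ShalikaLowerUnipotentElementOdd{Z}\right) = W\left(\left(\begin{smallmatrix}&1\\\IdentityMatrix{2m}&\end{smallmatrix}\right)\oddPermutationMatrix\ShalikaUnipotentElementOdd{-\weylElement{m}\transpose{X}\weylElement{m}}\ShalikaDiagonalElementOdd{\weylElement{m}\inverseTranspose{g}}\ShalikaUpperRightUnipotentElementOdd{-\transpose{Z}}\right)$, and the same two substitutions in $X$ and $g$ --- no change in $Z$ is needed, since $\FourierTransformWithRespectToCharacter{\phi}{\fieldCharacter}$ already occupies the correct slot --- finish it, with $\abs{\det g}^{(1-s)-1}$ becoming $\abs{\det g}^{s}$.

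The work here is organizational rather than conceptual: the main task is to get the two Weyl-element identities above right --- the odd one being slightly more delicate, as it must absorb the cyclic factor $\left(\begin{smallmatrix}&1\\\IdentityMatrix{2m}&\end{smallmatrix}\right)$ --- and to verify that each change of variables, in particular the transpose-conjugation on the strictly lower triangular quotient $\lquot{\UpperTriangularAdditive}{M}$, has Jacobian of absolute value one. The only feature distinguishing this from the finite field computation is the passage to the meromorphic continuation at the end; no additional analytic input is required.
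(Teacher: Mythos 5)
Your argument is correct and is essentially the intended one: the paper states this proposition without proof, as the direct analogue of \cref{prop:dual-jacquet-shalika-formula-even} and \cref{prop:dual-jacquet-shalika-formula-odd}, and your unwinding of the definition via the identity $\weylElement{2m}\,\evenPermutationMatrix\left(\begin{smallmatrix}&\IdentityMatrix{m}\\\IdentityMatrix{m}&\end{smallmatrix}\right)=\evenPermutationMatrix\left(\begin{smallmatrix}\weylElement{m}&\\&\weylElement{m}\end{smallmatrix}\right)$ (and its odd analogue), together with the measure-preserving substitutions $X\mapsto-\weylElement{m}\transpose{X}\weylElement{m}$ and $g\mapsto\weylElement{m}\inverseTranspose{g}$, is exactly the standard computation, with all exponents and the argument $\firstrowvector\inverseTranspose{g}$ coming out as stated. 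One small remark: no overlap of the convergence regions of $\JS{\localFieldRepresentation}{\fieldCharacter}$ and $\DualJS{\localFieldRepresentation}{\fieldCharacter}$ is needed, since both sides of each asserted identity are the same absolutely convergent integral (in the left half-plane where the dual integral converges) related by your changes of variables, after which equality of the meromorphic continuations is automatic.
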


We now introduce the important local exterior square factors $\gammaFactorOfLocalField{s}$ and $\epsilonFactorOfLocalField{s}$, relating the Jacquet-Shalika integral $\JSOfLocalFieldRepresentation{s}{W}{\phi}$ to its dual $\DualJSOfLocalFieldRepresentation{s}{W}{\phi}$.

\begin{thm}[{\cite[Theorem 4.1]{matringe2014linear}, \cite[Theorem 3.1]{CogdellMatringe15}}]\label{thm:local-field-functional-equation}
	There exists an element $\gammaFactorOfLocalField{s} \in \cComplex \left(q^{-s}\right)$, such that for every $W \in \WhittakerModelOfLocalFieldRepresentation$ and $\phi \in \Schwartz \left( F^m \right)$,
	$$ \DualJSOfLocalFieldRepresentation{s}{W}{\phi} = \gammaFactorOfLocalField{s} \cdot \JSOfLocalFieldRepresentation{s}{W}{\phi}.$$ Furthermore $$ \gammaFactorOfLocalField{s} = \epsilonFactorOfLocalField{s} \cdot \frac{\exteriorSquareLFunction{1-s}{\Contragradient{\localFieldRepresentation}}}{\exteriorSquareLFunction{s}{\localFieldRepresentation}},$$
	where $\epsilonFactorOfLocalField{s}$ is an invertible element of $\cComplex {\left[q^{-s}, q^s\right]}$.
\end{thm}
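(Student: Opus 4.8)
The plan is to transport the strategy of the finite-field functional equation (\cref{thm:functional-equation-finite-field}) to the $p$-adic setting, while carrying the analytic bookkeeping that the rational-function framework requires; this is exactly what is done in \cite[Section~4]{matringe2014linear} and \cite[Section~3.3]{CogdellMatringe15}, and I would follow their treatment. The two sides $\JSOfLocalFieldRepresentation{s}{W}{\phi}$ and $\DualJSOfLocalFieldRepresentation{s}{W}{\phi}$ converge in a right, respectively left, half-plane by \cref{thm:convergence-of-jacquet-shalika-integral} (and the remark following it), and each continues to an element of $\cComplex\left(q^{-s}\right)$. The heart of the proof is a \emph{uniqueness} statement: for all but finitely many $s$, the space of bilinear forms $B : \WhittakerModelOfLocalFieldRepresentation \times \Schwartz\left(\localField^m\right) \to \cComplex$ that are $\ShalikaSubgroup{n}$-equivariant in the sense of \cref{prop:equivariance-of-jacquet-shalika-integral-even-case} (even case) or \cref{prop:equivariance-of-jacquet-shalika-integral-odd-case} (odd case), twisted by $\abs{\det g}^{s}$, is at most one-dimensional.

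First I would prove this uniqueness, in the even and odd cases separately, mimicking the finite-field arguments but replacing the elementary counting by smooth-representation theory: realize the $\ShalikaSubgroup{n}$-action on $\Schwartz\left(\localField^m\right)$, restricted away from $0$, as a representation compactly induced from $\ShalikaSubgroup{n} \cap \Mirabolic{n}$, apply smooth Frobenius reciprocity, and reduce to the $p$-adic analogs of \cref{lem:multiplicity-one-shalika-subgroup-even} and \cref{lem:multiplicity-one-shalika-subgroup-odd} --- multiplicity one for $\Hom_{\ShalikaSubgroup{n}\cap\Mirabolic{n}}\left(\localFieldRepresentation\otimes\abs{\det}^{s},\ShalikaCharacter\right)$, which is proved by a Bernstein--Zelevinsky / Mackey analysis of orbits. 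The one delicate point, absent over finite fields, is the contribution of functionals supported on $\{0\}$: generic representations of $\GL{n}{\localField}$ may carry Shalika functionals, so such a functional need not vanish; but it ranges over a space that depends on $s$ rationally and is finite-dimensional, so it can affect the dimension count for at most finitely many $s$, which is harmless for a functional equation of rational functions.

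Granting uniqueness, the functional equation is automatic: both $\JSOfLocalFieldRepresentation{s}{W}{\phi}$ and $\DualJSOfLocalFieldRepresentation{s}{W}{\phi}$ lie in this at-most-one-dimensional space, and both are nonzero --- for instance by evaluating on an explicit pair, a Whittaker function supported near the identity of $\GL{n}{\localField}$ together with $\phi$ the indicator of a small neighbourhood of the relevant point, as in \cref{prop:jacquet-shalika-integral-of-bessel-function-even-case} and \cref{prop:jacquet-shalika-integral-of-bessel-function-odd-case}. Hence they are proportional by a factor $\gammaFactorOfLocalField{s}$ independent of $W$ and $\phi$; being a ratio of two elements of $\cComplex\left(q^{-s}\right)$, it lies in $\cComplex\left(q^{-s}\right)$.

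It remains to extract $\epsilonFactorOfLocalField{s}$ and identify it as a unit of $\cComplex\left[q^{-s}, q^{s}\right]$. By definition $\Span_{\cComplex}\left\{\JSOfLocalFieldRepresentation{s}{W}{\phi}\right\} = \exteriorSquareLFunction{s}{\localFieldRepresentation} \cdot \cComplex\left[q^{-s}, q^{s}\right]$. Since $\phi \mapsto \FourierTransformWithRespectToCharacter{\phi}{\fieldCharacter}$ is a bijection of $\Schwartz\left(\localField^m\right)$ and $W \mapsto \Contragradient{W}$, composed with the fixed permutation matrix occurring in the definition of the dual integral, is a bijection $\WhittakerModelOfLocalFieldRepresentation \to \Whittaker\left(\Contragradient{\localFieldRepresentation}, \fieldCharacter^{-1}\right)$, the definition of the dual integral shows that $\Span_{\cComplex}\left\{\DualJSOfLocalFieldRepresentation{s}{W}{\phi}\right\}$ equals the fractional ideal attached to $\Contragradient{\localFieldRepresentation}$ evaluated at $1-s$ (and this ideal is independent of the choice of non-trivial additive character), that is $\exteriorSquareLFunction{1-s}{\Contragradient{\localFieldRepresentation}} \cdot \cComplex\left[q^{-s}, q^{s}\right]$. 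Applying the functional equation to these two spans yields
$$\gammaFactorOfLocalField{s} \cdot \exteriorSquareLFunction{s}{\localFieldRepresentation} \cdot \cComplex\left[q^{-s}, q^{s}\right] = \exteriorSquareLFunction{1-s}{\Contragradient{\localFieldRepresentation}} \cdot \cComplex\left[q^{-s}, q^{s}\right],$$
so $\epsilonFactorOfLocalField{s} := \gammaFactorOfLocalField{s} \cdot \exteriorSquareLFunction{s}{\localFieldRepresentation} / \exteriorSquareLFunction{1-s}{\Contragradient{\localFieldRepresentation}}$ generates the unit ideal and is therefore invertible in $\cComplex\left[q^{-s}, q^{s}\right]$, i.e.\ a nonzero scalar times a power of $q^{-s}$. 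I expect the uniqueness step to be the main obstacle, since it is the one place where the finite-field counting does not carry over and one must import the $p$-adic geometric input of \cite{matringe2014linear} and \cite{CogdellMatringe15}.
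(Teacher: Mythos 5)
This theorem is quoted in the paper without proof, as a citation of Matringe and Cogdell--Matringe, and your sketch reproduces exactly the strategy of those references (twisted multiplicity one for Shalika-equivariant bilinear forms outside finitely many values of $q^{-s}$, non-vanishing of both integrals, and the fractional-ideal comparison that exhibits $\epsilonFactorOfLocalField{s}$ as a unit of $\cComplex\left[q^{-s},q^{s}\right]$), which is also the template the paper itself follows in the finite-field analogue \cref{thm:functional-equation-finite-field}. The one inaccuracy is your reason for discarding forms supported at $\left\{0\right\}$: the finiteness of the exceptional $s$ does not follow from ``rational dependence and finite-dimensionality'' but from the central-character constraint imposed by the centre of $\ShalikaSubgroupEven$, which forces $q^{ms}=\centralCharacter{\localFieldRepresentation}\left(\uniformizer\right)$ (compare \cref{thm:exterior-square-L-function-in-terms-of-shalika-functionals}); since you explicitly defer this delicate step to the cited works, the plan as a whole is sound and matches the intended proof.
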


In the case where $\localFieldRepresentation$ is  supercuspidal representation, the following result regrading $\exteriorSquareLFunction{s}{\localFieldRepresentation}$ is known

\begin{thm}[{\cite[Theorem 3.6]{jo2018derivatives}}]\label{thm:exterior-square-polynomial-is-a-divisor}
	\begin{enumerate}
		\item If $n = 2m + 1$, then $\exteriorSquareLFunction{s}{\localFieldRepresentation} = 1$.
		\item If $n = 2m$, then $\exteriorSquareLFunction{s}{\localFieldRepresentation} = \frac{1}{p\left(q^{-s}\right)}$, where $p\left(Z\right) \in \cComplex \left[Z\right]$ is a polynomial dividing $1- \centralCharacter{\localFieldRepresentation}\left(\uniformizer\right) Z^m$, satisfying $p\left(0\right) = 1$.
	\end{enumerate}
\end{thm}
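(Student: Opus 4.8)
\emph{Proof proposal.} The plan is to analyze the fractional ideal $I=\Span_{\cComplex}\left\{\JSOfLocalFieldRepresentation{s}{W}{\phi}\mid W\in\WhittakerModelOfLocalFieldRepresentation,\ \phi\in\Schwartz\left(\localField^{m}\right)\right\}$ whose generator is $\exteriorSquareLFunction{s}{\localFieldRepresentation}$, using that a supercuspidal $\localFieldRepresentation$ is as small as possible in the sense of Bernstein--Zelevinsky: all proper derivatives vanish, $\localFieldRepresentation^{(k)}=0$ for $1\le k\le n-1$, $\localFieldRepresentation^{(0)}=\localFieldRepresentation$, and $\localFieldRepresentation^{(n)}$ is the trivial representation of $\GL{0}{\localField}$, so $\localFieldRepresentation\restriction_{\Mirabolic{n}}\isomorphic\CompactInd{\UnipotentSubgroup_{n}}{\Mirabolic{n}}{\fieldCharacter}$. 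The support estimate I would extract from this is that for supercuspidal $\localFieldRepresentation$ the restriction of every $W\in\WhittakerModelOfLocalFieldRepresentation$ to the diagonal torus of $\GL{n}{\localField}$ is compactly supported modulo the center. Using the Iwasawa decomposition $\GL{m}{\localField}=\UnipotentSubgroup_{m}A_{m}\GL{m}{\integersring}$ and the left $\UnipotentSubgroup_{m}$-equivariance of the integrand, one rewrites $\JSOfLocalFieldRepresentation{s}{W}{\phi}$ (convergent for $\RealPart(s)$ large by \cref{thm:convergence-of-jacquet-shalika-integral}) as an integral over $A_{m}\times\GL{m}{\integersring}$ of the inner $\lquot{\UpperTriangularAdditive}{M}$-integral against $\abs{\det g}^{s}$ (resp.\ $\abs{\det g}^{s-1}$) and $\phi$; since for $a=\diag(\alpha_{1},\dots,\alpha_{m})\in A_{m}$ the torus element that emerges in the argument of $W$ is $\diag(\alpha_{1},\alpha_{1},\dots,\alpha_{m},\alpha_{m})\in A_{2m}$ (resp.\ $\diag(\alpha_{1},\alpha_{1},\dots,\alpha_{m},\alpha_{m},1)\in A_{2m+1}$), the support estimate forces the integrand to vanish unless all ratios $\alpha_{i}/\alpha_{j}$ stay bounded. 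Thus the $A_{m}$-integration localizes to a compact set in every torus direction except possibly the scalar line $a\mapsto a\IdentityMatrix{m}$.

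Along that scalar line the parity of $n$ is decisive. When $n=2m+1$, the torus element $\diag(\alpha_{1},\alpha_{1},\dots,\alpha_{m},\alpha_{m},1)$ has a constant $1$ in its last slot, so compact support modulo $Z_{2m+1}$ of $W$ pins the central parameter and forces $\abs{\alpha_{i}}\asymp1$ for all $i$; hence the whole $A_{m}$-integration localizes to a compact set, every $\JSOfLocalFieldRepresentation{s}{W}{\phi}$ is a Laurent polynomial in $q^{-s}$, $I=\cComplex[q^{s},q^{-s}]$, and $\exteriorSquareLFunction{s}{\localFieldRepresentation}=1$ (matching the Galois side: $\wedge^{2}$ of an odd-dimensional irreducible representation carries no invariant vectors, since symplectic forms require even dimension). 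When $n=2m$ there is no such trailing $1$: the scalar line maps to $\evenPermutationMatrix\,\SmallShalikaDiagonalElement{a\IdentityMatrix{m}}\,\evenPermutationMatrix^{-1}=a\IdentityMatrix{2m}\in Z\left(\GL{2m}{\localField}\right)$, the inner integral picks up $\centralCharacter{\localFieldRepresentation}(a)$ while $\abs{\det(a\IdentityMatrix{m}g')}^{s}=\abs{a}^{ms}\abs{\det g'}^{s}$, and the contribution of the end $\abs{a}\to0$ of this line is $\phi(0)$ times a geometric series $\sum_{k\ge k_{0}}\centralCharacter{\localFieldRepresentation}(\uniformizer)^{k}q^{-kms}\cdot(\text{const})$ with everything else a Laurent polynomial. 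Hence $\JSOfLocalFieldRepresentation{s}{W}{\phi}\in\frac{1}{1-\centralCharacter{\localFieldRepresentation}(\uniformizer)q^{-ms}}\cComplex[q^{s},q^{-s}]$; writing $I=\frac{1}{p(q^{-s})}\cComplex[q^{s},q^{-s}]$ with $p(0)=1$ this forces $p(Z)\mid 1-\centralCharacter{\localFieldRepresentation}(\uniformizer)Z^{m}$.

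It remains to check the bounds are attained. This is a standard good-section argument, the $p$-adic analogue of \cref{prop:jacquet-shalika-integral-of-bessel-function-even-case}: take $\phi$ supported on a small neighbourhood of $\lastrowvector$ (even case), resp.\ of $0\in\localField^{m}$ (odd case), and $W$ supported on a small two-sided $\UnipotentSubgroup_{n}\times\GL{n}{\integersring}$-neighbourhood of $\evenPermutationMatrix$, resp.\ $\oddPermutationMatrix$ --- possible by the compact induction description of $\localFieldRepresentation\restriction_{\Mirabolic{n}}$ --- so that in the odd case the integral collapses to a nonzero constant, and in the even case it becomes, up to a unit, a nonzero multiple of $\frac{1}{1-\centralCharacter{\localFieldRepresentation}(\uniformizer)q^{-ms}}$; together with the divisibility above this pins down $\exteriorSquareLFunction{s}{\localFieldRepresentation}=\frac{1}{p(q^{-s})}$ with $p(0)=1$ and $p(Z)\mid 1-\centralCharacter{\localFieldRepresentation}(\uniformizer)Z^{m}$.

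The main obstacle is making the localization step rigorous: the $\lquot{\UpperTriangularAdditive}{M}$-integral (and, in the odd case, the $\Mat{1}{m}{\localField}$-integral) runs over a non-compact space, so one must verify carefully that the inner integral, as a function of $g$, is still compactly supported modulo the center on $A_{m}$, rather than spreading out under these integrations. The robust tool is the Bernstein--Zelevinsky filtration of the relevant module of $\Schwartz$-sections --- the $p$-adic analogue of the filtration of $\Schwartz\left(\finiteField^{m}\right)$ by $\ShalikaSubgroup{n}\cap\Mirabolic{n}$-support used in the proof of \cref{thm:functional-equation-finite-field} --- arranged so that its successive layers pair with the derivatives $\localFieldRepresentation^{(k)}$; supercuspidality then kills every layer except the top one (attached to $\localFieldRepresentation^{(0)}=\localFieldRepresentation$, contributing a holomorphic, pole-free term) and, only when $n$ is even, a bottom layer attached to $\localFieldRepresentation^{(n)}=\mathbf{1}$ producing the geometric series in $\centralCharacter{\localFieldRepresentation}(\uniformizer)q^{-ms}$. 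Carrying out this filtration and the attendant parity bookkeeping precisely is exactly the work of \cite{jo2018derivatives}.
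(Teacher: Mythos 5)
Note first that the paper does not prove this statement at all: it is imported verbatim from \cite[Theorem 6.2]{jo2018derivatives}, so your proposal has to be measured against the cited proof rather than anything in this paper. Your outline does follow the strategy underlying that reference: supercuspidality kills all proper Bernstein--Zelevinsky derivatives, Whittaker functions of supercuspidal representations are compactly supported on the torus modulo the center, and after the Iwasawa decomposition the only non-compact direction surviving in the Jacquet--Shalika integral is the central line $aI_m$, which in the even case produces a geometric series in $\centralCharacter{\localFieldRepresentation}\left(\uniformizer\right)q^{-ms}$ and in the odd case is blocked by the trailing $1$. However, the decisive step --- that the inner $X$-integration (and the $Z$-integration in the odd case) does not spoil this localization, so that every integral genuinely lies in $\frac{1}{1-\centralCharacter{\localFieldRepresentation}\left(\uniformizer\right)q^{-ms}}\cComplex\left[q^{\pm s}\right]$, resp.\ in $\cComplex\left[q^{\pm s}\right]$ --- is exactly the mathematical content of the theorem, and you defer it wholesale to the citation. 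The argument of $W$ in the integral is $\evenPermutationMatrix\SmallShalikaUnipotentElement{X}\SmallShalikaDiagonalElement{g}$, not a torus element, so compact support of $W$ on the torus does not directly apply; one needs the Iwasawa/gauge estimates of Jacquet--Shalika type (compare \cref{lem:diagonal-part-of-iwasawa-decomposition-of-u_Z} and \cref{lem:norm-of-Z-in-terms-of-tz}, which this paper quotes for a related purpose) or Jo's filtration by derivatives. As a self-contained proof your text is therefore incomplete at its central point, though the point is correctly identified.

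Your third paragraph is both unnecessary and, in the even case, false as stated. Once the containments above are known, nothing more is needed: writing $I=\frac{1}{p\left(q^{-s}\right)}\cComplex\left[q^{\pm s}\right]$ with $p\left(0\right)=1$, the odd-case containment $I\subseteq\cComplex\left[q^{\pm s}\right]$ forces $p\left(Z\right)$ to divide a power of $Z$, hence $p=1$; the even-case containment forces $p\left(Z\right)$ to divide $\left(1-\centralCharacter{\localFieldRepresentation}\left(\uniformizer\right)Z^{m}\right)Z^{k}$ for some $k$, hence $p\left(Z\right)\mid 1-\centralCharacter{\localFieldRepresentation}\left(\uniformizer\right)Z^{m}$. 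No attainment argument is required for either part of the statement. Moreover, the claim that a good choice of data makes the even-case integral a nonzero multiple of $\left(1-\centralCharacter{\localFieldRepresentation}\left(\uniformizer\right)q^{-ms}\right)^{-1}$, thereby ``pinning down'' $\exteriorSquareLFunction{s}{\localFieldRepresentation}$, is wrong in general: the coefficient multiplying the geometric series is essentially a twisted Shalika functional evaluated on $W$, and it can vanish identically. Indeed, by \cref{thm:exterior-square-L-function-in-terms-of-shalika-functionals}, and concretely by \cref{thm:equality-of-gamma-factors-non-shalika-vector} for level zero representations attached to cuspidal $\residueFieldRepresentation$ without a Shalika vector, one has $\exteriorSquareLFunction{s}{\localFieldRepresentation}=1$ in the absence of a twisted Shalika functional, so the pole need not be attained. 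This overclaim does not affect the divisibility you are asked to prove, but it should be removed rather than relied upon.
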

Actually a more precise version of this theorem is known, in which $\exteriorSquareLFunction{s}{\localFieldRepresentation}$ is expressed in terms of twisted Shalika functionals in the even case. This is discussed in \Cref{subsubsec:twisted-shalika-functionals}, see \Cref{thm:exterior-square-L-function-in-terms-of-shalika-functionals} for instance.

\subsubsection{Level zero supercuspidal representations}

\newcommand{\levelZeroCentralCharacter}{\chi}
\newcommand{\levelZeroBeforeInduction}{\residueFieldRepresentation \cdot \levelZeroCentralCharacter}

Let $\representationDeclaration{\residueFieldRepresentation}$ be an irreducible cuspidal representation of $\GL{n}{\residueField}$. Let $\levelZeroCentralCharacter : \multiplicativegroup{\localField} \rightarrow \multiplicativegroup{\cComplex}$ be a multiplicative character, such that $\levelZeroCentralCharacter \restriction_{\multiplicativegroup{\integersring}} = \centralCharacter{\residueFieldRepresentation} \circ \quotientMap \restriction_{\multiplicativegroup{\integersring}}$. Let $\left(\levelZeroBeforeInduction, \underlyingVectorSpace{\residueFieldRepresentation}\right)$ be the representation of the group $\multiplicativegroup{\localField} \cdot \GL{n}{\integersring}$, defined by the formula $\levelZeroBeforeInduction \left( zk \right) = \levelZeroCentralCharacter \left(z\right) \residueFieldRepresentation \left(\quotientMap\left(k\right)\right) $, where $z \in \multiplicativegroup{\localField}$ and $k \in \GL{n}{\integersring}$.

\begin{thm}[{\cite[Theorem 6.2]{prasad2000representation}}]\label{thm:level-zero-representation-is-induced-from-cuspidal-reprensentation}
	Let $\localFieldRepresentation = \CompactInd{\multiplicativegroup{\localField} \cdot \GL{n}{\integersring}}{\GL{n}{\localField}}{\levelZeroBeforeInduction}$. Then $\localFieldRepresentation$ is an irreducible supercuspidal representation of $\GL{n}{\localField}$, with central character $\levelZeroCentralCharacter$.
\end{thm}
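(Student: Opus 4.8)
\emph{Proof proposal.} The plan is to recognise $\localFieldRepresentation$ as a representation compactly induced from an open subgroup compact modulo the centre, and to apply the standard irreducibility criterion for such inductions (in the style of Mautner and Bushnell--Kutzko). Write $\maximalCompactSubgroup = \GL{n}{\integersring}$ and let $\maximalCompactSubgroup_1 = \IdentityMatrix{n} + \SquareMat{n}{\maximalideal}$ be its principal congruence subgroup, so that $\maximalCompactSubgroup / \maximalCompactSubgroup_1 \isomorphic \GL{n}{\residueField}$; set $J = \multiplicativegroup{\localField} \cdot \maximalCompactSubgroup$, which is open in $\GL{n}{\localField}$, compact modulo the centre, and contains the centre. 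First I would check that $\levelZeroBeforeInduction$ is a well-defined smooth irreducible representation of $J$: the cuspidal $\residueFieldRepresentation$ inflates along $\quotientMap : \maximalCompactSubgroup \rightarrow \GL{n}{\residueField}$ to an irreducible representation of $\maximalCompactSubgroup$ trivial on $\maximalCompactSubgroup_1$, and the hypothesis $\levelZeroCentralCharacter \restriction_{\multiplicativegroup{\integersring}} = \centralCharacter{\residueFieldRepresentation} \circ \quotientMap \restriction_{\multiplicativegroup{\integersring}}$ guarantees that the twist by $\levelZeroCentralCharacter$ extends it consistently across $J = \multiplicativegroup{\localField}\maximalCompactSubgroup$. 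The general principle then says that $\localFieldRepresentation = \CompactInd{J}{\GL{n}{\localField}}{\levelZeroBeforeInduction}$ is admissible, and is irreducible --- hence automatically supercuspidal, since every function in the induced space is supported on finitely many cosets of $J$ and therefore has matrix coefficients compactly supported modulo the centre, and with central character $\levelZeroCentralCharacter$, as the centre lies in $J$ and acts there by $\levelZeroCentralCharacter$ --- provided the intertwining spaces
$$I_g(\levelZeroBeforeInduction) = \Hom_{J \cap g J g^{-1}}\left( {}^{g}\levelZeroBeforeInduction, \levelZeroBeforeInduction \right)$$
vanish for every $g \in \GL{n}{\localField} \setminus J$.

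The heart of the proof is this intertwining vanishing, and this is exactly where cuspidality of $\residueFieldRepresentation$ enters. Since $I_g(\levelZeroBeforeInduction)$ depends only on the double coset $J g J$, the Cartan decomposition together with $\multiplicativegroup{\localField} \subseteq J$ lets me assume $g = \diag\left(\uniformizer^{a_1}, \dots, \uniformizer^{a_n}\right)$ with $a_1 \ge \dots \ge a_n$, and $g \notin J$ forces the $a_i$ not to be all equal. Grouping the indices into the $r \ge 2$ consecutive blocks on which $a_i$ is constant determines a proper standard parabolic $\overline{P} = \overline{M}\, \overline{U}$ of $\GL{n}{\residueField}$ (block upper triangular) with opposite $\overline{P}^{-} = \overline{M}\, \overline{U}^{-}$. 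A direct matrix computation shows that $H := J \cap g J g^{-1}$ meets $\maximalCompactSubgroup$ precisely in $\left\{ k \in \maximalCompactSubgroup \mid k_{ij} \in \maximalideal^{a_i - a_j} \text{ whenever } i \text{ lies in a strictly earlier block than } j \right\}$; consequently $\quotientMap(H \cap \maximalCompactSubgroup) = \overline{P}^{-}$, so $\levelZeroBeforeInduction \restriction_{H}$ factors through $\overline{P}^{-}$ via $\residueFieldRepresentation$, while $h \mapsto {}^{g}\levelZeroBeforeInduction(h) = \levelZeroBeforeInduction\left(g^{-1} h g\right)$ factors through $\overline{P}$ via $\residueFieldRepresentation$.

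Next I would exploit the ``contracting'' unipotent. Let $\overline{U}(\integersring)$ denote the block strictly upper triangular unipotent matrices with entries in $\integersring$. Because $a_i > a_j$ strictly whenever $i$ lies in a strictly earlier block than $j$, conjugation by $g$ contracts $\overline{U}(\integersring)$ into $\maximalCompactSubgroup_1$, so $H_0 := g\, \overline{U}(\integersring)\, g^{-1}$ is a subgroup of $\maximalCompactSubgroup_1 \cap H$. On $H_0$ the representation $\levelZeroBeforeInduction$ is trivial, whereas an element $h = g v_0 g^{-1}$ of $H_0$ satisfies ${}^{g}\levelZeroBeforeInduction(h) = \levelZeroBeforeInduction(v_0) = \residueFieldRepresentation\left(\quotientMap(v_0)\right)$, and $\quotientMap(v_0)$ runs over all of $\overline{U}$ as $v_0$ runs over $\overline{U}(\integersring)$. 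Hence any $T \in I_g(\levelZeroBeforeInduction)$ satisfies $T \circ \residueFieldRepresentation(\overline{u}) = T$ for every $\overline{u} \in \overline{U}$, so $T$ factors through the $\overline{U}$-coinvariants of $\underlyingVectorSpace{\residueFieldRepresentation}$, i.e.\ through the Jacquet module of $\residueFieldRepresentation$ along $\overline{P}$. Since $\residueFieldRepresentation$ is cuspidal and $\overline{P}$ is a proper parabolic, this Jacquet module is zero, so $T = 0$ and $I_g(\levelZeroBeforeInduction) = 0$.

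Combined with the criterion recalled in the first paragraph, this gives the theorem. The main obstacle is the intertwining computation of the second and third paragraphs --- in particular identifying $\quotientMap(H \cap \maximalCompactSubgroup)$ correctly and verifying that conjugation by $g$ pushes the integral block strictly upper unipotent group into $\maximalCompactSubgroup_1$ while its $g^{-1}$-conjugate still surjects onto $\overline{U}$ modulo $\maximalideal$; once this bookkeeping is in place the vanishing is immediate from cuspidality. The remaining inputs (admissibility of compact induction from an open subgroup compact modulo the centre, and the fact that such an irreducible induction is supercuspidal with the expected central character) are standard.
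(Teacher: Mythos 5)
Your proposal is correct: the paper itself offers no proof of this statement, citing it as \cite[Theorem 6.2]{prasad2000representation} (a special case of the Bushnell--Kutzko construction), and your argument is precisely the standard one used there --- compact induction from the open, compact-mod-center subgroup $\multiplicativegroup{\localField}\GL{n}{\integersring}$, the Mackey-type irreducibility criterion, reduction to diagonal $\uniformizer$-powers via the Cartan decomposition, and the vanishing of intertwining because the contracted unipotent forces any intertwiner to factor through a proper Jacquet module of the cuspidal $\residueFieldRepresentation$, which is zero. The supporting bookkeeping (e.g.\ that $J \cap g J g^{-1} \cap \GL{n}{\integersring} = \GL{n}{\integersring} \cap g\GL{n}{\integersring}g^{-1}$, since a determinant comparison puts the central factor in $\multiplicativegroup{\integersring}$, and that its reduction is the opposite block parabolic) checks out, so no gap.
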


In fact, this theorem is a special case of the construction of supercuspidal representations using type theory, due to Bushnell and Kutzko. See \cite[Chapter 6]{BushnellKutzko93} for details.

We say that $\localFieldRepresentation$ is a level zero supercuspidal representation of $\GL{n}{\localField}$ constructed from the representation $\residueFieldRepresentation$, with respect to the central character $\levelZeroCentralCharacter$, or simply a level zero supercuspidal representation constructed from the representation $\residueFieldRepresentation$ (as we can recover $\levelZeroCentralCharacter$ via the central character $\centralCharacter{\localFieldRepresentation}$).

\subsubsection{Lifts}

In order to relate Jacquet-Shalika integrals of cuspidal representations of $\GL{n}{\residueField}$ to the local Jacquet-Shalika integrals of their corresponding level-zero representations, we need to be able to lift Schwartz functions and Whittaker functions to corresponding functions defined over the local field. We describe here briefly the process of doing so, leaving the details to the reader.

\subsubsection*{Lifts of Schwartz functions.}
Denote for a function $\phi_0 \in \Schwartz \left( \residueField^m \right)$ a lift $\lift{\phi_0} \in \Schwartz \left( \localField^m  \right)$ defined by $\lift{\phi_0}\left(x\right) = \begin{cases}
		\phi_0 \left( \quotientMap \left( x \right) \right) & x \in \integersring^m, \\
		0                                                   & \text{otherwise.}
	\end{cases}$

It is easy to verify the following relation between the Fourier transforms and the lifts.

\begin{prop}
	Let $\phi_0 \in \Schwartz\left( \residueField^m \right)$. Then
	$$\FourierTransformWithRespectToCharacter{\lift{\phi_0}}{\fieldCharacter} = \lift{\FourierTransformWithRespectToCharacter{\phi_0}{\residueFieldCharacter}}.$$
\end{prop}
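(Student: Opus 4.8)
The plan is to prove the identity by directly unwinding both sides and comparing at an arbitrary point $y \in \localField^m$. Since $\lift{\phi_0}$ vanishes outside $\integersring^m$ and equals $\phi_0 \circ \quotientMap$ there, the definition of the local Fourier transform reduces $\FourierTransformWithRespectToCharacter{\lift{\phi_0}}{\fieldCharacter}\left(y\right)$ to $q^{m/2}\int_{\integersring^m}\phi_0\left(\quotientMap\left(x\right)\right)\fieldCharacter\left(\standardForm{x}{y}\right)dx$. First I would partition $\integersring^m$ into the cosets of $\maximalideal^m$; on each coset $\phi_0\circ\quotientMap$ is constant, so after choosing representatives $x_{\bar x}$ for $\bar x\in\residueField^m$ the integral becomes $q^{m/2}\sum_{\bar x}\phi_0\left(\bar x\right)\fieldCharacter\left(\standardForm{x_{\bar x}}{y}\right)\int_{\maximalideal^m}\fieldCharacter\left(\standardForm{t}{y}\right)dt$. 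Everything then comes down to evaluating the single integral $\int_{\maximalideal^m}\fieldCharacter\left(\standardForm{t}{y}\right)dt$ and the phase $\fieldCharacter\left(\standardForm{x_{\bar x}}{y}\right)$, with a dichotomy according to whether $y\in\integersring^m$.

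When $y\in\integersring^m$, I would observe that $\standardForm{t}{y}\in\maximalideal$ for $t\in\maximalideal^m$, so $\fieldCharacter$ kills it and the inner integral is just $\mathrm{vol}\left(\maximalideal^m\right)=q^{-m}$ for the chosen normalization. Likewise $\standardForm{x_{\bar x}}{y}\in\integersring$ reduces modulo $\maximalideal$ to $\standardForm{\bar x}{\quotientMap\left(y\right)}$, so the compatibility $\fieldCharacter\restriction_{\integersring}=\residueFieldCharacter\circ\quotientMap$ turns $\fieldCharacter\left(\standardForm{x_{\bar x}}{y}\right)$ into $\residueFieldCharacter\left(\standardForm{\bar x}{\quotientMap\left(y\right)}\right)$; collecting the constants $q^{m/2}\cdot q^{-m}=q^{-m/2}$ recovers exactly $\FourierTransformWithRespectToCharacter{\phi_0}{\residueFieldCharacter}\left(\quotientMap\left(y\right)\right)=\lift{\FourierTransformWithRespectToCharacter{\phi_0}{\residueFieldCharacter}}\left(y\right)$.

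When $y\notin\integersring^m$ I would show both sides are zero. The right-hand side vanishes by definition of the lift. For the left-hand side, some coordinate $y_i$ has $\abs{y_i}>1$, and factoring $\int_{\maximalideal^m}\fieldCharacter\left(\standardForm{t}{y}\right)dt=\prod_i\int_{\maximalideal}\fieldCharacter\left(t_iy_i\right)dt_i$, the $i$-th factor vanishes: the change of variable $u=t_iy_i$ identifies it with a nonzero multiple of $\int_{\maximalideal y_i}\fieldCharacter\left(u\right)du$, and since $\maximalideal y_i\supseteq\integersring$ while $\fieldCharacter$ is a nontrivial character of the compact group $\integersring$ (this is exactly the conductor hypothesis), this integral is $0$. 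Hence $\FourierTransformWithRespectToCharacter{\lift{\phi_0}}{\fieldCharacter}\left(y\right)=0$, completing the comparison.

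There is no genuinely hard step here; the only points that need care are the bookkeeping of the normalization constants ($q^{m/2}$ against $q^{-m/2}$, together with $\mathrm{vol}\left(\maximalideal^m\right)=q^{-m}$) and making the conductor hypothesis on $\fieldCharacter$ do its job in the vanishing case — which is the one place the choice of conductor $\maximalideal$ is essential rather than cosmetic.
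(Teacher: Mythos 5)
Your argument is correct, and it is exactly the direct verification the paper has in mind when it leaves this proposition as an easy check: split $\FourierTransformWithRespectToCharacter{\lift{\phi_0}}{\fieldCharacter}(y)$ over cosets of $\maximalideal^m$ in $\integersring^m$, use $\fieldCharacter\restriction_{\integersring} = \residueFieldCharacter \circ \quotientMap$ and $\mathrm{vol}(\maximalideal^m) = q^{-m}$ to recover $\FourierTransformWithRespectToCharacter{\phi_0}{\residueFieldCharacter}(\quotientMap(y))$ for $y \in \integersring^m$, and use nontriviality of $\fieldCharacter$ on $\integersring$ (the conductor hypothesis) to get vanishing for $y \notin \integersring^m$. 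The bookkeeping of constants and the use of the conductor are handled correctly, so there is nothing to add.
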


\subsection*{Lifts of Whittaker functions}

Let $\representationDeclaration{\residueFieldRepresentation}$ be an irreducible cuspidal representation of $\GL{n}{\residueField}$. Let $\localFieldRepresentation$ be a level zero supercuspidal constructed from $\residueFieldRepresentation$.
Let $0 \ne \residueFieldWhittakerFunctional \in \Hom_{\UnipotentRadical{n}\left(\residueField\right)}\left(\residueFieldRepresentation , \residueFieldCharacter \right)$ be a Whittaker functional of $\residueFieldRepresentation$. The following proposition explains how to lift $\residueFieldWhittakerFunctional$ to a Whittaker functional of $\localFieldRepresentation$.

\begin{prop}[{\cite[Theorem 4.3]{Zelingher17}}]\label{prop:lift-of-whittaker-functional}
	Denote by $\localFieldWhittakerFunctional : \underlyingVectorSpace{\localFieldRepresentation} \rightarrow \cComplex$ the functional
	$$ \standardForm{\localFieldWhittakerFunctional}{f} = \int_{\lquot{\UnipotentRadical{n}\left( \integersring \right)}{\UnipotentRadical{n} \left( \localField  \right)}}{ \standardForm{\residueFieldWhittakerFunctional}{f\left(u\right)} \fieldCharacter^{-1}\left(u\right) } \multiplicativeMeasure{u},$$
	where $ f \in \underlyingVectorSpace{\localFieldRepresentation}$ (recall from \cref{thm:level-zero-representation-is-induced-from-cuspidal-reprensentation} that $f : \GL{n}{\localField} \rightarrow \underlyingVectorSpace{\residueFieldRepresentation}$).
	Then this integral converges and $0 \ne \localFieldWhittakerFunctional \in \Hom_{\UnipotentRadical{n} \left( \localField \right)}\left(\localFieldRepresentation, \fieldCharacter\right)$ is a Whittaker functional.
\end{prop}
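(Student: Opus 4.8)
The plan is to check three things in turn: that the integral defining $\standardForm{\localFieldWhittakerFunctional}{f}$ is a finite sum (so convergence and well-definedness are free), that $\localFieldWhittakerFunctional$ transforms by $\fieldCharacter$ under $\UnipotentRadical{n}\left(\localField\right)$, and that $\localFieldWhittakerFunctional \neq 0$.

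First I would establish the finiteness. The elementary but crucial point is a determinant count: if $u \in \UnipotentRadical{n}\left(\localField\right)$ can be written $u = zk$ with $z \in \multiplicativegroup{\localField}$ a scalar and $k \in \GL{n}{\integersring}$, then $1 = \det u = z^n\det k$ forces $\abs{z} = 1$, hence $u \in \GL{n}{\integersring}$ and in fact $u \in \UnipotentRadical{n}\left(\integersring\right)$; thus $\UnipotentRadical{n}\left(\localField\right) \cap \left(\multiplicativegroup{\localField}\cdot\GL{n}{\integersring}\right) = \UnipotentRadical{n}\left(\integersring\right)$. More generally, for a compact set $C \subseteq \GL{n}{\localField}$, if $u \in \UnipotentRadical{n}\left(\localField\right)$ and $uc^{-1} \in \multiplicativegroup{\localField}\cdot\GL{n}{\integersring}$ for some $c \in C$, then $\abs{\det\left(uc^{-1}\right)} = \abs{\det c}^{-1}$ lies in a finite set, which (writing $uc^{-1} = zk$ as above) bounds $\abs{z}$, confines $uc^{-1}$ to a compact subset of $\GL{n}{\localField}$, and hence confines $u = \left(uc^{-1}\right)c$ to a compact set. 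Since $f$ is smooth and compactly supported modulo $\multiplicativegroup{\localField}\cdot\GL{n}{\integersring}$, it follows that $\left\{ u \in \UnipotentRadical{n}\left(\localField\right) : f\left(u\right) \neq 0 \right\}$ is relatively compact; as $\UnipotentRadical{n}\left(\integersring\right)$ is open, $\lquot{\UnipotentRadical{n}\left(\integersring\right)}{\UnipotentRadical{n}\left(\localField\right)}$ is discrete and the integral reduces to a finite sum. Well-definedness on the quotient is then automatic: for $u_0 \in \UnipotentRadical{n}\left(\integersring\right)$ one has $f\left(u_0 u\right) = \residueFieldRepresentation\left(\quotientMap\left(u_0\right)\right)f\left(u\right)$, so $\standardForm{\residueFieldWhittakerFunctional}{f\left(u_0 u\right)} = \residueFieldCharacter\left(\quotientMap\left(u_0\right)\right)\standardForm{\residueFieldWhittakerFunctional}{f\left(u\right)} = \fieldCharacter\left(u_0\right)\standardForm{\residueFieldWhittakerFunctional}{f\left(u\right)}$, and this cancels against $\fieldCharacter^{-1}\left(u_0 u\right) = \fieldCharacter^{-1}\left(u_0\right)\fieldCharacter^{-1}\left(u\right)$.

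Next, for the equivariance, fix $v \in \UnipotentRadical{n}\left(\localField\right)$ and compute $\standardForm{\localFieldWhittakerFunctional}{\localFieldRepresentation\left(v\right)f} = \int \standardForm{\residueFieldWhittakerFunctional}{f\left(uv\right)}\fieldCharacter^{-1}\left(u\right)\multiplicativeMeasure{u}$. Writing $u' = uv$ and using bi-invariance of the Haar measure on $\UnipotentRadical{n}\left(\localField\right)$ together with the fact that right translation by $v$ permutes the cosets of $\lquot{\UnipotentRadical{n}\left(\integersring\right)}{\UnipotentRadical{n}\left(\localField\right)}$, this integral becomes $\int \standardForm{\residueFieldWhittakerFunctional}{f\left(u'\right)}\fieldCharacter^{-1}\left(u'v^{-1}\right)\multiplicativeMeasure{u'} = \fieldCharacter\left(v\right)\standardForm{\localFieldWhittakerFunctional}{f}$, since $\fieldCharacter$ is a character of $\UnipotentRadical{n}$ and hence $\fieldCharacter^{-1}\left(u'v^{-1}\right) = \fieldCharacter\left(v\right)\fieldCharacter^{-1}\left(u'\right)$. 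Thus $\localFieldWhittakerFunctional \in \Hom_{\UnipotentRadical{n}\left(\localField\right)}\left(\localFieldRepresentation, \fieldCharacter\right)$.

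Finally, for non-vanishing, choose $v_0 \in \underlyingVectorSpace{\residueFieldRepresentation}$ with $\standardForm{\residueFieldWhittakerFunctional}{v_0} \neq 0$ (possible since $\residueFieldWhittakerFunctional \neq 0$) and take $f_0 \in \underlyingVectorSpace{\localFieldRepresentation}$ supported on the single coset $\multiplicativegroup{\localField}\cdot\GL{n}{\integersring}$ with $f_0\left(zk\right) = \levelZeroBeforeInduction\left(zk\right)v_0$; this is a legitimate smooth vector in the compact induction because $v_0$ is fixed by the open subgroup $\ker\left(\quotientMap : \GL{n}{\integersring} \rightarrow \GL{n}{\residueField}\right)$. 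By the finiteness step only the identity coset of $\lquot{\UnipotentRadical{n}\left(\integersring\right)}{\UnipotentRadical{n}\left(\localField\right)}$ contributes to $\standardForm{\localFieldWhittakerFunctional}{f_0}$, and it contributes $\operatorname{vol}\left(\UnipotentRadical{n}\left(\integersring\right)\right)\standardForm{\residueFieldWhittakerFunctional}{v_0} \neq 0$, so $\localFieldWhittakerFunctional \neq 0$. I expect the finiteness step to be the only real obstacle: the non-compact center $\multiplicativegroup{\localField}$ could a priori spread $\Supp f$ over infinitely many $\UnipotentRadical{n}\left(\integersring\right)$-cosets, and it is exactly the determinant count that rules this out; once that is in hand, the equivariance and non-vanishing are routine.
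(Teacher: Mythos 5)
Your proof is correct, and since the paper does not reprove this proposition but refers to \cite[Theorem 4.3]{Zelingher17}, the comparison is with that standard argument, which yours matches in all essentials: the determinant/support count giving $\UnipotentRadical{n}\left(\localField\right) \cap \multiplicativegroup{\localField}\GL{n}{\integersring} = \UnipotentRadical{n}\left(\integersring\right)$ and reducing the integral to a finite sum, the change-of-variables computation for $\UnipotentRadical{n}\left(\localField\right)$-equivariance, and nonvanishing via the canonical vector supported on $\multiplicativegroup{\localField}\cdot\GL{n}{\integersring}$. No gaps; the left-invariance check that makes the integrand well defined on the quotient is also correctly carried out.
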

\begin{proof}
	Let $f \in \underlyingVectorSpace{\pi}$. We have that $f$ is supported on a set of the form $\multiplicativegroup{\localField} \cdot \GL{n}{\integersring} \cdot K_f$, where $K_f$ is a compact subset of $\GL{n}{\localField}$. Suppose that $u \in \UnipotentSubgroup_n\left(\localField\right) \cap \Supp\left(f\right)$. Then $u = z k k'$, where $z \in \multiplicativegroup{\localField}$, $k \in \GL{n}{\integersring}$ and $k' \in K_f$. Taking determinants, we get from $z = u \left(k'\right)^{-1} k^{-1}$ that $\abs{z}^n = \abs{\det k'}^{-1}$. Since $K_f$ is a compact set, we have that there exist $c_f, C_f > 0$, such that $c_f \le \abs{\det k'}^{-1} \le C_f$ for all $k' \in K_f$. Therefore, we have for $z$ as above, $c_f^{\frac{1}{n}} \le \abs{z} \le C_f^{\frac{1}{n}}$. This implies that $$\UnipotentSubgroup_n\left(\localField\right) \cap \Supp\left(f\right) \subseteq \left\{ z \in \multiplicativegroup{\localField} \mid c_f^{\frac{1}{n}} \le \abs{z} \le C_f^{\frac{1}{n}} \right\} \cdot \GL{n}{\integersring} \cdot K_f.$$
	The right hand side is a compact subset of $\GL{n}{\localField}$ as a product of compact subsets, and the left hand side is a closed subset, and hence compact. We proved that the integral defining $\localFieldWhittakerFunctional$ is over a compact domain, and therefore converges.
	
	The functional $\localFieldWhittakerFunctional$ is not zero: let $v_0 \in \underlyingVectorSpace{\residueFieldRepresentation}$. We consider the function $f_{v_0} : \GL{n}{\localField} \rightarrow \underlyingVectorSpace{\residueFieldRepresentation}$ defined by $$f_{v_0}\left(g\right) = \begin{cases}
	\centralCharacter{\localFieldRepresentation}\left(z\right) \residueFieldRepresentation\left(\quotientMap\left(k\right)\right) v_0  & g = zk,\, z \in \multiplicativegroup{\localField},\, k \in \GL{n}{\integersring},\\
	0 & \text{otherwise}.
	\end{cases}$$
	Then $f_{v_0} \in \underlyingVectorSpace{\localFieldRepresentation}$ with $\Supp f_{v_0} = \multiplicativegroup{\localField} \cdot \GL{n}{\integersring}$. Since $\Supp f_{v_0} \cap \UnipotentSubgroup_n \left(\localField\right) = \UnipotentSubgroup_n\left(\integersring\right)$, we have that  $$\standardForm{\localFieldWhittakerFunctional}{f_{v_0}} = \int_{\lquot{\UnipotentRadical{n}\left( \integersring \right)}{\UnipotentRadical{n} \left( \integersring  \right)}}{ \standardForm{\residueFieldWhittakerFunctional}{f_{v_0}\left(u\right)} \fieldCharacter^{-1}\left(u\right) } \multiplicativeMeasure{u} = \standardForm{\residueFieldWhittakerFunctional}{f_{v_0}\left(\IdentityMatrix{n}\right)} = \standardForm{\residueFieldWhittakerFunctional}{v_0}.$$
	Choosing $v_0$ such that $\standardForm{\residueFieldWhittakerFunctional}{v_0} \ne 0$, we get that $\standardForm{\localFieldWhittakerFunctional}{f_{v_0}} \ne 0$, and therefore $\localFieldWhittakerFunctional \ne 0$, as required.
	
	The functional $T$ is a Whittaker functional: let $f \in \underlyingVectorSpace{\pi}$, $u_0 \in \UnipotentSubgroup_n \left(\localField\right)$ and $g \in \GL{n}{\localField}$. Then $\left(\localFieldRepresentation\left(u_0\right)f\right)\left(g\right) = f\left(g u_0\right)$. Therefore
	$$ \standardForm{\localFieldWhittakerFunctional}{\localFieldRepresentation\left(u_0\right) f} = \int_{\lquot{\UnipotentRadical{n}\left( \integersring \right)}{\UnipotentRadical{n} \left( \localField  \right)}}{ \standardForm{\residueFieldWhittakerFunctional}{f\left(u u_0\right)} \fieldCharacter^{-1}\left(u\right) } \multiplicativeMeasure{u}.$$
	Substituting $u' = u u_0$, we get
	$$ \standardForm{\localFieldWhittakerFunctional}{\localFieldRepresentation\left(u_0\right) f} = \int_{\lquot{\UnipotentRadical{n}\left( \integersring \right)}{\UnipotentRadical{n} \left( \localField  \right)}}{ \standardForm{\residueFieldWhittakerFunctional}{f\left(u'\right)} \fieldCharacter^{-1}\left(u' u_0^{-1}\right) } \multiplicativeMeasure{u} = \fieldCharacter\left( u_0 \right) \standardForm{\localFieldWhittakerFunctional}{f},$$
	as required.
\end{proof}

Using the lifted Whittaker functional $\localFieldWhittakerFunctional$, we are now able to define a lift of a Whittaker function.
\begin{defn}
	Let $W_0 \in \WhittakerModelOfResidueFieldRepresentation$. Let $v_{W_0} \in \underlyingVectorSpace{\residueFieldRepresentation}$ be the unique vector such that $W_0\left(g\right) = \standardForm{\residueFieldWhittakerFunctional}{\residueFieldRepresentation\left(g\right)v_{W_0}}$, for every $g \in \GL{n}{\residueField}$. Let $f_{W_0} \in \underlyingVectorSpace{\localFieldRepresentation}$ be defined as $$f_{W_0} \left( g \right) =
		\begin{cases}
			\centralCharacter{\localFieldRepresentation}\left(z\right) \residueFieldRepresentation \left( \quotientMap\left(k\right) \right) v_{W_0} & g = zk,\,z \in \multiplicativegroup{\localField},\,k\in\GL{n}{\integersring} \\
			0                                                                                                                                        & \text{otherwise}
		\end{cases}.$$
	We define $\lift{W_0} \in \WhittakerModelOfLocalFieldRepresentation$ by $\lift{W_0} \left(g\right) = \standardForm{\localFieldWhittakerFunctional}{\localFieldRepresentation\left(g\right) f_{W_0}}$ for $g \in \GL{n}{\localField}$.
\end{defn}

We have that $\lift{W_0}$ is given by a simple formula:

\begin{prop}[{\cite[Proposition 4.4]{Zelingher17}}]\label{prop:support-of-whittaker-lift}
	$\lift{W_0}$ is supported on $\UnipotentRadical{n} \left( \localField \right) \cdot \multiplicativegroup{F} \cdot \GL{n}{\integersring}$ and $$\lift{W_0}\left( u_0 zk \right) = \fieldCharacter\left( u_0 \right) \cdot \centralCharacter{\localFieldRepresentation}\left(z\right) \cdot W_0\left( \quotientMap\left( k \right) \right),$$ for any $u_0 \in \UnipotentRadical{n} \left( \localField \right)$, $z \in \multiplicativegroup{\localField}$, $k \in \GL{n}{\integersring}$.
\end{prop}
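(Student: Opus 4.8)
The plan is to unfold the definitions of $\localFieldWhittakerFunctional$, $f_{W_0}$ and $\lift{W_0}$ and to collapse the defining integral onto a single coset. Since $\left(\localFieldRepresentation(g)f_{W_0}\right)(u) = f_{W_0}(ug)$, we have
$$\lift{W_0}(g) = \standardForm{\localFieldWhittakerFunctional}{\localFieldRepresentation(g)f_{W_0}} = \int_{\lquot{\UnipotentRadical{n}\left(\integersring\right)}{\UnipotentRadical{n}\left(\localField\right)}} \standardForm{\residueFieldWhittakerFunctional}{f_{W_0}(ug)}\,\fieldCharacter^{-1}(u)\,\multiplicativeMeasure{u}.$$
First I would check that the integrand descends to the quotient: for $n_0 \in \UnipotentRadical{n}\left(\integersring\right)$ the induced model gives $f_{W_0}(n_0 ug) = \residueFieldRepresentation\left(\quotientMap(n_0)\right)f_{W_0}(ug)$, hence $\standardForm{\residueFieldWhittakerFunctional}{f_{W_0}(n_0 ug)} = \residueFieldCharacter\left(\quotientMap(n_0)\right)\standardForm{\residueFieldWhittakerFunctional}{f_{W_0}(ug)}$ because $\residueFieldWhittakerFunctional$ is $\left(\UnipotentRadical{n}(\residueField),\residueFieldCharacter\right)$-equivariant; together with $\fieldCharacter^{-1}(n_0 u) = \fieldCharacter^{-1}(n_0)\fieldCharacter^{-1}(u)$ and the compatibility $\residueFieldCharacter\circ\quotientMap = \fieldCharacter\restriction_{\integersring}$ (which gives $\fieldCharacter(n_0) = \residueFieldCharacter\left(\quotientMap(n_0)\right)$ for $n_0 \in \UnipotentRadical{n}\left(\integersring\right)$), the twists cancel. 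Moreover $f_{W_0}$ is supported on the single coset $\multiplicativegroup{\localField}\cdot\GL{n}{\integersring}$, so the integral is really a finite sum and there is no convergence issue here.

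Next I would read off the support: the integrand can be nonzero only when $ug \in \Supp(f_{W_0}) = \multiplicativegroup{\localField}\cdot\GL{n}{\integersring}$ for some $u \in \UnipotentRadical{n}\left(\localField\right)$, and this forces $g \in \UnipotentRadical{n}\left(\localField\right)\cdot\multiplicativegroup{\localField}\cdot\GL{n}{\integersring}$, which is the asserted support statement. For the formula, write $g = u_0 z k$ with $u_0 \in \UnipotentRadical{n}\left(\localField\right)$, $z \in \multiplicativegroup{\localField}$, $k \in \GL{n}{\integersring}$. The substitution $u \mapsto u u_0^{-1}$ preserves Haar measure and descends to $\lquot{\UnipotentRadical{n}(\integersring)}{\UnipotentRadical{n}(\localField)}$, and since $\fieldCharacter$ is a homomorphism on $\UnipotentRadical{n}\left(\localField\right)$ it pulls out a factor $\fieldCharacter(u_0)$; then centrality of the scalar matrix $z$ together with the relation $f_{W_0}(z h) = \levelZeroCentralCharacter(z)f_{W_0}(h)$ pulls out $\levelZeroCentralCharacter(z) = \centralCharacter{\localFieldRepresentation}(z)$ (the last equality by \cref{thm:level-zero-representation-is-induced-from-cuspidal-reprensentation}). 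This reduces the claim to $\lift{W_0}(k) = W_0\left(\quotientMap(k)\right)$ for $k \in \GL{n}{\integersring}$.

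Finally, for $k \in \GL{n}{\integersring}$ the term $f_{W_0}(uk)$ is nonzero only if $uk \in \multiplicativegroup{\localField}\cdot\GL{n}{\integersring}$, i.e. $u \in \multiplicativegroup{\localField}\cdot\GL{n}{\integersring}$; since $\det u = 1$, looking at valuations of determinants forces $u \in \GL{n}{\integersring}$, hence $u \in \UnipotentRadical{n}\left(\integersring\right)$. So only the identity coset contributes, and with the Haar measure normalized so that $\UnipotentRadical{n}\left(\integersring\right)$ has volume one we get $\lift{W_0}(k) = \standardForm{\residueFieldWhittakerFunctional}{f_{W_0}(k)} = \standardForm{\residueFieldWhittakerFunctional}{\residueFieldRepresentation\left(\quotientMap(k)\right)v_{W_0}} = W_0\left(\quotientMap(k)\right)$, which completes the argument. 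I do not expect a genuine obstacle: the proof is essentially bookkeeping, and the only points deserving care are the coset-invariance of the integrand (which legitimizes the reduction to a single coset) and handling the scalar-matrix factor via centrality rather than via $\GL{n}{\integersring}$.
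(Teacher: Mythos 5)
Your argument is correct: unfolding the definitions of $\localFieldWhittakerFunctional$ and $f_{W_0}$, checking $\UnipotentRadical{n}(\integersring)$-invariance of the integrand, pulling out $\fieldCharacter(u_0)$ and $\centralCharacter{\localFieldRepresentation}(z)$, and reducing via the determinant/valuation argument to the single trivial coset (of volume one) is exactly the routine verification the paper alludes to with ``one easily checks,'' deferring details to the cited thesis. No gaps; this is essentially the intended proof.
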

\begin{proof}
	Let $g \in \GL{n}{\localField}$ with $f\left(g\right) \ne 0$. Then
	$$ \lift{W_0}\left(g\right) = \standardForm{\localFieldWhittakerFunctional}{\localFieldRepresentation\left(g\right) f_{W_0}} = \int_{\lquot{\UnipotentRadical{n}\left( \integersring \right)}{\UnipotentRadical{n} \left( \localField  \right)}}{ \standardForm{\residueFieldWhittakerFunctional}{f_{W_0}\left(u g\right)} \fieldCharacter^{-1}\left(u\right) } \multiplicativeMeasure{u}.$$
	Since $f_{W_0}$ is supported on $\multiplicativegroup{\localField} \cdot \GL{n}{\integersring}$, we must have $u g \in \multiplicativegroup{\localField} \cdot \GL{n}{\integersring}$ for some $u \in \UnipotentSubgroup_n \left(\localField\right)$, i.e., $g \in \UnipotentSubgroup_n\left(\localField\right) \cdot \multiplicativegroup{\localField} \cdot \GL{n}{\integersring}$.
	
	Write $g = u_0 z k$ for $u_0 \in \UnipotentSubgroup_n\left(\localField\right)$, $z \in \multiplicativegroup{\localField}$, $k \in \GL{n}{\integersring}$. Then $$\lift{W_0}\left( u_0 z k\right) = \standardForm{\localFieldWhittakerFunctional}{\pi\left(u_0 z k\right) f_{W_0}} = \fieldCharacter\left( u_0 \right) \centralCharacter{\localFieldRepresentation}\left(z\right) \standardForm{\localFieldWhittakerFunctional}{\localFieldRepresentation\left(k\right) f_{W_0}},$$ where we used the fact that $\localFieldWhittakerFunctional$ is a Whittaker functional and that $\localFieldRepresentation\left(z\right) = \centralCharacter{\localFieldRepresentation}\left(z\right) \idmap_{\underlyingVectorSpace{\localFieldRepresentation}}$.
	Finally, write $$\standardForm{\localFieldWhittakerFunctional}{\localFieldRepresentation\left(k\right) f_{W_0}} = \int_{\lquot{\UnipotentRadical{n}\left( \integersring \right)}{\UnipotentRadical{n} \left( \localField  \right)}}{ \standardForm{\residueFieldWhittakerFunctional}{f_{W_0}\left(u k\right)} \fieldCharacter^{-1}\left(u\right) } \multiplicativeMeasure{u}.$$
	This integral is supported on $uk \in \multiplicativegroup{\localField} \cdot \GL{n}{\integersring}$, i.e., $u \in \multiplicativegroup{\localField} \cdot \GL{n}{\integersring}$. We have that $u \in \left(\multiplicativegroup{\localField} \cdot \GL{n}{\integersring}\right) \cap \UnipotentSubgroup_n\left(\localField\right) = \UnipotentSubgroup_n\left(\integersring\right)$, and therefore we get
	\begin{align*}
	\standardForm{\localFieldWhittakerFunctional}{\localFieldRepresentation\left(k\right) f_{W_0}} &= \int_{\lquot{\UnipotentRadical{n}\left( \integersring \right)}{\UnipotentRadical{n} \left( \integersring  \right)}}{ \standardForm{\residueFieldWhittakerFunctional}{f_{W_0}\left(u k\right)} \fieldCharacter^{-1}\left(u\right) } \multiplicativeMeasure{u} \\
	&= \standardForm{\residueFieldWhittakerFunctional}{f_{W_0}\left(k\right)} = \standardForm{\residueFieldWhittakerFunctional}{\residueFieldRepresentation\left( \quotientMap\left(k\right) \right) v_{W_0}} = W_0\left( \quotientMap\left(k\right) \right).
	\end{align*}
	Therefore we have $\lift{W_0}\left(u_0 zk\right) = \fieldCharacter\left(u_0\right) \centralCharacter{\localFieldRepresentation}\left(z\right) W_0\left( \quotientMap\left(k\right) \right)$, as required.
\end{proof}

\subsection{A relation between the Jacquet-Shalika integrals}

In this section, we find a relation between local Jacquet-Shalika integrals of level zero representations and their corresponding residue field Jacquet-Shalika integrals. Our main result is the following:

\begin{thm} \label{thm:Jacquet-Shalika-integral-of-a-level-zero-supercuspidal-representation}
	Let $n = 2m$ or $n = 2m+1$.
	Let $\representationDeclaration{\residueFieldRepresentation}$ be an irreducible cuspidal representation of $\GL{n}{\residueField}$. If $n$ is even, suppose that $\residueFieldRepresentation$ does not admit a Shalika vector. Let $\localFieldRepresentation$ be a level zero supercuspidal (irreducible) representation constructed from $\residueFieldRepresentation$. Then for every $W_0 \in \WhittakerModelOfResidueFieldRepresentation$, $ \phi_0 \in \Schwartz \left( \residueField^m \right) $, $s \in \cComplex$,
	\begin{equation*}
		\begin{split}
			\JSOfLocalFieldRepresentation{s}{\lift{W_0}}{\lift{\phi_0}} &= \JSOfResidueFieldRepresentation{W_0}{\phi_0}, \\
			\DualJSOfLocalFieldRepresentation{s}{\lift{W_0}}{\lift{\phi_0}} &= \DualJSOfResidueFieldRepresentation{W_0}{\phi_0}.
		\end{split}
	\end{equation*}

\end{thm}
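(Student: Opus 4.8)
The plan is to reduce the local integral to the residue-field integral by exploiting the explicit support and formula for the lifted Whittaker function in \cref{prop:support-of-whittaker-lift} and the lift of Schwartz functions. The key point is that the lifts collapse the archimedean-looking integrals over $\lquot{\UnipotentSubgroup}{G}$, $\lquot{\UpperTriangularAdditive}{M}$ (and, in the odd case, $\Mat{1}{m}{\localField}$) down to finite sums over the corresponding residue-field objects, with the extra variable ranges contributing only factors of $1$ because $\lift{W_0}$ and $\lift{\phi_0}$ vanish off $\GL{n}{\integersring}$-type sets.

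First I would treat the even case. Write $g \in \lquot{\UnipotentSubgroup}{G}$ and, using the Iwasawa-type decomposition $\GL{m}{\localField} = \GL{m}{\integersring} \cdot A \cdot \UnipotentSubgroup$ together with the left $\UnipotentSubgroup$-equivariance of the integrand (the Whittaker function transforms by $\fieldCharacter$, which is trivial on $\UnipotentSubgroup(\integersring)$ and kills the non-integral unipotent directions after integrating $X$), reduce the $g$-integral to an integral over the torus $A$ times $\GL{m}{\integersring}$. The point is that $\lift{W_0}\left(\evenPermutationMatrix \ShalikaUnipotentElement{X}\ShalikaDiagonalElement{g}\right)$, by \cref{prop:support-of-whittaker-lift}, is nonzero only when $\ShalikaUnipotentElement{X}\ShalikaDiagonalElement{g}$ lies in $\UnipotentRadical{2m}(\localField)\cdot \multiplicativegroup{\localField} \cdot \GL{2m}{\integersring}$; pushing $\evenPermutationMatrix$ through, this forces $X$ to be (essentially) integral and $g$ to be integral up to a central twist, so the $\abs{\det g}^s$ factor becomes $1$, the torus integral degenerates to a single point, and $\lift{W_0}$ evaluates as $W_0\left(\quotientMap(\evenPermutationMatrix \ShalikaUnipotentElement{\quotientMap X}\ShalikaDiagonalElement{\quotientMap g})\right)$. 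The measure normalizations ($\int_{\integersring}dx = \int_{\multiplicativegroup{\integersring}}\multiplicativeMeasure{x} = 1$) are chosen exactly so that the resulting finite sum carries the normalizing constants $\frac{1}{\grpIndex{G}{N}\grpIndex{M}{\UpperTriangularAdditive}}$ appearing in $\JSOfResidueFieldRepresentation{W_0}{\phi_0}$. The function $\phi_0$ enters through $\lift{\phi_0}\left(\lastrowvector g\right)$, which is $\phi_0\left(\quotientMap(\lastrowvector g)\right)$ when $g$ is integral and $0$ otherwise, matching the finite sum exactly. The dual statement then follows either by the same direct computation using \cref{prop:dual-jacquet-shalika-formula-even} and $\FourierTransformWithRespectToCharacter{\lift{\phi_0}}{\fieldCharacter} = \lift{\FourierTransformWithRespectToCharacter{\phi_0}{\residueFieldCharacter}}$, or more cleanly by observing that $\Contragradient{\lift{W_0}} = \lift{\Contragradient{W_0}}$ (which follows from the definition of $\Contragradient{W}$ via $\weylElement{n}\inverseTranspose{g}$ and the support description, since $\weylElement{n}\inverseTranspose{\cdot}$ preserves the relevant $\GL{n}{\integersring}$-cosets) and that the level-zero representation attached to $\Contragradient{\residueFieldRepresentation}$ is the contragredient of that attached to $\residueFieldRepresentation$; then the dual identity is just the even identity applied to $\Contragradient{\residueFieldRepresentation}$, $\fieldCharacter^{-1}$, $1-s$.

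The odd case is entirely parallel: in addition to the $g$- and $X$-integrals one has the integral over $Z \in \Mat{1}{m}{\localField}$, and the support of $\lift{W_0}$ together with the factor $\phi_0(Z)$ (for $\JSOfLocalFieldRepresentation{s}{W_0}{\phi_0}$) or $\FourierTransformWithRespectToCharacter{\phi_0}{\residueFieldCharacter}(Z)$ (for the dual, via \cref{prop:dual-jacquet-shalika-formula-odd}) restricts $Z$ to be integral, so the $Z$-integral becomes a finite sum over $\Mat{1}{m}{\residueField}$ and contributes the normalizing factor $\sizeof{\Mat{1}{m}{\finiteField}}^{-1}$; the $\abs{\det g}^{s-1}$ factor is again $1$ on the support. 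I expect the main obstacle to be \emph{bookkeeping the measure and index constants}: one must carefully verify that $\vol\left(\lquot{\UnipotentRadical{n}(\integersring)}{\UnipotentRadical{n}(\localField)}\text{-type domains}\right)$, the quotient $\lquot{\UnipotentSubgroup(\localField)}{\GL{m}{\localField}}$ measure restricted to its integral part, and the discreteness of the support combine to produce exactly $\frac{1}{\grpIndex{G}{N}\grpIndex{M}{\UpperTriangularAdditive}}$ (resp. with the extra $\sizeof{\Mat{1}{m}{\finiteField}}^{-1}$), rather than an off-by-a-power-of-$q$ constant; the representation-theoretic content is light once \cref{prop:support-of-whittaker-lift} is in hand, but getting the normalization to come out exactly equal (not just proportional) is where the care is needed.
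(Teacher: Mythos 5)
Your even-case argument has a genuine gap, and it sits exactly where the no-Shalika-vector hypothesis (which your proof never invokes) must be used. The support constraint from \cref{prop:support-of-whittaker-lift} together with \cref{lem:support-of-level-zero-jacquet-shalika-integrals} does \emph{not} force the torus integral to ``degenerate to a single point'' with $\abs{\det g}^s = 1$: it only forces $\abs{a_1} = \dots = \abs{a_m} = \abs{\lambda}$ with $\lambda \in \multiplicativegroup{\localField}$ arbitrary. Indeed $g = \uniformizer^k k'$ with $k' \in \GL{m}{\integersring}$ lies in the support for every $k \in \zIntegers$, since then $\evenPermutationMatrix \SmallShalikaUnipotentElement{X} \SmallShalikaDiagonalElement{g}$ is $\uniformizer^k$ times an integral matrix and hence in $\multiplicativegroup{\localField}\cdot \UnipotentRadical{2m}\left(\localField\right)\cdot\GL{2m}{\integersring}$. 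The factor $\lift{\phi_0}\left(\lastrowvector g\right)$ kills only $k<0$; for $k \ge 1$ it equals $\phi_0\left(0\right)$ and $\abs{\det g}^s = q^{-kms}$, so these shells contribute a geometric series, namely $\phi_0\left(0\right) q^{-ms}\centralCharacter{\localFieldRepresentation}\left(\uniformizer\right)\representationLFunction{ms}{\centralCharacter{\localFieldRepresentation}} \cdot \JSOfResidueFieldRepresentation{W_0}{1}$ when $\centralCharacter{\localFieldRepresentation}$ is unramified. This is precisely the extra term in \cref{thm:Jacquet-Shalika-integral-of-a-level-zero-supercuspidal-representation-even-case}, which is genuinely nonzero when $\residueFieldRepresentation$ admits a Shalika vector; so an argument that yields the clean identity without the hypothesis cannot be correct. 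To close the gap you must show the tail vanishes: either $\centralCharacter{\residueFieldRepresentation}$ is non-trivial, in which case $\int_{\multiplicativegroup{\integersring}}\centralCharacter{\localFieldRepresentation}\left(a\right)\multiplicativeMeasure{a} = 0$, or $\centralCharacter{\residueFieldRepresentation}$ is trivial, in which case $\JSOfResidueFieldRepresentation{W_0}{1} = 0$ by \cref{thm:equivalent-conditions-for-a-shalika-vector} because $\residueFieldRepresentation$ has no Shalika vector. This is the paper's route: it proves the general even-case identity with the extra term first and then specializes. The analogous correction is needed for your dual computation in the even case, where the tail involves $\FourierTransformWithRespectToCharacter{\phi_0}{\residueFieldCharacter}\left(0\right)$.

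Your odd case, by contrast, is essentially the paper's proof and is fine in outline: comparing last rows in the decomposition forces the scalar $\lambda$ to be a unit, so $X$, $g$, $Z$ are all integral, $\abs{\det g}^{s-1} = 1$, and the integral collapses to the finite sum with the correct normalizations. Your suggestion to deduce the dual identity from the first one applied to $\Contragradient{\residueFieldRepresentation}$, $\fieldCharacter^{-1}$, $1-s$ is also legitimate (the paper notes this), provided you record that $\Contragradient{\residueFieldRepresentation}$ admits a Shalika vector if and only if $\residueFieldRepresentation$ does, so the hypothesis transfers.
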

\cref{thm:Jacquet-Shalika-integral-of-a-level-zero-supercuspidal-representation} implies that if $\residueFieldRepresentation$ does not admit a Shalika vector, the Jacquet-Shalika integrals of its corresponding level zero representation for fixed lifted functions result in constant elements of $\cComplex \left(q^{-s}\right)$, i.e., they are independent of $s$. 
In the case where $n$ is even, we have a modified theorem that also handles the case in which the representation $\residueFieldRepresentation$ admits a Shalika vector:
\begin{thm} \label{thm:Jacquet-Shalika-integral-of-a-level-zero-supercuspidal-representation-even-case}
	Let $\representationDeclaration{\residueFieldRepresentation}$ be an irreducible cuspidal representation of $\GL{2m}{\residueField}$. Then for every $W_0 \in \WhittakerModelOfResidueFieldRepresentation$, $ \phi_0 \in \Schwartz \left( \residueField^m \right) $, $s \in \cComplex$,
	\begin{equation*}
		\begin{split}
			\JSOfLocalFieldRepresentation{s}{\lift{W_0}}{\lift{\phi_0}}
			=&\, \JSOfResidueFieldRepresentation{W_0}{\phi_0} +
			q^{-ms}
			\centralCharacter{\localFieldRepresentation} \left( \uniformizer \right)
			\phi_0\left(0\right)  \representationLFunction{ms}{\centralCharacter{\localFieldRepresentation}}
			\JSOfResidueFieldRepresentation{W_0}{1}, \\
			\DualJS{\localFieldRepresentation}{\fieldCharacter}\left(s, \lift{W_0}, \lift{\phi_0}\right)
			=&\,  \DualJS{\residueFieldRepresentation}{\residueFieldCharacter}\left(W_0, \phi_0 \right)  + \\
			& + q^{-m \left(1-s\right)}
			\centralCharacter{\localFieldRepresentation}^{-1} \left( \uniformizer \right)
			\FourierTransformWithRespectToCharacter{\phi_0}{\residueFieldCharacter}\left(0\right)  \representationLFunction{m \left(1-s\right)}{\centralCharacter{\localFieldRepresentation}^{-1}}
			\JSOfResidueFieldRepresentation{W_0}{1}.
		\end{split}
	\end{equation*}
\end{thm}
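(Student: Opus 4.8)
The plan is to substitute the explicit description of $\lift{W_0}$ from \cref{prop:support-of-whittaker-lift} into the definition of the local Jacquet--Shalika integral, and, for the second identity, into the local analogue of the formula in \cref{prop:dual-jacquet-shalika-formula-even}, whose integrand involves $\FourierTransformWithRespectToCharacter{\phi}{\fieldCharacter}\left(\firstrowvector\inverseTranspose{g}\right)$ in place of $\phi\left(\lastrowvector g\right)$. In both cases one unfolds the coset integral over $\lquot{\UnipotentSubgroup}{G}$, with $G=\GL{m}{\localField}$, by the Iwasawa decomposition $G=\UnipotentSubgroup\left(\localField\right)\,A\left(\localField\right)\,\GL{m}{\integersring}$, and retains only those representatives $g,X$ for which the argument $\evenPermutationMatrix\SmallShalikaUnipotentElement{X}\SmallShalikaDiagonalElement{g}$ lands in the support $\UnipotentSubgroup_{2m}\left(\localField\right)\cdot\multiplicativegroup{\localField}\cdot\GL{2m}{\integersring}$ of $\lift{W_0}$.

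The main step is this support computation, an Iwasawa analogue of \cref{lem:support-of-bessel-function-jacquet-shalika-integral}. Writing $g=a_gk_g$ with $a_g=\diag\left(a_1,\dots,a_m\right)$ and $k_g\in\GL{m}{\integersring}$, and fixing lower triangular representatives $X$ for $\lquot{\UpperTriangularAdditive}{M}$, one computes exactly as in the proof of \cref{lem:support-of-bessel-function-jacquet-shalika-integral} that
\[
\evenPermutationMatrix\SmallShalikaUnipotentElement{X}\SmallShalikaDiagonalElement{g}=\bigl(\evenPermutationMatrix\SmallShalikaDiagonalElement{a_g}\evenPermutationMatrix^{-1}\bigr)\bigl(\evenPermutationMatrix\SmallShalikaUnipotentElement{a_g^{-1}Xa_g}\evenPermutationMatrix^{-1}\bigr)\bigl(\evenPermutationMatrix\SmallShalikaDiagonalElement{k_g}\bigr)=d\cdot n\cdot\bigl(\evenPermutationMatrix\SmallShalikaDiagonalElement{k_g}\bigr),
\]
where $d=\evenPermutationMatrix\SmallShalikaDiagonalElement{a_g}\evenPermutationMatrix^{-1}=\diag\left(a_1,a_1,\dots,a_m,a_m\right)$, where $\evenPermutationMatrix\SmallShalikaDiagonalElement{k_g}\in\GL{2m}{\integersring}$, and where $n=\evenPermutationMatrix\SmallShalikaUnipotentElement{a_g^{-1}Xa_g}\evenPermutationMatrix^{-1}$ is unipotent with $\left(2i-1,2j\right)$ entry $\left(a_j/a_i\right)x_{ij}$, so that $dnd^{-1}$ has $\left(2i-1,2j\right)$ entry exactly $x_{ij}$. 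Using that an element of $\GL{2m}{\localField}$ lies in $\UnipotentSubgroup_{2m}\left(\localField\right)\multiplicativegroup{\localField}\GL{2m}{\integersring}$ if and only if the diagonal part of its Iwasawa decomposition has all entries of equal valuation, one concludes that $\left(g,X\right)$ contributes if and only if $X$ is integral and $a_1,\dots,a_m$ have the same valuation, i.e.\ $g\in\multiplicativegroup{\localField}\cdot\GL{m}{\integersring}$. Writing then $g=\uniformizer^{j}k$ with $k\in\GL{m}{\integersring}$, the argument equals $\uniformizer^{j}\IdentityMatrix{2m}\cdot\bigl(\evenPermutationMatrix\SmallShalikaUnipotentElement{X}\SmallShalikaDiagonalElement{k}\bigr)$ with the second factor in $\GL{2m}{\integersring}$, so by \cref{prop:support-of-whittaker-lift} the value of $\lift{W_0}$ there is $\centralCharacter{\localFieldRepresentation}\left(\uniformizer\right)^{j}W_0\bigl(\evenPermutationMatrix\SmallShalikaUnipotentElement{\quotientMap\left(X\right)}\SmallShalikaDiagonalElement{\quotientMap\left(k\right)}\bigr)$, while $\fieldCharacter\left(-\trace X\right)=\residueFieldCharacter\left(-\trace\quotientMap\left(X\right)\right)$ and $\abs{\det g}^{s}=q^{-jms}$.

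Next, organize the surviving integral by $j\in\zIntegers$, with the Haar measures normalized so that $\quotientMap$ pushes the relevant measures on $\lquot{\UnipotentSubgroup}{G}$ and on $\lquot{\UpperTriangularAdditive}{M}$ to the normalized counting measures occurring in $\JSOfResidueFieldRepresentation{\cdot}{\cdot}$. Since $\lift{\phi_0}$ is supported on $\integersring^{m}$ while $\lastrowvector\left(\uniformizer^{j}k\right)\in\uniformizer^{j}\integersring^{m}$ has a coordinate of valuation exactly $j$, only $j\ge 0$ survives in $\JSOfLocalFieldRepresentation{s}{\lift{W_0}}{\lift{\phi_0}}$; the slice $j=0$ gives $\lift{\phi_0}\left(\lastrowvector k\right)=\phi_0\left(\lastrowvector\quotientMap\left(k\right)\right)$ and contributes exactly $\JSOfResidueFieldRepresentation{W_0}{\phi_0}$, whereas for $j\ge 1$ one has $\lift{\phi_0}\left(\lastrowvector g\right)=\phi_0\left(0\right)$ and the slice contributes $q^{-jms}\centralCharacter{\localFieldRepresentation}\left(\uniformizer\right)^{j}\phi_0\left(0\right)\JSOfResidueFieldRepresentation{W_0}{1}$, with the constant function $1$. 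Summing $\sum_{j\ge 1}q^{-jms}\centralCharacter{\localFieldRepresentation}\left(\uniformizer\right)^{j}=q^{-ms}\centralCharacter{\localFieldRepresentation}\left(\uniformizer\right)\representationLFunction{ms}{\centralCharacter{\localFieldRepresentation}}$ for $\RealPart\left(s\right)$ large and continuing meromorphically yields the first identity. The second identity follows identically from \cref{prop:dual-jacquet-shalika-formula-even}: now $\FourierTransformWithRespectToCharacter{\lift{\phi_0}}{\fieldCharacter}=\lift{\FourierTransformWithRespectToCharacter{\phi_0}{\residueFieldCharacter}}$ is supported on $\integersring^{m}$ and $\firstrowvector\inverseTranspose{\left(\uniformizer^{j}k\right)}\in\uniformizer^{-j}\integersring^{m}$ has a coordinate of valuation $-j$, so only $j\le 0$ survives; the slice $j=0$ contributes $\DualJSOfResidueFieldRepresentation{W_0}{\phi_0}$, and the slices $j=-l$ with $l\ge 1$ contribute $q^{-lm\left(1-s\right)}\centralCharacter{\localFieldRepresentation}^{-1}\left(\uniformizer\right)^{l}\FourierTransformWithRespectToCharacter{\phi_0}{\residueFieldCharacter}\left(0\right)\JSOfResidueFieldRepresentation{W_0}{1}$, whose geometric sum is $q^{-m\left(1-s\right)}\centralCharacter{\localFieldRepresentation}^{-1}\left(\uniformizer\right)\representationLFunction{m\left(1-s\right)}{\centralCharacter{\localFieldRepresentation}^{-1}}$ times $\FourierTransformWithRespectToCharacter{\phi_0}{\residueFieldCharacter}\left(0\right)\JSOfResidueFieldRepresentation{W_0}{1}$.

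Two closing points. When $\centralCharacter{\residueFieldRepresentation}$ is non-trivial, $\centralCharacter{\localFieldRepresentation}$ is ramified and $\representationLFunction{ms}{\centralCharacter{\localFieldRepresentation}}=1$; but then, by \cref{rem:shalika-vector-trivial-character} and \cref{thm:equivalent-conditions-for-a-shalika-vector}, $\residueFieldRepresentation$ admits no Shalika vector and $\JSOfResidueFieldRepresentation{W_0}{1}=0$, so both the geometric series and the claimed closed form vanish; when $\centralCharacter{\residueFieldRepresentation}$ is trivial, $\centralCharacter{\localFieldRepresentation}$ is unramified and the series genuinely sums to the stated $L$-factor. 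The main obstacle is the support computation of the second paragraph --- extracting the clean pair of conditions ``$X$ integral'' and ``$g\in\multiplicativegroup{\localField}\GL{m}{\integersring}$'' from the Iwasawa form of $\evenPermutationMatrix\SmallShalikaUnipotentElement{X}\SmallShalikaDiagonalElement{g}$, in the spirit of \cref{lem:support-of-bessel-function-jacquet-shalika-integral} --- together with matching the Haar normalizations precisely enough that the $j=0$ slices reproduce $\JSOfResidueFieldRepresentation{W_0}{\phi_0}$ and $\DualJSOfResidueFieldRepresentation{W_0}{\phi_0}$ on the nose rather than up to a scalar.
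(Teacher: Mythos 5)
Your route is the same as the paper's: substitute the lifts, cut the integration domain down using the support of $\lift{W_0}$, split off the central $\uniformizer$-direction, reduce the unit part modulo $\maximalideal$ to recover the finite-field Jacquet--Shalika sums, and sum a geometric series, resolving the ramified/unramified dichotomy by the vanishing $\JSOfResidueFieldRepresentation{W_0}{1}=0$ when $\centralCharacter{\residueFieldRepresentation}$ is non-trivial (via \cref{rem:shalika-vector-trivial-character} and \cref{thm:equivalent-conditions-for-a-shalika-vector}). Your treatment of the $j$-slices, of the dual side through \cref{prop:dual-jacquet-shalika-formula-even} and $\FourierTransformWithRespectToCharacter{\lift{\phi_0}}{\fieldCharacter}=\lift{\FourierTransformWithRespectToCharacter{\phi_0}{\residueFieldCharacter}}$, and of the closing dichotomy matches the paper's computation (one small slip: in the ramified case it is the factor $\JSOfResidueFieldRepresentation{W_0}{1}=0$ that kills the tail term, not the geometric series itself, which sums to $q^{-ms}\centralCharacter{\localFieldRepresentation}\left(\uniformizer\right)\left(1-q^{-ms}\centralCharacter{\localFieldRepresentation}\left(\uniformizer\right)\right)^{-1}$ rather than to the displayed $L$-factor form).

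The one place the proposal falls short of a proof is the support computation, which you rightly single out as the main obstacle but then dispatch with ``one computes exactly as in the proof of \cref{lem:support-of-bessel-function-jacquet-shalika-integral}'' together with the valuation criterion for membership in $\UnipotentSubgroup_{2m}\left(\localField\right)\multiplicativegroup{\localField}\GL{2m}{\integersring}$. That finite-field lemma is a Bruhat-cell argument and does not transfer; the criterion itself is correct, but to apply it you must control the diagonal part of the Iwasawa decomposition of the lower-triangular unipotent factor $\evenPermutationMatrix\SmallShalikaUnipotentElement{a_g^{-1}Xa_g}\evenPermutationMatrix^{-1}$, and passing from ``the entries of $d\cdot t_Z$ have equal valuation'' to the conclusions that $t_Z$ is a unit diagonal, that all $\abs{a_i}$ coincide, and --- crucially --- that $X$ is integral requires the Jacquet--Shalika estimates \cref{lem:diagonal-part-of-iwasawa-decomposition-of-u_Z} and \cref{lem:norm-of-Z-in-terms-of-tz}; this is precisely the content of the paper's \cref{lem:support-of-level-zero-jacquet-shalika-integrals}. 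The easy direction (integral $X$ and $g\in\multiplicativegroup{\localField}\GL{m}{\integersring}$ land in the support) is as you say, but the converse is the real work and is not supplied by your sketch, so you should either cite those two lemmas or reproduce their proofs at this step.
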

By \cref{thm:equivalent-conditions-for-a-shalika-vector}, we have that \cref{thm:Jacquet-Shalika-integral-of-a-level-zero-supercuspidal-representation-even-case} implies \cref{thm:Jacquet-Shalika-integral-of-a-level-zero-supercuspidal-representation} in the even case.

As usual, we will treat the even case and the odd case separately. Before the proof, we recall the following lemmas from \cite{jacquet11exterior}, which will be useful throughout the proof.

\begin{lem}[{\cite{jacquet11exterior}}, Section 5, Proposition 4]\label{lem:diagonal-part-of-iwasawa-decomposition-of-u_Z}
	Let $Z \in \NilpotentLowerTriangular_m\left({\localField}\right)$ be a lower triangular nilpotent matrix. Denote $u_Z = \evenPermutationMatrix \SmallShalikaUnipotentElement{Z} \evenPermutationMatrix^{-1}$ - a lower triangular unipotent matrix. Let $u_Z = n_Z \cdot t_Z \cdot k_Z$ be an Iwasawa decomposition of $u_Z$, where $n_Z \in \UnipotentRadical{2m}$, $k_Z \in \GL{2m}{\integersring}$ and $t_Z$ is a diagonal matrix. Write $t_Z = \diag \left(t_1, \dots, t_{2m}\right)$. Then $\abs{t_1} = \abs{t_{2m}} = 1$, and $\abs{t_{i}} \ge 1$ for odd $i$, and $\abs{t_{i}} \le 1$ for even $i$.
\end{lem}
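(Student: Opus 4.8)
The plan is to read off the absolute values $\abs{t_i}$ directly from the rows of $u_Z$, using the standard description of the torus part of an Iwasawa decomposition. Concretely, if $g \in \GL{2m}{\localField}$ and $g = n \cdot \diag\left(t_1, \dots, t_{2m}\right) \cdot k$ with $n \in \UnipotentRadical{2m}$ and $k \in \GL{2m}{\integersring}$, then (the $\abs{t_i}$ being well defined, although $t_Z$ itself is only determined up to $\diag$ with unit entries) for every $1 \le i \le 2m$
$$\abs{t_i t_{i+1} \cdots t_{2m}} = \Norm{\mathrm{row}_i(g) \wedge \mathrm{row}_{i+1}(g) \wedge \cdots \wedge \mathrm{row}_{2m}(g)},$$
where $\Norm{\cdot}$ is the maximum of the absolute values of the coordinates in the standard basis of $\wedge^{2m-i+1}\localField^{2m}$ (equivalently, the maximum of the absolute values of the $\left(2m-i+1\right)\times\left(2m-i+1\right)$ minors of $g$ formed from its bottom $2m-i+1$ rows). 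This follows from $\mathrm{row}_j(g) = \mathrm{row}_j\!\left(n \diag(t_1,\dots,t_{2m})\right) k$, the fact that $\wedge^{2m-i+1}k$ preserves both the lattice $\wedge^{2m-i+1}\integersring^{2m}$ and the norm $\Norm{\cdot}$ (its entries and those of its inverse are minors of $k$ and $k^{-1}$), together with the observation that, $n$ being upper unipotent, $\mathrm{row}_j\!\left(n\diag(t_1,\dots,t_{2m})\right)$ is supported on coordinates $\ge j$ with coefficient $t_j$ at coordinate $j$, so that $\bigwedge_{j=i}^{2m}\mathrm{row}_j\!\left(n\diag(t_1,\dots,t_{2m})\right)$ has a single nonzero coordinate, the one along $e_i \wedge \cdots \wedge e_{2m}$, with coefficient $t_i \cdots t_{2m}$. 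Writing $M_i$ for the quantity on the right (and $M_{2m+1} = 1$), one has $\abs{t_i} = M_i / M_{i+1}$, so it remains to estimate the $M_i$.

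Next I would record the shape of the rows of $u_Z = \evenPermutationMatrix \SmallShalikaUnipotentElement{Z} \evenPermutationMatrix^{-1}$. By the column formula established inside the proof of \cref{lem:support-of-bessel-function-jacquet-shalika-integral} (applied with $X = Z = (z_{ij})$), one gets $\mathrm{row}_{2i}(u_Z) = e_{2i}$ and $\mathrm{row}_{2i-1}(u_Z) = e_{2i-1} + \sum_{j=1}^{i-1} z_{ij} e_{2j}$ for $1 \le i \le m$. In particular $u_Z$ is unipotent, so $M_1 = \abs{\det u_Z} = 1$. The combinatorial heart of the argument is the following observation: every correction term $z_{ij}e_{2j}$ occurring in an odd row $\mathrm{row}_{2i-1}(u_Z)$ is a multiple of a basis vector $e_{2j}$ with $2j < 2i-1$; tracking which standard basis vectors can occur in a bottom block of rows, one finds in particular that $e_{2l-1}$ never lies in the support of $\bigwedge_{j=2l}^{2m}\mathrm{row}_j(u_Z)$ (the odd rows among $\mathrm{row}_{2l}(u_Z), \dots, \mathrm{row}_{2m}(u_Z)$ are $\mathrm{row}_{2l+1}, \mathrm{row}_{2l+3}, \dots$, whose leading indices are all $\ge 2l+1$, while their corrections involve only even indices). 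I expect this index bookkeeping to be the main obstacle; everything after it is formal.

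With that in hand the remaining claims follow by cases. For $\abs{t_1}$: the rows $\mathrm{row}_2(u_Z), \dots, \mathrm{row}_{2m}(u_Z)$ already contain all of $e_2, e_4, \dots, e_{2m}$, which annihilates every correction term in the odd rows upon wedging, so $\bigwedge_{j=2}^{2m}\mathrm{row}_j(u_Z) = \pm\, e_2 \wedge e_3 \wedge \cdots \wedge e_{2m}$, whence $M_2 = 1$ and $\abs{t_1} = M_1/M_2 = 1$; and $M_{2m} = \Norm{e_{2m}} = 1$, so $\abs{t_{2m}} = 1$. For an even index $2l$: since $\mathrm{row}_{2l}(u_Z) = e_{2l}$, we have $M_{2l} = \Norm{e_{2l} \wedge \bigwedge_{j=2l+1}^{2m}\mathrm{row}_j(u_Z)} \le \Norm{\bigwedge_{j=2l+1}^{2m}\mathrm{row}_j(u_Z)} = M_{2l+1}$, because wedging with a standard basis vector never increases $\Norm{\cdot}$; hence $\abs{t_{2l}} \le 1$. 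For an odd index $2l-1$ with $l \ge 2$: write $\mathrm{row}_{2l-1}(u_Z) = e_{2l-1} + c$ with $c \in \Span\{e_2, e_4, \dots, e_{2l-2}\}$, and set $\omega = \bigwedge_{j=2l}^{2m}\mathrm{row}_j(u_Z)$, so $M_{2l} = \Norm{\omega}$ and $M_{2l-1} = \Norm{e_{2l-1}\wedge\omega + c\wedge\omega}$; by the observation above $e_{2l-1} \notin \Supp\left(\omega\right)$, so $e_{2l-1}\wedge\omega$ has norm $\Norm{\omega} = M_{2l}$ and is supported on multi-indices containing $2l-1$, while $c\wedge\omega$ is supported on multi-indices not containing $2l-1$; as these supports are disjoint, $M_{2l-1} \ge M_{2l}$, i.e. $\abs{t_{2l-1}} \ge 1$. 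This exhausts all indices $1 \le i \le 2m$ and proves the lemma.
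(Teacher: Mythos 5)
Your argument is correct and complete. The key identity $\abs{t_i t_{i+1}\cdots t_{2m}} = \Norm{\mathrm{row}_i(g)\wedge\cdots\wedge\mathrm{row}_{2m}(g)}$ is rightly justified (the bottom-right block of $n_Z t_Z$ is upper triangular with diagonal $t_i,\dots,t_{2m}$, and $\wedge^{r}k_Z$ together with its inverse have entries in $\integersring$, so the sup-norm on $\wedge^{r}\localField^{2m}$ is preserved), the row description $\mathrm{row}_{2i}(u_Z)=e_{2i}$, $\mathrm{row}_{2i-1}(u_Z)=e_{2i-1}+\sum_{j<i}z_{ij}e_{2j}$ matches the column computation of $U_X$ used elsewhere in the paper, and the three case estimates (even rows being pure basis vectors give $M_{2l}\le M_{2l+1}$; the index $2l-1$ never occurring in the bottom block gives disjoint supports and $M_{2l-1}\ge M_{2l}$; $M_1=M_2=M_{2m}=1$) are each sound, with the ultrametric disjoint-support step doing exactly the work it needs to. Note that the paper itself offers no proof here — the lemma is quoted verbatim from Jacquet--Shalika, Section 5, Proposition 4 — so there is nothing internal to compare against; your wedge-norm/minor bookkeeping is a legitimate self-contained substitute, and it even gives the slightly sharper fact that $\abs{t_1}=1$ directly via $M_2=1$ rather than only as the conjunction of the two inequalities.
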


\begin{lem}[{\cite{jacquet11exterior}}, Section 5, Proposition 5]\label{lem:norm-of-Z-in-terms-of-tz}
	For the same notations as in \cref{lem:diagonal-part-of-iwasawa-decomposition-of-u_Z}, we have that there exists some $\alpha > 0$, such that $ \Norm{Z}^{\frac{1}{\alpha}} \le \prod_{\substack{1 \le i \le 2m \\
				i \text{ is odd}}} \abs{t_i} $, where $\Norm{Z} = \max_{1 \le i,j \le m} \abs{z_{i j}}$.
\end{lem}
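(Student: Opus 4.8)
The plan is to bound $\Norm{Z}$ from below by a tail product $\abs{t_l t_{l+1}\cdots t_{2m}}$ of the diagonal entries, to identify such a tail product with the content of a wedge of the bottom rows of $u_Z$, and then to exhibit an explicit minor among those rows that is as large as $\Norm{Z}$. First I would record the shape of $u_Z = \evenPermutationMatrix \SmallShalikaUnipotentElement{Z} \evenPermutationMatrix^{-1}$: this is precisely the column description in the proof of \cref{lem:support-of-bessel-function-jacquet-shalika-integral}, read along rows — $u_Z$ is lower unitriangular, its even-indexed rows are the standard basis (row) vectors $\transpose{e_{2b}}$, and its $(2b-1)$-st row is $\transpose{e_{2b-1}} + \sum_{j<b} z_{bj} \transpose{e_{2j}}$. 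In particular an entry $z_{bj}$ of $Z$ occurs in $u_Z$ only in position $(2b-1, 2j)$.

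Second, write $r_i$ for the $i$-th row of $u_Z$ and, for an element of $\bigwedge^{k} \localField^{2m}$, call the maximum of the absolute values of its coordinates in the standard basis $\{e_S\}$ its content. I claim that for every $1 \le l \le 2m$ the content of $r_l \wedge r_{l+1} \wedge \cdots \wedge r_{2m}$ equals $\abs{t_l t_{l+1} \cdots t_{2m}}$. Indeed, the analogous wedge formed from $n_Z t_Z$ is, by upper-triangularity, supported on the single basis element $e_{\{l,\dots,2m\}}$ with coefficient $t_l\cdots t_{2m}$; and since $r_i(u_Z) = r_i(n_Z t_Z) k_Z$, passing from $n_Z t_Z$ to $u_Z$ applies the exterior power of right multiplication by $k_Z \in \GL{2m}{\integersring}$, which is an automorphism of $\bigwedge^{k}\integersring^{2m}$ and so preserves content. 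Combining this with \cref{lem:diagonal-part-of-iwasawa-decomposition-of-u_Z} (so $\abs{t_i} \ge 1$ for $i$ odd, $\abs{t_i} \le 1$ for $i$ even) and dropping the even factors while reinserting the remaining odd ones gives $\prod_{i \text{ odd}} \abs{t_i} \ge \abs{t_l t_{l+1} \cdots t_{2m}}$ for every $l$.

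Third — the one step that needs a genuine choice — I would pick $i_0 > j_0$ with $\abs{z_{i_0 j_0}} = \Norm{Z}$ (the case $\Norm{Z}\le 1$ being trivial, as every odd $\abs{t_i}\ge 1$), and form the square submatrix of $u_Z$ on the rows $\{2i_0-1, 2i_0, 2i_0+1, \dots, 2m\}$ and the columns $\{2j_0\}\cup\{2i_0, 2i_0+1, \dots, 2m\}$. By the first step, within these columns the $(2i_0-1)$-st row has the single nonzero entry $z_{i_0 j_0}$, in column $2j_0$; expanding the determinant along this row, the complementary minor is the principal submatrix of $u_Z$ on rows and columns $\{2i_0, \dots, 2m\}$, which is lower unitriangular and hence has determinant $1$. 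Thus this minor equals $\pm z_{i_0 j_0}$, of absolute value $\Norm{Z}$, so the content of $r_{2i_0-1}\wedge\cdots\wedge r_{2m}$ is $\ge \Norm{Z}$. Together with the second step, $\prod_{i \text{ odd}}\abs{t_i} \ge \abs{t_{2i_0-1}\cdots t_{2m}} \ge \Norm{Z}$, proving the lemma (indeed with $\alpha = 1$).

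The hard part is exactly this last choice: one must trap the largest entry $z_{i_0 j_0}$ inside a minor whose complementary minor is a unit, so that the entry is neither cancelled nor diluted. Beginning the block of rows at $2i_0-1$ annihilates all other entries of $Z$ appearing in that row among the chosen columns, while appending the columns $2i_0, \dots, 2m$ turns the leftover block into a principal block of the unitriangular $u_Z$; the content computation of the second step and the well-definedness of $\abs{t_i}$ (the $t_Z$ in the Iwasawa decomposition being unique up to $\multiplicativegroup{\integersring}$) are routine.
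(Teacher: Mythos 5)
The paper itself offers no proof of this lemma --- it is quoted from Jacquet--Shalika \cite{jacquet11exterior}, Section 5 --- so there is nothing internal to compare against; judged on its own, your argument is correct and complete. The row description of $u_Z$ agrees with the column description recorded in the proof of \cref{lem:support-of-bessel-function-jacquet-shalika-integral}; the identification of $\abs{t_l t_{l+1}\cdots t_{2m}}$ with the maximal absolute value of the top-degree minors of the bottom $2m-l+1$ rows of $u_Z$ is the standard Pl\"ucker/Iwasawa computation (for $n_Z t_Z$ only the coordinate along $e_l\wedge\cdots\wedge e_{2m}$ survives, with coefficient $t_l\cdots t_{2m}$, and right multiplication by $k_Z\in \GL{2m}{\integersring}$ preserves $\bigwedge^{k}\integersring^{2m}$, hence the sup of the coordinates); and your chosen minor, on rows $\{2i_0-1,2i_0,\dots,2m\}$ and columns $\{2j_0\}\cup\{2i_0,\dots,2m\}$, does evaluate to $\pm z_{i_0 j_0}$, since within those columns row $2i_0-1$ has $z_{i_0j_0}$ as its only nonzero entry and the complementary block is a principal (hence lower unitriangular, determinant $1$) submatrix of $u_Z$. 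Combining with \cref{lem:diagonal-part-of-iwasawa-decomposition-of-u_Z} this yields $\Norm{Z}\le \prod_{i\text{ odd}}\abs{t_i}$, i.e.\ the lemma with $\alpha=1$, which is stronger than the quoted form; the mechanism (gauges on exterior powers controlling the Iwasawa torus part) is essentially the one Jacquet--Shalika use, executed here explicitly and in the non-archimedean setting where no loss of exponent occurs.
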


The following lemma will be useful for the proofs of \cref{thm:Jacquet-Shalika-integral-of-a-level-zero-supercuspidal-representation} and \cref{thm:Jacquet-Shalika-integral-of-a-level-zero-supercuspidal-representation-even-case}. Let $\diagonalSubgroup_{m}$ be the diagonal subgroup of $\GL{m}{\localField}$.

\begin{lem}\label{lem:support-of-level-zero-jacquet-shalika-integrals}
	Suppose that $\evenPermutationMatrix \SmallShalikaUnipotentElement{X} \SmallShalikaDiagonalElement{a} = \lambda \cdot u \cdot k$, where $a = \diag\left(a_1,\dots,a_m\right) \in \diagonalSubgroup_m$ is an invertible diagonal matrix, $X \in \NilpotentLowerTriangular_m \left(\localField\right)$ is a lower triangular nilpotent matrix,  $\lambda \in \multiplicativegroup{\localField}$, $u \in \UnipotentRadical{2m}$ and $k \in \maximalCompactSubgroup_{2m} = \GL{2m}{\integersring}$. Then
	\begin{enumerate}
		\item{$\abs{a_1} = \dots = \abs{a_m} = \abs{\lambda}$.}
		\item{$X \in \SquareMat{m}{\integersring}.$}
	\end{enumerate}
\end{lem}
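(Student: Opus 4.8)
The plan is to convert the hypothesis into an Iwasawa decomposition of $\evenPermutationMatrix \SmallShalikaUnipotentElement{X} \SmallShalikaDiagonalElement{a}$ whose diagonal part is explicitly controlled both by $a$ and by the two auxiliary lemmas above, and then to extract the two assertions by comparing this diagonal part with $\lambda \IdentityMatrix{2m}$, using that the valuations of the diagonal part of an Iwasawa decomposition are uniquely determined.

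First I would push the torus element to the left. Using $\SmallShalikaUnipotentElement{X} \SmallShalikaDiagonalElement{a} = \SmallShalikaDiagonalElement{a} \SmallShalikaUnipotentElement{a^{-1} X a}$, setting $X' = a^{-1} X a$ (which is again a lower triangular nilpotent matrix, as $a$ is diagonal), and recalling from the proof of \cref{lem:support-of-bessel-function-jacquet-shalika-integral} that $d_a := \evenPermutationMatrix \SmallShalikaDiagonalElement{a} \evenPermutationMatrix^{-1} = \diag\left(a_1, a_1, a_2, a_2, \dots, a_m, a_m\right)$, one gets
$$\evenPermutationMatrix \SmallShalikaUnipotentElement{X} \SmallShalikaDiagonalElement{a} = d_a \cdot u_{X'} \cdot \evenPermutationMatrix, \qquad u_{X'} := \evenPermutationMatrix \SmallShalikaUnipotentElement{X'} \evenPermutationMatrix^{-1},$$
with $u_{X'}$ a lower triangular unipotent matrix. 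Next I would fix one Iwasawa decomposition $u_{X'} = n_{X'} t_{X'} k_{X'}$, with $n_{X'}$ upper triangular unipotent, $t_{X'} = \diag\left(t_1, \dots, t_{2m}\right)$, and $k_{X'} \in \maximalCompactSubgroup_{2m}$; conjugating $n_{X'}$ past $d_a$ (which keeps it upper triangular unipotent) and absorbing $\evenPermutationMatrix$ into $\maximalCompactSubgroup_{2m}$ gives the Iwasawa decomposition
$$\evenPermutationMatrix \SmallShalikaUnipotentElement{X} \SmallShalikaDiagonalElement{a} = \left(d_a n_{X'} d_a^{-1}\right)\left(d_a t_{X'}\right)\left(k_{X'}\evenPermutationMatrix\right),$$
whose diagonal part $d_a t_{X'}$ has entries $a_j t_{2j-1}$ and $a_j t_{2j}$ for $1 \le j \le m$.

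On the other hand, the hypothesis itself reads $\evenPermutationMatrix \SmallShalikaUnipotentElement{X} \SmallShalikaDiagonalElement{a} = u \left(\lambda \IdentityMatrix{2m}\right) k$, so comparing the diagonal parts of these two Iwasawa decompositions gives $\abs{a_j t_{2j-1}} = \abs{a_j t_{2j}} = \abs{\lambda}$ for every $j$. By \cref{lem:diagonal-part-of-iwasawa-decomposition-of-u_Z} applied to $X'$ we have $\abs{t_{2j-1}} \ge 1$ and $\abs{t_{2j}} \le 1$, so the first equality forces $\abs{a_j} \le \abs{\lambda}$ and the second $\abs{a_j} \ge \abs{\lambda}$, which proves assertion (1); it also forces $\abs{t_{2j-1}} = 1$ for all $j$, hence $\prod_{i \text{ odd}} \abs{t_i} = 1$. \Cref{lem:norm-of-Z-in-terms-of-tz} applied to $X'$ then yields $\Norm{X'} \le 1$, i.e. $X' \in \SquareMat{m}{\integersring}$; since $x_{ij} = a_i x'_{ij} a_j^{-1}$ and $\abs{a_i} = \abs{a_j} = \abs{\lambda}$, we conclude $\abs{x_{ij}} = \abs{x'_{ij}} \le 1$, which is assertion (2). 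I do not expect a genuine obstacle: the two cited lemmas of Jacquet--Shalika carry all the analytic content, and the only points requiring care are the bookkeeping with the permutation $\evenPermutationMatrix$ (so that $d_a$ genuinely has the claimed interleaved shape) and the use of a single Iwasawa decomposition of $u_{X'}$ throughout, so that the $t_i$ appearing in the two lemma applications really coincide.
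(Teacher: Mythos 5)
Your proposal is correct and follows essentially the same route as the paper's own proof: conjugate the torus past the unipotent part to write the matrix as $b\, u_Z\, \evenPermutationMatrix$ with $Z = a^{-1}Xa$, compare with the given decomposition $u(\lambda \IdentityMatrix{2m})k$ using uniqueness of the valuations of the Iwasawa diagonal part, and then apply \cref{lem:diagonal-part-of-iwasawa-decomposition-of-u_Z} and \cref{lem:norm-of-Z-in-terms-of-tz} to get $\abs{a_i}=\abs{\lambda}$ and $Z$, hence $X$, integral. The only difference is cosmetic: the paper derives the diagonal comparison directly from $\diagonalSubgroup_{2m}\cap\UnipotentRadical{2m}\maximalCompactSubgroup_{2m}=\diagonalSubgroup_{2m}\cap\maximalCompactSubgroup_{2m}$, whereas you cite the standard uniqueness statement.
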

\begin{proof}
	Denote $Z = a^{-1} X a$, and $u_Z = \evenPermutationMatrix \SmallShalikaUnipotentElement{Z} \evenPermutationMatrix^{-1}$. Also denote $b = \evenPermutationMatrix \SmallShalikaDiagonalElement{a} \evenPermutationMatrix^{-1} = \diag\left(a_1,a_1,a_2,a_2\dots,a_m,a_m\right)$. Then we have that $b u_Z \evenPermutationMatrix = \lambda u k$. Writing $u_Z = n_Z t_Z k_Z$ as in \cref{lem:diagonal-part-of-iwasawa-decomposition-of-u_Z}, we get that $\lambda^{-1} b t_Z = \left(b n_Z^{-1} b^{-1} u\right) \cdot \left(k \evenPermutationMatrix^{-1} k_Z^{-1}\right) $. Since $\diagonalSubgroup_{2m} \cap \left( \UnipotentRadical{2m} \cdot \maximalCompactSubgroup_{2m} \right) = \diagonalSubgroup_{2m} \cap \maximalCompactSubgroup_{2m} = \left(\multiplicativegroup{\integersring}\right)^{2m}$, we get that $\lambda^{-1} b t_Z $ is a diagonal matrix having units on its diagonal. Writing $t_Z = \diag \left(t_1, \dots, t_{2m}\right)$, we have that $\abs{t_{2i-1}} = \abs{t_{2i}} = \frac{\abs{\lambda}}{\abs{a_i}}$, for every $1 \le i \le m$. By \cref{lem:diagonal-part-of-iwasawa-decomposition-of-u_Z}, we get that $\abs{t_i} = 1$ for every $1 \le i \le 2m$, and therefore $\abs{a_i} = \abs{\lambda}$, for every $1 \le i \le m$. Finally, by \cref{lem:norm-of-Z-in-terms-of-tz}, we get that $Z \in \SquareMat{m}{\integersring}$. Since $\frac{\abs{a_i}}{\abs{a_j}} = \frac{\abs{\lambda}}{\abs{\lambda}}= 1$, for every $1 \le i, j \le m$, we get that $X \in \SquareMat{m}{\integersring}$, as required.
\end{proof}

We now move to prove \cref{thm:Jacquet-Shalika-integral-of-a-level-zero-supercuspidal-representation} and \cref{thm:Jacquet-Shalika-integral-of-a-level-zero-supercuspidal-representation-even-case}. Throughout both proofs, we will use the following symbols:

$W = \lift{W_0}$, $\phi = \lift{\phi_0}$. $A = \diagonalSubgroup_{m}$ is the diagonal subgroup of $G$. $K = \GL{m}{\integersring}$. $\NilpotentLowerTriangular = \NilpotentLowerTriangular_m \left(\localField\right) \le \SquareMat{m}{\localField}$ is the subspace consisting of lower triangular nilpotent matrices.

Let $X \in \NilpotentLowerTriangular$. Also let $g = a k$, where $a = \diag \left(a_1, \dots, a_{m}\right) \in \diagonalSubgroup$, $k \in \maximalCompactSubgroup$. Then by the Iwasawa decomposition of $\GL{m}{\localField}$, we have $\multiplicativeMeasure{g} = \rightHaarMeasureModulus{\borelSubgroup_{m}} \left(a\right) \prod_{i = 1}^{m}{\multiplicativeMeasure{a_{i}}} \cdot \multiplicativeMeasure{k} $, where $\rightHaarMeasureModulus{\borelSubgroup_{m}} \left(a\right) = \prod_{1 \le i < j \le {m}} \abs{\frac{a_j}{a_i}}$.

\begin{proof}[Proof of \cref{thm:Jacquet-Shalika-integral-of-a-level-zero-supercuspidal-representation-even-case}]
	We prove only the equation regarding $\JS{\localFieldRepresentation}{\fieldCharacter}$. The proof of the equation regarding $\DualJS{\localFieldRepresentation}{\fieldCharacter}$ is similar. It can also be deduced from the first equation.

	The proof consists of two parts. In the first part we find the supports of $g$, $X$ in the Jacquet-Shalika integral. In the second part, we evaluate the integral on the supports.

	Suppose that $\evenPermutationMatrix \SmallShalikaUnipotentElement{X} \SmallShalikaDiagonalElement{a} \SmallShalikaDiagonalElement{k} \in \Supp \left(W\right) \subseteq \multiplicativegroup{F} \UnipotentRadical{2m} \maximalCompactSubgroup_{2m} $ (see \cref{prop:support-of-whittaker-lift}). Then by \cref{lem:support-of-level-zero-jacquet-shalika-integrals}, we have that $\abs{a_i} = \abs{a_m}$, for every $1 \le  i \le m$, and $X \in \SquareMat{m}{\integersring}$. Therefore, we get that $a_i = u_i \cdot a_m$, where $u_i \in \multiplicativegroup{\integersring}$, $\multiplicativeMeasure{u_i} = \multiplicativeMeasure{a_i} $, for every $1 \le i \le m-1$. We also get $\rightHaarMeasureModulus{\borelSubgroup_{m}}\left(a\right) = \prod_{1 \le i < j \le {m}} \abs{\frac{a_j}{a_i}} = 1$.

	Therefore, the Jacquet Shalika integral is integrated on $X \in \NilpotentLowerTriangular_m\left(\integersring\right)$, $g = a k$, where $k \in \maximalCompactSubgroup$, $a = a_m \cdot \diag \left(u_1, \dots, u_{m-1}, 1\right)$, $u_i \in \multiplicativegroup{\integersring}$ for every $1 \le i \le m -1$, $\multiplicativeMeasure{g} = \multiplicativeMeasure{a_m} \cdot \prod_{i=1}^{m-1}{\multiplicativeMeasure{u_i}} \cdot \multiplicativeMeasure{k}$, and by replacing the variable $k$ with $\diag\left(u_1, \dots, u_{m-1}, 1\right)^{-1} \cdot k$, we have that $ \JSOfLocalFieldRepresentation{s}{W}{\phi} $ is given by
	$$ \int_{\multiplicativegroup{\localField}} \int_{\maximalCompactSubgroup} \int_{\NilpotentLowerTriangular_m \left(\integersring\right)} W \left(\evenPermutationMatrix \ShalikaUnipotentElement{X} \ShalikaDiagonalElement{k} \right) \phi \left( \lastrowvector k a_m \right) \abs{a_m}^{ms} \centralCharacter{\localFieldRepresentation}\left(a_m\right)  d X \multiplicativeMeasure{k} \multiplicativeMeasure{a_m}.$$
	Since $\phi = \lift{\phi_0}$ is a lift of the Schwartz function $\phi_0$, we have that for a fixed $k$, $\phi \left(\lastrowvector k a_m \right) = 0$ for
	$\abs{a_m} > 1$ and $\phi \left(\lastrowvector k a_m \right) = \phi_0 \left(0\right)$ for $\abs{a_m} < 1$.
	Therefore
	$$ \int_{\multiplicativegroup{\localField}} \phi \left( \lastrowvector k a_m \right) \abs{a_m}^{ms} \centralCharacter{\localFieldRepresentation}\left(a_m\right) \multiplicativeMeasure{a_m} =  \int_{\multiplicativegroup{\integersring}} \phi \left( \lastrowvector k a_m \right) \centralCharacter{\localFieldRepresentation}\left(a_m\right) \multiplicativeMeasure{a_m} + \phi_0\left(0\right) I\left(s\right),$$
	where
	$$I\left(s\right) = \sum_{i=1}^{\infty} \int_{\uniformizer^i \cdot \multiplicativegroup{\integersring}} \abs{a_m}^{ms} \centralCharacter{\localFieldRepresentation}\left(a_m\right) \multiplicativeMeasure{a_m} = \int_{\multiplicativegroup{\integersring}} \centralCharacter{\localFieldRepresentation} \left( a_m \right) \multiplicativeMeasure{a_m} \cdot \sum_{i=1}^{\infty}{q^{-i m s} \centralCharacter{\localFieldRepresentation}\left(\uniformizer\right)}^{i}.$$
	Therefore, $I\left(s\right) = q^{-ms} \centralCharacter{\localFieldRepresentation} \left(\uniformizer\right) \representationLFunction{ms}{\centralCharacter{\localFieldRepresentation}}$ if $\centralCharacter{\localFieldRepresentation}$ is unramified (i.e., $\centralCharacter{\localFieldRepresentation}\restriction_{\multiplicativegroup{\integersring}}$ trivial, which happens if and only if $\centralCharacter{\residueFieldRepresentation}$ is trivial), and otherwise $\int_{\multiplicativegroup{\integersring}} \centralCharacter{\localFieldRepresentation} \left( a_m \right) \multiplicativeMeasure{a_m} = 0$, so $I\left(s\right)=0$.

	We are left to evaluate
	$$ \int_{\multiplicativegroup{\integersring}} \int_{\maximalCompactSubgroup} \int_{\NilpotentLowerTriangular_m \left(\integersring\right)} W \left(\evenPermutationMatrix \ShalikaUnipotentElement{X} \ShalikaDiagonalElement{k} \right)  \centralCharacter{\localFieldRepresentation}\left(a_m\right) \cdot \left(\phi \left( \lastrowvector k a_m \right) + \phi_0\left(0\right) I\left(s\right) \right)  d X \multiplicativeMeasure{k} \multiplicativeMeasure{a_m}.$$
	Since $W$, $\phi$ and $\centralCharacter{\localFieldRepresentation}$ are lifts of $W_0$, $\phi_0$ and $\centralCharacter{\residueFieldRepresentation}$ respectively, and the expression is constant on the quotient spaces $\rquot{\GL{m}{\integersring}}{1 + \uniformizer\SquareMat{m}{\integersring}} \isomorphic \GL{m}{\residueField}$, $\rquot{\NilpotentLowerTriangular \left( \integersring \right)}{\NilpotentLowerTriangular \left( \maximalideal \right)} \isomorphic \NilpotentLowerTriangular \left( \residueField \right)$ and $\rquot{\multiplicativegroup{\integersring}}{1 + \maximalideal} \isomorphic \multiplicativegroup{\residueField}$, we get that this sum equals
	\begin{align*}
		\frac{1}{\sizeof{\multiplicativegroup{\residueField}}} \frac{1}{\sizeof{\GL{m}{\residueField}}} \frac{1}{\sizeof{\NilpotentLowerTriangular \left(\residueField\right)}} \sum_{a_m \in \multiplicativegroup{\residueField}}\sum_{k \in \GL{m}{\residueField}}\sum_{
		X \in \NilpotentLowerTriangular \left(\residueField\right)} &
		W_0 \left(\evenPermutationMatrix \ShalikaUnipotentElement{X} \ShalikaDiagonalElement{k} \right) \cdot                                                                                                                                       \\
		                                                            & \cdot \centralCharacter{\residueFieldRepresentation}\left(a_m\right) \cdot \left(\phi_0 \left( \lastrowvector k a_m \right) + \phi_0 \left(0\right) I\left(s\right) \right) .
	\end{align*}
	By replacing the variable $k$ with $k = k' a_m^{-1}$, we get
	$$ \frac{1}{\sizeof{\GL{m}{\residueField}}} \frac{1}{\sizeof{\NilpotentLowerTriangular \left(\residueField\right)}} \sum_{k' \in \GL{m}{\residueField}}\sum_{X \in \NilpotentLowerTriangular \left(\residueField\right)}  W_0 \left(\evenPermutationMatrix \ShalikaUnipotentElement{X} \ShalikaDiagonalElement{k'} \right)
		\left( \phi_0 \left( \lastrowvector k' \right) + \phi_0 \left(0\right) I\left(s\right) \right).$$
	Since this expression is constant on cosets of  $k' \in \lquot{\UnipotentRadical{m}\left(\residueField\right)}{\GL{m}{\residueField}}$ and since $\lquot{\UpperTriangularAdditive_m\left(\residueField\right)}{\SquareMat{m}{\residueField}} \isomorphic \NilpotentLowerTriangular_m \left( \residueField \right)$, we get that $$ \JSOfLocalFieldRepresentation{s}{W}{\phi} = \JSOfResidueFieldRepresentation{W_0}{\phi_0} + \phi_0 \left(0\right) I\left(s\right) \JSOfResidueFieldRepresentation{W_0}{1}.$$

	Finally, we claim that $$ \phi_0 \left(0\right) I\left(s\right) \JSOfResidueFieldRepresentation{W_0}{1} = \phi_0 \left(0\right) q^{-ms} \centralCharacter{\localFieldRepresentation} \left(\uniformizer\right) \representationLFunction{ms}{\centralCharacter{\localFieldRepresentation}} \JSOfResidueFieldRepresentation{W_0}{1}.$$
	If $\centralCharacter{\residueFieldRepresentation}$ is trivial, we already saw that this is true. If $\centralCharacter{\residueFieldRepresentation}$ is non-trivial, $\residueFieldRepresentation$ does not admit a Shalika vector, and we get from \cref{thm:equivalent-conditions-for-a-shalika-vector}, that $\JSOfResidueFieldRepresentation{W}{1} = 0$, and therefore the result follows.
\end{proof}

\begin{proof}[Proof of \cref{thm:Jacquet-Shalika-integral-of-a-level-zero-supercuspidal-representation}]
	We are left to prove the odd case, $n = 2m+1$. Again, we only prove the equation regarding $\JS{\localFieldRepresentation}{\fieldCharacter}$.
	Let $Z \in \Mat{1}{m}{\localField}$. Suppose that $Z \in \Supp \left(\phi\right) \subseteq \integersring^m$. Therefore $Z$ is integrated on $\Mat{1}{m}{\integersring}$. Suppose that $$\oddPermutationMatrix \ShalikaUnipotentElementOdd{X} \ShalikaDiagonalElementOdd{a} \ShalikaDiagonalElementOdd{k} \ShalikaLowerUnipotentElementOdd{Z} \in \Supp \left(W\right).$$
	Since $\Supp \left(W\right) \subseteq \UnipotentRadical{2m+1} \cdot \multiplicativegroup{F} \cdot \maximalCompactSubgroup_{2m+1}$, we get that $\oddPermutationMatrix \SmallShalikaUnipotentElementOdd{X} \SmallShalikaDiagonalElementOdd{a} = u' \cdot \lambda \cdot k'$, where $u' \in \UnipotentRadical{2m+1}$, $\lambda \in \multiplicativegroup{\localField}$, $k' \in \maximalCompactSubgroup_{2m+1}$. Writing $\oddPermutationMatrix = \smallDiagTwo{\evenPermutationMatrix}{1}$, comparing the last row of the matrices, and modifying $\lambda$ by a unit, we get that $k' = \left(\begin{smallmatrix}
				k'' & Y\\
				0 & 1
			\end{smallmatrix}\right)$, where $k'' \in \maximalCompactSubgroup_{2m}$ and $Y \in \Mat{2m}{1}{\integersring}$. Replacing $u'$ by $u' \left(\begin{smallmatrix}
				\IdentityMatrix{2m} & Y\\
				0 & 1
			\end{smallmatrix}\right)$, we may assume that $k' = \smallDiagTwo{k''}{1}$, which implies that $u' = \smallDiagTwo{u''}{1}$, where $u'' \in \UnipotentRadical{2m}$. We also get $\lambda = 1$. Therefore we have that $\evenPermutationMatrix \SmallShalikaUnipotentElement{X} \SmallShalikaDiagonalElement{a} = 1 \cdot u''  \cdot k''$. By \cref{lem:support-of-level-zero-jacquet-shalika-integrals}, this implies that $X \in \SquareMat{m}{\integersring}$ and $\abs{a_i} = 1$, for every $1 \le i \le m$, which implies that $\rightHaarMeasureModulus{\borelSubgroup_{m}}\left(a\right) = 1$. Therefore, $\JSOfLocalFieldRepresentation{s}{W}{\phi}$ is given by

	\begin{equation*}
		\begin{split}
			\int_{\maximalCompactSubgroup}\int_{\NilpotentLowerTriangular_m\left(\integersring\right)} \int_{\Mat{1}{m}{\integersring}}
			&W\left(
			\oddPermutationMatrix
			\ShalikaUnipotentElementOdd{X}
			\ShalikaDiagonalElementOdd{k}
			\ShalikaLowerUnipotentElementOdd{Z}
			\right) \\
			& \cdot
			\fieldCharacter\left(-\trace X\right)
			\phi \left( Z \right)
			dZ\,dX\,\multiplicativeMeasure{k}.
		\end{split}
	\end{equation*}
	Since the integrand is constant on $k \in \rquot{\maximalCompactSubgroup}{1 + \uniformizer \SquareMat{m}{\integersring}} \isomorphic \GL{m}{\residueField}$, $X \in \rquot{\NilpotentLowerTriangular_m\left(\integersring\right)}{ \NilpotentLowerTriangular_m\left(\maximalideal\right)} \isomorphic \NilpotentLowerTriangular_m\left(\residueField\right)$ and $Z \in \rquot{\Mat{1}{m}{\integersring}}{ \Mat{1}{m}{\maximalideal
			}} \isomorphic \Mat{1}{m}{\residueField}$, and since $W$ and $\phi$ are lifts of $W_0$ and $\phi_0$ respectively, we have that this expression equals \begin{align*}
		 & \frac{1}{\sizeof{\GL{m}{\residueField}} \sizeof{\NilpotentLowerTriangular_m \left( \residueField \right)} \sizeof{\Mat{1}{m}{\residueField}}} \sum_{k \in \GL{m}{\residueField}} \sum_{X \in \NilpotentLowerTriangular_m\left(\residueField\right)} \sum_{Z \in \Mat{1}{m}{\residueField}} \\
		 & W_0\left( \oddPermutationMatrix \ShalikaUnipotentElementOdd{X} \ShalikaDiagonalElementOdd{k} \ShalikaLowerUnipotentElementOdd{Z}
		\right) \fieldCharacter\left(-\trace X\right) \phi_0 \left( Z \right),
	\end{align*}
	and it equals $\JSOfResidueFieldRepresentation{W_0}{\phi_0}$, as the summand is constant on the cosets of $\lquot{\UnipotentRadical{m}\left(\residueField\right)}{\GL{m}{\residueField}} $, and since $\NilpotentLowerTriangular_m\left(\residueField\right) \isomorphic \lquot{\UpperTriangularAdditive_m\left(\residueField\right)}{\SquareMat{m}{\residueField}}$. Hence, we proved that equality $$\JSOfLocalFieldRepresentation{s}{\lift{W_0}}{\lift{\phi_0}} = \JSOfResidueFieldRepresentation{W_0}{\phi_0}.$$

\end{proof}

\subsection{The modified functional equation in the even case}

As a result of \cref{thm:Jacquet-Shalika-integral-of-a-level-zero-supercuspidal-representation-even-case}, we get the following modified functional equation, a generalization of \cref{thm:functional-equation-finite-field}, this time valid for all irreducible cuspidal representations of $\GL{2m}{\finiteField}$, regardless whether they admit a Shalika vector.

\begin{thm}[The modified functional equation]\label{thm:modified-functional-equation}
	Let $\finiteField$ be a finite field with $\sizeof{\finiteField} = q$. Let $\residueFieldCharacter : \finiteField \rightarrow \multiplicativegroup{\cComplex}$ be a non-trivial (additive) character. Let $\representationDeclaration{\residueFieldRepresentation}$ be an irreducible cuspidal representation of $\GL{m}{\finiteField}$. Then there exists $\modifiedGammaFactorOfResidueField{s} \in \cComplex \left( q^{-s} \right)$, such that for every $W_0 \in \WhittakerModelOfResidueFieldRepresentation$ and $\phi_0 \in \Schwartz\left(\finiteField^m\right)$, we have
	\begin{align*}
		\DualJS{\residueFieldRepresentation}{\residueFieldCharacter}\left(W_0, \phi_0 \right)  + q^{-m \left(1-s\right)}
		\FourierTransformWithRespectToCharacter{\phi_0}{\residueFieldCharacter}\left(0\right)  \representationLFunction{m \left(1-s\right)}{1}
		\JSOfResidueFieldRepresentation{W_0}{1} = \\
		\modifiedGammaFactorOfResidueField{s} \cdot \left( \JSOfResidueFieldRepresentation{W_0}{\phi_0} +
		q^{-ms}
		\phi_0\left(0\right)  \representationLFunction{ms}{1}
		\JSOfResidueFieldRepresentation{W_0}{1} \right).
	\end{align*}
\end{thm}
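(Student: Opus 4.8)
The plan is to deduce this finite-field identity from the local functional equation of \cref{thm:local-field-functional-equation}, applied to a level zero supercuspidal representation built from $\residueFieldRepresentation$, by using \cref{thm:Jacquet-Shalika-integral-of-a-level-zero-supercuspidal-representation-even-case} to translate local Jacquet--Shalika integrals of lifted data back into finite-field ones. First I would fix an auxiliary $p$-adic field $\localField$ whose residue field is $\finiteField$ (for instance the unramified extension of $\mathbb{Q}_{p}$ of the appropriate degree), together with an additive character $\fieldCharacter$ of $\localField$ of conductor $\maximalideal$ whose restriction to $\integersring$ descends to $\residueFieldCharacter$; both choices are standard.

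Next I would pick a multiplicative character $\levelZeroCentralCharacter : \multiplicativegroup{\localField} \rightarrow \multiplicativegroup{\cComplex}$ with $\levelZeroCentralCharacter \restriction_{\multiplicativegroup{\integersring}} = \centralCharacter{\residueFieldRepresentation} \circ \quotientMap \restriction_{\multiplicativegroup{\integersring}}$, choosing $\levelZeroCentralCharacter$ to be the trivial character whenever $\centralCharacter{\residueFieldRepresentation}$ is trivial (possible since the trivial character is unramified), and otherwise normalizing by $\levelZeroCentralCharacter(\uniformizer) = 1$. Let $\localFieldRepresentation = \CompactInd{\multiplicativegroup{\localField} \cdot \GL{2m}{\integersring}}{\GL{2m}{\localField}}{\residueFieldRepresentation \cdot \levelZeroCentralCharacter}$ be the associated level zero supercuspidal representation of $\GL{2m}{\localField}$ (\cref{thm:level-zero-representation-is-induced-from-cuspidal-reprensentation}), so that $\centralCharacter{\localFieldRepresentation} = \levelZeroCentralCharacter$. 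I would then \emph{define} $\modifiedGammaFactorOfResidueField{s} := \gammaFactorOfLocalField{s}$, which lies in $\cComplex(q^{-s})$ and, by \cref{thm:local-field-functional-equation}, is independent of the Whittaker and Schwartz functions. Substituting $W = \lift{W_0}$, $\phi = \lift{\phi_0}$ into the local functional equation $\DualJSOfLocalFieldRepresentation{s}{W}{\phi} = \gammaFactorOfLocalField{s}\,\JSOfLocalFieldRepresentation{s}{W}{\phi}$ and expanding both sides through \cref{thm:Jacquet-Shalika-integral-of-a-level-zero-supercuspidal-representation-even-case} produces, after matching terms, the desired identity.

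The one place requiring attention — and where I expect the (mild) obstacle to lie — is the bookkeeping of the auxiliary $L$-factor correction terms: \cref{thm:Jacquet-Shalika-integral-of-a-level-zero-supercuspidal-representation-even-case} produces $\representationLFunction{ms}{\centralCharacter{\localFieldRepresentation}}$, $\representationLFunction{m(1-s)}{\centralCharacter{\localFieldRepresentation}^{-1}}$ and the scalars $\centralCharacter{\localFieldRepresentation}(\uniformizer)^{\pm 1}$, whereas the statement wants $\representationLFunction{ms}{1}$, $\representationLFunction{m(1-s)}{1}$ and no such scalars. When $\centralCharacter{\residueFieldRepresentation}$ is trivial this is exactly the content of the choice $\levelZeroCentralCharacter = 1$ made above, so every term matches verbatim. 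When $\centralCharacter{\residueFieldRepresentation}$ is non-trivial, $\residueFieldRepresentation$ admits no Shalika vector by \cref{rem:shalika-vector-trivial-character}, whence $\JSOfResidueFieldRepresentation{W_0}{1} = 0$ for all $W_0$ by \cref{thm:equivalent-conditions-for-a-shalika-vector}; consequently every correction term on both sides vanishes, and the identity collapses to $\DualJSOfResidueFieldRepresentation{W_0}{\phi_0} = \modifiedGammaFactorOfResidueField{s} \cdot \JSOfResidueFieldRepresentation{W_0}{\phi_0}$, which is once more \cref{thm:local-field-functional-equation} transported via \cref{thm:Jacquet-Shalika-integral-of-a-level-zero-supercuspidal-representation-even-case}. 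In particular this recovers \cref{thm:functional-equation-finite-field} in the even, non-Shalika case and shows that $\modifiedGammaFactorOfResidueField{s}$ is then the constant $\gammaFactorOfResidueField$; the finer fact that $\gammaFactorOfLocalField{s}$ itself is independent of $s$, as well as the independence of $\modifiedGammaFactorOfResidueField{s}$ from the auxiliary choices, is taken up separately in \cref{sec:gamma-factor} and is not needed for the present existence statement.
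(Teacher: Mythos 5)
Your proof is correct and follows essentially the same route as the paper: the identity is deduced from \cref{thm:local-field-functional-equation} via the level zero lifting formulas of \cref{thm:Jacquet-Shalika-integral-of-a-level-zero-supercuspidal-representation-even-case}, with the degenerate case handled by the vanishing of $\JS{\residueFieldRepresentation}{\residueFieldCharacter}\left(W_0, 1\right)$ from \cref{thm:equivalent-conditions-for-a-shalika-vector}. The only cosmetic difference is that you split on the central character and route both cases through the local field, whereas the paper splits on the existence of a Shalika vector and, in the non-Shalika case, simply cites \cref{thm:functional-equation-finite-field}; the two arrangements are interchangeable.
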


\begin{proof}
	If $\residueFieldRepresentation$ does not admit a Shalika vector, then from \cref{thm:equivalent-conditions-for-a-shalika-vector} for every $W_0 \in \WhittakerModelOfResidueFieldRepresentation$, we have that $\JSOfResidueFieldRepresentation{W_0}{1} = 0$, and therefore we get the same functional equation as in \cref{thm:functional-equation-finite-field}.

	Suppose that $\residueFieldRepresentation$ admits a Shalika vector. Then by \cref{rem:shalika-vector-trivial-character}, $\residueFieldRepresentation$ has a trivial central character. Choose any local field $\localField$ with $\finiteField$ as its residue field, and $\fieldCharacter : \localField \rightarrow \multiplicativegroup{\cComplex}$, an additive character, such that $\fieldCharacter\restriction_{\integersring} = \residueFieldCharacter \circ \quotientMap$. Let $\representationDeclaration{\localFieldRepresentation}$ be the level zero supercuspidal representation constructed from $\residueFieldRepresentation$, with respect to the trivial central character. The statement now follows from \cref{thm:Jacquet-Shalika-integral-of-a-level-zero-supercuspidal-representation-even-case} and \cref{thm:local-field-functional-equation}.
\end{proof}

\subsection{Exterior square gamma factors for level zero supercuspidal representations}
Let $\representationDeclaration{\residueFieldRepresentation}$ be an irreducible cuspidal representation of $\GL{n}{\residueField}$, and let $\representationDeclaration{\localFieldRepresentation}$ be a level zero representation of $\GL{n}{\localField}$ constructed from $\residueFieldRepresentation$.

As a corollary of \cref{thm:Jacquet-Shalika-integral-of-a-level-zero-supercuspidal-representation} we obtain the following main theorems of the paper.

\begin{thm}\label{thm:equality-of-gamma-factors-non-shalika-vector}
	If $\residueFieldRepresentation$ does not admit a Shalika vector, then $\gammaFactorOfLocalField{s}$ is an invertible constant (i.e., independent of $s$), and $\gammaFactorOfLocalField{s} = \gammaFactorOfResidueField$.
	Furthermore, $\exteriorSquareLFunction{s}{\localFieldRepresentation} = 1$, $\epsilonFactorOfLocalField{s} = \gammaFactorOfResidueField$.
\end{thm}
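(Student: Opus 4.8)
The plan is to feed the comparison of Jacquet--Shalika integrals from \cref{thm:Jacquet-Shalika-integral-of-a-level-zero-supercuspidal-representation} into the local functional equation of \cref{thm:local-field-functional-equation}, and to read off the constant from the finite field functional equation of \cref{thm:functional-equation-finite-field}.

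First I would show $\exteriorSquareLFunction{s}{\localFieldRepresentation} = 1$. When $n$ is odd this is \cref{thm:exterior-square-polynomial-is-a-divisor}(1). When $n = 2m$, recall $\exteriorSquareLFunction{s}{\localFieldRepresentation} = \frac{1}{p\left(q^{-s}\right)}$, where $\frac{1}{p\left(q^{-s}\right)}\cComplex\left[q^{-s}, q^{s}\right]$ is the fractional ideal spanned by the integrals $\JSOfLocalFieldRepresentation{s}{W}{\phi}$ and $p(0) = 1$. By \cref{prop:jacquet-shalika-integral-of-bessel-function-even-case} there are $W_0 \in \WhittakerModelOfResidueFieldRepresentation$ and $\phi_0 \in \Schwartz\left(\residueField^{m}\right)$ with $\JSOfResidueFieldRepresentation{W_0}{\phi_0} = 1$, and by \cref{thm:Jacquet-Shalika-integral-of-a-level-zero-supercuspidal-representation} their lifts satisfy $\JSOfLocalFieldRepresentation{s}{\lift{W_0}}{\lift{\phi_0}} = 1$ for every $s$. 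Hence the constant $1$ lies in the fractional ideal, so it equals $\cComplex\left[q^{-s},q^{s}\right]$; since a polynomial $p$ with $p(0) = 1$ whose reciprocal is a unit of $\cComplex\left[q^{-s},q^{s}\right]$ must be $p \equiv 1$, we conclude $\exteriorSquareLFunction{s}{\localFieldRepresentation} = 1$. The identical argument applied to $\Contragradient{\localFieldRepresentation}$ — which is the level zero supercuspidal representation built from $\Contragradient{\residueFieldRepresentation}$, and which again admits no Shalika vector, since by \cref{thm:equivalent-conditions-for-a-shalika-vector} this property is insensitive to replacing the associated regular character $\cuspidalCharacter$ by $\cuspidalCharacter^{-1}$ — yields $\exteriorSquareLFunction{s}{\Contragradient{\localFieldRepresentation}} = 1$ as well.

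Next, for the gamma factor I would specialize the local functional equation of \cref{thm:local-field-functional-equation} to the lifted data $W = \lift{W_0}$, $\phi = \lift{\phi_0}$ and invoke \cref{thm:Jacquet-Shalika-integral-of-a-level-zero-supercuspidal-representation} to obtain
$$\DualJSOfResidueFieldRepresentation{W_0}{\phi_0} = \gammaFactorOfLocalField{s}\cdot\JSOfResidueFieldRepresentation{W_0}{\phi_0}$$
for every $W_0 \in \WhittakerModelOfResidueFieldRepresentation$ and every $\phi_0 \in \Schwartz\left(\residueField^{m}\right)$. Choosing $W_0,\phi_0$ with $\JSOfResidueFieldRepresentation{W_0}{\phi_0} = 1$ shows that $\gammaFactorOfLocalField{s} = \DualJSOfResidueFieldRepresentation{W_0}{\phi_0}$ is independent of $s$; comparing with \cref{thm:functional-equation-finite-field} identifies this constant as $\gammaFactorOfResidueField$, which is nonzero and hence an invertible constant. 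Finally, substituting $\exteriorSquareLFunction{s}{\localFieldRepresentation} = 1$ and $\exteriorSquareLFunction{1-s}{\Contragradient{\localFieldRepresentation}} = 1$ into the relation $\gammaFactorOfLocalField{s} = \epsilonFactorOfLocalField{s}\cdot\exteriorSquareLFunction{1-s}{\Contragradient{\localFieldRepresentation}}/\exteriorSquareLFunction{s}{\localFieldRepresentation}$ of \cref{thm:local-field-functional-equation} gives $\epsilonFactorOfLocalField{s} = \gammaFactorOfLocalField{s} = \gammaFactorOfResidueField$.

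I expect the main obstacle to be the even-case verification that $\exteriorSquareLFunction{s}{\localFieldRepresentation} = 1$: one must correctly use the fractional-ideal characterization of the $L$-factor, and one must justify that $\Contragradient{\localFieldRepresentation}$ is again a level zero supercuspidal representation, constructed from $\Contragradient{\residueFieldRepresentation}$, and that $\Contragradient{\residueFieldRepresentation}$ still carries no Shalika vector. Once these are in place, the remaining steps are routine bookkeeping built on top of the three cited functional-equation-type results.
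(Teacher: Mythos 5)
Your treatment of the gamma factor is exactly the paper's: lift the finite-field data with \cref{thm:Jacquet-Shalika-integral-of-a-level-zero-supercuspidal-representation}, plug into \cref{thm:local-field-functional-equation}, compare with \cref{thm:functional-equation-finite-field}, and conclude $\gammaFactorOfLocalField{s} = \gammaFactorOfResidueField$ is a nonzero constant. The gap is in your even-case argument that $\exteriorSquareLFunction{s}{\localFieldRepresentation} = 1$. You argue that since $\JSOfLocalFieldRepresentation{s}{\lift{W_0}}{\lift{\phi_0}} = 1$, the constant $1$ lies in the fractional ideal $I = \frac{1}{p\left(q^{-s}\right)}\cComplex\left[q^{-s},q^{s}\right]$, and that this forces $I = \cComplex\left[q^{-s},q^{s}\right]$. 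But $1 = \frac{p\left(q^{-s}\right)}{p\left(q^{-s}\right)}$ lies in $\frac{1}{p\left(q^{-s}\right)}\cComplex\left[q^{-s},q^{s}\right]$ for \emph{every} $p$, i.e.\ $\cComplex\left[q^{-s},q^{s}\right] \subseteq I$ automatically; knowing that one particular family of integrals takes the constant value $1$ puts no constraint whatsoever on the generator $p$. To conclude $I = \cComplex\left[q^{-s},q^{s}\right]$ you would need to bound \emph{all} integrals $\JSOfLocalFieldRepresentation{s}{W}{\phi}$ (for arbitrary $W$, $\phi$, not just lifts), which your argument does not provide. The same objection applies to your claim $\exteriorSquareLFunction{s}{\Contragradient{\localFieldRepresentation}} = 1$.

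The paper closes this gap differently, using the constancy of the gamma factor (which you have already established) together with \cref{thm:exterior-square-polynomial-is-a-divisor}(2). Write $\exteriorSquareLFunction{s}{\localFieldRepresentation} = \frac{1}{p_1\left(q^{-s}\right)}$ and $\exteriorSquareLFunction{s}{\Contragradient{\localFieldRepresentation}} = \frac{1}{p_2\left(q^{-s}\right)}$ with $p_1\left(0\right)=p_2\left(0\right)=1$. Since $\gammaFactorOfLocalField{s}$ is constant, the relation in \cref{thm:local-field-functional-equation} forces $\frac{p_1\left(q^{-s}\right)}{p_2\left(q^{-\left(1-s\right)}\right)} = c\, q^{ks}$ to be a unit of $\cComplex\left[q^{-s},q^{s}\right]$, so $p_1\left(Z\right)$ and $p_2\left(q^{-1}Z^{-1}\right)$ have the same nonzero roots. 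By \cref{thm:exterior-square-polynomial-is-a-divisor}, $p_1\left(Z\right)$ divides $1-\centralCharacter{\localFieldRepresentation}\left(\uniformizer\right)Z^{m}$ and $p_2\left(Z\right)$ divides $1-\centralCharacter{\localFieldRepresentation}\left(\uniformizer\right)^{-1}Z^{m}$; roots of $p_1$ satisfy $r^{m} = \centralCharacter{\localFieldRepresentation}\left(\uniformizer\right)^{-1}$, while roots of $p_2\left(q^{-1}Z^{-1}\right)$ satisfy $r^{m} = q^{-m}\centralCharacter{\localFieldRepresentation}\left(\uniformizer\right)^{-1}$, so the two root sets are disjoint and hence $p_1 = p_2 = 1$. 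This yields $\exteriorSquareLFunction{s}{\localFieldRepresentation} = 1$ (and the analogous statement for $\Contragradient{\localFieldRepresentation}$) without any appeal to your ideal-membership step, and then $\epsilonFactorOfLocalField{s} = \gammaFactorOfResidueField$ follows as you say. Your side remark that $\Contragradient{\localFieldRepresentation}$ is level zero attached to $\Contragradient{\residueFieldRepresentation}$ with no Shalika vector is fine, but it is not needed once the argument is repaired along these lines.
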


\begin{proof}
	We can choose $W_0 \in \WhittakerModelOfResidueFieldRepresentation$ and $\phi_0 \in \Schwartz \left(\residueField^m\right)$, such that $ \JSOfResidueFieldRepresentation{W_0}{\phi_0} \ne 0$ (for instance take the functions in \cref{prop:jacquet-shalika-integral-of-bessel-function-even-case}, \cref{prop:jacquet-shalika-integral-of-bessel-function-odd-case}). By \cref{thm:Jacquet-Shalika-integral-of-a-level-zero-supercuspidal-representation}, we have that
	$\JSOfLocalFieldRepresentation{s}{\lift{W_0}}{\lift{\phi_0}} = \JSOfResidueFieldRepresentation{W_0}{\phi_0}$ and $\DualJSOfLocalFieldRepresentation{s}{\lift{W_0}}{\lift{\phi_0}} = \DualJSOfResidueFieldRepresentation{W_0}{\phi_0}$.
	We get from the functional equations in \cref{thm:functional-equation-finite-field} and \cref{thm:local-field-functional-equation} that
	$$\gammaFactorOfLocalField{s} = \frac{\DualJSOfLocalFieldRepresentation{s}{\lift{W_0}}{\lift{\phi_0}}}{\JSOfLocalFieldRepresentation{s}{\lift{W_0}}{\lift{\phi_0}}} = \frac{\DualJSOfResidueFieldRepresentation{W_0}{\phi_0}}{\JSOfResidueFieldRepresentation{W_0}{\phi_0}} = \gammaFactorOfResidueField.$$
	This proves the result regarding the gamma factors.
	
	If $n = 2m+1$, we have from \cref{thm:exterior-square-polynomial-is-a-divisor} that $\exteriorSquareLFunction{s}{\localFieldRepresentation} = 1$. Suppose $n = 2m$, and denote $\exteriorSquareLFunction{s}{\localFieldRepresentation} = \frac{1}{p_1 \left( q^{-s} \right)}$, $\exteriorSquareLFunction{s}{\Contragradient{\localFieldRepresentation}} = \frac{1}{p_2 \left( q^{-s} \right)}$, where $p_1\left(Z\right), p_2\left(Z\right) \in \cComplex \left[Z\right]$ are polynomials with $p_1\left(0\right)=p_2\left(0\right)=1$. Since $\gammaFactorOfLocalField{s}$ is a constant, by \cref{thm:local-field-functional-equation} we must have that $$\frac{p_1\left(q^{-s}\right)}{p_2 \left(q^{-\left(1-s\right)}\right)} = \frac{\exteriorSquareLFunction{1-s}{\Contragradient{\localFieldRepresentation}}}{\exteriorSquareLFunction{s}{{\localFieldRepresentation}}} = c \cdot q^{ks},$$ where $k \in \zIntegers$ and $c \in \multiplicativegroup{\cComplex}$. This implies that $p_1 \left(Z\right)$ and $p_2 \left(q^{-1} Z^{-1}\right)$ have the same non-zero roots. By \cref{thm:exterior-square-polynomial-is-a-divisor}, we have that $p_1 \left(Z\right)$ divides $1 - \centralCharacter{\localFieldRepresentation}\left(\uniformizer\right) Z^m$ and $p_2$ divides $1 - \centralCharacter{\localFieldRepresentation}\left(\uniformizer\right)^{-1} Z^m$, and therefore $p_1 \left(Z\right)$ and $p_2 \left(q^{-1} Z^{-1}\right)$ can't have mutual roots, as roots $r$ of $p_1 \left(Z\right)$ satisfy $r^m = \centralCharacter{\localFieldRepresentation}\left(\uniformizer\right)^{-1}$, while roots $r'$ of $p_2 \left(q^{-1}Z^{-1}\right)$ satisfy $r'^m = q^{-m} \centralCharacter{\localFieldRepresentation}\left(\uniformizer\right)^{-1}$. Therefore $p_1 \left(Z\right), p_2 \left(Z\right)$ are constants and $p_1\left(Z\right) = p_2\left(Z\right) = 1$, which implies that $\exteriorSquareLFunction{s}{\localFieldRepresentation} = \frac{1}{p \left(q^{-s}\right)} = 1$.
	The result regarding $\epsilonFactorOfLocalField{s}$ now follows from the equation in \cref{thm:local-field-functional-equation}.
\end{proof}

\begin{thm}\label{thm:equality-of-gamma-factors-with-shalika-vector}
	If $n = 2m$ and $\residueFieldRepresentation$ admits a Shalika vector, then $$ \gammaFactorOfLocalField{s} = \frac{q^{ms}}{q^{\frac{m}{2}}
			\centralCharacter{\localFieldRepresentation}\left( \uniformizer\right)}
		\cdot
		\frac{\representationLFunction{m\left(1-s\right)}{\centralCharacter{\localFieldRepresentation}^{-1}}}{\representationLFunction{ms}{\centralCharacter{\localFieldRepresentation}}}.$$
	Furthermore,  $\exteriorSquareLFunction{s}{\localFieldRepresentation} = \representationLFunction{ms}{\centralCharacter{\localFieldRepresentation}}$, and $\epsilonFactorOfLocalField{s} = \frac{q^{ms}}{q^{\frac{m}{2}} \centralCharacter{\localFieldRepresentation}\left(\uniformizer\right)}$.
	
	Also in this case $\gammaFactorOfLocalField{s} = \modifiedGammaFactorOfResidueField{s - s_0}$, where $q^{m s_0} = \centralCharacter{\localFieldRepresentation} \left(\uniformizer\right)$ (see also \cref{thm:modified-functional-equation}).
\end{thm}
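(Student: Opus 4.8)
The plan is to deduce the theorem from \cref{thm:Jacquet-Shalika-integral-of-a-level-zero-supercuspidal-representation-even-case} by substituting one well-chosen pair of test functions, and then to pin down $\exteriorSquareLFunction{s}{\localFieldRepresentation}$ and $\epsilonFactorOfLocalField{s}$ using the local functional equation \cref{thm:local-field-functional-equation} together with \cref{thm:exterior-square-polynomial-is-a-divisor}, in the same spirit as the proof of \cref{thm:equality-of-gamma-factors-non-shalika-vector}. Since $\residueFieldRepresentation$ admits a Shalika vector, \cref{rem:shalika-vector-trivial-character} gives that $\centralCharacter{\residueFieldRepresentation}$ is trivial, hence $\centralCharacter{\localFieldRepresentation}$ is unramified and $\representationLFunction{ms}{\centralCharacter{\localFieldRepresentation}} = \bigl(1 - \centralCharacter{\localFieldRepresentation}(\uniformizer) q^{-ms}\bigr)^{-1}$, $\representationLFunction{m(1-s)}{\centralCharacter{\localFieldRepresentation}^{-1}} = \bigl(1 - \centralCharacter{\localFieldRepresentation}^{-1}(\uniformizer) q^{-m(1-s)}\bigr)^{-1}$. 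Moreover, by \cref{thm:equivalent-conditions-for-a-shalika-vector} I may fix $W_0 \in \WhittakerModelOfResidueFieldRepresentation$ with $\JSOfResidueFieldRepresentation{W_0}{1} \neq 0$, where $1$ denotes the constant function on $\residueField^m$; as test functions I take this $W_0$ together with $\phi_0 = \indicatorFunction{0}$, the indicator of $0 \in \residueField^m$.

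For this choice one has $\JSOfResidueFieldRepresentation{W_0}{\indicatorFunction{0}} = 0$, because $\lastrowvector g \neq 0$ for every $g \in \GL{m}{\residueField}$; the Fourier transform $\FourierTransformWithRespectToCharacter{\indicatorFunction{0}}{\residueFieldCharacter}$ is the constant function $q^{-m/2}$, so $\FourierTransformWithRespectToCharacter{\indicatorFunction{0}}{\residueFieldCharacter}(0) = q^{-m/2}$ and, by \cref{prop:dual-jacquet-shalika-formula-even}, $\DualJSOfResidueFieldRepresentation{W_0}{\indicatorFunction{0}} = q^{-m/2}\JSOfResidueFieldRepresentation{W_0}{1}$; and $\indicatorFunction{0}(0) = 1$. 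Feeding these into \cref{thm:Jacquet-Shalika-integral-of-a-level-zero-supercuspidal-representation-even-case} and simplifying the dual side with the elementary identity $1 + t(1-t)^{-1} = (1-t)^{-1}$ (applied with $t = \centralCharacter{\localFieldRepresentation}^{-1}(\uniformizer) q^{-m(1-s)}$) yields
\begin{align*}
	\JSOfLocalFieldRepresentation{s}{\lift{W_0}}{\lift{\indicatorFunction{0}}} &= q^{-ms}\,\centralCharacter{\localFieldRepresentation}(\uniformizer)\,\representationLFunction{ms}{\centralCharacter{\localFieldRepresentation}}\cdot \JSOfResidueFieldRepresentation{W_0}{1}, \\
	\DualJSOfLocalFieldRepresentation{s}{\lift{W_0}}{\lift{\indicatorFunction{0}}} &= q^{-\frac{m}{2}}\,\representationLFunction{m(1-s)}{\centralCharacter{\localFieldRepresentation}^{-1}}\cdot \JSOfResidueFieldRepresentation{W_0}{1}.
\end{align*}
Both are non-zero elements of $\cComplex(q^{-s})$, so by \cref{thm:local-field-functional-equation} their quotient is $\gammaFactorOfLocalField{s}$, which gives the asserted formula $\gammaFactorOfLocalField{s} = \tfrac{q^{ms}}{q^{m/2}\centralCharacter{\localFieldRepresentation}(\uniformizer)}\cdot\tfrac{\representationLFunction{m(1-s)}{\centralCharacter{\localFieldRepresentation}^{-1}}}{\representationLFunction{ms}{\centralCharacter{\localFieldRepresentation}}}$.

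Next I identify the $L$- and $\epsilon$-factors. Write $\exteriorSquareLFunction{s}{\localFieldRepresentation} = p_1(q^{-s})^{-1}$, $\exteriorSquareLFunction{s}{\Contragradient{\localFieldRepresentation}} = p_2(q^{-s})^{-1}$ with $p_i(0)=1$; by \cref{thm:exterior-square-polynomial-is-a-divisor}, $p_1 \mid 1 - \centralCharacter{\localFieldRepresentation}(\uniformizer)Z^m$ and $p_2 \mid 1 - \centralCharacter{\localFieldRepresentation}^{-1}(\uniformizer)Z^m$. By \cref{thm:local-field-functional-equation}, $\gammaFactorOfLocalField{s} = \epsilonFactorOfLocalField{s}\cdot p_1(q^{-s})/p_2(q^{-(1-s)})$ with $\epsilonFactorOfLocalField{s}$ an invertible element of $\cComplex[q^{-s},q^s]$. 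On the other hand, viewing the explicit formula above as a rational function of $X = q^{-s}$, it equals a monomial times $\bigl(1 - \centralCharacter{\localFieldRepresentation}(\uniformizer)X^m\bigr)\bigl(X^m - \centralCharacter{\localFieldRepresentation}^{-1}(\uniformizer)q^{-m}\bigr)^{-1}$, whose finite non-zero zeros satisfy $X^m = \centralCharacter{\localFieldRepresentation}(\uniformizer)^{-1}$ and whose finite non-zero poles satisfy $X^m = q^{-m}\centralCharacter{\localFieldRepresentation}(\uniformizer)^{-1}$ — two disjoint sets of $m$ simple points each, since $q^m \neq 1$, so there is no cancellation. As $\epsilonFactorOfLocalField{s}$ contributes zeros and poles only at $X = 0,\infty$, matching the remaining zeros forces $p_1(Z) = 1 - \centralCharacter{\localFieldRepresentation}(\uniformizer)Z^m$, i.e. $\exteriorSquareLFunction{s}{\localFieldRepresentation} = \representationLFunction{ms}{\centralCharacter{\localFieldRepresentation}}$, and matching the remaining poles forces $p_2(Z) = 1 - \centralCharacter{\localFieldRepresentation}^{-1}(\uniformizer)Z^m$; substituting back into the functional equation then isolates $\epsilonFactorOfLocalField{s} = q^{ms}/(q^{m/2}\centralCharacter{\localFieldRepresentation}(\uniformizer))$. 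This exclusion of the possibility that $p_1$ is a proper divisor is the only genuinely non-formal point, and it rests precisely on the disjointness of the numerator and denominator roots of the explicit $\gamma$-formula.

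Finally, for the comparison with the modified finite-field gamma factor, choose $s_0 \in \cComplex$ with $q^{m s_0} = \centralCharacter{\localFieldRepresentation}(\uniformizer)$ (possible since $\centralCharacter{\localFieldRepresentation}$ is unramified). Then $\representationLFunction{ms}{\centralCharacter{\localFieldRepresentation}} = \representationLFunction{m(s-s_0)}{1}$, $\representationLFunction{m(1-s)}{\centralCharacter{\localFieldRepresentation}^{-1}} = \representationLFunction{m(1-(s-s_0))}{1}$, and $q^{ms}/(q^{m/2}\centralCharacter{\localFieldRepresentation}(\uniformizer)) = q^{m(s-s_0)}/q^{m/2}$, so the formula for $\gammaFactorOfLocalField{s}$ rewrites as $\tfrac{q^{m(s-s_0)}}{q^{m/2}}\cdot\tfrac{\representationLFunction{m(1-(s-s_0))}{1}}{\representationLFunction{m(s-s_0)}{1}}$. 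Running the same $\phi_0 = \indicatorFunction{0}$ computation inside the modified functional equation \cref{thm:modified-functional-equation} (with trivial central character) gives $\modifiedGammaFactorOfResidueField{s} = \tfrac{q^{ms}}{q^{m/2}}\cdot\tfrac{\representationLFunction{m(1-s)}{1}}{\representationLFunction{ms}{1}}$, and comparing the two expressions yields $\gammaFactorOfLocalField{s} = \modifiedGammaFactorOfResidueField{s-s_0}$, completing the proof.
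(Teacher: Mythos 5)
Your proof is correct, and it follows the same skeleton as the paper's (substitute lifted test data into \cref{thm:Jacquet-Shalika-integral-of-a-level-zero-supercuspidal-representation-even-case}, take the quotient of the two integrals, then pin down the $L$- and $\epsilon$-factors using \cref{thm:local-field-functional-equation} and \cref{thm:exterior-square-polynomial-is-a-divisor}), but with two genuine variations. First, the paper takes $\phi_0 = 1$, so that $\JSOfResidueFieldRepresentation{W_0}{\phi_0} = \JSOfResidueFieldRepresentation{W_0}{1} = 1$ and $\FourierTransformWithRespectToCharacter{\phi_0}{\residueFieldCharacter} = q^{\frac{m}{2}}\indicatorFunction{0}$, which yields $\JSOfLocalFieldRepresentation{s}{\lift{W_0}}{\lift{\phi_0}} = \representationLFunction{ms}{\centralCharacter{\localFieldRepresentation}}$ on the nose; you take the Fourier-dual choice $\phi_0 = \indicatorFunction{0}$, and your evaluations ($\JSOfResidueFieldRepresentation{W_0}{\indicatorFunction{0}}=0$, $\DualJSOfResidueFieldRepresentation{W_0}{\indicatorFunction{0}} = q^{-\frac{m}{2}}\JSOfResidueFieldRepresentation{W_0}{1}$) are correct and give the same $\gamma$-formula. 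Second, for $\exteriorSquareLFunction{s}{\localFieldRepresentation} = \representationLFunction{ms}{\centralCharacter{\localFieldRepresentation}}$ the paper simply notes that its computed integral $\representationLFunction{ms}{\centralCharacter{\localFieldRepresentation}}$ lies in the fractional ideal $\frac{1}{p\left(q^{-s}\right)}\cComplex\left[q^{s},q^{-s}\right]$, so $1-\centralCharacter{\localFieldRepresentation}\left(\uniformizer\right)Z^m \mid p\left(Z\right)$, and combines this with the reverse divisibility from \cref{thm:exterior-square-polynomial-is-a-divisor}; your zero/pole-matching argument through the functional equation (resting on disjointness of the two root sets, exactly as in the paper's proof of \cref{thm:equality-of-gamma-factors-non-shalika-vector}) is also valid since both $p_1$ and $p_2$ divide separable polynomials, though it could be shortened: your own computed value $q^{-ms}\centralCharacter{\localFieldRepresentation}\left(\uniformizer\right)\representationLFunction{ms}{\centralCharacter{\localFieldRepresentation}}\JSOfResidueFieldRepresentation{W_0}{1}$ already lies in that ideal and yields the divisibility directly. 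Finally, your explicit derivation of $\gammaFactorOfLocalField{s} = \modifiedGammaFactorOfResidueField{s-s_0}$ by running the same test data through \cref{thm:modified-functional-equation} supplies a detail the paper's proof leaves implicit.
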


\begin{proof}
	By \cref{rem:shalika-vector-trivial-character}, since $\residueFieldRepresentation$ admits a Shalika vector, the central character $\centralCharacter{\residueFieldRepresentation}$ is trivial. Thus, the central character $\centralCharacter{\localFieldRepresentation}$ is unramified. Therefore,
	$$\representationLFunction{s}{\centralCharacter{\localFieldRepresentation}} = \frac{1}{1-\centralCharacter{\localFieldRepresentation} \left( \uniformizer \right) q^{-s}}.$$
	
	By \cref{thm:equivalent-conditions-for-a-shalika-vector}, there exists $W_0 \in \WhittakerModelOfResidueFieldRepresentation$, such that $\JSOfResidueFieldRepresentation{W_0}{1} = 1$. We substitute in \cref{thm:equivalent-conditions-for-a-shalika-vector} such $W_0$ and $\phi_0 = 1$, $\FourierTransformWithRespectToCharacter{\phi_0}{\residueFieldCharacter} = q^{\frac{m}{2}} \indicatorFunction{0}$ to get
	\begin{equation}\label{eqn:substitution}
		\JSOfLocalFieldRepresentation{s}{\lift{W_0}}{\lift{\phi_0}} = 1 + q^{-ms} \centralCharacter{\localFieldRepresentation} \left(\uniformizer\right) \representationLFunction{ms}{\centralCharacter{\localFieldRepresentation}} =  \representationLFunction{ms}{\centralCharacter{\localFieldRepresentation}},
	\end{equation}
	and $$\DualJSOfLocalFieldRepresentation{s}{\lift{W_0}}{\lift{\phi_0}} = q^{\frac{m}{2}} q^{-m \left( 1 - s \right)}  \centralCharacter{\localFieldRepresentation} \left( \uniformizer \right)^{-1} \representationLFunction{m \left(1 - s\right)}{\centralCharacter{\localFieldRepresentation}^{-1}}.$$
	The result regarding $\gammaFactorOfLocalField{s}$ follows as $\gammaFactorOfLocalField{s} = \frac{\DualJSOfLocalFieldRepresentation{s}{\lift{W_0}}{\lift{\phi_0}}}{\JSOfLocalFieldRepresentation{s}{\lift{W_0}}{\lift{\phi_0}}}$.
	
	Regarding $\exteriorSquareLFunction{s}{\localFieldRepresentation}$, denote $\exteriorSquareLFunction{s}{\localFieldRepresentation} = \frac{1}{p \left( q^{-s} \right)}$, where $p\left(Z\right) \in \cComplex \left[Z\right]$ is a polynomial with $p\left(0\right) = 1$. By \cref{thm:exterior-square-polynomial-is-a-divisor}, we have that $p\left(Z\right) \mid 1 - \centralCharacter{\localFieldRepresentation}\left(\uniformizer\right) Z^m$. From \cref{eqn:substitution}, we have that $$\JSOfLocalFieldRepresentation{s}{\lift{W_0}}{\lift{\phi_0}} = \representationLFunction{ms}{\centralCharacter{\localFieldRepresentation}} = \frac{1}{1 - \centralCharacter{\localFieldRepresentation}\left(\uniformizer\right) q^{-ms}} \in \frac{1}{p\left(q^{-s}\right)} \cComplex\left[q^{s}, q^{-s}\right],$$ so $1 - \centralCharacter{\localFieldRepresentation}\left(\uniformizer\right) Z^m \mid p\left(Z\right)$. Therefore we must have $p \left(Z\right) = 1 - \centralCharacter{\localFieldRepresentation}\left(\uniformizer\right) Z^m$, and the result $\exteriorSquareLFunction{s}{\localFieldRepresentation} = \representationLFunction{ms}{\centralCharacter{\localFieldRepresentation}}$ follows.	
	The result regarding $\epsilonFactorOfLocalField{s}$ now follows from the equation in \cref{thm:local-field-functional-equation}.
\end{proof}

\cref{thm:equality-of-gamma-factors-non-shalika-vector} and \cref{thm:equality-of-gamma-factors-with-shalika-vector} establish a connection between a cuspidal representation $\residueFieldRepresentation$ and its corresponding level zero representation $\localFieldRepresentation$ via the local exterior square factors of $\localFieldRepresentation$. Moreover, these theorems demonstrate a close connection between the existence of Shalika vectors and the existence of poles of the local exterior square $L$-function.

\begin{cor}\label{cor:shalika-vator-and-pole}
Let $\localFieldRepresentation$ be a level zero representation constructed from an irreducible cuspidal representation $\residueFieldRepresentation$. Then $\residueFieldRepresentation$ admits a Shalika vector if and only if $\exteriorSquareLFunction{s}{\localFieldRepresentation}$ has a pole.
\end{cor}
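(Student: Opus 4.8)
The plan is to read this off directly from the two main theorems, \cref{thm:equality-of-gamma-factors-non-shalika-vector} and \cref{thm:equality-of-gamma-factors-with-shalika-vector}, after separating cases by the parity of $n$. First I would dispose of the odd case $n = 2m+1$: here the notion of a Shalika vector is not even defined (\cref{defn:shalika-vector} is stated only for even rank), so $\residueFieldRepresentation$ does not admit a Shalika vector, while \cref{thm:equality-of-gamma-factors-non-shalika-vector} (or already \cref{thm:exterior-square-polynomial-is-a-divisor}) gives $\exteriorSquareLFunction{s}{\localFieldRepresentation} = 1$, which has no pole. Thus both conditions fail and the equivalence holds.

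For the even case $n = 2m$, I would argue both directions. If $\residueFieldRepresentation$ does not admit a Shalika vector, then \cref{thm:equality-of-gamma-factors-non-shalika-vector} gives $\exteriorSquareLFunction{s}{\localFieldRepresentation} = 1$, so there is no pole. Conversely, if $\residueFieldRepresentation$ admits a Shalika vector, then by \cref{rem:shalika-vector-trivial-character} the central character $\centralCharacter{\residueFieldRepresentation}$ is trivial, hence $\centralCharacter{\localFieldRepresentation}$ is unramified, and \cref{thm:equality-of-gamma-factors-with-shalika-vector} gives $\exteriorSquareLFunction{s}{\localFieldRepresentation} = \representationLFunction{ms}{\centralCharacter{\localFieldRepresentation}} = \bigl(1 - \centralCharacter{\localFieldRepresentation}(\uniformizer) q^{-ms}\bigr)^{-1}$; since $m \ge 1$, this is a non-constant rational function of $q^{-s}$, with poles at those $s$ for which $q^{-ms}\centralCharacter{\localFieldRepresentation}(\uniformizer) = 1$. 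Combining the two cases yields the stated biconditional.

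Since every ingredient has already been established, I do not anticipate any real obstacle; the only place calling for a word of care is the odd case, where one should note explicitly that $\residueFieldRepresentation$ trivially fails to admit a Shalika vector, so that the two sides of the equivalence are simultaneously false. One could alternatively state and prove \cref{cor:shalika-vator-and-pole} only for even $n$, in which case the argument reduces to the second paragraph alone.
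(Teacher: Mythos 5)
Your argument is essentially the paper's own proof: both directions are read off from \cref{thm:equality-of-gamma-factors-non-shalika-vector} (no Shalika vector implies $\exteriorSquareLFunction{s}{\localFieldRepresentation}=1$, hence no pole) and \cref{thm:equality-of-gamma-factors-with-shalika-vector} (Shalika vector implies $\exteriorSquareLFunction{s}{\localFieldRepresentation}=\representationLFunction{ms}{\centralCharacter{\localFieldRepresentation}}$, which has a pole). Your explicit remark about the odd case, where a Shalika vector is not defined and both sides of the equivalence fail, is a small additional point of care that the paper leaves implicit, but the route is the same.
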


\begin{proof}
On one hand, if $\residueFieldRepresentation$ admits a Shalika vector, then by \cref{thm:equality-of-gamma-factors-with-shalika-vector}, $\exteriorSquareLFunction{s}{\localFieldRepresentation} = \representationLFunction{ms}{\centralCharacter{\localFieldRepresentation}}$ has a pole.

On the other hand, if $\residueFieldRepresentation$ does not admit a Shalika vector, then by \cref{thm:equality-of-gamma-factors-non-shalika-vector}, $\exteriorSquareLFunction{s}{\localFieldRepresentation} = 1$ does not have any poles.
\end{proof}

\begin{rem}
	Once we establish the connection between Shalika vectors of $\residueFieldRepresentation$ and Shalika functionals of $\localFieldRepresentation$ in the next section, we can see that \Cref{cor:shalika-vator-and-pole} is a special case of a more general fact relating existence of Shalika functionals to poles of exterior square $L$ functions. Corollary 4.4 of \cite{kewat2011local}, which is a consequence of \cite[Theorem 4.3]{kewat2011local}, states that for an irreducible square integrable representation $\pi$, a sufficient condition for $\pi$ to have a non-zero Shalika functional is that $L(s, \pi, \wedge^2)$ has a pole at $s = 0$. Actually, it is also a necessary condition, see \cite[Proposition 6.1]{matringe2014linear} or \cite[Propositon 3.12]{jo2018derivatives}.
\end{rem}

\subsubsection{Relation with Shalika functionals}\label{subsubsec:twisted-shalika-functionals}
Shalika vectors in $\underlyingVectorSpace{\residueFieldRepresentation}$ are closely related to poles of $\exteriorSquareLFunction{s}{\localFieldRepresentation}$, which in turn are closely akin to (twisted) Shalika functionals on $\WhittakerModelOfLocalFieldRepresentation$. After we relate Shalika vectors and Shalika periods, we will give another explanation for the computation of $\exteriorSquareLFunction{s}{\localFieldRepresentation}$ in the case where $\residueFieldRepresentation$ admits a Shalika vector. We begin with the introduction of twisted Shalika functionals $\Lambda_s$ on $\WhittakerModelOfLocalFieldRepresentation$.

\begin{defn}\label{defn:twisted-shalika-functional}Let $\representationDeclaration{\localFieldRepresentation}$ be a representation with an unramified central character.
For any $s \in \cComplex$ satisfying $q^{ms} = \centralCharacter{\localFieldRepresentation}\left(\uniformizer\right)$, the twisted Shalika period $\Lambda_s$ on $\WhittakerModelOfLocalFieldRepresentation$ is defined to be the following linear functional $\Lambda_s : \WhittakerModelOfLocalFieldRepresentation \rightarrow \cComplex$
\begin{equation*}
\Lambda_s \left( W \right) = \int_{\lquot{\multiplicativegroup{\localField}\UnipotentSubgroup}{G}} \int_{\lquot{\UpperTriangularAdditive}{M}} W\left(\evenPermutationMatrix \ShalikaUnipotentElement{X} \ShalikaDiagonalElement{g} \right) \fieldCharacter \left(-\trace X \right) dX \cdot \abs{\det g}^s \multiplicativeMeasure{g}.
\end{equation*}
\end{defn}

For any $s \in \cComplex$, we set $\nu^{s}$ to be the one dimensional representation of $\GL{2m}{\localField}$ given by $\nu^{s}(g) = \abs{\det g}^s$ for any $g \in \GL{2m}{\localField}$. Applying \cite[Lemma 4.2]{kewat2011local} to the representation $\localFieldRepresentation \otimes \nu^{\frac{s}{2}}$, we know that $\Lambda_s$ converges absolutely. $\Lambda_s$ is a twisted Shalika functional in the sense that for any $h \in \ShalikaSubgroupEven$ and any $W \in \WhittakerModelOfLocalFieldRepresentation$, we have
\begin{equation}\label{eqn:translation-under-shalika-subgroup}
\Lambda_s \left( \localFieldRepresentation \left( h \right)W \right) = \abs{\det h}^{-s} \ShalikaCharacter \left( h \right) \Lambda_s \left( W \right).
\end{equation}
By \cref{eqn:translation-under-shalika-subgroup}, we have
$$\Lambda_s \in \Hom_{\ShalikaSubgroupEven(\localField)} \left( \localFieldRepresentation \otimes \nu^{\frac{s}{2}}, \ShalikaCharacter \right).$$

By \cite{jo2018derivatives}, we have
\begin{thm}[{\cite[Theorem 3.6]{jo2018derivatives}}]\label{thm:exterior-square-L-function-in-terms-of-shalika-functionals}Let $\representationDeclaration{\localFieldRepresentation}$ be an irreducible supercuspidal representation of $\GL{2m}{\localField}$, with an unramified central character. Then
	$$\exteriorSquareLFunction{s}{\localFieldRepresentation} = \prod_{\alpha}(1-\alpha q^{-s})^{-1},$$
where the product runs over all $\alpha = q^{s_0}$, such that $\alpha^{m} = \centralCharacter{\localFieldRepresentation}(\uniformizer)$ and $\Lambda_{s_0} \ne 0$. Equivalently, the product runs over all $\alpha = q^{s_0}$ such that $\Hom_{\ShalikaSubgroupEven(\localField)} \left( \localFieldRepresentation \otimes \nu^{\frac{s_0}{2}}, \ShalikaCharacter \right) \ne 0$.
\end{thm}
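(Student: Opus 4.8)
The plan is to combine the divisibility constraint already recorded in \cref{thm:exterior-square-polynomial-is-a-divisor} with a residue computation of the local Jacquet--Shalika integral that isolates the twisted Shalika functional $\Lambda_{s_0}$ at each candidate pole. By \cref{thm:exterior-square-polynomial-is-a-divisor}(2) we may write $\exteriorSquareLFunction{s}{\localFieldRepresentation} = 1/p(q^{-s})$ with $p(Z)\mid 1-\centralCharacter{\localFieldRepresentation}(\uniformizer)Z^{m}$; since $\centralCharacter{\localFieldRepresentation}$ is unramified this polynomial has $m$ distinct roots, so every pole of $\exteriorSquareLFunction{s}{\localFieldRepresentation}$ is \emph{simple} and occurs at a point $s_0$ with $q^{ms_0}=\centralCharacter{\localFieldRepresentation}(\uniformizer)$, i.e. at $\alpha=q^{s_0}$ with $\alpha^{m}=\centralCharacter{\localFieldRepresentation}(\uniformizer)$. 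It then remains only to decide, for each such $s_0$, whether $s=s_0$ is genuinely a pole, and to match this with the condition $\Lambda_{s_0}\neq 0$.

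To do this I would analyze the analytic behaviour of $\JSOfLocalFieldRepresentation{s}{W}{\phi}$ by separating the action of the center $Z_G\cong\multiplicativegroup{\localField}$ of $G=\GL{m}{\localField}$ inside the $g$-integral. Using that the scalar matrices $z\IdentityMatrix{2m}\in\ShalikaSubgroupEven(\localField)$ act on $W$ through $\centralCharacter{\localFieldRepresentation}$, one rewrites the Jacquet--Shalika integral as
$$\JSOfLocalFieldRepresentation{s}{W}{\phi}=\int_{\lquot{Z_G\UnipotentSubgroup}{G}}\left(\int_{\multiplicativegroup{\localField}}\phi(\lastrowvector g\,z)\,\abs{z}^{ms}\,\centralCharacter{\localFieldRepresentation}(z)\,\multiplicativeMeasure{z}\right)\left(\int_{\lquot{\UpperTriangularAdditive}{M}}W\!\left(\evenPermutationMatrix\ShalikaUnipotentElement{X}\ShalikaDiagonalElement{g}\right)\fieldCharacter(-\trace X)\,dX\right)\abs{\det g}^{s}\,\multiplicativeMeasure{g}.$$
The inner $\multiplicativegroup{\localField}$-integral is a Tate integral of the Schwartz function $z\mapsto\phi(\lastrowvector g\,z)$: it converges for $\RealPart(s)$ large (this is the domain of \cref{thm:convergence-of-jacquet-shalika-integral}), continues to an element of $\cComplex(q^{-s})$ whose only pole is simple and located exactly at $q^{ms}=\centralCharacter{\localFieldRepresentation}(\uniformizer)$, and whose residue there is a nonzero constant times $\phi(0)$. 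Because $\localFieldRepresentation$ is supercuspidal, its matrix coefficients are compactly supported modulo the center, so the outer integral over $\lquot{Z_G\UnipotentSubgroup}{G}$ converges absolutely for \emph{all} $s$ (one can also quote \cite[Lemma~4.2]{kewat2011local} applied to $\localFieldRepresentation\otimes\nu^{s/2}$, exactly as in the discussion after \cref{defn:twisted-shalika-functional}). Hence $\JSOfLocalFieldRepresentation{s}{W}{\phi}$ is holomorphic away from the finitely many $s_0$ with $q^{ms_0}=\centralCharacter{\localFieldRepresentation}(\uniformizer)$, and at each such $s_0$ its leading Laurent coefficient equals $c\,\phi(0)\,\Lambda_{s_0}(W)$ for a nonzero constant $c$, where the integral appearing is precisely $\Lambda_{s_0}(W)$ of \cref{defn:twisted-shalika-functional} (its absolute convergence is the $s=s_0$ instance above, and the integrand descends to $\lquot{Z_G\UnipotentSubgroup}{G}$ precisely because $q^{ms_0}=\centralCharacter{\localFieldRepresentation}(\uniformizer)$). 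Consequently $s_0$ is a pole of $\exteriorSquareLFunction{s}{\localFieldRepresentation}$ iff some $\JSOfLocalFieldRepresentation{s}{W}{\phi}$ has a pole there (by the definition of the $L$-factor as the generator of the fractional ideal $I$) iff there exist $W,\phi$ with $\phi(0)\Lambda_{s_0}(W)\neq 0$ iff $\Lambda_{s_0}\neq 0$; together with simplicity of the poles this yields $\exteriorSquareLFunction{s}{\localFieldRepresentation}=\prod_{\alpha}(1-\alpha q^{-s})^{-1}$ over $\alpha=q^{s_0}$ with $\alpha^{m}=\centralCharacter{\localFieldRepresentation}(\uniformizer)$ and $\Lambda_{s_0}\neq 0$.

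Finally, for the ``equivalently'' clause: \eqref{eqn:translation-under-shalika-subgroup} shows that whenever $\Lambda_{s_0}\neq 0$ it is a nonzero element of $\Hom_{\ShalikaSubgroupEven(\localField)}\!\left(\localFieldRepresentation\otimes\nu^{\frac{s_0}{2}},\ShalikaCharacter\right)$, giving one implication. The reverse implication --- that nonvanishing of this $\Hom$ space forces $\Lambda_{s_0}\neq 0$ --- is the step I expect to be the main obstacle. The plan is to invoke multiplicity one for twisted Shalika functionals (in the spirit of \cite{matringe2014linear,CogdellMatringe15}), so that $\dim_{\cComplex}\Hom_{\ShalikaSubgroupEven(\localField)}\!\left(\localFieldRepresentation\otimes\nu^{\frac{s_0}{2}},\ShalikaCharacter\right)\le 1$ and $\Lambda_{s_0}$ is a scalar multiple of any generator of that space; the real work is to show that this scalar is nonzero, equivalently that a Shalika model of $\localFieldRepresentation\otimes\nu^{s_0/2}$, when it exists, is actually \emph{realized} by the convergent period integral defining $\Lambda_{s_0}$. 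For supercuspidal $\localFieldRepresentation$ this should follow from compact support modulo center of matrix coefficients together with an approximation argument reducing the vanishing of $\Lambda_{s_0}$ on the Whittaker model to the vanishing of a matrix-coefficient period, contradicting the existence of a nonzero Shalika functional; alternatively one appeals directly to the Bernstein--Zelevinsky derivative analysis of the Jacquet--Shalika integral carried out in \cite{jo2018derivatives}, which is the route taken in the cited reference.
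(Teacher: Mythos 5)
The paper does not actually prove this statement: it is imported wholesale from \cite[Theorems 6.2, 6.7]{jo2018derivatives}, so there is no internal argument to compare your sketch against, and the question is whether your outline would stand on its own. Its first half is essentially sound and is in fact the standard ``exceptional pole'' analysis underlying the cited result: splitting off the center $\multiplicativegroup{\localField}$ of $\GL{m}{\localField}$ inside the $g$-integral, recognizing the inner integral as a Tate integral for the unramified character $\centralCharacter{\localFieldRepresentation}$ whose only (simple) pole sits at $q^{ms}=\centralCharacter{\localFieldRepresentation}(\uniformizer)$ with residue a nonzero constant times $\phi(0)$, and identifying the leading Laurent coefficient of $\JSOfLocalFieldRepresentation{s}{W}{\phi}$ at $s_0$ with $c\,\phi(0)\,\Lambda_{s_0}(W)$, does, together with \cref{thm:exterior-square-polynomial-is-a-divisor}, yield the product formula over those $\alpha=q^{s_0}$ with $\alpha^m=\centralCharacter{\localFieldRepresentation}(\uniformizer)$ and $\Lambda_{s_0}\neq 0$. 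One imprecision: the convergence of the outer integral over $\lquot{\multiplicativegroup{\localField}\UnipotentSubgroup}{G}$ for all $s$ should be justified by the support statement for the Shalika-period integrand of Whittaker functions of supercuspidal representations (the relevant set of $g$ is compact modulo $\multiplicativegroup{\localField}\UnipotentSubgroup$, cf.\ \cite{jacquet11exterior}), not by ``compact support of matrix coefficients modulo the center''; this is standard but is the fact you actually need to legitimize the interchange and residue identification.

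The genuine gap is the ``equivalently'' clause, which is part of the statement. You only prove $\Lambda_{s_0}\neq 0\Rightarrow\Hom_{\ShalikaSubgroupEven(\localField)}\left(\localFieldRepresentation\otimes\nu^{\frac{s_0}{2}},\ShalikaCharacter\right)\neq 0$; the converse --- that the existence of an abstract nonzero twisted Shalika functional forces the explicit period integral $\Lambda_{s_0}$ to be nonvanishing on $\WhittakerModelOfLocalFieldRepresentation$ (equivalently, forces a pole of $\exteriorSquareLFunction{s}{\localFieldRepresentation}$) --- is precisely the substantive content of \cite[Theorem 6.7]{jo2018derivatives}. Multiplicity one for twisted Shalika functionals only says $\Lambda_{s_0}$ is proportional to a generator of that $\Hom$ space; it gives no control on the proportionality constant, and the ``approximation argument'' you gesture at is not carried out. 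Since your fallback at exactly this point is to invoke the derivative analysis of \cite{jo2018derivatives}, your argument there reduces to citing the theorem being proved --- which is what the paper itself does --- so as a self-contained proof the proposal is incomplete at that step.
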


As before, let $\representationDeclaration{\residueFieldRepresentation}$ be an irreducible representation of $\GL{2m}{\residueField}$ and let $\representationDeclaration{\localFieldRepresentation}$ be a level zero representation constructed from $\residueFieldRepresentation$. We recall that the central character $\centralCharacter{\residueFieldRepresentation}$ is trivial if and only if the central character $\centralCharacter{\localFieldRepresentation}$ is unramified. Similarly to the proof of \cref{thm:Jacquet-Shalika-integral-of-a-level-zero-supercuspidal-representation-even-case}, one can show

\begin{prop}\label{prop:evaluate-shalika-functional-on-lifted-whittaker-function}
	Suppose $\centralCharacter{\residueFieldRepresentation}$ is trivial and equivalently $\centralCharacter{\localFieldRepresentation}$ is unramified. For any $W_0 \in \WhittakerModelOfResidueFieldRepresentation$ and any $s \in \cComplex$, such that $q^{ms} = \centralCharacter{\localFieldRepresentation} \left(\uniformizer\right)$, we have
	\begin{equation*}
		\Lambda_s \left( \lift{W_0} \right) = \JSOfResidueFieldRepresentation{W_0}{1}.
	\end{equation*}
\end{prop}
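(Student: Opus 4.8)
The plan is to mirror, essentially verbatim, the argument used to prove \cref{thm:Jacquet-Shalika-integral-of-a-level-zero-supercuspidal-representation-even-case}: the functional $\Lambda_s$ differs from $\JS{\localFieldRepresentation}{\fieldCharacter}\left(s,\cdot,\cdot\right)$ only in that the Schwartz function is taken to be the constant function $1$, the outer integration is over $\lquot{\multiplicativegroup{\localField}\UnipotentSubgroup}{G}$ instead of $\lquot{\UnipotentSubgroup}{G}$, and there is an extra factor $\abs{\det g}^{s}$ with no $\phi\left(\lastrowvector g\right)$ term. First I would record the role of the hypothesis on $s$: writing $G = \GL{m}{\localField}$ and regarding $\multiplicativegroup{\localField} \hookrightarrow G$ as the scalar matrices, one has $\ShalikaDiagonalElement{zI_m} = zI_{2m}$, and since $\lift{W_0}$ has central character $\centralCharacter{\localFieldRepresentation}$ while $\abs{\det\left(zg\right)}^{s} = \abs{z}^{ms}\abs{\det g}^{s}$, the integrand of $\Lambda_s$ is left $\multiplicativegroup{\localField}$-invariant precisely because $\centralCharacter{\localFieldRepresentation}$ is unramified and $q^{ms} = \centralCharacter{\localFieldRepresentation}\left(\uniformizer\right)$; this is the only place the assumption on $s$ enters. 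Convergence is already guaranteed by the discussion following \cref{defn:twisted-shalika-functional}.

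Next I would use the Iwasawa decomposition $g = a k$ with $a = \diag\left(a_1,\dots,a_m\right) \in \diagonalSubgroup_m$ and $k \in \maximalCompactSubgroup = \GL{m}{\integersring}$, choosing representatives for $\lquot{\multiplicativegroup{\localField}}{\diagonalSubgroup_m}$ with $a_m = 1$. By \cref{prop:support-of-whittaker-lift}, $\lift{W_0}$ is supported on $\UnipotentRadical{2m}\left(\localField\right)\cdot\multiplicativegroup{\localField}\cdot\GL{2m}{\integersring}$, so after absorbing $\ShalikaDiagonalElement{k}$ into $\GL{2m}{\integersring}$, \cref{lem:support-of-level-zero-jacquet-shalika-integrals} applied to $\evenPermutationMatrix\ShalikaUnipotentElement{X}\ShalikaDiagonalElement{a}$ shows that only $X \in \SquareMat{m}{\integersring}$ and $a$ with $\abs{a_1} = \dots = \abs{a_m}$ contribute; together with $a_m = 1$ this forces $a \in \left(\multiplicativegroup{\integersring}\right)^{m-1}\times\left\{1\right\}$, so $\rightHaarMeasureModulus{\borelSubgroup_m}\left(a\right) = 1$ and, crucially, $\abs{\det g} = 1$ on the support, so the twist $\abs{\det g}^{s}$ is identically $1$ there. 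Substituting $k \mapsto \diag\left(a_1,\dots,a_{m-1},1\right)^{-1}k$ and integrating out each $a_i$ over $\multiplicativegroup{\integersring}$ (mass $1$), $\Lambda_s\left(\lift{W_0}\right)$ reduces to $\int_{\maximalCompactSubgroup}\int_{\NilpotentLowerTriangular_m\left(\integersring\right)} \lift{W_0}\left(\evenPermutationMatrix\ShalikaUnipotentElement{X}\ShalikaDiagonalElement{k}\right)\fieldCharacter\left(-\trace X\right)\,dX\,\multiplicativeMeasure{k}$, using $\lquot{\UpperTriangularAdditive_m}{\SquareMat{m}{\localField}} \isomorphic \NilpotentLowerTriangular_m\left(\localField\right)$.

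Finally, since $\lift{W_0}$ and $\fieldCharacter\restriction_{\integersring}$ are inflated from $W_0$ and $\residueFieldCharacter$, and the integrand is constant modulo $1+\uniformizer\SquareMat{m}{\integersring}$ in $k$ and modulo $\NilpotentLowerTriangular_m\left(\maximalideal\right)$ in $X$, the integral collapses to the finite sum $\frac{1}{\sizeof{\GL{m}{\residueField}}\sizeof{\NilpotentLowerTriangular_m\left(\residueField\right)}}\sum_{k}\sum_{X} W_0\left(\evenPermutationMatrix\ShalikaUnipotentElement{X}\ShalikaDiagonalElement{k}\right)\residueFieldCharacter\left(-\trace X\right)$ over $k\in\GL{m}{\residueField}$, $X\in\NilpotentLowerTriangular_m\left(\residueField\right)$; recognizing that the summand depends only on the coset of $k$ in $\lquot{\UnipotentRadical{m}\left(\residueField\right)}{\GL{m}{\residueField}}$ and matching the normalizing constants with $\frac{1}{\grpIndex{G}{\UnipotentSubgroup}\grpIndex{M}{\UpperTriangularAdditive}}$, this is exactly $\JSOfResidueFieldRepresentation{W_0}{1}$ (with $\phi \equiv 1$ the factor $\phi\left(\lastrowvector g\right)$ is identically $1$). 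The argument is otherwise routine bookkeeping once the proof of \cref{thm:Jacquet-Shalika-integral-of-a-level-zero-supercuspidal-representation-even-case} is in hand; the one point that genuinely needs care, and the main conceptual content, is the verification that $q^{ms} = \centralCharacter{\localFieldRepresentation}\left(\uniformizer\right)$ both makes $\Lambda_s$ well defined on $\lquot{\multiplicativegroup{\localField}\UnipotentSubgroup}{G}$ and forces the $\abs{\det g}^{s}$ twist to be trivial on the support of $\lift{W_0}$, so that the answer is independent of $s$.
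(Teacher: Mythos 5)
Your proposal is correct and is essentially the paper's intended argument: the paper proves \cref{prop:evaluate-shalika-functional-on-lifted-whittaker-function} by remarking that it follows "similarly to the proof of \cref{thm:Jacquet-Shalika-integral-of-a-level-zero-supercuspidal-representation-even-case}," which is exactly the reduction you carry out (support analysis via \cref{lem:support-of-level-zero-jacquet-shalika-integrals}, reduction mod $\maximalideal$, and matching normalizations), with the only new points being the ones you flag: the condition $q^{ms}=\centralCharacter{\localFieldRepresentation}\left(\uniformizer\right)$ makes the integrand invariant under the central quotient, and the absence of the $a_m$-integration removes the $L$-factor term so the twist $\abs{\det g}^{s}$ is trivial on the support.
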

We remark that the right hand side of the equation in the above proposition is independent of $s \in \cComplex$.

We can now use \cref{thm:exterior-square-L-function-in-terms-of-shalika-functionals} and \cref{prop:evaluate-shalika-functional-on-lifted-whittaker-function} to give another explanation why in the case that $\residueFieldRepresentation$ admits a Shalika vector, $\exteriorSquareLFunction{s}{\localFieldRepresentation} = \representationLFunction{ms}{\centralCharacter{\localFieldRepresentation}}$:
Since $\residueFieldRepresentation$ admits a Shalika vector, we have by \cref{rem:shalika-vector-trivial-character} that the central character $\centralCharacter{\residueFieldRepresentation}$ is trivial. We also have by \cref{thm:equivalent-conditions-for-a-shalika-vector} and \cref{prop:evaluate-shalika-functional-on-lifted-whittaker-function} that for any $s \in \cComplex$, with $q^{ms} = \centralCharacter{\localFieldRepresentation}\left(\uniformizer\right)$, $\Lambda_s \ne 0$. Therefore, the condition $\alpha = q^{s_0}$ with $\Lambda_{s_0} \ne 0$ is always valid. Since $\alpha$ in the product in \cref{thm:exterior-square-L-function-in-terms-of-shalika-functionals} runs over the $m$-th roots of $\centralCharacter{\localFieldRepresentation}(\uniformizer)$, we get
$$
L(s, \localFieldRepresentation, \wedge^2) = \prod_{\alpha}(1-\alpha q^{-s})^{-1} = (1-\centralCharacter{\localFieldRepresentation}(\uniformizer)q^{-ms})^{-1} = L(ms, \centralCharacter{\localFieldRepresentation}).
$$

We conclude this section by giving a characterization of the existence of Shalika vectors in terms of the existence of (twisted) Shalika functionals.

\begin{prop}
$\Hom_{\ShalikaSubgroupEven(\residueField)} \left( \residueFieldRepresentation, \residueFieldShalikaCharacter \right) \ne 0$ if and only if $\Hom_{\ShalikaSubgroupEven(\localField)} \left( \localFieldRepresentation \otimes \nu^{s/2}, \ShalikaCharacter \right) \ne 0$ for some $s \in \cComplex$. Here $\residueFieldShalikaCharacter : \ShalikaSubgroup{2m}\left(\residueField\right) \rightarrow \multiplicativegroup{\cComplex}$ is the character on Shalika subgroup, defined by the character $\residueFieldCharacter$ (see \cref{defn:shalika-subgroup-even}).
\end{prop}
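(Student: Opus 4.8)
The plan is to deduce both directions from the dictionary, set up in this section, relating Shalika vectors for $\residueFieldRepresentation$, poles of $\exteriorSquareLFunction{s}{\localFieldRepresentation}$, and non-vanishing of the twisted Shalika functionals $\Lambda_s$ of \cref{defn:twisted-shalika-functional}.

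\emph{Forward implication.} Assume $\Hom_{\ShalikaSubgroupEven(\residueField)}(\residueFieldRepresentation, \residueFieldShalikaCharacter) \ne 0$, i.e. $\residueFieldRepresentation$ admits a Shalika vector. By \cref{rem:shalika-vector-trivial-character} the central character $\centralCharacter{\residueFieldRepresentation}$ is trivial, hence $\centralCharacter{\localFieldRepresentation}$ is unramified; choose $s_0 \in \cComplex$ with $q^{m s_0} = \centralCharacter{\localFieldRepresentation}(\uniformizer)$, so that $\Lambda_{s_0}$ is defined. By \cref{thm:equivalent-conditions-for-a-shalika-vector} there is $W_0 \in \WhittakerModelOfResidueFieldRepresentation$ with $\JSOfResidueFieldRepresentation{W_0}{1} \ne 0$, and then \cref{prop:evaluate-shalika-functional-on-lifted-whittaker-function} yields $\Lambda_{s_0}(\lift{W_0}) = \JSOfResidueFieldRepresentation{W_0}{1} \ne 0$, so $\Lambda_{s_0} \ne 0$. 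By the equivariance \eqref{eqn:translation-under-shalika-subgroup}, $\Lambda_{s_0} \in \Hom_{\ShalikaSubgroupEven(\localField)}(\localFieldRepresentation \otimes \nu^{s_0/2}, \ShalikaCharacter)$, so this space is non-zero and the implication holds with $s = s_0$.

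\emph{Converse implication.} Assume $0 \ne \Lambda \in \Hom_{\ShalikaSubgroupEven(\localField)}(\localFieldRepresentation \otimes \nu^{s/2}, \ShalikaCharacter)$ for some $s \in \cComplex$. The first step is to restrict $\Lambda$ to the center: since $z \IdentityMatrix{2m} \in \ShalikaSubgroupEven(\localField)$ and $\ShalikaCharacter(z\IdentityMatrix{2m}) = 1$, the equivariance forces $\centralCharacter{\localFieldRepresentation}(z) = \abs{z}^{-ms}$ for every $z \in \multiplicativegroup{\localField}$; in particular $\centralCharacter{\localFieldRepresentation}$ is unramified (equivalently $\centralCharacter{\residueFieldRepresentation}$ is trivial) and $q^{ms} = \centralCharacter{\localFieldRepresentation}(\uniformizer)$. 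Thus $\alpha = q^{s}$ satisfies $\alpha^m = \centralCharacter{\localFieldRepresentation}(\uniformizer)$ and, by hypothesis, $\Hom_{\ShalikaSubgroupEven(\localField)}(\localFieldRepresentation \otimes \nu^{s/2}, \ShalikaCharacter) \ne 0$, so $\alpha$ contributes a factor to the product in \cref{thm:exterior-square-L-function-in-terms-of-shalika-functionals}; hence $\exteriorSquareLFunction{s}{\localFieldRepresentation}$ has a pole. By \cref{cor:shalika-vator-and-pole}, $\residueFieldRepresentation$ admits a Shalika vector, i.e. $\Hom_{\ShalikaSubgroupEven(\residueField)}(\residueFieldRepresentation, \residueFieldShalikaCharacter) \ne 0$.

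The only point that needs care is the bookkeeping of the spectral parameter: the statement asserts merely ``for some $s$,'' whereas $\Lambda_s$ is meaningful, and appears in \cref{thm:exterior-square-L-function-in-terms-of-shalika-functionals}, only when $q^{ms} = \centralCharacter{\localFieldRepresentation}(\uniformizer)$. In the converse direction the central-character computation is exactly what pins $s$ to this locus, while in the forward direction we are free to choose $s_0$ on it. Beyond this the argument is a direct concatenation of \cref{rem:shalika-vector-trivial-character}, \cref{thm:equivalent-conditions-for-a-shalika-vector}, \cref{prop:evaluate-shalika-functional-on-lifted-whittaker-function}, \cref{thm:exterior-square-L-function-in-terms-of-shalika-functionals} and \cref{cor:shalika-vator-and-pole}, with no genuinely hard step.
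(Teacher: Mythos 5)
Your proposal is correct, and it runs on the same circle of results the paper uses; the only genuine difference is in the forward direction. For the converse you argue exactly as the paper does: non-vanishing of $\Hom_{\ShalikaSubgroupEven(\localField)}\left(\localFieldRepresentation\otimes\nu^{s/2},\ShalikaCharacter\right)$ forces a pole of $\exteriorSquareLFunction{s}{\localFieldRepresentation}$ via \cref{thm:exterior-square-L-function-in-terms-of-shalika-functionals}, and \cref{cor:shalika-vator-and-pole} then produces the Shalika vector; your preliminary central-character computation ($\centralCharacter{\localFieldRepresentation}(z)=\abs{z}^{-ms}$, hence unramified with $q^{ms}=\centralCharacter{\localFieldRepresentation}(\uniformizer)$) is a worthwhile addition, since it verifies the unramifiedness hypothesis of \cref{thm:exterior-square-L-function-in-terms-of-shalika-functionals} that the paper's proof leaves implicit, and it also rules out the degenerate case where the parameter $s$ is off the locus $q^{ms}=\centralCharacter{\localFieldRepresentation}(\uniformizer)$. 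In the forward direction you bypass the $L$-function entirely: rather than invoking the pole criterion, you exhibit a non-zero element of the Hom space directly, namely $\Lambda_{s_0}$, whose non-vanishing you get from \cref{thm:equivalent-conditions-for-a-shalika-vector} together with \cref{prop:evaluate-shalika-functional-on-lifted-whittaker-function} and whose equivariance is \cref{eqn:translation-under-shalika-subgroup}; this is precisely the mechanism the paper deploys in the discussion just before the proposition (to recompute $\exteriorSquareLFunction{s}{\localFieldRepresentation}$), so your argument is a legitimate, slightly more direct shortcut rather than a new idea. One small point you pass over silently, in both directions: the proposition is phrased in terms of $\Hom_{\ShalikaSubgroupEven(\residueField)}\left(\residueFieldRepresentation,\residueFieldShalikaCharacter\right)$, whereas \cref{thm:equivalent-conditions-for-a-shalika-vector} and \cref{cor:shalika-vator-and-pole} speak of Shalika vectors in the sense of \cref{defn:shalika-vector}; the two are equivalent because $\residueFieldRepresentation$ is unitarizable (a Shalika vector $v_0$ gives the functional $v\mapsto\innerproduct{v}{v_0}$ and conversely), which is the one-line remark the paper makes at the start of its proof and which you should include for completeness.
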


\begin{proof}
With a choice of inner product on $\underlyingVectorSpace{\residueFieldRepresentation}$, with respect to which $\residueFieldRepresentation$ is unitary, we can show that the existence of a Shalika vector is equivalent to $\Hom_{\ShalikaSubgroupEven(\residueField)} \left( \residueFieldRepresentation, \residueFieldShalikaCharacter \right) \ne 0$. By \cref{thm:exterior-square-L-function-in-terms-of-shalika-functionals}, $\exteriorSquareLFunction{s}{\localFieldRepresentation}$ has a pole at $s \in \cComplex$ if and only if $\Hom_{\ShalikaSubgroupEven(\localField)} \left( \localFieldRepresentation \otimes \nu^{s/2}, \ShalikaCharacter \right) \ne 0$. Therefore, the proposition now follows from \cref{cor:shalika-vator-and-pole}.
\end{proof}

\appendix

\section{Multiplicity one theorems}\label{sec:multiplicity-one}

In the appendix, we follow \cite{matringe2012cuspidal} in order to prove the following multiplicity one theorems, which are keys to the proofs of the functional equation for the finite field case (\cref{thm:functional-equation-finite-field}).

Let $\finiteField$ be a finite field.

\begin{thm}\label{thm:multiplicity-one-even-case}
	Let $\representationDeclaration{\finiteFieldRepresentation}$ be an irreducible cuspidal representation of $\GL{2m}{\finiteField}$. Then $$\dim_{\cComplex}\Hom_{\LeviSubgroup{m,m} \cap \Mirabolic{2m}}\left(\finiteFieldRepresentation, 1\right) \le 1,$$
	where $\LeviSubgroup{m,m}$ is the Levi subgroup corresponding to the partition $\left(m,m\right)$, and $\Mirabolic{2m}$ is the mirabolic subgroup.
\end{thm}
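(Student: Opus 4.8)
The plan is to first convert the statement into one about the restriction of $\finiteFieldRepresentation$ to the mirabolic subgroup, and then, using cuspidality, into a purely combinatorial inequality in which no representation appears. To begin, $\LeviSubgroup{m,m} \cap \Mirabolic{2m}$ is exactly the group of block-diagonal matrices $\left(\begin{smallmatrix} g_1 & \\ & g_2 \end{smallmatrix}\right)$ with $g_1 \in \GL{m}{\finiteField}$ and $g_2$ in the mirabolic subgroup $\Mirabolic{m} \le \GL{m}{\finiteField}$; in particular $\LeviSubgroup{m,m} \cap \Mirabolic{2m} \isomorphic \GL{m}{\finiteField} \times \Mirabolic{m}$ is contained in $\Mirabolic{2m}$, so
$$\Hom_{\LeviSubgroup{m,m} \cap \Mirabolic{2m}}\left(\finiteFieldRepresentation, 1\right) = \Hom_{\GL{m}{\finiteField} \times \Mirabolic{m}}\left(\finiteFieldRepresentation\restriction_{\Mirabolic{2m}}, 1\right),$$
and everything depends only on $\finiteFieldRepresentation\restriction_{\Mirabolic{2m}}$.

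Next I would invoke the Bernstein--Zelevinsky theory of derivatives over a finite field, following \cite{matringe2012cuspidal}. Since $\finiteFieldRepresentation$ is cuspidal, all of its derivatives below the top one vanish and the top derivative is one dimensional; hence the restriction $\finiteFieldRepresentation\restriction_{\Mirabolic{2m}}$ is the Gelfand--Graev representation $\left(\MirabolicPlusFunctor\right)^{2m-1}\Psi^{+}\left(\mathbf{1}\right) \isomorphic \Ind{\UnipotentRadical{2m}}{\Mirabolic{2m}}{\fieldCharacter}$ of $\Mirabolic{2m}$, which does not depend on $\finiteFieldRepresentation$. (Equivalently, this is the classical fact that $\finiteFieldRepresentation\restriction_{\Mirabolic{2m}}$ is irreducible of dimension $\grpIndex{\Mirabolic{2m}}{\UnipotentRadical{2m}}$.) Thus it suffices to prove the $\finiteFieldRepresentation$-free inequality
$$\dim_{\cComplex}\Hom_{\GL{m}{\finiteField} \times \Mirabolic{m}}\left(\Ind{\UnipotentRadical{2m}}{\Mirabolic{2m}}{\fieldCharacter}, 1\right) \le 1.$$

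For the last step I would apply Mackey's formula. Writing $H := \GL{m}{\finiteField} \times \Mirabolic{m}$, the left-hand side equals the number of double cosets $x \in \lquot{H}{\Mirabolic{2m}} / \UnipotentRadical{2m}$ for which the conjugated character ${}^{x}\fieldCharacter$ is trivial on $H \cap x\UnipotentRadical{2m}x^{-1}$. One then parametrizes these double cosets by a Bruhat-type decomposition of $\Mirabolic{2m}$ adapted to $H$, and checks that such a coset can satisfy the triviality condition only if it avoids every ``simple-root'' direction of $\UnipotentRadical{2m}$ on which $\fieldCharacter$ is non-trivial, which pins it down to a single distinguished coset (roughly, the one attached to the Jacquet--Shalika permutation $\evenPermutationMatrix$, i.e. the big cell relative to $H$).

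I expect this double-coset bookkeeping to be the main obstacle: the family $\lquot{H}{\Mirabolic{2m}}/\UnipotentRadical{2m}$ is positive-dimensional for $m \ge 2$, so one cannot simply quote $H\UnipotentRadical{2m} = \Mirabolic{2m}$. An alternative, and closer to \cite{matringe2012cuspidal}, is an induction on $m$ using the adjunction and exactness properties of the Bernstein--Zelevinsky functors $\MirabolicPlusFunctor$, $\Psi^{+}$ and their left adjoints, matched against the $\GL{m-1}{\finiteField} \ltimes \finiteField^{m-1}$ structure of $\Mirabolic{m}$, so that at each stage the Hom-space is either preserved or computed by a single orbit. Note that only the bound $\le 1$ is needed here; non-vanishing is not required for \Cref{thm:multiplicity-one-even-case} (and is in any case visible from \Cref{prop:jacquet-shalika-integral-of-bessel-function-even-case}). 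The odd analogue, \cref{thm:multiplicity-one-theorem-odd-case}, is handled in the same way, with $\LeviSubgroup{m,m}$ replaced by the non-standard Levi $\NonstandardLeviSubgroup{2m+1}$.
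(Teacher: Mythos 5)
Your opening reductions are correct and match the paper's starting point: $\LeviSubgroup{m,m}\cap\Mirabolic{2m}\isomorphic\GL{m}{\finiteField}\times\Mirabolic{m}$, and by \cref{thm:cuspidal-restriction-to-mirabolic} one has $\finiteFieldRepresentation\restriction_{\Mirabolic{2m}}\isomorphic\left(\MirabolicPlusFunctor\right)^{2m-1}\left(1\right)\isomorphic\Ind{\UnipotentRadical{2m}}{\Mirabolic{2m}}{\fieldCharacter}$, so the theorem is indeed equivalent to $\dim_{\cComplex}\Hom_{\GL{m}{\finiteField}\times\Mirabolic{m}}\left(\Ind{\UnipotentRadical{2m}}{\Mirabolic{2m}}{\fieldCharacter},1\right)\le 1$. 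But that restatement carries the entire content of the theorem, and the step you propose for it --- Mackey's formula plus the assertion that the invariance condition ``pins it down to a single distinguished coset attached to $\evenPermutationMatrix$'' --- is not carried out; you yourself identify the double-coset bookkeeping as the main obstacle and leave it there. The difficulty is genuine: the coset space $\lquot{\left(\GL{m}{\finiteField}\times\Mirabolic{m}\right)}{\Mirabolic{2m}}/\UnipotentRadical{2m}$ is large, and deciding on which cosets ${}^{x}\fieldCharacter$ is trivial on $H\cap x\UnipotentRadical{2m}x^{-1}$ is exactly where the two mirabolic conditions (ambient group $\Mirabolic{2m}$ rather than $\GL{2m}{\finiteField}$, and $\Mirabolic{m}$ rather than $\GL{m}{\finiteField}$ in the second factor) must be exploited; for the analogous question with the full group and the full Levi, the corresponding multiplicity is far larger than one, so no soft ``avoid the simple roots'' argument can succeed and an orbit-by-orbit analysis is unavoidable. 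As written, the proposal is a correct reduction followed by an unproved combinatorial claim, i.e.\ there is a gap.

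The alternative you mention only in passing --- an induction via the Bernstein--Zelevinsky functor $\MirabolicPlusFunctor$ --- is in fact the paper's proof: it establishes two embedding statements, \cref{prop:embedding-n-minus-1} and \cref{prop:embedding-n-minus-2}, which lower $n$ by one and by two respectively, proved by a distribution-theoretic support argument resting on the stabilizer computations of \cref{lem:stabilizer-of-character-p-q} and \cref{lem:stabilizer-of-character-p-q-minus-1}, and then iterates from $\left(\MirabolicPlusFunctor\right)^{2m-1}\left(1\right)$ down to the one-dimensional space $\Hom_{\Mirabolic{2}\cap\TwistedLeviSubgroup[2]{1}{0}}\left(1,1\right)$. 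In effect this performs your double-coset analysis one mirabolic layer at a time, which is what makes it tractable; to complete your argument you would either carry out that induction (the route I recommend) or reprove those support lemmas globally inside a single Mackey computation, which is substantially harder than the sketch suggests.
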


\begin{thm}\label{thm:multiplicity-one-odd-case}
	Let $\representationDeclaration{\finiteFieldRepresentation}$ be an irreducible cuspidal representation of $\GL{2m+1}{\finiteField}$. Then
	$$\dim_{\cComplex}\Hom_{\NonstandardLeviSubgroup{2m+1} \cap \Mirabolic{2m+1}}\left(\finiteFieldRepresentation, 1\right) \le 1,$$
	where $\Mirabolic{2m+1}$ is the mirabolic subgroup and $\NonstandardLeviSubgroup{2m+1}$ is the following maximal (non-standard) Levi subgroup of $\GL{2m+1}{\finiteField}$ corresponding to the partition $\left(m+1,m\right)$:
	$$\NonstandardLeviSubgroup{2m+1}= \left\{ \begin{pmatrix}
			g_1 &     & u       \\
			    & g_2 &         \\
			v   &     & \lambda
		\end{pmatrix} \mid g_1,g_2 \in \GL{m}{\finiteField}, \,u \in \Mat{m}{1}{\finiteField}, \,v \in \Mat{1}{m}{\finiteField}, \lambda \in \finiteField \right\} \cap \GL{2m+1}{\finiteField}.$$
\end{thm}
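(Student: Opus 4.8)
We prove \Cref{thm:multiplicity-one-odd-case}; the argument for \Cref{thm:multiplicity-one-even-case} is identical --- replace $\GL{2m+1}{\finiteField}$, $\NonstandardLeviSubgroup{2m+1}$, $\Mirabolic{2m+1}$ by $\GL{2m}{\finiteField}$, $\LeviSubgroup{m,m}$, $\Mirabolic{2m}$ --- and slightly simpler. Following the finite field analogue of Matringe's argument \cite{matringe2012cuspidal}, the plan rests on the restriction of a cuspidal representation to the mirabolic subgroup: since $\finiteFieldRepresentation$ is cuspidal, its Bernstein--Zelevinsky derivatives $\finiteFieldRepresentation^{(i)}$ vanish for $1 \le i \le n-1$ while $\finiteFieldRepresentation^{(n)}$ is the trivial representation of the trivial group, so the Bernstein--Zelevinsky filtration of $\finiteFieldRepresentation \restriction_{\Mirabolic{n}}$ has a single nonzero layer and $\finiteFieldRepresentation \restriction_{\Mirabolic{n}} \isomorphic \Ind{\UnipotentRadical{n}}{\Mirabolic{n}}{\fieldCharacter}$ is the Gelfand--Graev representation of $\Mirabolic{n}$ (equivalently, $\finiteFieldRepresentation$ coincides with its Kirillov model; cf. \cite{gel1970representations} and Bernstein--Zelevinsky). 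Writing $H = \NonstandardLeviSubgroup{2m+1} \cap \Mirabolic{2m+1} \isomorphic \GL{m}{\finiteField} \times \Mirabolic{m+1}\left(\finiteField\right)$ and noting $H \le \Mirabolic{n}$, this identifies $\Hom_H\left(\finiteFieldRepresentation, 1\right)$ with $\Hom_H\left(\Ind{\UnipotentRadical{n}}{\Mirabolic{n}}{\fieldCharacter}\restriction_H, 1\right)$.

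Next I would apply Mackey's restriction formula to decompose $\Ind{\UnipotentRadical{n}}{\Mirabolic{n}}{\fieldCharacter}\restriction_H$ as the direct sum over $\gamma \in \lquot{\UnipotentRadical{n}}{\Mirabolic{n}}/H$ of the representations $\Ind{\gamma^{-1}\UnipotentRadical{n}\gamma \cap H}{H}{{}^{\gamma}\fieldCharacter}$, and then invoke Frobenius reciprocity: each summand contributes to $\Hom_H(\cdot, 1)$ a space of dimension at most one, of dimension exactly one precisely when $\fieldCharacter$ is trivial on $\UnipotentRadical{n} \cap \gamma H \gamma^{-1}$. So the theorem reduces to the purely combinatorial assertion
$$\#\left\{ \gamma \in \lquot{\UnipotentRadical{n}}{\Mirabolic{n}}/H \;:\; \fieldCharacter\restriction_{\UnipotentRadical{n} \cap \gamma H \gamma^{-1}} = 1 \right\} \le 1.$$

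To establish this I would choose representatives for the double cosets $\lquot{\UnipotentRadical{n}}{\Mirabolic{n}}/H$ adapted to the Bruhat decomposition relative to the two parabolic subgroups attached to $\UnipotentRadical{n}$ and to $H$, and exploit that $\ker\bigl(\fieldCharacter\restriction_{\UnipotentRadical{n}}\bigr)$ is the subgroup on which the sum of the simple-root coordinates vanishes: triviality of $\fieldCharacter$ on $\UnipotentRadical{n} \cap \gamma H\gamma^{-1}$ forces this intersection to contain no simple-root subgroup. Since $H$ already carries a large unipotent part --- a full $\GL{m}{\finiteField}$ together with the mirabolic unipotent radical of $\GL{m+1}{\finiteField}$ --- this condition is very restrictive, and a direct case analysis should show it is met only by the representative giving the open $\UnipotentRadical{n}$--$H$ orbit, whose surviving functional is exactly the linear period underlying the Jacquet--Shalika integral.

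The main obstacle is this last step, the classification of the double cosets on which $\fieldCharacter$ restricts trivially --- it is the only point where the precise shape of $\NonstandardLeviSubgroup{2m+1}$ (respectively $\LeviSubgroup{m,m}$) enters. Rather than enumerating Weyl-group cosets head on, the cleanest route is probably an induction on $m$ organized by the Bernstein--Zelevinsky functors $\MirabolicPlusFunctor$ and $\Psi^{+}$, as in \cite{matringe2012cuspidal}: one peels off one mirabolic layer at a time, matches it against one block of the Levi, and checks at each stage that a single orbit can carry a $\fieldCharacter$-invariant functional, so that the count collapses to $\le 1$ once all layers are exhausted.
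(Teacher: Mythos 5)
Your opening moves are sound: for a cuspidal $\finiteFieldRepresentation$ the identification $\finiteFieldRepresentation\restriction_{\Mirabolic{n}} \isomorphic \Ind{\UnipotentRadical{n}}{\Mirabolic{n}}{\fieldCharacter} = \left(\MirabolicPlusFunctor\right)^{n-1}\left(1\right)$ is correct (it is \cref{thm:cuspidal-restriction-to-mirabolic}), and $\NonstandardLeviSubgroup{2m+1}\cap\Mirabolic{2m+1}$ is indeed conjugate to $\GL{m}{\finiteField}\times\Mirabolic{m+1}\left(\finiteField\right)$. The Mackey-plus-Frobenius reduction is also formally valid: each double coset $\gamma \in \lquot{\UnipotentRadical{n}}{\Mirabolic{n}}/H$ contributes dimension $0$ or $1$, so $\dim\Hom_H\left(\finiteFieldRepresentation,1\right)$ equals the number of cosets on which the transported character is trivial. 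But that is exactly where the proposal stops being a proof: the combinatorial assertion that at most one coset survives is, after your reduction, \emph{equivalent} to the theorem, and you do not establish it. The remark that the intersection $\UnipotentRadical{n}\cap\gamma H\gamma^{-1}$ must avoid simple-root subgroups is only a necessary condition, and "a direct case analysis should show" is not carried out; parametrizing $\lquot{\UnipotentRadical{n}}{\Mirabolic{2m+1}}/H$ and controlling $\fieldCharacter$ on all intersections is precisely the hard content. Your own fallback — "an induction organized by $\MirabolicPlusFunctor$, as in \cite{matringe2012cuspidal}" — concedes the point, and that induction is also not supplied.

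For comparison, the paper does not enumerate double cosets at all. It conjugates the period subgroup into the twisted Levi $\TwistedLeviSubgroup[2m+2]{m+1}{m}$, and then proves two embedding statements (\cref{prop:embedding-n-minus-1}, \cref{prop:embedding-n-minus-2}): an invariant functional on $\MirabolicPlusFunctor\left(\MirabolicRepresentation\right)$ gives a distribution on $\Schwartz\left(\Mirabolic{n},\MirabolicRepresentationVectorSpace\right)$ whose support is pinned down by the stabilizer computations of \cref{lem:stabilizer-of-character-p-q} and \cref{lem:stabilizer-of-character-p-q-minus-1} (showing the character $\MirabolicCharacter$ forces support inside $\Mirabolic{n-1}\MirabolicUnipotentRadical{n}\TwistedLeviSubgroup{p}{q-1}$), and the surviving distribution is read off as an invariant functional one level down. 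Iterating these embeddings, $\Hom_{\Mirabolic{2m+1}\cap\TwistedLeviSubgroup[2m+2]{m+1}{m}}\left(\finiteFieldRepresentation,1\right)$ embeds into $\Hom_{\Mirabolic{2}\cap\TwistedLeviSubgroup[2]{1}{0}}\left(1,1\right)$, which is one dimensional. If you want to complete your write-up you must either supply the full double coset analysis (with explicit representatives and the verification that only the open orbit carries a trivial restriction of $\fieldCharacter$), or actually prove the layer-by-layer embedding statements you allude to — which amounts to reproducing the paper's Propositions \ref{prop:embedding-n-minus-1} and \ref{prop:embedding-n-minus-2} rather than citing them as a plausible route.
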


The proofs of these theorems require some preparations.

For a finite group $G$ and a vector space $V$ over $\cComplex$, denote $$\Schwartz\left(G, V\right) = \left\{f : G \rightarrow V \right\}.$$
Denote by $\rightTranslation$ and $\leftTranslation$ the right and left actions of $G$ on $\Schwartz\left(G, V\right)$ respectively: $ \left(\rightTranslation\left(g_0\right) f\right)\left(g\right) = f\left(g g_0\right)$, $ \left(\leftTranslation\left(g_0\right) f\right)\left(g\right) = f\left({g_0}^{-1} g \right)$.

Let $\fieldCharacter : \finiteField \rightarrow \cComplex$ be a non-trivial additive character of $\finiteField$.

For every positive integer $n$, we denote $G_n = \GL{n}{\finiteField}$. In the following, we use the convention that for $k < n$, $G_k$ is embedded in $G_n$ by mapping $g \mapsto \smallDiagTwo{g}{\IdentityMatrix{n-k}}$. Suppose that $p + q = n$, where $p \ge q \ge 0$. Let $\LeviPermutation{p}{q}$ be the following permutation:
$$\begin{pmatrix}
		1 & 2 & \cdots & p-q & \mid & p-q+1 & p-q+2 & \cdots & p     & | & p+1   & p+2   & \cdots & p+q \\
		1 & 2 & \cdots & p-q & \mid & p-q+1 & p-q+3 & \cdots & p+q-1 & | & p-q+2 & p-q+4 & \cdots & p+q
	\end{pmatrix}.$$
Let $\LeviPermutationMatrix{p}{q}$ be the column permutation matrix corresponding to $\LeviPermutation{p}{q}$.

Let $\TwistedLeviSubgroup{p}{q} = \LeviPermutationMatrix{p}{q} \LeviSubgroup{p,q} \LeviPermutationMatrix{p}{q}^{-1}$, where $\LeviSubgroup{p,q}$ is the Levi subgroup corresponding to the partition $\left(p,q\right)$. If $q \ge 1$, denote $$\TwistedLeviSubgroup{p}{q-1} = \LeviPermutationMatrix{p}{q} \left\{ \diagTwo{m}{1} \mid m \in \LeviSubgroup{p, q-1} \right\}\LeviPermutationMatrix{p}{q}^{-1},$$ where $\LeviSubgroup{p, q-1}$ is the Levi subgroup of $\GL{n-1}{\finiteField}$ corresponding to the partition $\left(p, q-1\right)$. Note that since $\LeviPermutation{p}{q}\left(n\right) = n$, we have that $\TwistedLeviSubgroup{p}{q-1}$ is a subgroup of $G_{n-1}$. If $q \ge 1$, also denote $$\TwistedLeviSubgroup{p-1}{q-1} = \left\{ \diagTwo{h}{\IdentityMatrix{2}} \mid h \in \TwistedLeviSubgroup[n-2]{p-1}{q-1} \right\}.$$

Let $$\MirabolicUnipotentRadical{n} = \left\{ \begin{pmatrix}
		\IdentityMatrix{n-1} & x \\
		                     & 1
	\end{pmatrix} \mid x \in \Mat{n-1}{1}{\finiteField} \right\},$$ be the unipotent radical of $G_n$, corresponding to the partition $\left(n-1,1\right)$. Let $\Mirabolic{n} \le G_n$ be the mirabolic subgroup. We have that $\Mirabolic{n} = \MirabolicUnipotentRadical{n} \rtimes G_{n-1}$. $\fieldCharacter$ defines a character on $\MirabolicUnipotentRadical{n}$ by $\fieldCharacter\left(u\right) = \fieldCharacter\left(u_{n-1,n}\right)$.

Recall the definition of the following Bernstein-Zelevinsky derivative: for a representation $\finiteFieldRepresentation$ of $\Mirabolic{n-1}$, we define $\MirabolicPlusFunctor \left( \finiteFieldRepresentation \right) = \Ind{\Mirabolic{n-1} \MirabolicUnipotentRadical{n}}{\Mirabolic{n}}{\finiteFieldRepresentation \otimes \fieldCharacter}$, where $\left(\finiteFieldRepresentation \otimes \fieldCharacter\right) \left(p u\right) = \fieldCharacter \left(u\right) \cdot \finiteFieldRepresentation \left(p\right)$, for $p \in \Mirabolic{n-1}$ and $u \in \MirabolicUnipotentRadical{n}$. We have the following relation between this functor and irreducible cuspidal representations of $G_n$:

\begin{thm}[{\cite[Theorem 2.3]{gel1970representations}}] \label{thm:cuspidal-restriction-to-mirabolic}
	Let $\finiteFieldRepresentation$ be an irreducible cuspidal representation of $\GL{n}{\finiteField}$. Then the restriction of $\finiteFieldRepresentation$ to the mirabolic subgroup $\Mirabolic{n}$ is isomorphic to the representation $\left(\MirabolicPlusFunctor\right)^{n-1}\left(1\right)$.
\end{thm}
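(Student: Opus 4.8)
The plan is to run the Bernstein--Zelevinsky derivative calculus in the finite field setting, in the form worked out by Gelfand and Kazhdan. Besides the functor $\MirabolicPlusFunctor : \mathrm{Rep}\left(\Mirabolic{n-1}\right) \to \mathrm{Rep}\left(\Mirabolic{n}\right)$ recalled above, I introduce three companion functors: the twisted Jacquet functor $\Phi^{-} : \mathrm{Rep}\left(\Mirabolic{n}\right) \to \mathrm{Rep}\left(\Mirabolic{n-1}\right)$ given by the $\left(\MirabolicUnipotentRadical{n},\fieldCharacter\right)$-coinvariants; the inflation-and-induction functor $\Psi^{+} : \mathrm{Rep}\left(\GL{n-1}{\finiteField}\right) \to \mathrm{Rep}\left(\Mirabolic{n}\right)$, $\Psi^{+}\left(\sigma\right) = \Ind{\GL{n-1}{\finiteField}\cdot\MirabolicUnipotentRadical{n}}{\Mirabolic{n}}{\sigma}$ where $\sigma$ is inflated to $\GL{n-1}{\finiteField}\cdot\MirabolicUnipotentRadical{n}$ through $\MirabolicUnipotentRadical{n}$; and the untwisted Jacquet functor $\Psi^{-} : \mathrm{Rep}\left(\Mirabolic{n}\right) \to \mathrm{Rep}\left(\GL{n-1}{\finiteField}\right)$ given by $\MirabolicUnipotentRadical{n}$-coinvariants. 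The first ingredient is the \emph{fundamental short exact sequence}: for every $\tau \in \mathrm{Rep}\left(\Mirabolic{n}\right)$ there is a functorial sequence $0 \to \MirabolicPlusFunctor\Phi^{-}\left(\tau\right) \to \tau \to \Psi^{+}\Psi^{-}\left(\tau\right) \to 0$, which simply records the decomposition of $\tau\restriction_{\MirabolicUnipotentRadical{n}}$ according to whether $\MirabolicUnipotentRadical{n} \isomorphic \finiteField^{n-1}$ acts through the orbit of $\fieldCharacter$ or trivially; over a finite field this reduces to the transitivity of $\GL{n-1}{\finiteField}$ on the nonzero characters of $\MirabolicUnipotentRadical{n}$.

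Iterating the fundamental sequence yields a $\Mirabolic{n}$-stable filtration $0 = \tau_0 \subseteq \tau_1 \subseteq \dots \subseteq \tau_n = \finiteFieldRepresentation\restriction_{\Mirabolic{n}}$ with successive quotients $\tau_k/\tau_{k-1} \isomorphic \left(\MirabolicPlusFunctor\right)^{k-1}\Psi^{+}\left(\finiteFieldRepresentation^{(k)}\right)$, where the $k$-th derivative $\finiteFieldRepresentation^{(k)} := \Psi^{-}\left(\Phi^{-}\right)^{k-1}\left(\finiteFieldRepresentation\restriction_{\Mirabolic{n}}\right)$ is a representation of $\GL{n-k}{\finiteField}$. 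Thus it suffices to prove: (i) $\finiteFieldRepresentation^{(k)} = 0$ for $1 \le k \le n-1$; and (ii) $\finiteFieldRepresentation^{(n)} \isomorphic \cComplex$. Granting these, the filtration collapses to $\finiteFieldRepresentation\restriction_{\Mirabolic{n}} \isomorphic \left(\MirabolicPlusFunctor\right)^{n-1}\Psi^{+}\left(\cComplex\right) = \left(\MirabolicPlusFunctor\right)^{n-1}\left(1\right)$, using that $\Psi^{+}$ carries the trivial representation of $\GL{0}{\finiteField} = \left\{1\right\}$ to the trivial representation of $\Mirabolic{1} = \left\{1\right\}$.

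For (i) and (ii), the point is that $\Psi^{-}\left(\Phi^{-}\right)^{k-1}$ applied to $\finiteFieldRepresentation\restriction_{\Mirabolic{n}}$ is, by a Mackey-theoretic orbit computation (the finite-field analogue of the Bernstein--Zelevinsky geometric lemma), a constituent of a twisted Jacquet module of $\finiteFieldRepresentation$ along the standard parabolic of type $\left(n-k,k\right)$. Since $\finiteFieldRepresentation$ is cuspidal, every Jacquet module along a proper parabolic vanishes, so $\finiteFieldRepresentation^{(k)} = 0$ for $1 \le k \le n-1$, giving (i). For $k = n$ the same computation identifies $\finiteFieldRepresentation^{(n)}$ with the Gelfand--Graev (Whittaker) coinvariants of $\finiteFieldRepresentation$ with respect to $\left(\UnipotentRadical{n},\fieldCharacter\right)$; an irreducible cuspidal representation of $\GL{n}{\finiteField}$ is generic, and by Gelfand--Graev multiplicity one this space is one-dimensional, so $\finiteFieldRepresentation^{(n)} \isomorphic \cComplex$, which is (ii).

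The main obstacle is the bookkeeping in the penultimate step: matching $\Psi^{-}\left(\Phi^{-}\right)^{k-1}\left(\finiteFieldRepresentation\restriction_{\Mirabolic{n}}\right)$ with an honest (twisted) Jacquet module of $\finiteFieldRepresentation$ requires organizing the double cosets of the relevant $\Mirabolic{n}$-type subgroups of $\GL{n}{\finiteField}$ and tracking the characters that appear on the successive unipotent radicals. Once that identification is in place, (i) is immediate from cuspidality and (ii) from uniqueness of the Whittaker model, while the exactness assertions in the first two steps are elementary for finite groups.
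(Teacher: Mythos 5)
This statement is not proved in the paper at all: it is quoted from Gelfand's 1970 paper (his Theorem 2.3), so there is no internal argument to compare against. Your proposal reconstructs what is essentially the classical proof via the derivative calculus (Gelfand, later Bernstein--Zelevinsky): the functors $\Phi^{\pm},\Psi^{\pm}$, the decomposition of $\finiteFieldRepresentation\restriction_{\Mirabolic{n}}$ with pieces $\left(\MirabolicPlusFunctor\right)^{k-1}\Psi^{+}\left(\finiteFieldRepresentation^{(k)}\right)$, vanishing of the intermediate derivatives, and one-dimensionality of the top one; this is correct in outline. Two precision points. First, over a finite field the fundamental sequence splits (everything is semisimple), so the filtration is a direct sum, and your $\Psi^{+}$ needs no induction since $\GL{n-1}{\finiteField}\cdot\MirabolicUnipotentRadical{n}=\Mirabolic{n}$ -- it is just inflation. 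Second, the step you flag as "bookkeeping" is where the only real check lies: cuspidality kills \emph{untwisted} Jacquet modules, whereas $\finiteFieldRepresentation^{(k)}=\Psi^{-}\left(\Phi^{-}\right)^{k-1}\left(\finiteFieldRepresentation\restriction_{\Mirabolic{n}}\right)$ is by construction the coinvariants for the pair $\left(V_k,\theta_k\right)$, with $V_k$ the unipotent radical of the parabolic of type $\left(n-k,1,\dots,1\right)$ and $\theta_k\left(v\right)=\fieldCharacter\left(v_{n-k+1,n-k+2}+\dots+v_{n-1,n}\right)$. One must observe that $\theta_k$ is trivial on the normal subgroup $\UnipotentRadical{n-k,k}\le V_k$ (its support lies strictly inside the lower-right $k\times k$ block), so that taking $\left(V_k,\theta_k\right)$-coinvariants factors through the untwisted Jacquet module along $\left(n-k,k\right)$, which vanishes for $1\le k\le n-1$; no Mackey or geometric-lemma argument is needed, only computation in stages. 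With that spelled out, (i) holds, and for (ii) you can even dispense with quoting genericity: since $\finiteFieldRepresentation\ne 0$ and the lower derivatives vanish, $\finiteFieldRepresentation^{(n)}\ne 0$, and Gelfand--Graev uniqueness bounds its dimension by one.
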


We start with the following lemmas from \cite{matringe2012cuspidal}. These are purely algebraic statements, which are proved exactly as in \cite{matringe2012cuspidal}.

\begin{lem}[{\cite[Lemma 3.1]{matringe2012cuspidal}}]\label{lem:stabilizer-of-character-p-q}
	Let $p \ge q \ge 1$ with $p+q = n$, and let $$\CharacterStabilizerSubgroup{p}{q} = \left\{ g \in G_{n-1} \mid \fieldCharacter \left(gug^{-1}\right) = 1, \forall u \in \MirabolicUnipotentRadical{n} \cap \TwistedLeviSubgroup{p}{q} \right\}.$$
	Then $\CharacterStabilizerSubgroup{p}{q} = \Mirabolic{n-1} \cdot \TwistedLeviSubgroup{p}{q-1}$.
\end{lem}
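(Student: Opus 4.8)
The plan is to make the condition defining $\CharacterStabilizerSubgroup{p}{q}$ completely explicit, and then to recognize the resulting set of matrices as the set of products $\Mirabolic{n-1}\cdot\TwistedLeviSubgroup{p}{q-1}$; the argument follows \cite[Lemma 3.1]{matringe2012cuspidal}. First I would compute $\MirabolicUnipotentRadical{n}\cap\TwistedLeviSubgroup{p}{q}$. An element of $\MirabolicUnipotentRadical{n}$ is $\IdentityMatrix{n}+x$ with $x$ supported on the last column in rows $1,\dots,n-1$, so $\MirabolicUnipotentRadical{n}$ is abelian and identified with $\finiteField^{n-1}$ via $x\mapsto v=(x_{1,n},\dots,x_{n-1,n})$. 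Since $\TwistedLeviSubgroup{p}{q}$ is $\LeviPermutationMatrix{p}{q}$-conjugate to the block-diagonal subgroup $\LeviSubgroup{p,q}$, such an $\IdentityMatrix{n}+x$ lies in $\TwistedLeviSubgroup{p}{q}$ precisely when $v$ is supported on the index set $\Sigma:=\LeviPermutation{p}{q}\bigl(\{p+1,\dots,n-1\}\bigr)$; this is a direct unwinding of the two-line description of $\LeviPermutation{p}{q}$ together with $\LeviPermutation{p}{q}(n)=n$. I would also record for later use the elementary facts $\LeviPermutation{p}{q}(p)=n-1$ (so $n-1\notin\Sigma$) and $\lvert\Sigma\rvert=q-1$.

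Next I would convert the character condition into linear algebra. Conjugation by $g=\smallDiagTwo{g'}{1}\in G_{n-1}$ acts on $\MirabolicUnipotentRadical{n}\cong\finiteField^{n-1}$ by $v\mapsto g'v$, and $\fieldCharacter$ corresponds under this identification to $v\mapsto\fieldCharacter(v_{n-1})$. Hence for $u\in\MirabolicUnipotentRadical{n}\cap\TwistedLeviSubgroup{p}{q}$ corresponding to $v$ supported on $\Sigma$ one has $\fieldCharacter(g u g^{-1})=\fieldCharacter\bigl(\sum_{i\in\Sigma}g'_{n-1,i}v_{i}\bigr)$, and as $v$ varies this quantity runs over an $\finiteField$-subspace of $\finiteField$, which is annihilated by the nontrivial character $\fieldCharacter$ only when it is $\{0\}$. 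Therefore $$\CharacterStabilizerSubgroup{p}{q}=\bigl\{\,g'\in\GL{n-1}{\finiteField}\ :\ g'_{n-1,i}=0\ \text{ for every }i\in\Sigma\,\bigr\},$$ i.e.\ $\CharacterStabilizerSubgroup{p}{q}$ consists of the invertible $(n-1)\times(n-1)$ matrices whose bottom row is supported on $T:=\{1,\dots,n-1\}\setminus\Sigma$. (In particular this set need not be a subgroup, and $\Mirabolic{n-1}\cdot\TwistedLeviSubgroup{p}{q-1}$ should likewise be understood as a set of products.)

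Finally I would match this description with $\Mirabolic{n-1}\cdot\TwistedLeviSubgroup{p}{q-1}$. The combinatorial core is that, being $\LeviPermutationMatrix{p}{q}$-conjugate to a block-diagonal subgroup of $G_{n-1}$, the group $\TwistedLeviSubgroup{p}{q-1}$ has blocks indexed by $\LeviPermutation{p}{q}(\{1,\dots,p\})=T$ and $\LeviPermutation{p}{q}(\{p+1,\dots,n-1\})=\Sigma$, and $n-1=\LeviPermutation{p}{q}(p)\in T$. Granting this: for $m\in\Mirabolic{n-1}$ and $h\in\TwistedLeviSubgroup{p}{q-1}$ the bottom row of $mh$ equals the bottom row of $h$ (the bottom row of $m$ being $\rowvector{n-1}$), which is supported on the block of $\TwistedLeviSubgroup{p}{q-1}$ containing $n-1$, namely $T$, so $\Mirabolic{n-1}\cdot\TwistedLeviSubgroup{p}{q-1}\subseteq\CharacterStabilizerSubgroup{p}{q}$. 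Conversely, given $g'\in\CharacterStabilizerSubgroup{p}{q}$, its bottom row is a nonzero vector supported on $T$; since $n-1\in T$ it extends to an invertible matrix supported on the block $T$, producing $h\in\TwistedLeviSubgroup{p}{q-1}$ with the same bottom row as $g'$, and then $m:=g'h^{-1}$ has bottom row $\rowvector{n-1}h h^{-1}=\rowvector{n-1}$, i.e.\ $m\in\Mirabolic{n-1}$ and $g'=mh$. I expect the only real obstacle to be the index bookkeeping in the first and last steps — above all checking that $\LeviPermutation{p}{q}(\{p+1,\dots,n-1\})$ is precisely the block of $\TwistedLeviSubgroup{p}{q-1}$ complementary to the one containing $n-1$; everything else is formal, and I expect the argument to run verbatim as in \cite{matringe2012cuspidal}.
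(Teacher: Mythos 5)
Your proof is correct: the identification of $\MirabolicUnipotentRadical{n}\cap\TwistedLeviSubgroup{p}{q}$ with the coordinate subspace of vectors supported on $\Sigma=\LeviPermutation{p}{q}\left(\{p+1,\dots,n-1\}\right)$, the translation of the character condition into the vanishing of the bottom-row entries of $g$ on $\Sigma$, and the two inclusions via bottom-row bookkeeping (using $\LeviPermutation{p}{q}\left(p\right)=n-1$ and that the blocks of $\TwistedLeviSubgroup{p}{q-1}$ are indexed by $T$ and $\Sigma$) all check out. The paper gives no proof of its own --- it states the lemma is proved exactly as in Matringe's Lemma 3.1 --- and your argument is precisely that standard computation, so there is nothing further to compare.
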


\begin{lem}[{\cite[Lemma 3.2]{matringe2012cuspidal}}]\label{lem:stabilizer-of-character-p-q-minus-1}
	Let $p \ge q \ge 1$ with $p+q = n$, and let $$\CharacterStabilizerSubgroup{p}{q-1} = \left\{ g \in G_{n-2} \mid \fieldCharacter \left(gug^{-1}\right) = 1, \forall u \in \MirabolicUnipotentRadical{n-1} \cap \TwistedLeviSubgroup{p}{q-1} \right\}.$$
	Then $\CharacterStabilizerSubgroup{p}{q-1} = \Mirabolic{n-2} \cdot \TwistedLeviSubgroup{p-1}{q-1}$.
\end{lem}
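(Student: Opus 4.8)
The statement has already been attributed to \cite[Lemma 3.2]{matringe2012cuspidal}, so the plan is simply to reproduce that explicit matrix computation, which is the ``one step deeper'' analogue of \cref{lem:stabilizer-of-character-p-q} and runs in close parallel to it. I would not try to reduce it formally to \cref{lem:stabilizer-of-character-p-q}: even though the two lemmas have the same flavour, the group $\TwistedLeviSubgroup{p}{q-1}$ is built from the permutation $\LeviPermutationMatrix{p}{q}$ rather than $\LeviPermutationMatrix{p}{q-1}$, so the relevant index sets are not literally those of a smaller-rank instance of \cref{lem:stabilizer-of-character-p-q}, and a direct computation is cleaner.

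The first step is to make $\MirabolicUnipotentRadical{n-1}\cap\TwistedLeviSubgroup{p}{q-1}$ completely explicit. Conjugating the one-parameter-per-coordinate description of $\MirabolicUnipotentRadical{n-1}$ (free entries in the $(n-1)$-st column, rows $1,\dots,n-2$) by $\LeviPermutationMatrix{p}{q}$ and intersecting with the block-diagonal subgroup $\diag(\LeviSubgroup{p,q-1},1)$, a column-by-column check — using that $\LeviPermutation{p}{q}$ sends $p$ to $n-1$ and fixes $n$ — shows that this intersection is a unipotent group of the shape $\{\, I+\sum_{j\in J}y_j E_{j,n-1} : y_j\in\finiteField \,\}$, where $J\subseteq\{1,\dots,n-2\}$ is the explicit index set cut out by the requirement of unipotency, consisting of the initial indices $1,\dots,p-q$ together with every second index of a middle block; in particular one reads off whether $\fieldCharacter$ restricts non-trivially by checking whether $n-2\in J$. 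This is exactly the analogue, at level $n-1$, of the description of $\MirabolicUnipotentRadical{n}\cap\TwistedLeviSubgroup{p}{q}$ needed for \cref{lem:stabilizer-of-character-p-q}.

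Next, for $g=\diag(h,\IdentityMatrix{2})\in G_{n-2}$ with $h\in\GL{n-2}{\finiteField}$, I would compute $gug^{-1}$ for $u$ in the group above: conjugation by $g$ multiplies the $(n-1)$-st column through by $h$ and leaves everything else fixed, so $\fieldCharacter\left(gug^{-1}\right)$ only involves the $(n-2)$-nd row of $h$. Hence the defining condition of $\CharacterStabilizerSubgroup{p}{q-1}$ becomes a system of linear equations saying that the entries $h_{n-2,j}$ vanish for $j$ in a prescribed subset of $\{1,\dots,n-2\}$ (again dictated by $\LeviPermutation{p}{q}$). One then identifies the subgroup of $\GL{n-2}{\finiteField}$ so obtained with $\Mirabolic{n-2}\cdot\TwistedLeviSubgroup{p-1}{q-1}$. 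The inclusion $\supseteq$ is immediate, since elements of $\Mirabolic{n-2}$ have $(0,\dots,0,1)$ as their $(n-2)$-nd row, and the block structure of $\TwistedLeviSubgroup{p-1}{q-1}$ — a ``de-interleaved'' copy of $\GL{p-1}{\finiteField}\times\GL{q-1}{\finiteField}$ occupying the first $n-2$ coordinates — forces the prescribed entries of the $(n-2)$-nd row to vanish.

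The real work is the reverse inclusion. Here I would argue directly: given $h$ satisfying the vanishing conditions, right-multiply it by a suitable element of $\TwistedLeviSubgroup{p-1}{q-1}$ to clear the remaining part of its $(n-2)$-nd row, ending up with a matrix in $\Mirabolic{n-2}$; alternatively, compare cardinalities, using $\sizeof{\Mirabolic{n-2}\cdot\TwistedLeviSubgroup{p-1}{q-1}}=\sizeof{\Mirabolic{n-2}}\,\sizeof{\TwistedLeviSubgroup{p-1}{q-1}}/\sizeof{\Mirabolic{n-2}\cap\TwistedLeviSubgroup{p-1}{q-1}}$ and checking it matches the number of $h\in\GL{n-2}{\finiteField}$ meeting the prescribed row conditions. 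The point that makes this go through is that $\TwistedLeviSubgroup{p-1}{q-1}$ is obtained from $\TwistedLeviSubgroup{p}{q-1}$ by deleting exactly one coordinate from each of its two blocks, so the constraints on the $(n-2)$-nd row line up precisely; pinning this down is the main obstacle, and beyond it the proof is just careful bookkeeping with the interleaving permutations $\LeviPermutation{p}{q}$ and $\LeviPermutation{p-1}{q-1}$, with no representation theory entering — only linear algebra over $\finiteField$.
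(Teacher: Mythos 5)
Your plan is correct and is essentially the paper's own route: the paper offers no argument beyond citing Matringe, and the cited proof is exactly the computation you outline — identify $\MirabolicUnipotentRadical{n-1}\cap\TwistedLeviSubgroup{p}{q-1}$ as $\left\{I_n+\sum_{j\in J}y_j E_{j,n-1}\right\}$ with $J=\LeviPermutation{p}{q}\left(\left\{1,\dots,p-1\right\}\right)$, translate the defining condition on $g=\diag\left(h,\IdentityMatrix{2}\right)$ into the vanishing of the $(n-2)$-nd row of $h$ on the columns indexed by $J$, and get the reverse inclusion by clearing that row through right multiplication by an element of $\TwistedLeviSubgroup{p-1}{q-1}$, whose second block occupies exactly the complementary coordinates. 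The one point worth recording is the edge case your own criterion detects: $n-2\in J$ occurs precisely when $q=1<p$, and there the asserted equality degenerates (the left-hand side is empty because the character is then nontrivial on all of $\MirabolicUnipotentRadical{n-1}$, while the right-hand side is all of $G_{n-2}$), so the computation — like the paper's applications of the lemma, which only invoke it with $p=q$ — should be carried out under the assumption $q\ge 2$ or $p=q$.
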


The proofs of both multiplicity one theorems rely on the following propositions:

\begin{prop}\label{prop:embedding-n-minus-1}
	Suppose $p \ge q \ge 1$ with $p+q = n$. Let $\representationDeclaration{\MirabolicRepresentation}$ be a representation of $\Mirabolic{n-1}$. Then there exists an embedding
	$$\Hom_{\Mirabolic{n} \cap \TwistedLeviSubgroup{p}{q}}\left(\MirabolicPlusFunctor\left(\MirabolicRepresentation\right), 1\right) \hookrightarrow \Hom_{\Mirabolic{n-1} \cap \TwistedLeviSubgroup{p}{q-1}} \left(\MirabolicRepresentation, 1\right). $$
\end{prop}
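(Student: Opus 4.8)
The plan is to run the double-coset (Bernstein--Zelevinsky) analysis of \cite[\S 3]{matringe2012cuspidal}. Write $H = \Mirabolic{n} \cap \TwistedLeviSubgroup{p}{q}$ and $Q = \Mirabolic{n-1}\MirabolicUnipotentRadical{n} = \MirabolicUnipotentRadical{n} \rtimes \Mirabolic{n-1}$, so that $\MirabolicPlusFunctor(\MirabolicRepresentation) = \Ind{Q}{\Mirabolic{n}}{\MirabolicRepresentation \otimes \fieldCharacter}$. Since $\Mirabolic{n} = \MirabolicUnipotentRadical{n} \rtimes G_{n-1}$, the coset space $Q \backslash \Mirabolic{n}$ is identified $G_{n-1}$-equivariantly with $\lquot{\Mirabolic{n-1}}{G_{n-1}}$, so one may choose representatives $y$ of the double cosets $Q \backslash \Mirabolic{n} / H$ inside $G_{n-1}$; concretely these are the orbits of $\overline{H}$, the image of $H$ in $G_{n-1}$, on $\lquot{\Mirabolic{n-1}}{G_{n-1}} \isomorphic \finiteField^{n-1} \setminus \left\{0\right\}$. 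Filtering $\MirabolicPlusFunctor(\MirabolicRepresentation)$ restricted to $H$ by supports on unions of these orbits, the graded pieces are, by the geometric lemma, the induced representations $\Ind{H \cap y Q y^{-1}}{H}{{}^{y}\left( \MirabolicRepresentation \otimes \fieldCharacter \right)}$, one per double coset $QyH$.

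Applying the left-exact contravariant functor $\Hom_{H}\left( -, 1 \right)$ to this filtration and using Frobenius reciprocity, the proposition reduces to two claims. First I would show that every double coset other than $QH$ contributes nothing: since $\MirabolicUnipotentRadical{n}$ is normal in $\Mirabolic{n}$ and $\MirabolicRepresentation \otimes \fieldCharacter$ restricts to $\MirabolicUnipotentRadical{n}$ as the character $\fieldCharacter$, the restriction of ${}^{y}\left( \MirabolicRepresentation \otimes \fieldCharacter \right)$ to $\MirabolicUnipotentRadical{n} \cap H = \MirabolicUnipotentRadical{n} \cap \TwistedLeviSubgroup{p}{q}$ is the character $u \mapsto \fieldCharacter\left( y^{-1} u y \right)$; a nonzero element of $\Hom_{H \cap y Q y^{-1}}\left( {}^{y}\left( \MirabolicRepresentation \otimes \fieldCharacter \right), 1\right)$ forces this character to be trivial, i.e. $y \in \CharacterStabilizerSubgroup{p}{q}$. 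By \cref{lem:stabilizer-of-character-p-q}, $\CharacterStabilizerSubgroup{p}{q} = \Mirabolic{n-1}\cdot\TwistedLeviSubgroup{p}{q-1}$, and since $\TwistedLeviSubgroup{p}{q-1}$ (viewed inside $G_{n-1}$) lies in $\overline{H}$ — because $\TwistedLeviSubgroup{p}{q-1} \subseteq \TwistedLeviSubgroup{p}{q}$ and $\TwistedLeviSubgroup{p}{q-1} \subseteq \Mirabolic{n-1} \subseteq \Mirabolic{n}$, hence $\TwistedLeviSubgroup{p}{q-1} \subseteq H$ — this condition forces $y$ into the double coset $QH$. Thus only the graded piece for $y = e$ survives, and $\Hom_{H}\left( \MirabolicPlusFunctor(\MirabolicRepresentation), 1 \right)$ injects into $\Hom_{H \cap Q}\left( \MirabolicRepresentation \otimes \fieldCharacter, 1 \right)$.

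Second, for $y = e$ I would identify $H \cap Q$ and its contribution. Using \cref{lem:stabilizer-of-character-p-q} again (with $g = e$), $\fieldCharacter$ is trivial on $\MirabolicUnipotentRadical{n} \cap \TwistedLeviSubgroup{p}{q}$, so the restriction of $\MirabolicRepresentation \otimes \fieldCharacter$ to $H \cap Q$ factors through $\left( H \cap Q \right) / \left( \MirabolicUnipotentRadical{n} \cap \TwistedLeviSubgroup{p}{q} \right)$ and is just $\MirabolicRepresentation$. Since $\Mirabolic{n-1} \cap \TwistedLeviSubgroup{p}{q-1} \subseteq H \cap Q$ and $\MirabolicRepresentation \otimes \fieldCharacter$ acts there by $\MirabolicRepresentation$, restriction of functionals yields an injection $\Hom_{H \cap Q}\left( \MirabolicRepresentation \otimes \fieldCharacter, 1 \right) \hookrightarrow \Hom_{\Mirabolic{n-1} \cap \TwistedLeviSubgroup{p}{q-1}}\left( \MirabolicRepresentation, 1 \right)$. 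Composing with the injection from the previous paragraph gives $\Hom_{\Mirabolic{n} \cap \TwistedLeviSubgroup{p}{q}}\left( \MirabolicPlusFunctor(\MirabolicRepresentation), 1 \right) \hookrightarrow \Hom_{\Mirabolic{n-1} \cap \TwistedLeviSubgroup{p}{q-1}}\left( \MirabolicRepresentation, 1 \right)$, as required.

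The main obstacle is the combinatorial bookkeeping around \cref{lem:stabilizer-of-character-p-q}: one must carefully verify that $\fieldCharacter$ on $\MirabolicUnipotentRadical{n} \cap \TwistedLeviSubgroup{p}{q}$ is trivial exactly on the conjugates coming from $\CharacterStabilizerSubgroup{p}{q}$, that this stabilizer equals $\Mirabolic{n-1}\cdot\TwistedLeviSubgroup{p}{q-1}$, and that the distinguished double coset intersects $H$ in a group related to $\Mirabolic{n-1} \cap \TwistedLeviSubgroup{p}{q-1}$. The companion embedding (with $\TwistedLeviSubgroup{p}{q-1}$ and $\TwistedLeviSubgroup{p-1}{q-1}$) needed to run the downward induction in the proofs of \cref{thm:multiplicity-one-even-case} and \cref{thm:multiplicity-one-odd-case} is handled identically, invoking \cref{lem:stabilizer-of-character-p-q-minus-1} in place of \cref{lem:stabilizer-of-character-p-q}; everything else — the geometric lemma, Frobenius reciprocity, and left exactness — is formal.
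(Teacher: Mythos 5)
Your argument is correct in substance but follows a genuinely different route from the paper. You restrict $\MirabolicPlusFunctor\left(\MirabolicRepresentation\right)=\Ind{\Mirabolic{n-1}\MirabolicUnipotentRadical{n}}{\Mirabolic{n}}{\MirabolicRepresentation\otimes\fieldCharacter}$ to $H=\Mirabolic{n}\cap\TwistedLeviSubgroup{p}{q}$, decompose by Mackey theory (over $\cComplex$ and a finite group this is in fact a direct sum, so no filtration or geometric lemma is needed), apply Frobenius reciprocity, and use \cref{lem:stabilizer-of-character-p-q} to kill every double coset except the identity one, landing in $\Hom_{\Mirabolic{n-1}\cap\TwistedLeviSubgroup{p}{q-1}}\left(\MirabolicRepresentation,1\right)$. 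The paper instead runs Matringe's distribution-theoretic scheme: a functional $L$ becomes a distribution $T=L\circ A$ on $\Schwartz\left(\Mirabolic{n},\MirabolicRepresentationVectorSpace\right)$, twisting by $\MirabolicCharacter$ and \cref{lem:stabilizer-of-character-p-q} confine $\Supp T$ to $\Mirabolic{n-1}\MirabolicUnipotentRadical{n}\TwistedLeviSubgroup{p}{q-1}$, and a second projection $B$ extracts the invariant functional $\xi_T$. The geometric core is identical (the paper's support computation is your double-coset analysis in disguise, both resting on $\CharacterStabilizerSubgroup{p}{q}=\Mirabolic{n-1}\cdot\TwistedLeviSubgroup{p}{q-1}$); your version is shorter and more transparent in the finite setting, in fact yielding an isomorphism onto the identity-coset term, while the paper's version is the one that transfers to the local-field situation it imitates, where semisimplicity and the Mackey direct sum are unavailable.

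Two bookkeeping slips to repair, neither fatal. First, your Mackey conventions are mismatched: with pieces $\Ind{H\cap yQy^{-1}}{H}{{}^{y}\left(\MirabolicRepresentation\otimes\fieldCharacter\right)}$ and ${}^{y}\sigma'\left(h\right)=\sigma'\left(y^{-1}hy\right)$, the triviality condition reads $\fieldCharacter\left(y^{-1}uy\right)=1$, i.e. $y^{-1}\in\CharacterStabilizerSubgroup{p}{q}$, not $y\in\CharacterStabilizerSubgroup{p}{q}$ (and $\CharacterStabilizerSubgroup{p}{q}$ is not obviously closed under inversion); either index by $\lquot{H}{\rquot{\Mirabolic{n}}{Q}}$ with your subgroup and twist, or keep $QyH$ but use $H\cap y^{-1}Qy$ with the twist $h\mapsto\sigma'\left(yhy^{-1}\right)$, in which case $y\in\CharacterStabilizerSubgroup{p}{q}=\Mirabolic{n-1}\cdot\TwistedLeviSubgroup{p}{q-1}$ and your conclusion $QyH=QH$ follows as written. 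Second, your containment $\TwistedLeviSubgroup{p}{q-1}\subseteq\Mirabolic{n-1}$ is false in general: already for $p=q=1$ one has $\TwistedLeviSubgroup[2]{1}{0}=\left\{\diag\left(a,1\right)\right\}$ while $\Mirabolic{1}$ is trivial. What you need, and what is true, is $\TwistedLeviSubgroup{p}{q-1}\subseteq G_{n-1}\subseteq\Mirabolic{n}$ together with $\TwistedLeviSubgroup{p}{q-1}\subseteq\TwistedLeviSubgroup{p}{q}$, which gives $\TwistedLeviSubgroup{p}{q-1}\subseteq H$ and keeps both your coset-killing step and your final restriction step intact.
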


\begin{prop}\label{prop:embedding-n-minus-2}
	Suppose $p \ge q \ge 1$ with $p+q = n$. Let $\representationDeclaration{\MirabolicRepresentation}$ be a representation of $\Mirabolic{n-2}$. Then there exists an embedding
	$$\Hom_{\Mirabolic{n-1} \cap \TwistedLeviSubgroup{p}{q-1}}\left(\MirabolicPlusFunctor\left(\MirabolicRepresentation\right), 1\right) \hookrightarrow \Hom_{\Mirabolic{n-2} \cap \TwistedLeviSubgroup{p-1}{q-1}} \left(\MirabolicRepresentation, 1\right). $$
\end{prop}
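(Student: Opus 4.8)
The plan is to mimic the proof of Proposition~\ref{prop:embedding-n-minus-1}, which (following Matringe) analyzes the Mackey orbits of $\MirabolicPlusFunctor(\MirabolicRepresentation) = \Ind{\Mirabolic{n-2}\MirabolicUnipotentRadical{n-1}}{\Mirabolic{n-1}}{\MirabolicRepresentation \otimes \fieldCharacter}$ under restriction to the subgroup $\Mirabolic{n-1} \cap \TwistedLeviSubgroup{p}{q-1}$, and then picks out the single orbit that can support a nonzero functional. First I would apply Frobenius reciprocity together with the Mackey double-coset decomposition to write
$$
\Hom_{\Mirabolic{n-1}\cap\TwistedLeviSubgroup{p}{q-1}}\!\left(\MirabolicPlusFunctor(\MirabolicRepresentation),1\right)
\isomorphic
\bigoplus_{\gamma} \Hom_{\left(\Mirabolic{n-1}\cap\TwistedLeviSubgroup{p}{q-1}\right)^{\gamma}\cap\, \Mirabolic{n-2}\MirabolicUnipotentRadical{n-1}}\!\left(\MirabolicRepresentation\otimes\fieldCharacter,1\right),
$$
where $\gamma$ runs over representatives of $\lquot{\left(\Mirabolic{n-1}\cap\TwistedLeviSubgroup{p}{q-1}\right)}{\Mirabolic{n-1}}/\left(\Mirabolic{n-2}\MirabolicUnipotentRadical{n-1}\right)$. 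The key point, exactly as in Proposition~\ref{prop:embedding-n-minus-1}, is that a double coset contributes a nonzero summand only if the character $\fieldCharacter$ restricted to $\MirabolicUnipotentRadical{n-1}\cap\gamma^{-1}\TwistedLeviSubgroup{p}{q-1}\gamma$ is trivial; by Lemma~\ref{lem:stabilizer-of-character-p-q-minus-1} the elements realizing this triviality lie in $\CharacterStabilizerSubgroup{p}{q-1} = \Mirabolic{n-2}\cdot\TwistedLeviSubgroup{p-1}{q-1}$, which is precisely what pins the relevant double coset down to the one whose stabilizer, intersected appropriately, is $\Mirabolic{n-2}\cap\TwistedLeviSubgroup{p-1}{q-1}$.

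So the main steps, in order, are: (i) rewrite the Hom-space via Frobenius reciprocity and Mackey theory; (ii) identify the double-coset representatives $\gamma$, which amounts to a Bruhat-type computation with the permutation $\LeviPermutation{p}{q}$ defining $\TwistedLeviSubgroup{p}{q-1}$ inside $G_{n-1}$ and tracking the position of the mirabolic unipotent radical $\MirabolicUnipotentRadical{n-1}$; (iii) for each $\gamma$, restrict the character $\fieldCharacter$ to $\MirabolicUnipotentRadical{n-1}\cap\gamma\TwistedLeviSubgroup{p}{q-1}\gamma^{-1}$ and invoke Lemma~\ref{lem:stabilizer-of-character-p-q-minus-1} to kill all but (at most) one double coset; and (iv) for the surviving coset, compute the resulting subgroup over which the Hom is taken and check it is $\Mirabolic{n-2}\cap\TwistedLeviSubgroup{p-1}{q-1}$ (up to conjugation), which yields the claimed embedding — an embedding rather than an isomorphism because the other orbits are only shown to vanish after restriction, not a priori. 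The combinatorics of step (ii)–(iv) is the heart of the matter and is where one uses that $p \ge q$, so that the partition $(p,q)$ genuinely has a distinguished block of size $p-q$; the shift from $\Mirabolic{n-1}$ down to $\Mirabolic{n-2}$ and from $\TwistedLeviSubgroup{p}{q-1}$ down to $\TwistedLeviSubgroup{p-1}{q-1}$ mirrors the step $n \to n-2$ built into the definition of these twisted Levi subgroups.

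The hard part will be step (ii)–(iv): correctly bookkeeping the conjugation by $\LeviPermutationMatrix{p}{q}$ and the embedding conventions $G_k \hookrightarrow G_n$ so that the intersection $\MirabolicUnipotentRadical{n-1}\cap\TwistedLeviSubgroup{p}{q-1}$ and its conjugates are described explicitly enough to apply Lemma~\ref{lem:stabilizer-of-character-p-q-minus-1} verbatim. I expect this to go through by the same argument as \cite[Lemma~3.2, proof]{matringe2012cuspidal} and Proposition~\ref{prop:embedding-n-minus-1} above, with only the indices shifted; the role of $\Mirabolic{n-1}$ there is played by $\Mirabolic{n-2}$ here, and the role of $\MirabolicUnipotentRadical{n}$ by $\MirabolicUnipotentRadical{n-1}$, so no genuinely new phenomenon arises — the proof is \emph{mutatis mutandis} that of Proposition~\ref{prop:embedding-n-minus-1}. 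Once the embedding is established, combining Propositions~\ref{prop:embedding-n-minus-1} and~\ref{prop:embedding-n-minus-2} alternately with Theorem~\ref{thm:cuspidal-restriction-to-mirabolic} (which identifies $\finiteFieldRepresentation\restriction_{\Mirabolic{n}}$ with an iterate of $\MirabolicPlusFunctor$ applied to the trivial representation) will reduce the dimension of the Hom-space in Theorems~\ref{thm:multiplicity-one-even-case} and~\ref{thm:multiplicity-one-odd-case} down to $\dim\Hom(1,1) = 1$, which is the desired conclusion.
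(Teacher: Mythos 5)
Your plan is correct, but it takes a genuinely different route from the paper. The paper proves \cref{prop:embedding-n-minus-1} by an equivariant-distribution argument modeled on Matringe's $p$-adic proof --- a functional $L$ is converted into the distribution $T=L\circ A$ on $\Schwartz\left(\Mirabolic{n},\MirabolicRepresentationVectorSpace\right)$, the character $\MirabolicCharacter$ together with \cref{lem:stabilizer-of-character-p-q} confines the support of $T$ to $\Mirabolic{n-1}\MirabolicUnipotentRadical{n}\TwistedLeviSubgroup{p}{q-1}$, and $T$ is then reconstructed from a functional $\xi_T$ in the target space --- and it asserts that \cref{prop:embedding-n-minus-2} follows by the same argument with \cref{lem:stabilizer-of-character-p-q-minus-1} replacing \cref{lem:stabilizer-of-character-p-q}. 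You instead use Frobenius reciprocity plus the Mackey double-coset decomposition, which is legitimate over a finite field and in fact yields slightly more. Concretely, with $K=\Mirabolic{n-1}\cap\TwistedLeviSubgroup{p}{q-1}$ and $H=\Mirabolic{n-2}\MirabolicUnipotentRadical{n-1}$: since $\MirabolicUnipotentRadical{n-1}$ is normal in $\Mirabolic{n-1}$ and lies in $H$, representatives $\gamma$ may be taken in $G_{n-2}$, and the $\gamma$-summand vanishes unless $\fieldCharacter$ is trivial on $\gamma^{-1}\left(\MirabolicUnipotentRadical{n-1}\cap\TwistedLeviSubgroup{p}{q-1}\right)\gamma$, i.e.\ unless $\gamma^{-1}\in\CharacterStabilizerSubgroup{p}{q-1}=\Mirabolic{n-2}\cdot\TwistedLeviSubgroup{p-1}{q-1}$ by \cref{lem:stabilizer-of-character-p-q-minus-1}. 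Two points you leave implicit should be spelled out: first, every surviving representative lies in the single double coset $K\cdot H$, because $\TwistedLeviSubgroup{p-1}{q-1}\subseteq\Mirabolic{n-1}\cap\TwistedLeviSubgroup{p}{q-1}=K$ (a short check with the interleaving permutations $\LeviPermutation{p}{q}$ and $\LeviPermutation{p-1}{q-1}$) and $\Mirabolic{n-2}\subseteq H$, so the whole Hom-space reduces to $\Hom_{K\cap H}\left(\MirabolicRepresentation\otimes\fieldCharacter,1\right)$; second, the desired embedding is then simply restriction of invariant functionals, well defined and injective because $\Mirabolic{n-2}\cap\TwistedLeviSubgroup{p-1}{q-1}\subseteq K\cap H$ and on such elements $\MirabolicRepresentation\otimes\fieldCharacter$ restricts to $\MirabolicRepresentation$. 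In particular your parenthetical explanation of why one only gets an embedding is off: the other orbits vanish outright, and any failure of surjectivity comes from $K\cap H$ being possibly larger than $\left(\Mirabolic{n-2}\cap\TwistedLeviSubgroup{p-1}{q-1}\right)\left(\MirabolicUnipotentRadical{n-1}\cap\TwistedLeviSubgroup{p}{q-1}\right)$. With those checks your argument is complete; it even computes the Hom-space exactly rather than merely embedding it, while the paper's distribution method has the advantage of being the argument that transfers to the $p$-adic setting.
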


We prove only \cref{prop:embedding-n-minus-1}. The proof of \cref{prop:embedding-n-minus-2} is similar.

\begin{proof}[Proof of \cref{prop:embedding-n-minus-1}]
	Denote $W = \MirabolicPlusFunctor \left( \MirabolicRepresentation \right) = \Ind{\Mirabolic{n-1} \MirabolicUnipotentRadical{n}}{\Mirabolic{n}}{\MirabolicTensorRepresentation}$, where $\MirabolicTensorRepresentation = \MirabolicRepresentation \otimes \fieldCharacter$.

	Let $A : \Schwartz\left(\Mirabolic{n},\MirabolicRepresentationVectorSpace\right) \rightarrow W$ be the projection operator $$\left(Af\right)\left(p\right) = \frac{1}{\sizeof{\Mirabolic{n-1} \MirabolicUnipotentRadical{n}}} \sum_{y \in \Mirabolic{n-1} \MirabolicUnipotentRadical{n}}{\MirabolicTensorRepresentation\left(y\right)^{-1} f\left(y p\right)}.$$

	Let $L \in \Hom_{\Mirabolic{n} \cap \TwistedLeviSubgroup{p}{q}}\left(\MirabolicPlusFunctor\left(\MirabolicRepresentation\right), 1\right)$. We can define using $A$ and $L$ a distribution $T = L \circ A : \Schwartz\left( \Mirabolic{n}, \MirabolicRepresentationVectorSpace \right) \rightarrow \cComplex$. One easily checks that this distribution satisfies
	\begin{align}
		\label{eq:mirabolic-right-translation-invariance} & \standardForm{T}{\rightTranslation \left(h\right)f} = \standardForm{T}{f}                                                     & \forall h \in \Mirabolic{n} \cap \TwistedLeviSubgroup{p}{q},   \\
		\label{eq:mirabolic-left-translation-invariance}  & \standardForm{T}{\leftTranslation\left(y_0\right) f} = \standardForm{T}{\MirabolicTensorRepresentation \left(y_0\right)^{-1} f} & \forall y_0 \in \Mirabolic{n-1} \MirabolicUnipotentRadical{n}.
	\end{align}

	Since $A$ is onto, we have that the map $L \mapsto L \circ A$ is an injection from $\Hom_{\Mirabolic{n} \cap \TwistedLeviSubgroup{p}{q}} \left(\MirabolicPlusFunctor \left(\MirabolicRepresentation\right), 1\right)$ to the space of all distributions satisfying \cref{eq:mirabolic-right-translation-invariance} and \cref{eq:mirabolic-left-translation-invariance}.

	Let $\MirabolicCharacter : \Mirabolic{n} \rightarrow \cComplex$ be the function defined by $\MirabolicCharacter \left(ug\right) = \fieldCharacter\left(u\right)$, where $u\in \MirabolicUnipotentRadical{n}$ and $g \in G_{n-1}$. It is well defined, since $\MirabolicUnipotentRadical{n} \cap G_{n-1} = \left\{\IdentityMatrix{n}\right\}$. One has $\MirabolicCharacter \left(up\right) = \fieldCharacter \left(u\right) \MirabolicCharacter \left(p\right)$ for $u \in \MirabolicUnipotentRadical{n} $ and $p \in \Mirabolic{n}$.

	Let $T$ be a distribution on $\Schwartz \left(\Mirabolic{n},\MirabolicRepresentationVectorSpace\right)$, satisfying \cref{eq:mirabolic-right-translation-invariance} and \cref{eq:mirabolic-left-translation-invariance}. We denote by $\MirabolicCharacter \cdot T$ the distribution defined by $\standardForm{\MirabolicCharacter \cdot T}{f} = \standardForm{T}{\MirabolicCharacter \cdot f}$. One easily checks using \cref{eq:mirabolic-left-translation-invariance} that \begin{equation}\label{eq:mirabolic-character-times-distribution-invariance}
		\standardForm{\leftTranslation \left(u\right)\left(\MirabolicCharacter \cdot T\right)}{f} = \standardForm{\MirabolicCharacter \cdot T}{f},
	\end{equation} for every $u \in \MirabolicUnipotentRadical{n}$. Define for $f : \Mirabolic{n} \rightarrow \MirabolicRepresentationVectorSpace$, $f' : G_{n - 1} \rightarrow \MirabolicRepresentationVectorSpace$ by $$f' \left(g\right) = \sum_{u \in \MirabolicUnipotentRadical{n}}{f \left(ug\right)},$$ and define a distribution $S$ on $\Schwartz\left(G_{n-1}, \MirabolicRepresentationVectorSpace\right)$ by defining it on the basis by $\standardForm{S}{\indicatorFunction{g_0}} = \standardForm{\MirabolicCharacter \cdot T}{\indicatorFunction{g_0}}$, for every $g_0 \in G_{n-1}$. Then one easily checks using \cref{eq:mirabolic-character-times-distribution-invariance} that $\standardForm{S}{f'} = \standardForm{\MirabolicCharacter \cdot T}{f}$, for every $f \in \Schwartz\left(G_{n-1}, \MirabolicRepresentationVectorSpace\right)$. It follows that $\standardForm{\MirabolicCharacter \cdot T}{\rightTranslation \left(u_0\right) f} =\standardForm{\MirabolicCharacter \cdot T}{f}$, for every $f \in \Schwartz \left(\Mirabolic{n}, \MirabolicRepresentationVectorSpace\right)$ and $u_0 \in \MirabolicUnipotentRadical{n}$.

	Suppose that $g_0 \in G_{n-1}$ is in the support of $S$, i.e., $\standardForm{S}{\indicatorFunction{g_0}} \ne 0$. Then for every $u_0 \in \MirabolicUnipotentRadical{n} \cap \TwistedLeviSubgroup{p}{q}$, we have $$\MirabolicCharacter \left(g_0\right) \standardForm{T}{\indicatorFunction{g_0}} = \standardForm{\MirabolicCharacter \cdot T}{\indicatorFunction{g_0}} = \standardForm{\MirabolicCharacter \cdot T}{\rightTranslation\left(u_0\right) \indicatorFunction{g_0}} = \MirabolicCharacter\left(g_0 u_0^{-1}\right) \standardForm{T}{\indicatorFunction{g_0}}.$$
	Since $\standardForm{S}{\indicatorFunction{g_0}} \ne 0$, we get that $\standardForm{T}{\indicatorFunction{g_0}} \ne 0$, and therefore $\MirabolicCharacter\left(g_0 u_0^{-1}\right) = \MirabolicCharacter \left(g_0\right)$, which implies that $\fieldCharacter \left(g_0 u_0 g_0^{-1}\right) = 1$. Thus we have that $\Supp S \subseteq \CharacterStabilizerSubgroup{p}{q}$, and by \cref{lem:stabilizer-of-character-p-q}, $\Supp S \subseteq \Mirabolic{n-1} \cdot \TwistedLeviSubgroup{p}{q-1}$, and hence $\Supp T \subseteq \MirabolicUnipotentRadical{n} \Mirabolic{n-1} \TwistedLeviSubgroup{p}{q-1} =  \Mirabolic{n-1} \MirabolicUnipotentRadical{n} \TwistedLeviSubgroup{p}{q-1}$. Hence the restriction map $T \rightarrow T \restriction_{\Schwartz\left(\Mirabolic{n-1} \MirabolicUnipotentRadical{n} \TwistedLeviSubgroup{p}{q-1}, \MirabolicRepresentationVectorSpace\right)}$, from the space of distributions on $\Schwartz \left(\Mirabolic{n},\MirabolicRepresentationVectorSpace\right)$  satisfying \cref{eq:mirabolic-right-translation-invariance} and \cref{eq:mirabolic-left-translation-invariance} to the space on distributions of $\Schwartz\left(\Mirabolic{n-1} \MirabolicUnipotentRadical{n} \TwistedLeviSubgroup{p}{q-1}, \MirabolicRepresentationVectorSpace\right)$ satisfying \cref{eq:mirabolic-right-translation-invariance}, \cref{eq:mirabolic-left-translation-invariance} is injective.

	Consider the projection $B : \Schwartz\left(\Mirabolic{n-1} \MirabolicUnipotentRadical{n} \times \TwistedLeviSubgroup{p}{q-1}, \MirabolicRepresentationVectorSpace \right) \rightarrow \Schwartz\left(\Mirabolic{n-1} \MirabolicUnipotentRadical{n}  \TwistedLeviSubgroup{p}{q-1}, \MirabolicRepresentationVectorSpace \right)$ given by $$\left(Bf\right)\left(y^{-1} h\right) = \frac{1}{\sizeof{\Mirabolic{n-1} \cap \TwistedLeviSubgroup{p}{q-1}}}\sum_{a \in \Mirabolic{n-1} \cap \TwistedLeviSubgroup{p}{q-1} }{f \left(ay, ah\right)},$$
	for $y \in \Mirabolic{n-1} \MirabolicUnipotentRadical{n}$, $h \in \TwistedLeviSubgroup{p}{q-1}$. This is well defined, and $B$ is a projection in the sense that if $f \left(y,h\right) = g\left(y^{-1} h\right)$ for $g \in \Schwartz \left(\Mirabolic{n-1} \MirabolicUnipotentRadical{n} \TwistedLeviSubgroup{p}{q-1}, \MirabolicRepresentationVectorSpace\right)$, then $Bf = g$. In particular $B$ is onto.

	Consider the linear isomorphism from $\Schwartz \left(\Mirabolic{n-1} \MirabolicUnipotentRadical{n} \times \TwistedLeviSubgroup{p}{q-1}, \MirabolicRepresentationVectorSpace \right) $ to itself, $\phi \mapsto \tilde{\phi}$, given by $\tilde{\phi}\left(y,h\right) =  \MirabolicTensorRepresentation \left(y\right)^{-1} \phi \left(y, h\right) $. Let $y_0 \in \Mirabolic{n-1} \MirabolicUnipotentRadical{n}$, $h_0 \in \TwistedLeviSubgroup{p}{q-1}$, $\phi \in \Schwartz \left( \Mirabolic{n-1}\MirabolicUnipotentRadical{n} \times \TwistedLeviSubgroup{p}{q-1}, \MirabolicRepresentationVectorSpace \right)$ and denote $\phi_1 = \rightTranslation\left(y_0, h_0\right) \phi$. One checks that $\widetilde{\phi_1} = \MirabolicTensorRepresentation \left(y_0\right) \left(\rightTranslation \left(y_0, h_0\right) \tilde{\phi} \right)$ and that $B \left( \widetilde{\phi_1} \right) = \rightTranslation \left(h_0\right) \leftTranslation \left(y_0\right) \MirabolicTensorRepresentation \left(y_0\right) B \left(\tilde{\phi}\right)$, which implies for a distribution $T$ on $\Schwartz\left(\Mirabolic{n-1} \MirabolicUnipotentRadical{n} \TwistedLeviSubgroup{p}{q-1}, \MirabolicRepresentationVectorSpace\right)$ satisfying \cref{eq:mirabolic-right-translation-invariance}, \cref{eq:mirabolic-left-translation-invariance}, we have $\standardForm{T}{B\left( \tilde{\phi} \right)} = \standardForm{T}{B\left( \widetilde{\phi_1} \right)}$.

	For $T$ as above, we define a distribution $D_T$ on the space $\Schwartz\left(\Mirabolic{n-1} \MirabolicUnipotentRadical{n} \times \TwistedLeviSubgroup{p}{q-1}, \MirabolicRepresentationVectorSpace \right)$ by $\standardForm{D_T}{\phi} = \standardForm{T}{B \left(\tilde{\phi}\right)}$. The map $T \mapsto D_T$ is injective, as $B$ is surjective and $\phi \mapsto \tilde{\phi} $ is an isomorphism. The above discussion shows that $\rightTranslation \left(y_0, h_0\right) D_T = D_T$, for every $y_0 \in \Mirabolic{n-1} \MirabolicUnipotentRadical{n}$, $h_0 \in \TwistedLeviSubgroup{p}{q-1}$. This means that $D_T$ is determined by the functional $\xi_T : \MirabolicRepresentationVectorSpace \rightarrow \cComplex$, defined by $\standardForm{\xi_T}{v} = \standardForm{D_T}{v \cdot  \indicatorFunction{\left( \IdentityMatrix{n}, \IdentityMatrix{n} \right)}}$, and is given by the formula \begin{equation}\label{eq:distribution-in-terms-of-functional}
		\standardForm{D_T}{\phi} = \sum_{\substack{
				y \in \Mirabolic{n - 1} \MirabolicUnipotentRadical{n}\\
				h \in \TwistedLeviSubgroup{p}{q-1}
			}}{\standardForm{\xi_T}{\phi \left(y, h\right)}}.
	\end{equation}

	We now show that $\xi_T \in \Hom_{\Mirabolic{n-1} \cap \TwistedLeviSubgroup{p}{q-1}} \left(\MirabolicRepresentation ,1\right)$. Let $\phi \in \Schwartz \left(\Mirabolic{n-1} \MirabolicUnipotentRadical{n} \times \TwistedLeviSubgroup{p}{q-1}\right)$, let $b \in \Mirabolic{n-1} \cap \TwistedLeviSubgroup{p}{q-1}$, and let $\phi_1 = \leftTranslation \left(b,b\right) \phi$. Then an easy computation shows that $\leftTranslation\left(b,b\right) \reallywidetilde{\MirabolicTensorRepresentation \left(b\right)^{-1} \phi} = \widetilde{\phi_1}$. Since $B \left(\leftTranslation \left(b,b\right)f \right) = B \left(f\right)$, for every $f \in \Schwartz\left(\Mirabolic{n-1} \MirabolicUnipotentRadical{n} \times \TwistedLeviSubgroup{p}{q-1}, \MirabolicRepresentationVectorSpace \right)$, we get that $\standardForm{T}{\leftTranslation \left(b,b\right) \phi} = \standardForm{T}{\MirabolicTensorRepresentation \left(b\right)^{-1} \phi}$. It follows from \cref{eq:distribution-in-terms-of-functional} that $\standardForm{T}{\leftTranslation \left(b,b\right) \phi} = \standardForm{T}{\phi}$. Therefore $\standardForm{\xi_T}{v} = \standardForm{\xi_T}{\MirabolicTensorRepresentation \left(b\right) v}$, for every $v \in \MirabolicRepresentationVectorSpace$ and every $b \in \Mirabolic{n-1} \cap \TwistedLeviSubgroup{p}{q-1}$, as required.

\end{proof}

We are now able to prove \cref{thm:multiplicity-one-even-case} and \cref{thm:multiplicity-one-odd-case}.

\begin{proof}[Proof of \cref{thm:multiplicity-one-even-case}]
	Since $\TwistedLeviSubgroup[2m]{m}{m} = \LeviPermutationMatrix{m}{m} \LeviSubgroup{m,m} \LeviPermutationMatrix{m}{m}^{-1}$ and $\Mirabolic{2m} = \LeviPermutationMatrix{m}{m} \Mirabolic{2m} \LeviPermutationMatrix{m}{m}^{-1}$, we get that $$\Hom_{\Mirabolic{2m} \cap \LeviSubgroup{m,m}} \left(\finiteFieldRepresentation, 1\right) \isomorphic \Hom_{\Mirabolic{2m} \cap \TwistedLeviSubgroup[2m]{m}{m}} \left(\finiteFieldRepresentation, 1\right)$$
	by mapping $L \in \Hom_{\Mirabolic{2m} \cap \TwistedLeviSubgroup[2m]{m}{m}} \left(\finiteFieldRepresentation, 1\right)$ to $L \finiteFieldRepresentation \left( \LeviPermutationMatrix{m}{m} \right) \in \Hom_{\Mirabolic{2m} \cap \LeviSubgroup{m,m}} \left(\finiteFieldRepresentation, 1\right)$. Therefore it suffices to prove that $$ \dim_{\cComplex} \Hom_{\Mirabolic{2m} \cap \TwistedLeviSubgroup[2m]{m}{m}} \left(\finiteFieldRepresentation, 1\right) \le 1.$$

	Since $\finiteFieldRepresentation$ is an irreducible cuspidal representation, we have from \cref{thm:cuspidal-restriction-to-mirabolic} that $\finiteFieldRepresentation \restriction_{\Mirabolic{2m}} \isomorphic \left(\MirabolicPlusFunctor\right)^{2m - 1}\left(1\right)$.	Using \cref{prop:embedding-n-minus-1} and \cref{prop:embedding-n-minus-2} repeatedly, and using the fact that $ \TwistedLeviSubgroup[2m]{m-1}{m-1} \isomorphic \TwistedLeviSubgroup[2m - 2]{m-1}{m-1} $, one gets an embedding
	$$\Hom_{\Mirabolic{2m} \cap \TwistedLeviSubgroup[2m]{m}{m}} \left(\left(\MirabolicPlusFunctor\right)^{2m - 1} \left(1\right), 1\right) \hookrightarrow \Hom_{\Mirabolic{2} \cap \TwistedLeviSubgroup[2]{1}{0}} \left(1, 1\right).$$
	The last space is one dimensional, and therefore we get the required result.
\end{proof}

\begin{proof}[Proof of \cref{thm:multiplicity-one-odd-case}]
	Let $\tau$ be the permutation $$\begin{pmatrix}
			1 & 2 & 3 & \dots & m & \mid & m+1  & \mid & m+2 & m+3 & \dots & 2m+1 \\
			1 & 2 & 3 & \dots & m & \mid & 2m+1 & \mid & m+1 & m+2 & \dots & 2m
		\end{pmatrix},$$
	and let $\columnPermutationMatrix{\tau}$ be the column permutation matrix corresponding to $\tau$. Then $\NonstandardLeviSubgroup{2m+1} = \columnPermutationMatrix{\tau} \LeviSubgroup{m+1,m} \columnPermutationMatrix{\tau}^{-1}$, where $\LeviSubgroup{m+1,m}$ is the standard Levi subgroup corresponding to the partition $\left(m+1,m\right)$. A simple calculation shows that $$\NonstandardLeviSubgroup{2m+1} \cap \Mirabolic{2m + 1} = \columnPermutationMatrix{\tau} \left\{ \diagTwo{p}{g} \mid p \in \Mirabolic{m+1}, g \in \GL{m}{\finiteField} \right\} \columnPermutationMatrix{\tau}^{-1}.$$
	Similarly, an easy calculation shows that $$\TwistedLeviSubgroup[2m + 2]{m+1}{m} \cap \Mirabolic{2m + 1} = \LeviPermutationMatrix{m+1}{m+1} \left\{ \begin{pmatrix}
			p &   &   \\
			  & g &   \\
			  &   & 1
		\end{pmatrix} \mid p \in \Mirabolic{m+1}, g \in \GL{m}{\finiteField} \right\} \LeviPermutationMatrix{m+1}{m+1}^{-1}.$$
	Therefore $\Mirabolic{2m + 1}  \cap \TwistedLeviSubgroup[2m + 2]{m+1}{m}$ and $\Mirabolic{2m + 1} \cap \NonstandardLeviSubgroup{2m + 1}$ are conjugate as subgroups of $\GL{2m+1}{\finiteField}$ (actually even as subgroups of $\Mirabolic{2m+1}$), which implies that $$\Hom_{\Mirabolic{2m+1} \cap \NonstandardLeviSubgroup{2m+1}} \left(\finiteFieldRepresentation, 1 \right) \isomorphic \Hom_{\Mirabolic{2m + 1} \cap \TwistedLeviSubgroup[2m + 2]{m+1}{m}} \left(\finiteFieldRepresentation, 1\right).$$
	Thus it suffices to prove that $$\dim_{\cComplex} \Hom_{\Mirabolic{2m + 1} \cap \TwistedLeviSubgroup[2m + 2]{m+1}{m}}\left(\finiteFieldRepresentation, 1\right) \le 1.$$

	As in the previous proof, by \cref{thm:cuspidal-restriction-to-mirabolic}, and by using \cref{prop:embedding-n-minus-1} and \cref{prop:embedding-n-minus-2} repeatedly, we get an embedding
	$$\Hom_{\Mirabolic{2m + 1} \cap \TwistedLeviSubgroup[2m + 2]{m+1}{m}}\left(\finiteFieldRepresentation, 1\right) \hookrightarrow \Hom_{\Mirabolic{2} \cap \TwistedLeviSubgroup[2]{1}{0}}\left(1,1\right),$$ and the statement follows.
\end{proof}

\subsection*{Acknowledgments}

We thank our advisors for their tremendous support. We appreciate the first author's advisor, James Cogdell, for his support and his comments on this paper. We are grateful to the second author's advisor, David Soudry, for suggesting the problem and for many helpful discussions during our work on the even case.

We are thankful to Ofir Gorodetsky for useful discussions and for the observation in \cref{rem:evaluation-of-square-sums}.

\bibliographystyle{amsalpha}
\bibliography{bibliography}

\end{document}